\definecolor{grey}{rgb}{0.5,0.5,0.5}
\patchcmd{\section}{\scshape}{\bfseries}{}{}
\renewcommand{\@secnumfont}{\bfseries}
\DeclareRobustCommand{\SkipTocEntry}[5]{}
 \def\l@subsection{\@tocline{2}{0pt}{2pc}{0pc}{}} \makeatother
 \def\l@subsubsection{\@tocline{2}{0pt}{3pc}{3pc}{}} \makeatother
\theoremstyle{definition}
\theoremstyle{plain}
\newtheorem{theorem}{Theorem}[subsection]
\newtheorem{proposition}[theorem]{Proposition}
\newtheorem{lemma}[theorem]{Lemma}
\newtheorem{corollary}[theorem]{Corollary}
\theoremstyle{definition}
\newtheorem{definition}[theorem]{Definition}
\newtheorem*{notation}{Notation}
\newtheorem{algorithm}[theorem]{$\ulcorner$ Algorithm}
\newtheorem{example}[theorem]{Example}
\newtheorem{examplecontinued}[theorem]{Example (continued)}
\newtheorem{remark}[theorem]{Remark}
\renewcommand{\bar}{\overline}
\renewcommand{\hat}{\widehat}
\renewcommand{\geq}{\geqslant}
\renewcommand{\leq}{\leqslant}
\renewcommand{\ge}{\geqslant}
\DeclareMathOperator{\Z}{\mathbf{Z}}
\DeclareMathOperator{\No}{\mathcal{N}}
\DeclareMathOperator{\R}{\mathbf{R}}
\DeclareMathOperator{\C}{\mathbf{C}}
\DeclareMathOperator{\F}{\mathbf{F}}
\DeclareMathOperator{\End}{\mathrm{End}}
\DeclareMathOperator{\Aut}{\mathrm{Aut}}
\DeclareMathOperator{\ord}{\mathrm{ord}}
\newcommand{\fl}{\llbracket}
\newcommand{\fr}{\rrbracket}
\newcommand{\Car}{\Lambda}
\newcommand{\Gal}{\mathrm{Gal}}
\newcommand{\lau}[1]{(\!(#1)\!)} 
\newcommand{\pau}[1]{\fl #1 \fr}
\newcommand{\us}{\sigma}
\newlength{\dhatheight}
\newcommand{\doublehat}[1]{%
    \settoheight{\dhatheight}{\ensuremath{\hat{#1}}}%
    \addtolength{\dhatheight}{-0.35ex}%
    \hat{\vphantom{\rule{1pt}{\dhatheight}}%
    \smash{\hat{#1}}}}
\DeclareMathOperator{\QS}{\doublehat{S\, }}
\newcolumntype{H}{>{\setbox0=\hbox\bgroup}c<{\egroup}@{}}
\renewenvironment{quote}
               {\list{}{\rightmargin0.5\leftmargin}%
                \item\relax}
               {\endlist}
\begin{document}

\date{\today\ (version 1.0)} 
\title{Automata and finite order elements in the Nottingham group}
\author[J.~Byszewski]{Jakub Byszewski}
\address{\normalfont Wydzia\l{} Matematyki i Informatyki Uniwersytetu Jagiello\'nskiego, ul.\ S.\ \L ojasiewicza 6,
30-348 Krak\'ow, Polska}
\email{jakub.byszewski@gmail.com}
\author[G.~Cornelissen]{Gunther Cornelissen}
\address{\normalfont Mathematisch Instituut, Universiteit Utrecht, Postbus 80.010, 3508 TA Utrecht, Nederland} 
\email{g.cornelissen@uu.nl}
\author[D.~Tijsma]{Djurre Tijsma}
\address{\normalfont Mathematisches Institut der Heinrich-Heine-Universit\"at, Universit\"atsstra{\ss}e 1, 40225 D\"usseldorf, Deutschland}
\email{tijsma@uni-duesseldorf.de}
\thanks{JB was supported by National Science Center, Poland under grant no.\ 2016/\-23/\-D/ST1/\-01124. DT was supported in part by the research training group \textit{GRK 2240: Algebro-geometric Methods in Algebra, Arithmetic and Topology}, funded by the DFG. We thank Jeroen Sijsling for advice on various computations, Jonathan Lubin for sharing his unpublished work on conjugacy classes in the Nottingham group, Andrew Bridy and Eric Rowland for many interesting discussions about implementations, and Jason Bell for some insightful discussions. We also thank Ragnar Groot Koerkamp for setting up a computer search for small automata.}

\subjclass[2010]{11-11, 11B85 (secondary: 11-04, 11G20, 11S31, 11Y16, 20E18, 20E45, 68Q70)}
\keywords{\normalfont Nottingham group, power series over finite fields, automata theory}

\begin{abstract} \noindent 
The Nottingham group at $2$ is the group of (formal) power series $t+a_2 t^2+ a_3 t^3+ \cdots$ in the variable $t$ with coefficients $a_i$ from the field with two elements, where the group operation is given by composition of power series. The depth of such a series is the largest $d\geq 1$ for which $a_2=\dots=a_d=0$. 

Only a handful of power series of finite order (forcedly a power of $2$) are explicitly known through a formula for their coefficients. We argue in this paper that it is advantageous to describe such series in closed computational form through automata, based on effective versions of proofs of Christol's theorem identifying algebraic and automatic series. 

Up to conjugation, there are only finitely many series $\sigma$ of order $2^n$ with fixed break sequence (i.e. the sequence of depths of $\sigma^{\circ 2^i}$). 
Starting from Witt vector or Carlitz module constructions, we give an explicit automaton-theoretic description of: (a) representatives up to conjugation for all series of order $4$ with break sequence $(1,m)$ for $m<10$; (b) representatives up to conjugation for all series of order $8$ with minimal break sequence $(1,3,11)$; and (c) an embedding of the Klein four-group into the Nottingham group at $2$. 

We study the complexity of the new examples from the algebro-geometric properties of the equations they satisfy. For this, we generalise the theory of sparseness of power series to a four-step hierarchy of complexity, for which we give both Galois-theoretic and combinatorial descriptions.  We identify where our different series fit into this hierarchy. We construct sparse representatives for the conjugacy class of elements of order two and depth $2^\mu \pm 1$    $(\mu \geq 1)$. 
Series with small state complexity can end up high in the hierarchy. This is true, for example, for a new automaton we found, representing a series of order $4$ with $5$ states (the minimal possible number for such a series). 
 \end{abstract}

\maketitle

{\makeatletter
\def\@@underline#1{#1}
\tableofcontents
\makeatother}

\section{Introduction} 
Suppose $\sigma(t)=t+a_2 t^2 + a_3 t^3 + a_4 t^4 +  \cdots \neq t$ is a formal power series in the variable $t$ with coefficients from the field $\F_2 = {\Z}/{2}{\Z}$ with two elements. Since $\sigma(t)=t+O(t^2)$, substituting $\sigma(t)$ into itself produces a power series $\sigma^{\circ 2}(t) = t + a_2 (a_3 + 1)t^4 + \cdots$, and one may iterate this process to arrive at $\sigma^{\circ N}(t):=\sigma(\sigma(\cdots \sigma(t)))$. (We will systematically write $\sigma^{\circ N}(t)$ for the $N$-fold composition, and $\sigma(t)^N$ for the $N$-th power of the power series $\sigma(t)$; so here, for example, $\sigma(t)^2=t^2+a_2 t^4 + \cdots$.) Our concern is \emph{the explicit description of $\sigma$ and $N$ for which $\sigma^{\circ N}(t)=t$} (this is only possible if $N$ is a power of $2$). Our goal is not to compute finitely many coefficients $a_i$ of such $\sigma(t)$, but rather to give a \emph{finite description} of the complete series. To accomplish this, one might search for explicit formulas for the general coefficient $a_i$ or for the set $E(\sigma):=\{ i \in \Z_{\geq 0} : a_i \neq 0\}$ of occurring exponents, and this has been done  in a few cases. In this paper, we will argue that  one may push the boundaries of what is currently feasible by describing the coefficients of the power series by means of a finite automaton (that such a description is possible was already pointed out in \cite[Rem.\ 1.5]{BCPS}). We will construct the automaton using symbolic computation, based on Christol's characterisation of algebraic power series by automata \cite{Christol,CKMR}. We wish to stress that an automaton is a perfectly deterministic finite description of the corresponding power series $\sigma(t)$, but that a very small automaton (i.e.\ with very few states) may correspond to a power series for which an elementary description of the set $E(\sigma)$ is very complex. If one is interested in just the computation of the $k$-th coefficient of the power series $\sigma(t)$, the automaton can be used to do this in time logarithmic in $k$. 

We will first review the mathematical relevance of this  problem. Then we describe existing results and explain our method. Since the same question makes sense for the finite field $\F_p$ with $p$ elements (where $p$ is prime, and then forcedly $N$ is a power of $p$), we will consider this more general problem in the theoretical parts of the paper. 

\subsection{Connections} Fixing a prime number $p$, the \emph{Nottingham group} $\No(\F_p)$ is the pro-$p$-Sylow subgroup of the group of ring automorphisms $\Aut(\F_p\fl t \fr)$ of the formal power series ring $\F_p\fl t \fr$ over the finite field $\F_p$, with composition as multiplication.  A ring endomorphism $\sigma$ of  $\F_p\fl t \fr$ is determined uniquely by the image $\sigma(t) \in t\F_p\fl t \fr$ of $t$, and $\No(\F_p)$ is identified with the group of power series $\us(t) \in \F_p\fl t \fr$ with $\us(t) = t+O(t^2)$ under composition. We write $\sigma \circ \tau$ for the result of substituting the series $\tau\in \No(\F_p)$ for the variable $t$ in $\sigma \in \No(\F_p)$. The Nottingham group arises in many areas:
\begin{itemize}[leftmargin=*]
\item In \emph{group theory}, as Ershov remarked in \cite{Ershov}, $\No(\F_p)$ is `an excellent test example for many questions or conjectures in profinite group theory that have been settled for Chevalley groups'. In that reference, he proved that for $p \geq 5$, $\No(\F_p)$ admits no open embedding into a topologically simple group. On the other hand, every countably based  pro-$p$ group  embeds into $\No(\F_p)$ (Camina \cite{Camina}; Jennings \cite{Jennings}); in particular, every finite $p$-group embeds into $\No(\F_p)$ (an older unpublished result of Leedham-Green and Weiss; see \cite[Thm.\ 3]{Camina}). 
\item In \emph{number theory}, the Nottingham group occurs naturally in the theory of wild ramification (as the group of wild automorphisms of $\F_p(\!(t)\!)$; see Fesenko \cite{Fesenko}). 
\item The previous point relates to \emph{algebraic geometry}, namely: if a group $G$ acts on a smooth projective curve $X$ over $\F_p$, then the stabiliser $G_x$ of a point $x \in X$ acts on the completion of the local ring $\mathcal{O}_{X,x}$. This completion is isomorphic to $\F_p\fl t \fr$, leading to an embedding of the wild ramification group $G_x^1$ (the $p$-Sylow subgroup of $G_x$) into  $\No(\F_p)$; one can, for example, study deformations of group actions on curves through deformations of this group homomorphism, much like deformations of linear group representations, e.g.\ of Galois groups, cf.\ \cite{Mazur}. 
\end{itemize}
The need for explicit representations of finite order elements in $\No(\F_p)$ has been articulated several times, both in group theory (\cite[p.\ 216]{CaminaSurvey}, \cite[\S 5.4]{Lubin}), as well as in deformation theory, where conclusive results about formal deformation spaces and/or lifting are only known when standard forms for the series are available \cite{BM, BC, CK, CM, BCK, Green}. 

Our results are also relevant for the \emph{theory of automata} (that it relies upon), in particular, issues of implementation of certain algorithms for solving algebraic equations (Section \ref{section2a}, e.g.\ \cite{LDG}), the enumeration of automata with specific properties (cf.\ Section \ref{algoNvertices}), and an extension of Cobham's theory of complexity of automata/regular languages (cf.\ Section \ref{aridsection}).

\subsection{Review of previous work} Klopsch has proven that every element of order $p$ in $\No(\F_p)$ is conjugate to  \begin{equation} \label{eqKm} {t}{/}\!{\sqrt[m]{1-ma t^m}}=t+at^{m+1}+\cdots \end{equation}  for some positive integer $m$ coprime to $p$ and $a \in \F^*_p$, and that these series are mutually not conjugate \cite{Klopsch}. The expression  (\ref{eqKm}) may be readily converted into a formula for the coefficients of the corresponding power series by applying the binomial expansion (see also the discussion in Example \ref{exklopschauto}). 

Jean \cite{Jean} and Lubin \cite{Lubin} indicated how to use formal groups and explicit local class field theory to describe elements of any order $p^n$ in $\No(\F_p)$, and iterative procedures for the calculation of the coefficients of such elements were described (compare \cite{Jeanthesis},  \cite{Kiselev}, \cite[\S 6]{BBK}). However, the only known formulas for elements of order $p^n$ for $n>1$ are for  $p^n=4$ in $\No(\F_2)$, given by Jean in \cite[Ch.\ 7]{Jeanthesis}, Chinburg and Symonds \cite{ChinburgSymonds}, and Scherr and Zieve (cf.\ \cite[Rem.\ 1.4]{BCPS}). The Chinburg--Symonds example represents the action of an automorphism of order $4$ on the local completed ring at zero of the supersingular elliptic curve over $\F_2$; compare also \cite[Sect.\ 1]{BCPS}, where it is argued that this is essentially the only example that can be constructed by such a method; more precisely, up to conjugation, it is the only `almost rational' example. The final section of \cite{Jeanthesis} contains another (implicit) way of describing a solution to the problem, this time by using the method of Mellin \cite{Mellin} to solve algebraic equations---in this case, a trinomial---using hypergeometric series (the historically not entirely accurate reference in loc.\ cit.\ is to a monograph by Belardinelli). 

The \emph{break sequence} of $\sigma \in \No(\F_p)$ of order $p^n$ is a refined invariant with the property that there are only finitely many conjugacy classes of elements of fixed order $p^n$ with a given break sequence. The method of Lubin \cite{Lubin} can in principle be used to count that number using results from local class field theory. There is an exact characterisation of possible break sequences \cite[Obs.\ 5]{Lubin}. We briefly recall the definitions. 

\begin{definition} The \emph{depth} of $\sigma=\sigma(t) \in \No(\F_p)$ is $d(\sigma):= \mathrm{ord}_t(\us(t)-t)-1$ (and $d(t)=\infty$), so if $\us(t)=t+a_kt^k+O(t^{k+1})$ with $a_k\neq 0$, then $d(\sigma)=k-1$. The \emph{lower break sequence} of an element $\sigma \in \No(\F_p)$ of finite order $p^n$ is defined as $\mathfrak b_\sigma = (b_i)_{i=0}^{n-1} = (d(\sigma^{\circ p^i}))_{i=0}^{n-1}.$ 
\end{definition} 
The data $\mathfrak b_\sigma$ correspond bijectively to the so-called \emph{upper break sequence} $\mathfrak b^\sigma =\langle b^{(i)} \rangle_{i=0}^{n-1}$ that 
we will not define; for our purposes, it suffices to quote from  \cite[Def.\ 4]{Lubin} the formula that converts between lower and upper break sequences, which in our case of the  cyclic group generated by $\sigma$ becomes
\begin{equation} \label{convert} b^{(0)}=b_0 \qquad \mbox{and}\qquad \ b^{(i)} = b^{(i-1)} + p^{-i}(b_i-b_{i-1}) \quad \mbox{for}\ i>0. 
\end{equation} 
We will always indicate lower sequences by $( \mbox{ } )$-brackets, and the corresponding  upper sequences by $\langle \mbox{ } \rangle$-brackets, and we will write $(b_i)=\langle b^{(i)} \rangle$  for corresponding lower and upper break sequences.  

\subsection{The method of construction} We will use the term \emph{$p$-automaton} to describe a finite directed multigraph (allowing loops, as well as multiple edges between vertices) for which:
\begin{itemize}
\item vertices are labelled by elements of $\F_p$ [`output alphabet $\F_p$'];
\item one vertex (the so-called \emph{start vertex}) is additionally marked `Start'; 
\item each vertex has exactly $p$ outgoing edges,  each labelled by a different element of  $\{0,1,\dots,p-1\}$;  [`input alphabet $\{0,1,\ldots, p-1\}$']
\item  there is a path in the automaton from the start vertex to any vertex [`accessibility']; 
\item an edge with label $0$ always connects two vertices with the same label [`leading zeros invariance'].
\end{itemize} 
In the general theory of automata, this is called a `leading zeros invariant $p$-DFAO (deterministic finite $p$-automaton with output) with output alphabet $\F_p$ and all states accessible'. Vertices are also called `states'. We omit the qualifier $p$ when it is clear from the context.

Such an automaton produces the so-called \emph{$p$-automatic sequence} $(a_k)_{k\geq 0}$, where $a_k$ is the label carried by the final vertex of the walk that starts at the start vertex and follows the edges according to the successive digits of $k$ in base $p$ (starting from the least significant digit, also called the `reverse/backwards reading convention', compare \cite[12.2]{AS}).  The sequence $(a_k)_{k \geq 0}$ gives rise to the corresponding formal power series $\sum a_k t^k$ over $\F_p$ in the variable $t$. Note that the `leading zeros invariance' property means that we can allow the base-$p$ expansion of $k$ to have any number of leading zeros without affecting the resulting sequence. Should an automaton contain inaccessible vertices, they may be removed together with all their connecting edges without changing the corresponding series. 

\begin{example} \label{exklopschauto}
We consider Klopsch's series $$\sigma_{\mathrm{K},3}:=t/\sqrt[3]{1+t^3}=\sum_{k\geq 0} a_{3k+1} t^{3k+1}=t+t^4+t^{13}+\dots\in\No(\F_2)$$ 
of order $2$ with lower break sequence $(3)$. The coefficients of this series can be described  explicitly: $a_{3k+1}$ is equal to the binomial coefficient $\binom{-1/3}{k}$ modulo $2$.  Writing $-1/3$ as a $2$-adic integer $-1/3=\sum_{k\geq 0} 4^k$, we get an infinite product representation  \begin{equation*} \label{infprod} \sigma_{\mathrm{K},3}= t \prod_{k \geq 0} (1+t^{3 \cdot 4^k}),
\end{equation*} which shows that $a_k=1$ if and only if the base-$4$ expansion of $k-1$ contains only the digits $0$ or $3$. An automaton corresponding to this series is depicted in Figure \ref{ka1}; one way to construct it is to solve the algebraic equation $(t^3+1) \sigma^3=t^3$ with initial coefficients $\sigma=t+t^4+O(t^5)$ using one of the algorithms in Section \ref{section2} below. 

To illustrate our reading conventions, we compute the  coefficient $a_{13}$ of the corresponding power series: write $13=1\cdot 2^3+1 \cdot 2^2+0 \cdot 2^1 + 1 \cdot 2^0$ in base $2$ as $1101$; begin at the start vertex and follow the directed edges with respective labels $1,0,1,1$; we end up in a vertex with label $1$, so $a_{13}=1$. (If one adds leading zeros, e.g.\  by writing $13=0 \cdot 2^4 + 1\cdot 2^3+1 \cdot 2^2+0 \cdot 2^1 + 1 \cdot 2^0$, the result is the same even though the final vertex might be different.)  

\begin{figure} 
\begin{tikzpicture}[scale=0.15]
\tikzstyle{every node}+=[inner sep=0pt]
\draw [black] (19.4,-20.9) circle (3);
\draw (19.4,-20.9) node {$0$};
\draw [black] (19.4,-34.5) circle (3);
\draw (19.4,-34.5) node {$0$};
\draw [black] (32.9,-34.5) circle (3);
\draw (32.9,-34.5) node {$0$};
\draw [black] (32.9,-20.9) circle (3);
\draw (32.9,-20.9) node {$1$};
\draw [black] (47.8,-20.9) circle (3);
\draw (47.8,-20.9) node {$1$};
\draw [black] (47.8,-34.5) circle (3);
\draw (47.8,-34.5) node {$0$};
\draw [black] (11.3,-15.4) -- (16.92,-19.21);
\draw (10.7,-14.05) node [left] {$\text{{\small Start}}$};
\fill [black] (16.92,-19.21) -- (16.54,-18.35) -- (15.98,-19.18);
\draw [black] (22.4,-20.9) -- (29.9,-20.9);
\fill [black] (29.9,-20.9) -- (29.1,-20.4) -- (29.1,-21.4);
\draw (26.15,-21.4) node [below] {$1$};
\draw [black] (32.9,-23.9) -- (32.9,-31.5);
\fill [black] (32.9,-31.5) -- (33.4,-30.7) -- (32.4,-30.7);
\draw (32.4,-27.7) node [left] {$1$};
\draw [black] (22.4,-34.5) -- (29.9,-34.5);
\fill [black] (29.9,-34.5) -- (29.1,-34) -- (29.1,-35);
\draw (26.15,-35) node [below] {$1$};
\draw [black] (17.404,-32.28) arc (-147.99155:-212.00845:8.641);
\fill [black] (17.4,-32.28) -- (17.4,-31.34) -- (16.56,-31.87);
\draw (15.59,-27.7) node [left] {$0$};
\draw [black] (21.233,-23.258) arc (28.59943:-28.59943:9.279);
\fill [black] (21.23,-23.26) -- (21.18,-24.2) -- (22.06,-23.72);
\draw (22.87,-27.7) node [right] {$0$};
\draw [black] (35.122,-18.903) arc (122.98874:57.01126:9.601);
\fill [black] (45.58,-18.9) -- (45.18,-18.05) -- (44.63,-18.89);
\draw (40.35,-16.86) node [above] {$0$};
\draw [black] (45.467,-22.769) arc (-59.75662:-120.24338:10.16);
\fill [black] (35.23,-22.77) -- (35.67,-23.6) -- (36.18,-22.74);
\draw (40.35,-24.65) node [below] {$0$};
\draw [black] (45.852,-32.237) arc (-149.02444:-210.97556:8.816);
\fill [black] (45.85,-23.16) -- (45.01,-23.59) -- (45.87,-24.11);
\draw (44.09,-27.7) node [left] {$1$};
\draw [black] (49.699,-23.204) arc (29.95534:-29.95534:9.004);
\fill [black] (49.7,-32.2) -- (50.53,-31.75) -- (49.67,-31.25);
\draw (51.4,-27.7) node [right] {$1$};
\draw [black] (44.8,-34.5) -- (35.9,-34.5);
\fill [black] (35.9,-34.5) -- (36.7,-35) -- (36.7,-34);
\draw (40.35,-34) node [above] {$0$};
\draw [black] (34.223,-37.18) arc (54:-234:2.25);
\draw (37.8,-38.3) node [below] {$0,1$};
\fill [black] (31.58,-37.18) -- (30.7,-37.53) -- (31.51,-38.12);
\end{tikzpicture} 
\caption{A 2-automaton representing Klopsch's series $\sigma_{\mathrm{K},3}\in\No(\F_2)$ of order $2$ with lower break sequence $(3)$.} 
\label{ka1}
\end{figure}
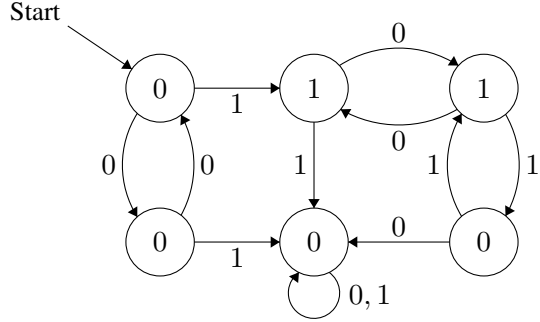
\end{example} 

Our construction of elements of order $p^n$ in $\No(\F_p)$ proceeds as follows: 
\begin{enumerate}
\item\label{enumwitt1}  Use Witt vectors to construct a cyclic Galois extension of order $p^n$ of the field of Laurent series $\F_p\lau{z}$ with certain ramification behaviour (this is similar to the method employed by Leedham-Green and Weiss, see \cite[Thm.\ 3]{Camina}; for a discussion using class field theoretic methods instead, see Remark \ref{remcft}). This field extension is described in terms of a finite set of generators $\alpha_i$ satisfying a set of explicit algebraic relations over $\F_p\lau{z}$ and with explicit formulas 
for the action of a generator $\sigma$ of the Galois group on the variables $\alpha_i$. Moreover, one can choose this field extension in such a way that $\alpha_i$ are algebraic over the field of rational functions $\F_p(z)$, so all computation involve algebraic functions only (cf.\ Examples \ref{re} \& \ref{re2}). 
\item Choose a rational function in the variables $\alpha_i$ that is a uniformiser for the field extension, say $t$. One can consider $\sigma$ as an automorphism of $\F_p\lau{t}$, and one has an explicit expression for $\us(t)$ as a rational function of the variables $\alpha_i$. This leads to a set of algebraic equations involving $\us(t), t$ and $\alpha_i$ (note that `algebraic' is w.r.t.\ the usual addition and multiplication of power series, not composition). By elimination of the variables $\alpha_i$ from those equations (in general with the help of a Groebner basis algorithm), one finds an explicit equation $F(t,X)=0$ for $\us=\us(t)$ over the field $\F_p(t)$.
\item\label{enumwitt3} Use an algorithmic version of a proof of Christol's theorem (based on using Ore polynomials, Furstenberg's diagonal method, or differential forms on algebraic curves) to find automata whose series correspond to the solutions of  the equation $F(t,X)=0$ in $\F_p\fl t \fr$. By Hensel's Lemma, sufficiently many initial coefficients of a solution will determine such a solution uniquely, so different solutions can be distinguished by solving iteratively for enough coefficients of a putative power series solution. 
\item\label{enumwitt4}  The equation found in \eqref{enumwitt3} might have several solutions, and at least one of these solutions is a power series of order $p^n$. Identify the solution(s) that correspond to elements of order $p^n$. 
\end{enumerate}

We describe the steps in some detail in the next section. In the first two steps, there are many possible choices of extensions and uniformisers, and hence there are many possible algebraic equations. 
 The size of the resulting automaton depends heavily on the choices made in the first two steps of the method, and the minimal size of an automaton representing a power series can vary greatly in a conjugacy class (theoretical bounds depending on the equations can be found in Bridy \cite{Bridybounds}).  

 Once the equation is fixed, the third and fourth step in the construction (which replace the naive method of trying to solve the equation recursively for the coefficients of a putative power series solution) have been automated by Rowland (see \cite{Rowland} for the source code and \cite{Rowlandproc} for the description) and partly in \cite{LDG}; we have used these implementations to produce the automata. 

\subsection{Results} We start by describing the case of elements of order $4$. 

\begin{theorem}[{Cor.\ \ref{31class} \& Props.\ \ref{sigmaminprop}, \ref{propsigmamin}, \ref{15class}, \ref{19class}}] 
The following is a complete list representing all possible elements of order $4$ in $\No(\F_2)$ with break sequence $(1,m) = \langle 1, (m+1)/2 \rangle $ for all admissible values $m < 10$, up to conjugation in $\No(\F_2)$\textup{:} 
\begin{itemize}[leftmargin=*]
\item \underline{with break sequence $(1,3)=\langle 1,2 \rangle$}\textup{:}  two (previously known) series $\sigma_{\mathrm{CS}}$ and $\sigma^{\circ 3}_{\mathrm{CS}}$ given in Equations \textup{(\ref{cseq})} \& \textup{(\ref{cs3eq})}, with the corresponding automata displayed in Table \textup{\ref{sigmatau}}. The series $\sigma_{\mathrm{CS}}$ is conjugate in $\No(\F_2)$ to a new series $\sigma_{\mathrm{min}}$ described by the automaton in Figure \textup{\ref{exrunpic}}, which is the unique series of order $4$ described by a $2$-automaton with at most $5$ states.  
\item \underline{with break sequence $(1,5)= \langle 1,3 \rangle$}\textup{:}  a series $\sigma_{(1,5)}$ corresponding to the $13$-state automaton displayed in Figure \textup{\ref{d13}}. 
\item \underline{with break sequence $(1,9) = \langle 1, 5 \rangle$}\textup{:}  a series $\sigma_{(1,9)}$ with $110$-state automaton described in Table \textup{\ref{19fig}}.  
\end{itemize}  
\end{theorem}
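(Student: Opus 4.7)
The plan is to handle each break sequence separately, applying the four-step construction scheme (\ref{enumwitt1})--(\ref{enumwitt4}) outlined above, and then to supplement this with a count of conjugacy classes to ensure completeness of the list.

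First, I would enumerate the admissible break sequences $(1,m)$ with $m<10$ by invoking Lubin's characterisation \cite[Obs.\ 5]{Lubin}, together with the upper-break conversion formula \eqref{convert}: this forces $m$ odd and imposes that $b^{(1)} = 1 + (m-1)/2$ is an integer of the right shape, pinning down the admissible values to $m\in\{3,5,9\}$. Lubin's class field theoretic counting then yields the exact number of conjugacy classes with each break sequence; I would check that the list above realises each such class exactly once (with the inverse $\sigma_{\mathrm{CS}}^{\circ 3}$ in the $(1,3)$ case accounting for the second conjugacy class, conjugate to $\sigma_{\mathrm{CS}}$ via the duality between $\sigma$ and $\sigma^{\circ 3}$ if and only if the automorphism is self-dual, which it is not here).

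For the construction itself, I would select, for each target break sequence, a length-two Witt vector extension of $\F_2\lau{z}$ with ramification filtration matching the prescribed break sequence; in the easiest case $(1,3)$ this reproduces the extension encoding the action on the supersingular elliptic curve at $p=2$ of Chinburg--Symonds, and recovers \eqref{cseq}--\eqref{cs3eq}. For $(1,5)$ and $(1,9)$ the Witt construction is algorithmic but with larger coefficient growth. After selecting a uniformiser $t$, eliminating the Witt coordinates $\alpha_i$ via a Groebner basis gives a polynomial equation $F(t,X)=0$ over $\F_2(t)$ satisfied by $\us(t)$; running the Ore-polynomial / diagonal-method implementation of Christol's theorem on the correct solution branch (selected by Hensel lifting from enough initial coefficients) produces the automata displayed in Table \ref{sigmatau}, Figure \ref{d13}, and Table \ref{19fig} respectively. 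Order $4$ and break sequence can then be verified a posteriori by a finite automaton computation of $\us^{\circ 2}$ and $\us^{\circ 4}$ truncated modulo $t^{m+2}$ and beyond, since a power series $\us$ of depth at most $m$ with $\us^{\circ 4}\equiv t \pmod{t^{m+2}}$ and correct intermediate depths is forced to have order $4$ with the claimed break sequence by the structure of the ramification filtration.

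The minimality assertion for $\sigma_{\mathrm{min}}$ in the $(1,3)$ case has two parts. First, I would exhibit an explicit conjugator $\rho \in \No(\F_2)$ with $\rho^{-1}\circ \sigma_{\mathrm{CS}}\circ \rho = \sigma_{\mathrm{min}}$, or alternatively verify that $\sigma_{\mathrm{min}}$ satisfies an algebraic equation compatible with the Chinburg--Symonds extension (same Galois closure), and then appeal to Klopsch-type rigidity for the break sequence $(1,3)$ to conclude conjugacy. Second, the ``at most $5$ states'' part would come from the exhaustive enumeration of small $2$-automata described in Section \ref{algoNvertices}, filtering the resulting series by the condition of being in $\No(\F_2)$ and of order $4$. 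The main obstacle is this last step: the raw enumeration space of small DFAOs is large, and producing the filter requires a reliable algorithm that, given a candidate automaton, decides whether the corresponding series has order exactly $4$ — this is done by computing depth, iterating composition, and checking $\sigma^{\circ 4} = t$ to sufficiently high precision, while also verifying the initial data $\sigma(t) = t + t^2 + \cdots$ place the series into the Nottingham group.
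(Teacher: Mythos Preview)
Your overall architecture matches the paper's: Witt-vector construction, elimination to an equation $F(t,X)=0$, Christol's theorem to produce automata, and Lubin's count of conjugacy classes for completeness. The admissible values $m\in\{3,5,9\}$ and the class counts (two for $(1,3)$, one each for $(1,5)$ and $(1,9)$) are correct.

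There is, however, a genuine gap in how you propose to certify that the constructed series have order exactly $4$. Your claim that ``$\sigma^{\circ 4}\equiv t \pmod{t^{m+2}}$ with correct intermediate depths forces $\sigma$ to have order $4$'' is false: no finite truncation of $\sigma^{\circ 4}$ can by itself establish $\sigma^{\circ 4}=t$. The paper never argues this way. Instead, the order is guaranteed \emph{a priori} by the construction: $\sigma$ is a generator of $\Gal(K/k)\cong \Z/4\Z$ for a cyclic degree-$4$ Witt extension, so $\sigma^{\circ 4}=\mathrm{id}$ holds exactly. The break sequence is then read off from a short truncation (which is legitimate, since depths are finite data). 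Your ``a posteriori verification'' step is both unnecessary and, as stated, unsound.

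The same issue recurs in your treatment of the enumeration for $\sigma_{\mathrm{min}}$. You cannot decide ``order exactly $4$'' for a candidate automaton by checking $\sigma^{\circ 4}=t$ to high precision. The paper's Algorithm~\ref{algoR} instead computes an \emph{automaton} for $\sigma^{\circ 4}$ (via elimination and Christol again) and compares minimal automata exactly. In practice the paper sidesteps even this: Groot~Koerkamp's search produces a unique candidate with $\leq 5$ states, and since $\sigma_{\mathrm{min}}$ is already known (from the Galois construction) to have order $4$ and $5$ states, the candidate must equal $\sigma_{\mathrm{min}}$.

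A smaller point: for the $(1,3)$ case you are vague about why $\sigma_{\mathrm{CS}}$ and $\sigma_{\mathrm{CS}}^{\circ 3}$ are non-conjugate and why $\sigma_{\mathrm{min}}$ lands in the $\sigma_{\mathrm{CS}}$ class. The paper handles both via a concrete coefficient criterion (Lemma~\ref{recognise}): the class is determined by whether $a_4=a_5$. Since $\sigma_{\mathrm{min}}=t+t^2+t^4+t^5+O(t^6)$, it is conjugate to $\sigma_{\mathrm{CS}}$. No explicit conjugator is produced, nor is one needed.
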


In Section \ref{algoNvertices} we present an algorithm for finding, for fixed integers $N$ and $n$,  all minimal $2$-automata representing an element of finite order $2^n$ in $\No(\F_2)$ with at most $\ N$ states. 

For some of the automata it is possible to extract a manageable \emph{closed formula} for the power series. We will present eight such formulas for power series of order $4$ with minimal break sequence, of which five are new: $\sigma_{\mathrm J}^{\circ 3}$ displayed in Equations 
 (\ref{J3eq}) \&  (\ref{J3eq2}) and  $ \sigma_{\mathrm{T},1},\sigma_{\mathrm{T},2}, \sigma_{\mathrm{T},3}$ and $\sigma_{\mathrm{T},4}$ in Table \ref{s5-s8}.
Note that although it is easy to determine which of these are mutually conjugate, the conjugating power series itself may be hard to describe: as far as we are concerned, it may be transcendental over $\F_2(t)$, and we are not aware of any criteria that guarantee the existence of an algebraic conjugating power series (but cf.\ Remark \ref{Rem:sparserep2}).

For order $8$, we have the following result (for the notion of `minimal' break sequence, see Example \ref{exjap}).

\begin{theorem}[{Props.\ \ref{8class}, \ref{conj8} \& \ref{sigma81}}] Up to conjugation in $\No(\F_2)$, there are precisely $4$ elements $\sigma_8, \sigma_8^{\circ 3}, \sigma_{8,2}, \sigma_{8,2}^{\circ 3}$ of order $8$ with `minimal' break sequence $(1,3,11) = \langle 1,2,4 \rangle$ in $\No(\F_2)$, where $\sigma_8$ corresponds to the $320$-state automaton given in Table \textup{\ref{8data}} and \cite{Database}, and $\sigma_{8,2}$ corresponds to the $926$-state automaton described in \textup{\ref{subsecCarlitz}} and \cite{Database}.
\end{theorem}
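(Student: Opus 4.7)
The plan combines the classification of conjugacy classes via Lubin's local class field theoretic count \cite{Lubin} with the four-step construction outlined in Section 1.3. For the enumeration side, Lubin's method assigns to each conjugacy class of an order-$8$ element with prescribed break sequence a cyclic totally ramified extension of $\F_2\lau{z}$ of degree $8$ together with a choice of generator of its Galois group; for the minimal upper break sequence $\langle 1,2,4 \rangle$ I would expect exactly two such extensions up to isomorphism, each contributing two conjugacy classes corresponding to the generator $\sigma$ and its cube $\sigma^{\circ 3}$ (since $(\Z/8\Z)^* = \{1,3,5,7\}$ and $\sigma^{\circ 7}=\sigma^{\circ -1}$ is conjugate to $\sigma$ while $\sigma^{\circ 5}=(\sigma^{\circ 3})^{\circ -1}$ is conjugate to $\sigma^{\circ 3}$, via inversion-conjugation in the cyclic group). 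This would yield exactly the four claimed classes.

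For the construction of $\sigma_8$ I would follow steps (\ref{enumwitt1})--(\ref{enumwitt4}) of the paper's method. In step (\ref{enumwitt1}), build the cyclic degree-$8$ extension of $\F_2\lau{z}$ using a length-$3$ Artin--Schreier--Witt equation, producing generators $\alpha_1,\alpha_2,\alpha_3$ with an explicit Witt-vector action of $\sigma$, chosen so that the ramification filtration realises $\langle 1,2,4\rangle$. In step (ii), pick a uniformiser $t$ of the extension as a rational function of the $\alpha_i$ and express $\sigma(t)$ similarly; a Groebner basis elimination of the $\alpha_i$ produces a single algebraic equation $F(t,X)=0$ with $X=\sigma(t)$ over $\F_2(t)$. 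In step (\ref{enumwitt3}), apply Rowland's implementation of the Christol/Furstenberg algorithm to $F$ to obtain the $320$-state automaton of Table \ref{8data}; the specific state count is an empirical outcome, not predicted a priori. In step (\ref{enumwitt4}), identify $\sigma_8$ among the finitely many algebraic solutions of $F$ by computing enough initial coefficients of the solutions and of their iterates to detect, via Hensel's lemma, the unique one whose depths $d(\sigma),d(\sigma^{\circ 2}),d(\sigma^{\circ 4})$ are $1,3,11$. The construction of $\sigma_{8,2}$ follows the same template but starts from a Carlitz module realisation of the same upper break sequence through a non-isomorphic extension (as detailed in Section \ref{subsecCarlitz}), and after elimination and Christol yields the $926$-state automaton.

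The main obstacle I anticipate is distinguishing $\sigma_8$ and $\sigma_{8,2}$ up to conjugation inside $\No(\F_2)$: sharing a break sequence does not determine a conjugacy class, and no finite computation of coefficients can rule out the existence of a transcendental conjugator. I would handle this at the level of step (\ref{enumwitt1}), by showing that the two Galois extensions of $\F_2\lau{z}$ constructed from the Witt and the Carlitz side are non-isomorphic as filtered extensions (equivalently, that the associated Artin representations are non-isomorphic); non-isomorphism there forces non-conjugacy of the corresponding uniformiser-actions in the Nottingham group, since conjugation would yield an isomorphism of the completions and hence of the extensions themselves. The remaining claims that $\sigma_8\not\sim\sigma_8^{\circ 3}$ and $\sigma_{8,2}\not\sim\sigma_{8,2}^{\circ 3}$ are then absorbed into Lubin's count, which already distinguishes the choice of Galois generator up to inversion. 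The rest of the argument is essentially a certification of a computer-algebra computation: once the automata are produced, verifying $\sigma_8^{\circ 8}=t$ and the depth sequence amounts to a finite (if large) check that sufficiently many coefficients of the iterates vanish, which the automaton makes feasible in time logarithmic in the coefficient index.
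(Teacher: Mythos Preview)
Your construction outline for $\sigma_8$ and $\sigma_{8,2}$ is essentially what the paper does, but the enumeration argument contains a genuine error. You claim that $\sigma^{\circ 7}=\sigma^{\circ -1}$ is conjugate to $\sigma$ ``via inversion-conjugation in the cyclic group''. This is false: the cyclic group $\langle \sigma\rangle$ is abelian, so conjugation inside it is trivial, and there is no general principle forcing an element of $\No(\F_2)$ to be conjugate to its inverse. In fact the paper's computation (Proposition~\ref{conj8} and Corollary~\ref{cor:recognise2}) shows that the correct pairing is $\sigma\sim\sigma^{\circ 5}$ and $\sigma^{\circ 3}\sim\sigma^{\circ 7}$, \emph{not} $\sigma\sim\sigma^{\circ 7}$. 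This comes from an honest calculation with the reciprocity map: the characters $\eta_{a,b}\colon U_1/U_5\to\Z/8\Z$ are strictly equivalent precisely when $a'=5a$, so multiplication by $5$ (not by $-1$) is what collapses the four generators to two classes. You happen to land on the same set of representatives $\{\sigma,\sigma^{\circ 3}\}$ because both pairings of $\{1,3,5,7\}$ into two pairs leave $1$ and $3$ as representatives, but your justification does not establish anything.

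Your plan for distinguishing $\sigma_8$ from $\sigma_{8,2}$ is also too vague to work as stated. Saying the two degree-$8$ extensions are ``non-isomorphic as filtered extensions'' is not a usable invariant here: they have the \emph{same} ramification filtration (that is the hypothesis), and both are abstractly $\F_2\lau{t}$. What actually separates them is the strict-equivalence class of the associated character on $U_1$, and computing that is exactly the content of Proposition~\ref{conj8}. The paper then makes this effective by conjugating any such $\sigma$ into a normal form $\sigma_{8,(b_4,b_{11})}$ depending on two bits read off from $\sigma\bmod t^{12}$ (Proposition~\ref{recognise2}); one checks directly that $\sigma_8$ and $\sigma_{8,2}$ land in different normal forms. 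Finally, note that ``verifying $\sigma_8^{\circ 8}=t$ by checking sufficiently many coefficients'' is not a proof on its own: the order-$8$ property is guaranteed \emph{by construction} (the series is built as a Galois generator of a $\Z/8\Z$-extension), not by a finite numerical check.
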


Since every finite $2$-group embeds in $\No(\F_2)$, Klopsch asked for a description of an embedding of the Klein four-group $V = {\Z}/{2}{\Z} \times {\Z}/{2}{\Z}$ in $\No(\F_2)$. We have the following result. 

\begin{theorem}[{Props.\ \ref{lem:Kleinfourbr} \& \ref{propkleinfour}}]
 For every embedding of the Klein four-group $V$ in the Nottingham group $\No(\F_2)$, some nontrivial element of $V$ has depth at least $5$. Furthermore, 
the series $ \sigma_{V,1}$ and $\sigma_{V,2}$ corresponding to the automata depicted in Table \textup{\ref{Klein}} have break sequences $(1)$ and $(5)$ and exhibit an explicit embedding of two generators of the Klein four-group into $\No(\F_2)$.
 \end{theorem}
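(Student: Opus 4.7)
The theorem has two parts.

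\textbf{Lower bound.} By Klopsch's classification every involution in $\No(\F_2)$ has odd depth. Assume for contradiction that all three nontrivial elements of $V$ have depth in $\{1, 3\}$. Two distinct commuting involutions of equal depth $d$ in $\No(\F_2)$ have leading coefficients equal to $1$ (forced over $\F_2$), so these cancel in the product, giving $d(\sigma \tau) > d$ and hence $d(\sigma \tau) \geq d + 2$ by oddness; this already excludes the depth-multisets $\{1, 3, 3\}$ and $\{3, 3, 3\}$. The remaining configurations $\{1, 1, 1\}$ and $\{1, 1, 3\}$ both require two distinct commuting depth-$1$ involutions $\sigma_1, \sigma_2$ with $d(\sigma_1 \sigma_2) \leq 3$. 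To handle these, I would conjugate so that $\sigma_1$ equals Klopsch's normal form $t/(1-t)$, which in the coordinate $u = 1/t$ becomes the translation $\tilde \sigma_1 : u \mapsto u + 1$. Any commuting depth-$1$ involution $\sigma_2$ then takes the form $\tilde \sigma_2(u) = u + 1 + f(u)$ with $f \in \F_2\fl 1/u \fr$ satisfying $f(u+1) = f(u)$, equivalently $f = g(v)$ for $v = 1/(u^2 + u)$ and some $g \in \F_2\fl v \fr$ with $g(0) = 0$. A direct computation gives $\tilde \sigma_1 \tilde \sigma_2(u) - u = f(u)$, so $d(\sigma_1 \sigma_2) = 3$ forces the linear coefficient $g_1$ of $g$ to be nonzero. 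On the other hand, the order-$2$ condition $\tilde \sigma_2^{\circ 2} = u$ rewrites as $g(v / (1 + v \wp(g))) = g(v)$ with $\wp(g) = g + g^2$; expanding to leading order in $v$ yields $g_1 = 0$, the required contradiction.

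\textbf{Explicit examples.} For the construction I would follow the four-step template from the introduction. Build a biquadratic extension of $\F_2\lau{z}$ with Galois group $V$ as the compositum of two Artin--Schreier extensions $\wp(x_i) = f_i(z)$ with $\wp(u) = u^2 + u$, picking $f_1, f_2 \in \F_2(z)$ so that the two Galois involutions $\sigma_{V,i} : x_i \mapsto x_i + 1$ acquire lower ramification breaks $1$ and $5$ at the unique totally ramified place. Choose a uniformiser $t$ there; express each $\sigma_{V,i}(t)$ as a rational function of $z, x_1, x_2$; eliminate these variables via a Gr\"obner basis computation to obtain algebraic equations $F_i(t, \sigma_{V,i}(t)) = 0$ over $\F_2(t)$; and apply the algorithmic Christol theorem (Section~\ref{section2a}) to produce the automata of Table~\ref{Klein}. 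The identities $\sigma_{V,i}^{\circ 2} = t$ and $\sigma_{V,1} \circ \sigma_{V,2} = \sigma_{V,2} \circ \sigma_{V,1}$ hold by Galois-theoretic construction, and the break sequences are verified from the initial coefficients read off from the automata.

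The hard part I expect is practical: the recipe always produces some automaton, but its size depends strongly on the choices of Artin--Schreier data $(f_1, f_2)$ and of uniformiser, and extracting the compact automata actually displayed in Table~\ref{Klein} likely requires the kind of experimentation carried out elsewhere in the paper.
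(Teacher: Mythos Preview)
Your proposal is correct. For the construction part you follow exactly the paper's template (biquadratic Artin--Schreier extension of $\F_2\lau{z}$, choice of uniformiser, elimination, Christol's algorithm), and your caveat about the practical difficulty of landing on small automata is accurate.

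For the lower bound your argument is correct but genuinely different from the paper's. The paper normalises $\sigma_1(t)=t/(t+1)$, writes $\sigma_2(t)=t+t^2+\sum_{i\geq 3} a_i t^i$, and performs a brute-force expansion: from $\sigma_2^{\circ 2}=t$ it extracts $a_3=1$, $a_4=a_5$, $a_7=1$; then it computes $\sigma_1\circ\sigma_2$ and $\sigma_2\circ\sigma_1$ separately up to $O(t^9)$, and equating them forces $a_4=1$ and $d(\sigma_1\sigma_2)\geq 5$. Your approach instead passes to $u=1/t$, where $\sigma_1$ becomes translation and commutativity is exactly periodicity $f(u+1)=f(u)$; the identification of the invariant subring with $\F_2\fl v\fr$ for $v=1/(u^2+u)$ then reduces both the product-depth and the involution conditions to transparent statements about the power series $g(v)$. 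The paper's route is more elementary (no need to recognise the Artin--Schreier invariant ring) and entirely self-contained, while yours is more conceptual: it explains \emph{why} the obstruction appears at depth $5$ (namely $v^2$ has $u$-order $4$), avoids tracking half a dozen unnamed coefficients, and would adapt more readily if one wanted to push the analysis to higher depth.
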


One notices in the examples that 
for fixed order and break sequence, 
some series with an explicit `easy' formula
are produced by a rather large automaton, 
while at the same time there exist series requiring fewer states for which an `easy' formula does not seem to exist. 
 We study this phenomenon in Section \ref{aridsection}, generalising the concept of \emph{sparseness}. Recall that a series $\sigma=\sum a_i t^i$  is in the class $S$ of sparse series if the number of nonzero coefficients $a_i$ with $i\leq N$ grows like a power of a logarithm of $N$. Klopsch's series $\sigma_{\mathrm{K},m}$ are not sparse, but at least for some values of $m$ their conjugacy class contains a sparse series.

\begin{theorem}[{Prop.\ \ref{KSprop}}] Any power series of order $2$ and depth $m=2^\mu\pm 1$, $\mu \geq 1$, is conjugate to a sparse power series $\sigma_{\mathrm{S},m}$ given in Equations \textup{(\ref{order2m1})}, \textup{(\ref{order2m})} \& \textup{(\ref{order2mplus})}, the first two of which correspond to the automata displayed in Table \textup{\ref{sparse2m}}. 
\end{theorem}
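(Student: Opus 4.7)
The plan is to verify, case by case ($m=1$; $m=2^\mu-1$ for $\mu\geq 2$; $m=2^\mu+1$ for $\mu\geq 1$), that the candidate power series $\sigma_{\mathrm{S},m}$ defined by \textup{(\ref{order2m1})}, \textup{(\ref{order2m})} and \textup{(\ref{order2mplus})} satisfies three things: compositional order exactly $2$, depth exactly $m$, and a coefficient-support growing polylogarithmically in the exponent bound. Once these are in hand, Klopsch's classification \cite{Klopsch} immediately produces the asserted conjugacy to $\sigma_{\mathrm{K},m}$, since the elements of $\No(\F_2)$ of order $2$ and given depth $m$ form a single conjugacy class (recall $m$ is odd).

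The crucial step is the involution property $\sigma_{\mathrm{S},m}\circ\sigma_{\mathrm{S},m}=t$. My strategy is to exhibit an algebraic equation $F(t,X)=0$ over $\F_2(t)$ for which $\sigma_{\mathrm{S},m}$ is the unique solution with $X=t+O(t^2)$, and which is symmetric under the swap $t\leftrightarrow X$; symmetry then forces $\sigma_{\mathrm{S},m}$ to be its own compositional inverse. For product-type or closed-form formulas I expect this symmetric equation to be extractable by direct manipulation in $\F_2\fl t \fr$, exploiting the characteristic-$2$ Frobenius identity $(1+x)^{2^\mu}=1+x^{2^\mu}$ together with the $2$-adic expansions
\[
-\tfrac{1}{2^\mu-1}=\sum_{k\geq 0}2^{k\mu},\qquad -\tfrac{1}{2^\mu+1}=\sum_{k\geq 0}\bigl(2^{(2k+1)\mu}-2^{2k\mu}\bigr),
\]
which already govern the shape of $\sigma_{\mathrm{K},m}=t(1+t^m)^{-1/m}$ (cf.\ Example \ref{exklopschauto} for the Mersenne case $\mu=2$). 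For a recursively defined $\sigma_{\mathrm{S},m}$, a short induction on the number of nested substitutions reduces the verification to a finite symbolic computation.

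With order exactly $2$ established, the depth is read off directly from the leading terms of each formula. Sparseness is then a combinatorial count: the defining expressions present each nonzero exponent of $\sigma_{\mathrm{S},m}$ as determined by $O(\log N)$ binary data (inclusions of geometrically growing building blocks, or layer-choices in a nested substitution), yielding polynomially-in-$\log N$ many distinct exponents below $N$; additive collisions in $\F_2$ can only reduce the count further. The automata in Table \ref{sparse2m} provide a direct cross-check of the small-$i$ coefficient behaviour for the first two cases.

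The main obstacle I anticipate is the composition identity in the Fermat case $m=2^\mu+1$: since $m$ is no longer a power of two, the Frobenius identity does not collapse $(1+x)^m$ in one step, and the symmetric equation must be routed through the factorisation $2^{2\mu}-1=(2^\mu-1)(2^\mu+1)$ so as to reduce the Fermat case to an intermediate Mersenne-type computation at level $2\mu$. Managing the exponent bookkeeping cleanly in this reduction — and ruling out stray nonzero contributions in the composition expansion — is where the bulk of the technical work will lie.
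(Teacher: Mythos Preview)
Your core strategy---a symmetric algebraic equation forces the involution, Klopsch's classification gives the conjugacy, and the explicit support description gives sparseness---is exactly the paper's approach (the symmetry argument is isolated as Lemma~\ref{symlem}). The main difference is one of direction: you propose to start from the closed formulas \eqref{order2m1}--\eqref{order2mplus} and \emph{extract} a symmetric equation by manipulation, whereas the paper \emph{postulates} the symmetric polynomials
\[
(tX)^2+tX+X+t,\qquad (tX)^{2^{\mu-1}}+X+t,\qquad (tX)^{2^\mu}+X^{2^\mu-1}+t^{2^\mu-1}
\]
first, uses Hensel's Lemma to get a unique solution of the shape $t+t^{m+1}+O(t^{m+2})$ (whence order $2$ by Lemma~\ref{symlem}), and only \emph{then} solves iteratively to recover the explicit formulas. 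Going equation $\to$ formula rather than formula $\to$ equation is both cleaner and shorter: the equation never has to be ``discovered'', and the depth is read off from Hensel before any closed form is written down.

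For the Fermat case $m=2^\mu+1$, your anticipated obstacle is real but your proposed workaround---routing through the factorisation $2^{2\mu}-1=(2^\mu-1)(2^\mu+1)$ to reach a Mersenne computation at level $2\mu$---is not what the paper does, and it is not clear to me how you would make that reduction precise. The paper instead substitutes $\tau:=t/\sigma$ and $q:=2^\mu$; the symmetric cubic-type equation then collapses to the Artin--Schreier equation $\tau=t^{q+1}+\tau^q$, solved at once by $\tau=1+\sum_{k\geq 0}t^{q^k(q+1)}$. One recovers $\sigma$ via $t^q\sigma=1+\tau^{q-1}=1+\prod_{j=0}^{\mu-1}\tau^{2^j}$, and expanding this finite product of explicit $1$-sparse series gives \eqref{order2mplus} with sparseness of rank $\mu$ immediate. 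No detour through level $2\mu$ is needed, and the ``exponent bookkeeping'' you worry about disappears.
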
 

We classify general series into three classes that we consider to have `easy formulas': 
$$ 
S \subset \hat{S} \subset \QS \subset  \F_2\pau{t},
$$
where $\hat S$ is the class of  series that are sparse up to multiplication with a rational function, and $\QS$ is the class of series that are in $\hat S$ up to composition with an automorphism of $\F_p(t)$. Whether or not a series is in a certain class can be studied both using Galois theory (Section \ref{sparsefield}) and combinatorics of automata (Section \ref{sparseauto}). Even for the `larger' automata with several hundred states, the combinatorial method can be automated relatively easily using the computer algebra representation (cf.\ Table \ref{proofnotin}). 
Among the series described above there occur examples at all levels of this hierarchy of complexity. 

\begin{theorem}[{Thm.\ \ref{enumarid} \& Table \ref{tableeqs}}] The series 
$\sigma_{\mathrm{T},1},\dots,\sigma_{\mathrm{T},4}, \us_{\mathrm{CS}}^{\circ 3}$ are in $S$\textup{;} the series $\us_{\mathrm{CS}},  \us_{\mathrm{CS}}^{\circ 2}$ are in $\hat{{S}}$ but not in $S$\textup{;} the series $ \us_{\mathrm{J}},  \us_{\mathrm{J}}^{\circ 3}$ are in $\QS$ but not in $\hat{{S}}$\textup{;} the series $\us_{\mathrm{K},m} (m\geq 3)$, $ \sigma_{V,1}$, $\sigma_{V,2}$, $\sigma_{V,3}$, $\us_{\mathrm{min}}$, $\us_{(1,5)}$, $\sigma_{(1,9)}$, $\us_8.$ are not in $\QS$.
\end{theorem}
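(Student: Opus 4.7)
The plan is to split the theorem along the four layers of the hierarchy $S\subset\hat{S}\subset\QS\subset\F_2\pau{t}$, and for each series handle two tasks in turn: \textbf{(i)} exhibit an explicit witness for membership in the declared class, and \textbf{(ii)} rule out membership in the next smaller class (when one is claimed), invoking the characterisations of these classes from Section~\ref{sparsefield} (Galois-theoretic) and/or Section~\ref{sparseauto} (automaton-combinatorial). Throughout, I will rely on the fact that these characterisations are invariant under precisely the group actions that define the three layers: multiplication by $\F_2(t)^\times$ for $\hat S$ and further composition by $\Aut(\F_2(t))$ for $\QS$.

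The positive inclusions are relatively direct. For $\sigma_{\mathrm{T},1},\ldots,\sigma_{\mathrm{T},4}$ and $\sigma_{\mathrm{CS}}^{\circ 3}$, membership in $S$ can be read off from the closed-form expressions (Table~\ref{s5-s8} and Equation~(\ref{cs3eq})), which exhibit a set of nonzero exponents of polylogarithmic density; alternatively one verifies the sparseness criterion directly on the associated automata. For $\sigma_{\mathrm{CS}}$ and $\sigma_{\mathrm{CS}}^{\circ 2}$, I would produce an explicit rational function $r(t)\in\F_2(t)$ such that the product $r(t)\cdot\sigma$ is sparse, certifying membership in $\hat S$ but not yet in $S$. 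For $\sigma_{\mathrm{J}}$ and $\sigma_{\mathrm{J}}^{\circ 3}$, I would additionally produce a Möbius automorphism $\phi\in\Aut(\F_2(t))$ such that $r(t)\cdot\phi(\sigma)$ is sparse, which places these series in $\QS$.

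The negative inclusions carry the weight of the proof. For each series $\sigma$ claimed to lie outside a class $X$, the strategy is to compute a numerical invariant that is constant on $X$-equivalence classes and check that it separates $\sigma$ from~$X$. For the exclusions from $S$ and from $\hat S$ one applies the Galois-theoretic criterion of Section~\ref{sparsefield} to the defining algebraic equation recorded in Table~\ref{tableeqs}, or equivalently a combinatorial invariant of the minimal automaton. The main obstacle is the exclusion from $\QS$ for $\sigma_{\mathrm{K},m}$ ($m\geq 3$), $\sigma_{V,1}$, $\sigma_{V,2}$, $\sigma_{V,3}$, $\sigma_{\mathrm{min}}$, $\sigma_{(1,5)}$, $\sigma_{(1,9)}$, and $\sigma_8$: here one must rule out \emph{every} Möbius change of variable combined with \emph{every} rational rescaling, so the invariant used must be genuinely uniformiser-independent. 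I would realise this by extracting from Section~\ref{sparsefield} a Galois-closure invariant (e.g.\ a constraint on ramification and inertia in the splitting field of the minimal polynomial) that is preserved under the $\Aut(\F_2(t))$-action, and evaluate it on the equations of Table~\ref{tableeqs}. For the larger automata (notably $\sigma_{(1,9)}$ and $\sigma_8$), the paper notes that the combinatorial version of this check can be automated (Table~\ref{proofnotin}), so in practice the verification proceeds by a finite kernel computation on the minimal automaton, once the $\QS$-invariance of the combinatorial test has been established in Section~\ref{sparseauto}.
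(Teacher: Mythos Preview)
Your outline matches the paper's strategy for the positive inclusions and for most of the negative ones, but there is one genuine misstep in the last paragraph.

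The combinatorial criterion of Section~\ref{sparseauto} (Proposition~\ref{hatA.2}) is \emph{not} $\QS$-invariant; it only certifies $\sigma\notin\hat S$. To deduce $\sigma\notin\QS$ combinatorially one would have to run the test separately on $\sigma$ and on $\sigma\circ\varphi$, and the paper explicitly records that this fails: the criterion of Proposition~\ref{hatA.2} is not satisfied for $\sigma_{\mathrm{min}}$ itself, nor for $\sigma_{(1,9)}\circ\varphi$. So your plan to handle the large automata $\sigma_{(1,9)}$ and $\sigma_8$ ``by a finite kernel computation on the minimal automaton'' does not go through for the $\QS$-exclusion.

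The paper's actual argument for $\notin\QS$ is purely field-theoretic throughout and does not touch the automata. The invariant used is condition~(\ref{ABetter3}) of Proposition~\ref{ABetter}: the splitting field of $\sigma$ over $\mathcal F$ must have $2$-power degree. This condition depends only on the field $\mathcal F(\sigma)$, hence is unchanged by rational multiplication, and it is also unchanged under $t\mapsto t/(t+1)$ since $\varphi$ fixes $\F_2(t)$ and the multiset $V_t(F)$. One then verifies that it fails via a short Newton-polygon computation on the explicit minimal polynomial: Corollary~\ref{odddeg} when $\deg F$ is not a $2$-power (this covers $\sigma_{\mathrm{min}}$, $\sigma_{(1,5)}$, $\sigma_{(1,9)}$, $\sigma_8$, and $\sigma_{\mathrm{K},m}$), and Corollary~\ref{deg4} via the cubic resolvent when $\deg F=4$ (this covers $\sigma_{V,1},\sigma_{V,2},\sigma_{V,3}$). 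For $\sigma_{(1,9)}$ and $\sigma_8$ in particular, the check is the one-line slope computation recorded in Table~\ref{tableeqs}, not an automaton computation. Your phrase ``ramification and inertia'' is also slightly off the mark: the ramification condition~(\ref{ABetter2}) is what distinguishes $\hat S$ from $\QS$ (it is used for $\sigma_{\mathrm J}$), but for the $\QS$-exclusions the operative invariant is the $2$-power degree condition~(\ref{ABetter3}).
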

 
Finally, in Section \ref{nonran} we briefly discuss the synchronisation properties of some of our automata, in relation to a `structured/random' decomposition of automatic sequences in \cite{BKM}. 

\subsection{Some open problems}
\begin{itemize}[leftmargin=*]
\item  We have provided one example of an embedding of a non-cyclic $p$-group (the Klein four-group $V$) into $\No(\F_p)$ (for $p=2$), with the break sequences of the nontrivial elements of $V$ being $(1), (1)$ and $(5)$. Study the possible break sequences for embeddings of  $V$ into $\No(\F_2)$, and more generally for embeddings of arbitrary finite $p$-groups into $\No(\F_p)$ (cf.\ Proposition \ref{lem:Kleinfourbr}).
\item Is there a sparse series of order $2$ with break sequence $(11)$? This is equivalent to asking whether Klopsch's series $t/\!\sqrt[11]{1+t^{11}} \in \No(\F_2)$ is conjugate to a sparse series. More generally, is every element of finite order in $\No(\F_2)$ sparse (or in $\hat S$ or $\QS$) up to conjugation? 
\item Provide an automaton-theoretic characterisation of series that are sparse up to multiplication with a rational function, in a manner analogous to how \cite{SYZS} gives a necessary and sufficient condition for a series to be sparse in terms of properties of a corresponding automaton.
\item As the automaton method allows us to extend the catalogue of known elements of finite order in $\No(\F_p)$, one may argue that it is advantageous to manipulate elements of finite order in $\No(\F_p)$ in their automatic form directly, ignoring any explicit form for the coefficients of the corresponding power series. Thus, it would make sense to study `$p$-automata of finite order' as a subject of its own.  How to characterise an automaton that represents a series of finite order? 
\item If it exists, describe an algorithm that finds all automata on at most $N$ states that represent series of finite order. For any \emph{given} finite order this is easy (see Section \ref{algoNvertices}), so an affirmative solution of this problem would most likely require finding a bound on the order of a series in terms of the number of states of an automaton that generates it.
\end{itemize}

\begin{notation}
We will use the notation $\sigma$ and $\sigma(t)$ for elements of $\No(\F_p)$ interchangeably, and also use $\sigma$ for the corresponding element of the Galois group of an extension of fields of formal Laurent series. We will also write `$\sigma(t)$' when $\sigma$ is considered as an element of a Galois group and $t$ is a specified uniformiser.  
\end{notation}

\section{Detailed method: finding an algebraic equation}\label{section2}

\subsection{Extensions of Laurent series fields and elements of $\No(\F_p)$} 
Let $k=\F_p\lau{z}$ be a field of formal Laurent series with corresponding valuation $v_z$, and let $K/k$ be a cyclic totally ramified Galois extension of degree $p^n$. Let $t$ be a uniformiser for $K$ with corresponding valuation $v_t$, so that  $K=\F_p\lau{t}$. Any $\sigma\in\Gal(K/k)$ is an automorphism of $\F_p\lau{t}$ fixing $\F_p\lau{z}$, and it automatically preserves the valuation.  It follows that
$\sigma(t)=a_1 t+a_2t^2+a_3t^3+\cdots$
for some $a_i\in\F_p$; since the order of $\sigma$ is a power of $p$, we have $a_1=1$, meaning that $\sigma$ is an element of $\No(\F_p)$. In this way, elements of order $p^n$ in $\No(\F_p)$ arise from totally ramified cyclic $p^n$-extensions of fields of Laurent series. 

We first explicitly describe cyclic $p^n$-extensions using Witt vectors and then discuss how to detect whether they are totally ramified. 
By Artin--Schreier theory any abelian extension $K/k$ of order $p^n$ can be decomposed as a tower of field extensions 
\begin{equation} \label{AStower} k=K_0\subsetneq K_1\subsetneq \cdots\subsetneq K_n=K 
\end{equation}  
with $K_{i+1}=K_i(\alpha_i)$ for $0\leq i\leq n-1$ and $K_{i+1}/K_i$  an Artin--Schreier extension with $\alpha_i^p-\alpha_i \in K_i$. 
In the opposite direction Witt vectors allow one to guarantee that such an iterative procedure produces a \emph{cyclic} extension $K/k$. 

Any $\sigma \in \No(\F_p)$ of order $p^n$ arises from such a construction: Harbater  \cite[\S 2]{Harbater} proved that every such $\sigma$ describes 
 the action of a generator of the Galois group on the completed local ring at a totally ramified point of a global ${\Z}/{p^n}{\Z}$-Galois cover of $\mathbf{P}^1$ having a unique ramification point. The choice of a uniformiser at the ramified point (i.e.\ the choice of an isomorphism of the completed local ring with $\F_p\pau{t}$) corresponds to a conjugation of the representing power series.  It follows that any $\us$ of order $p^n$  is conjugate to an algebraic power series; note that the conjugating power series is an element of $\No(\F_p)$, but is not necessarily algebraic over $\F_p(t)$. 
  
\begin{remark}
Harbater proved the result for perfect fields; it holds for arbitrary finite groups by the general theory of Harbater--Katz--Gabber covers \cite[1.4.1]{KG}, compare \cite[\S 4.3]{BCPS};   
for a cohomological characterisation of the occurring Galois covers, see \cite{Konto}. 
\end{remark}

\begin{remark} \label{remcft} 
There exist alternative methods for the explicit construction of  equations for the Galois extensions. One may use explicit local class field theory, using the theory of formal groups/moduli of Lubin and Tate \cite{LT}. An essentially equivalent global method is to use explicit global class field theory of function fields, employing torsion of the Carlitz module \cite{RS}, and then localising at a totally ramified place. 
This shows, at least theoretically, that the resulting series can be described by recursion relations or automata and immediately leads to a recursive algorithm to compute the coefficients of the power series.
 In Remark \ref{Carlitz} and Subsection \ref{subsecCarlitz}, we describe how to find series of order $4$ and $8$ in this way. In particular, we use this method to construct a complete set of representatives for all conjugacy classes of order $8$ elements with minimal break sequence. 
We have performed more experiments implementing these methods and observed that they tend to lead to automata with more states compared to the above method. A possible reason is that class field theory methods give Ore-style equations that in algorithms produce state spaces of size doubly exponential in the degree of the equation (cf.\ Subsection \ref{boundcomplexitychristol} below).
\end{remark}

\subsection{Witt vectors and construction of $p^n$-extensions}
Let $k$ be a field of characteristic $p>0$ and let $n \geq 1$ be an integer. Let $W_n(k)$ denote the ring of ($n$-truncated $p$-typical) Witt vectors over $k$. As a set $W_n(k)$ is equal to $k^n$, and we write its elements as vectors of length $n$. The zero and identity element of $W_n(k)$ are $0=(0,\ldots,0)$ and $1=(1,0,\ldots,0)$. Addition and multiplication of two elements $a,b\in W_n(k)$ are defined by polynomial expressions in the coordinates $a_0,\ldots,a_{n-1},b_0,\ldots,b_{n-1}$ of $a$ and $b$ (see e.g.\ Example \ref{re} and \ref{re2} below that we will use later). The ring $W_n(k)$ comes with a Frobenius endomorphism $\text{Frob}\colon W_n(k)\to W_n(k)$ mapping the element $(a_0,\ldots,a_{n-1})$ to $(a_0^p,\ldots,a_{n-1}^p)$. The map $\wp:=\text{Frob}-\text{Id}$ is an endomorphism of the underlying abelian group of $W_n(k)$.
Writing $k^{\mathrm{sep}}$ for a separable closure of $k$, for any given $\beta\in W_n(k)$ there exists some $\alpha\in W_n(k^{\mathrm{sep}})$ such that $\wp(\alpha)=\beta$. Such $\alpha$ is unique up to addition of an element of $\ker\wp=W_n(\F_p)$ and the extension $k(\wp^{-1}(\beta)):=k(\alpha_0,\ldots,\alpha_{n-1})$ of $k$ is independent of the choice of $\alpha$. Note that $W_1(k)$ is just the field $k$.

\begin{theorem}[{Witt; cf.\ \cite[p.\ 107, Thm.\ 5]{Lorenz}}]\label{ASW}
Let $k$ denote a field of characteristic $p>0$, let $k^{\mathrm{sep}}$ denote a separable closure of $k$, and let $n$ denote any positive integer. For any field $K$ with $k\subseteq K \subseteq k^{\mathrm{sep}}$, $K/k$ is a cyclic Galois extension of degree $p^n$ if and only if there exists a $\beta\in W_n(k)$ with $\beta_0\notin\wp(k)$ such that $K=k(\wp^{-1}(\beta))$. If $\alpha\in W_n(k^{\mathrm{sep}})$ satisfies $\wp(\alpha)=\beta$, then $k(\wp^{-1}(\beta))=k(\alpha_0,\ldots,\alpha_{n-1})$ and a generator $\sigma$ of the Galois group $\Gal(K/k)$ is determined by the equations 
\begin{equation} \label{galact} \sigma(\alpha_i)=(\alpha+1)_i,  \qquad i=0, \dots, n-1.\end{equation}
\end{theorem}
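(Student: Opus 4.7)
The plan is to prove this via the standard Artin--Schreier--Witt machinery, which uses Galois cohomology to set up an isomorphism between $W_n(k)/\wp W_n(k)$ and continuous homomorphisms $G_k := \Gal(k^{\mathrm{sep}}/k) \to \Z/p^n\Z$; both directions of the theorem and the explicit generator formula then follow by tracing through this isomorphism.

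First, one starts from the short exact sequence of $G_k$-modules
\begin{equation*}
0 \to W_n(\F_p) \to W_n(k^{\mathrm{sep}}) \xrightarrow{\wp} W_n(k^{\mathrm{sep}}) \to 0,
\end{equation*}
whose right exactness follows by inductively solving, in each Witt coordinate, a separable (in fact Artin--Schreier) polynomial equation over the separable closure. Taking $G_k$-cohomology yields an exact sequence
\begin{equation*}
0 \to W_n(k)/\wp W_n(k) \to \mathrm{Hom}_{\mathrm{cts}}(G_k, W_n(\F_p)) \to H^1(G_k, W_n(k^{\mathrm{sep}})),
\end{equation*}
and the last group vanishes by additive Hilbert 90 for Witt vectors. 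I would prove the latter by induction on $n$ from the short exact sequence
\begin{equation*}
0 \to k^{\mathrm{sep}} \xrightarrow{V^{n-1}} W_n(k^{\mathrm{sep}}) \xrightarrow{R} W_{n-1}(k^{\mathrm{sep}}) \to 0
\end{equation*}
(Verschiebung and truncation, which are $G_k$-equivariant homomorphisms of abelian groups) combined with classical additive Hilbert 90 $H^1(G_k, k^{\mathrm{sep}}) = 0$. Using the canonical identification $W_n(\F_p) \cong \Z/p^n\Z$, this yields the \emph{Artin--Schreier--Witt isomorphism} $W_n(k)/\wp W_n(k) \cong \mathrm{Hom}_{\mathrm{cts}}(G_k, \Z/p^n\Z)$, sending $[\beta]$ to the cocycle $\sigma \mapsto \sigma(\alpha) - \alpha$ for any $\alpha \in W_n(k^{\mathrm{sep}})$ with $\wp(\alpha) = \beta$.

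Now both directions of the theorem are translations under this isomorphism. A cyclic Galois $K/k$ of degree $p^n$ corresponds to a continuous surjection $G_k \twoheadrightarrow \Z/p^n\Z$, hence to some $[\beta]$; conversely, $[\beta]$ produces a continuous homomorphism $G_k \to \Z/p^n\Z$ whose fixed field is precisely $k(\alpha_0, \ldots, \alpha_{n-1})$. The key equivalence to check is that this homomorphism is surjective (so that the fixed field has degree exactly $p^n$) if and only if $\beta_0 \notin \wp(k)$: compatibility of the Artin--Schreier--Witt isomorphism at levels $n$ and $1$ (via truncation $W_n \to W_1$ on one side and projection $\Z/p^n\Z \to \Z/p\Z$ on the other) reduces this to the classical Artin--Schreier case, together with the observation that an element of $\Z/p^n\Z$ generates the group as soon as it is a unit modulo $p$. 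For the explicit formula, fix the canonical identification of $1 \in \Z/p^n\Z$ with $(1,0,\ldots,0) \in W_n(\F_p)$; then a generator $\sigma \in \Gal(K/k)$ corresponds to the cocycle $\sigma \mapsto 1$, i.e.\ $\sigma(\alpha) = \alpha + 1$ in $W_n(k^{\mathrm{sep}})$, and applying the Witt addition polynomials componentwise gives $\sigma(\alpha_i) = (\alpha + 1)_i$ as claimed.

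The main obstacle is additive Hilbert 90 for Witt vectors, which requires the inductive dévissage above; beyond that, the proof is a reasonably mechanical cohomological translation, with the one subtle point being the equivalence between $\beta_0 \notin \wp(k)$ and the extension having degree exactly $p^n$ (rather than a proper divisor), which depends on the careful compatibility between the $n$-th and first levels of the Artin--Schreier--Witt isomorphism.
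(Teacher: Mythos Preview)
The paper does not actually prove this theorem: it is stated with attribution to Witt and a citation to Lorenz's textbook \cite[p.\ 107, Thm.\ 5]{Lorenz}, and is used as a black box. Your cohomological outline via the Artin--Schreier--Witt short exact sequence, additive Hilbert~90 for Witt vectors (proved by d\'evissage along Verschiebung/truncation), and the resulting isomorphism $W_n(k)/\wp W_n(k)\cong\mathrm{Hom}_{\mathrm{cts}}(G_k,\Z/p^n\Z)$ is the standard modern proof of this classical result and is correct as sketched; in particular your handling of the only genuinely delicate point---that $\beta_0\notin\wp(k)$ is equivalent to the associated character being surjective, via compatibility of the isomorphism with truncation $W_n\to W_1$---is right.
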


\begin{example} \label{re}  
We consider the ring of Witt vectors $W_2(k)$ of length two over a field $k$ of characteristic $2$. For $a=(a_0,a_1),b=(b_0,b_1)\in W_2(k)$ the formulas for addition and multiplication are
\begin{equation*}
a+b=(a_0+b_0,a_1+b_1+a_0b_0)\quad\text{ and }\quad a\cdot b=(a_0b_0,a_0^2b_1+a_1b_0^2), 
\end{equation*}
and the map $\wp$ is given by
$ \wp(a) 
=(a_0^2+a_0,a_1^2+a_1+a_0^2+a_0^3).
$ 
Observe that this implies that $-(a_0,a_1) = (a_0,a_1+a_0^2)$. 
According to Theorem \ref{ASW}, an extension $K/k$ is a cyclic Galois extension of degree 4 if and only if $K=k(\alpha_0,\alpha_1)$, where $\alpha_0,\alpha_1$ satisfy\def\arraystretch{1.1}
\begin{equation*}
\left\{ \begin{array}{l}
\alpha_0^2+\alpha_0 = \beta_0; \\
\alpha_1^2+\alpha_1 = \beta_1 + \beta_0 \alpha_0
   \end{array} \right.\def\arraystretch{1}
\end{equation*}
for some $\beta_0,\beta_1 \in k$ with $\beta_0$ not of the form $x^2+x$ for $x \in k$.
The Galois group of $K/k$ is generated by the field automorphism $\sigma$ defined on the generators $\alpha_0,\alpha_1$ by
\begin{equation} \label{w4} 
\left\{ \begin{array}{l} \sigma(\alpha_0)= \alpha_0+1; \\ \sigma(\alpha_1)= \alpha_1 + \alpha_0. \end{array} \right.
\end{equation}

\end{example}

\begin{example} \label{re2}  
We consider the ring of Witt vectors $W_3(k)$ of length three over a field $k$ of characteristic $2$. For $a=(a_0,a_1,a_2),b=(b_0,b_1,b_2)\in W_3(k)$ the formula for addition is 
\begin{equation*}
a+b=(a_0+b_0,a_1+b_1+a_0b_0, a_2+b_2+a_1 b_1 + a_0 a_1 b_0 + a_0 b_0 b_1 + a_0^3 b_0 + a_0 b_0^3)
\end{equation*}
and for multiplication is 
\begin{equation*}
a\cdot b=(a_0b_0,a_0^2b_1+a_1b_0^2, a_1^2 b_1^2 + a_0^4b_2 + a_2 b_0^4 + a_0^2 a_1 b_0^2 b_1). 
\end{equation*}
By Theorem \ref{ASW}, cyclic degree-$8$ extensions $K/k$ of a field $k$ of characteristic $2$  are of the form $K=k(\alpha_0,\alpha_1,\alpha_2)$, where 
\def\arraystretch{1.1}
\begin{equation} \label{re2eq}
\left\{ \begin{array}{l}
\alpha_0^2+\alpha_0 = \beta_0; \\
\alpha_1^2+\alpha_1 = \beta_1 + \beta_0 \alpha_0; \\
\alpha_2^2+\alpha_2 = \beta_2 + \alpha_1 \beta_1 +\alpha_0 \alpha_1 \beta_0 + \alpha_0 \beta_0 \beta_1 + \alpha_0^3 \beta_0 + \alpha_0 \beta_0^3,
   \end{array} \right.\def\arraystretch{1}
\end{equation}
with $\beta_0$ not of the form $x^2+x$ for $x \in k$. The Galois group of $K/k$ is generated by the field automorphism defined on the generators $\alpha_0,\alpha_1,\alpha_2$ by

\begin{equation} \label{w8} 
\left\{ \begin{array}{l} \sigma(\alpha_0)= \alpha_0+1; \\ \sigma(\alpha_1)= \alpha_1 + \alpha_0; \\ \sigma(\alpha_2)= \alpha_2 + \alpha_0 \alpha_1 + \alpha_0^3+\alpha_0. \end{array} \right.
\end{equation} 
\end{example}

\subsection{Ramification}
The ramification in an Artin--Schreier extension of $\F_p\lau{z}$ can be described using the following easy result (see e.g.\ \cite[III.(2.5)]{Fesenkobook}).

\begin{lemma} \label{ramlem} 
Let $k=\F_p\lau{z}$ and let $K=k(\alpha)$ be an extension of $k$ with $\alpha^p-\alpha = \gamma$ for some $\gamma \in k$. If $v_z(\gamma)$ is negative and not divisible by $p$, then  $K/k$ is a cyclic extension of degree $p$, and for any uniformiser $\pi$ of $K$ we have $v_{\pi}(\alpha) = v_z(\gamma)$\textup{;} for $x\in k$ we have $v_{\pi}(x) = p v_z(x)$. 
\end{lemma}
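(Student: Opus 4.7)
The plan is to exploit the Artin--Schreier equation $\alpha^p - \alpha = \gamma$ together with the ultrametric inequality for valuations, dealing first with the degree of the extension and then with the ramification index.

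First, to see that $K/k$ has degree $p$ (and is therefore cyclic by Artin--Schreier theory), it suffices to show that $\gamma \notin \wp(k)$ where $\wp(x) = x^p - x$. Suppose, for contradiction, that $\gamma = \delta^p - \delta$ for some $\delta \in k$. If $v_z(\delta) \geq 0$, then $v_z(\delta^p - \delta) \geq 0$, contradicting $v_z(\gamma) < 0$. If $v_z(\delta) < 0$, then $v_z(\delta^p) = p\, v_z(\delta) < v_z(\delta)$, so the strict triangle inequality gives $v_z(\gamma) = p\, v_z(\delta)$, which is divisible by $p$, again a contradiction. Hence $\gamma \notin \wp(k)$ and $[K:k]=p$.

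Next, let $\pi$ be any uniformiser of $K$ and let $e \in \{1,p\}$ denote the ramification index of $K/k$, so that $v_\pi(x) = e\, v_z(x)$ for all $x \in k$. Applying $v_\pi$ to both sides of $\alpha^p - \alpha = \gamma$ yields $v_\pi(\alpha^p - \alpha) = e\, v_z(\gamma) < 0$. In particular, $\alpha$ cannot satisfy $v_\pi(\alpha) \geq 0$, so $v_\pi(\alpha) < 0$. Then $v_\pi(\alpha^p) = p\, v_\pi(\alpha) < v_\pi(\alpha)$, and the strict triangle inequality forces
\[
p\, v_\pi(\alpha) = v_\pi(\alpha^p - \alpha) = e\, v_z(\gamma).
\]

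Finally, since $p \nmid v_z(\gamma)$ and $e \in \{1,p\}$, the equality $p\, v_\pi(\alpha) = e\, v_z(\gamma)$ forces $e = p$, giving $v_\pi(\alpha) = v_z(\gamma)$ and confirming that $K/k$ is totally ramified, so $v_\pi(x) = p\, v_z(x)$ for $x \in k$. There is no real obstacle here; the only mildly delicate point is being careful to apply the strict triangle inequality in the right direction in each of the two places where it is invoked (first to rule out $\gamma \in \wp(k)$, then to compute $v_\pi(\alpha^p - \alpha)$), and to separate the a priori cases $e=1$ and $e=p$ before using the non-divisibility hypothesis on $v_z(\gamma)$.
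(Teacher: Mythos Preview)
Your proof is correct and self-contained. The paper itself does not give a proof of this lemma; it simply states the result and cites \cite[III.(2.5)]{Fesenkobook}, so there is no in-paper argument to compare against. Your argument is the standard one via the ultrametric inequality, and all the steps are sound: the case split on $v_z(\delta)$ correctly shows $\gamma\notin\wp(k)$, and the relation $p\,v_\pi(\alpha)=e\,v_z(\gamma)$ together with $p\nmid v_z(\gamma)$ and $ef=p$ forces $e=p$ as you say.
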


If we decompose a general cyclic totally ramified $p^n$-extension as a tower of Artin--Schreier extensions as in (\ref{AStower}) and we write $z_i$ for a uniformiser of $K_i$ (so $z_0=z$ and $z_n=t$), then $v_{z_{i+1}}(\alpha_i)=v_{z_i}(\alpha_i^p-\alpha_i)$ for $i=0,\dots,n-1$.

The general approach is now to take the following steps: 
\begin{enumerate} 
\item[(i)] Write down explicit equations for a cyclic $p^n$-extension in the variables $\alpha_i$ arising from the Witt construction, or other generators of the field (this may make equations simpler or help in applying Lemma \ref{ramlem} to check that the extension is totally ramified). 
\item[(ii)] Choose a unformiser $t$ as an algebraic function of the $\alpha_i$ (or the chosen field generators); using Lemma \ref{ramlem} allows us to control the valuations of rational functions in the field generators. 
\item[(iii)] Compute the action of a generator $\sigma$ of the Galois group on the uniformiser $t$ using the action in terms of Witt vectors given by Equation (\ref{galact}); this gives an equation for $\us(t)$ in terms of the $\alpha_i$ (or the chosen field generators).  
\end{enumerate}
These three steps lead to a set of algebraic equations from which one should eliminate all but $t$ and $\us(t)$, leading to an algebraic equation $F(t,X)=0$ with $F \in \F_p[t,X]$ satisfied by  $X=\us=\us(t)$. 
For elimination, one may use a Groebner basis algorithm (we used the implementation in \textsc{Singular} \cite{Singular}; in order to be able to eliminate all the variables it might be necessary to first make a primary decomposition of the ideal generated by the equations and extract a one-dimensional component). 

\begin{example} \label{exrun} We start describing what will be our `running example' for the next few sections, leading up to a particularly small (as it will turn out, the smallest possible one in terms of number of states) automaton for a series of order $4$ with `minimal' break sequence. 

Let $k=\F_2\lau{z}$, $\beta=(z^{-1},0)\in W_2(k)$, and write $\alpha=(x,y)\in W_2(k^{\mathrm{sep}})$ for a solution of $\wp(\alpha)=\beta$. Since $v_z(\wp(k))=2{\Z}\cup{\Z}_{\geq 0}$ we have $z^{-1}\notin\wp(k)$, and by Theorem \ref{ASW} the extension $K/k=\F_2\lau{z}(x,y)/\F_2\lau{z}$, with $x$ and $y$ satisfying
\begin{equation*}
\left\{ \begin{array}{l}
x^2+x=z^{-1}; \\
y^2+y=xz^{-1} = x^3+x^2, \\
   \end{array} \right.
\end{equation*}
is a cyclic Galois extension of degree 4. It is totally ramified; an example of a uniformiser $t$ for $K$  is given by $$t=(y+1)/(y+x^2).$$ 
Indeed, breaking up the extension into Artin--Schreier extensions as in Equation (\ref{AStower}), we have $$ k=K_0 = \F_2\lau{z_0} \subsetneq K_1 = K_0(x) = \F_2\lau{z_1} \subsetneq K_2 = K_1(y) = \F_2 \lau{z_2} = K $$
with $z_0=z, z_1, z_2$ uniformisers of the fields in the tower of extensions. So $v_{z_0}(z^{-1})=-1$, $v_{z_1}(x)=-1$ and $v_{z_1}(z)=2$. Hence $v_{z_1}(x^3+x^2)=-3$, so $K_1/K_0$ 
is totally ramified. Then $v_{z_2}(y)=-3$, $v_{z_2}(x)=-2$ and $v_{z_2}(z)=4$, so $K_2/K_1$ 
is also totally ramified. Hence $t$ is a uniformiser for $K$ since 
$$ v_{z_2}(t)=v_{z_2}(y+1)-v_{z_2}(y+x^2)=1.$$
Formula (\ref{w4}) shows that a generator $\sigma$ of the Galois group is determined by the equations
\begin{equation*}
\left\{ \begin{array}{l}
\sigma(x)=x+1; \\
\sigma(y)=y+x, \\
   \end{array} \right.
\end{equation*}
the other generator is given by $\tau=\sigma^{\circ 3}$. We compute $$\tau(t)=\sigma^{\circ 3}\left(\frac{y+1}{y+x^2}\right)=
\frac{y+x}{y+x^2+x}.$$
To find an algebraic equation for $\tau=\tau(t)$ over $\F_2(t)$, we need to eliminate $x$ and $y$ from the three equations 
\begin{equation*}
\left\{ \begin{array}{ll}
y^2+y=x^3+x^2 &  \mbox{[equation of extension]}; \\
(y+x^2)t=y+1 &  \mbox{[definition of uniformiser]}; \\ 
  (y+x^2+x)\tau(t)=y+x & \mbox{[action of $\tau$ on uniformiser]},
   \end{array} \right.
\end{equation*} 
from which we get that $X=\tau=\tau(t) \in \F_2\pau{t}$ satisfies the (irreducible) equation
\begin{equation} \label{exsmall}
F(t,X)=(t+1)^3X^3+(t^3+t)X^2+(t^3+t+1)X+t^3+t=0. 
\end{equation}
This equation has a unique solution of the form $t+O(t^2)$, as can be seen, e.g.\ from the corresponding $t$-adic Newton polygon;  its initial coefficients are given by $t+t^2+t^4+t^5+O(t^6)$.
\end{example} 

\subsection{Break sequence} By computing the first few coefficients of $\us \in \No(
\F_p)$ of order $p^n$ (using the algebraic equation for $\us$ over $\F_p(t)$), it is easy to determine the lower break sequence of $\sigma$. If one has an explicit upper bound for the number of inequivalent series with given break sequences, we can enumerate all classes of such series by `trying' enough equations, which sometimes works in practice. Such bounds are implicit in \cite[Theorem 2.2]{Lubin} and have been made explicit in a few cases (cf.\ the discussion in Sections \ref{section3} \& \ref{section5}). Alternatively, using explicit local class field theory constructions as in \cite{Lubin} we are guaranteed to obtain representatives of all the conjugacy classes.

A method of Kanesaka and Sekiguchi directly computes the upper break sequence in terms of the Witt vector data for a given extension of $k:=\F_p\lau{z}$ \cite[Thm.\ 5]{KS}, which we rephrase as follows.  

\begin{definition} Fix a positive integer $n$. Call a vector $a=(a_i) \in \bigoplus_{\mathbf N} W_n(\F_p)$
of Witt vectors of length $n$ (with finitely many nonzero entries) \emph{suitable} if $a_i=0$ for $p {\mid} i$ and for at least one $i$ we have $a_i \in W_n(\F_p)^*$ (i.e.\ the zero component of $a_i$ is not zero). 
If 
\begin{equation} \label{form}
\beta = (\beta_0,\ldots,\beta_{n-1}) := \sum_{i \geq 0} a_i (z^{-i},0,\dots,0)  + \wp(b) \in W_n(k)
\end{equation} 
for a suitable $a = (a_i)$ and any $b \in W_n(k)$,  define 
\begin{equation*} \label{rho} \rho_n(\beta):= p^{-1} \max\{ i \ord(a_i) : a_i \neq 0 \},
\end{equation*} where $\ord(a_i)$ is the order of $a_i$ in the additive group $W_n(\F_p)$ (that itself is of exponent $p^n$). This is well-defined, since one can show that if a vector $\beta$ admits such a representation, then the corresponding suitable vector is uniquely determined (since the vectors $(z^{-i},0,\dots,0)$ are independent modulo $\wp(W_n(k))$). Also note that $\rho_n(\beta)$ is independent of $b\in W_n(k)$.

Define, for $m \leq n$, the truncation map $ \lfloor (x_0,\dots,x_{n-1}) \rfloor_m := (x_0,\dots,x_{m-1}). $ The truncation of a vector of the form as in Equation (\ref{form}) in $W_n(k)$  is of that same form in $W_m(k)$. 
\end{definition}

\begin{proposition}[\cite{KS}] For $k=\F_p\lau{z}$ and a positive integer $n$, choose $\beta$ of the form as in Equation \textup{(\ref{form})}
for a suitable vector $a=(a_i)$, some $b \in W_n(k)$, and assume $\beta_0 \neq 0$. Then the extension $k(\wp^{-1}(\beta))/k$ is a totally ramified cyclic extension of degree $p^n$, and the upper break sequence of a generator of the corresponding Galois group is  
$\langle\rho_1(\lfloor \beta \rfloor_1),\ldots,
\rho_n(\lfloor \beta \rfloor_{n}) \rangle.$ 
\end{proposition}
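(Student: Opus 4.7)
My plan is to prove the proposition by induction on $n$ using the Witt-vector tower
$$k=K_0\subset K_1\subset\cdots\subset K_n=K,\qquad K_m:=k(\alpha_0,\dots,\alpha_{m-1})=k(\wp^{-1}(\lfloor\beta\rfloor_m)),$$
combined with Herbrand's theorem (upper numbering is compatible with Galois quotients) and the explicit lower-to-upper conversion~\eqref{convert}. Each layer $K_m/K_{m-1}$ is Artin--Schreier because Witt addition produces
$$\alpha_{m-1}^p-\alpha_{m-1}=\beta_{m-1}+P_{m-1}(\alpha_0,\dots,\alpha_{m-2};\beta_0,\dots,\beta_{m-2})\in K_{m-1}$$
for a universal polynomial $P_{m-1}$, so Lemma~\ref{ramlem} controls each step once we pin down the $K_{m-1}$-valuation of this right-hand side.

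\textbf{Base case $n=1$.} Suitability forces each nonzero $a_i$ to have $p\nmid i$ with $(a_i)_0\in\F_p^{\ast}$, so modulo $\wp(k)$ the element $\beta_0$ has $z$-valuation $-M$ where $M=\max\{i:(a_i)_0\neq 0\}$. Lemma~\ref{ramlem} produces a totally ramified degree-$p$ extension with (upper $=$ lower) break $M$, which agrees with $\rho_1(\beta_0)$ because $\ord(a_i)=p$ whenever $a_i\neq 0$ in $W_1(\F_p)=\F_p$.

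\textbf{Inductive step.} Assuming the proposition for length $n-1$, the first $n-1$ upper breaks of $K/k$ coincide with those of $K_{n-1}/k$ by Herbrand's $(G/H)^u=G^uH/H$, and so equal $\rho_1(\lfloor\beta\rfloor_1),\dots,\rho_{n-1}(\lfloor\beta\rfloor_{n-1})$ by induction. For the top break, apply Lemma~\ref{ramlem} to $K_n/K_{n-1}$: one obtains $b_{n-1}=-v_{z_{n-1}}(\beta_{n-1}+P_{n-1})$, and it suffices to show this equals $b_{n-2}+p^{n-1}\bigl(\rho_n(\lfloor\beta\rfloor_n)-\rho_{n-1}(\lfloor\beta\rfloor_{n-1})\bigr)$, whereupon~\eqref{convert} yields $b^{(n-1)}=\rho_n(\lfloor\beta\rfloor_n)$. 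The valuation calculation cascades down the tower via Lemma~\ref{ramlem}, hinging on the identity
$$p^j\cdot(z^{-i},0,\dots,0)=(\underbrace{0,\dots,0}_{j},\,z^{-ip^j},0,\dots,0)\quad\text{in }W_n(k),$$
from which one reads that an index $i$ whose lowest nonzero Witt-component of $a_i$ lies in slot $s_i$ contributes a $z_{n-1}$-pole of order $i\cdot p^{n-1-s_i}=p^{-1}\cdot i\cdot\ord(a_i)$; taking the maximum over $i$ reproduces $\rho_n(\lfloor\beta\rfloor_n)$ exactly.

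\textbf{Main obstacle.} The hard part will be confirming that no cancellation inside the Witt polynomial $P_{n-1}$ can lower the pole order of $\beta_{n-1}+P_{n-1}$, i.e.\ that the various $p^j$-multiples of $(z^{-i},0,\dots,0)$ for $i$ coprime to $p$ realise linearly independent classes modulo $\wp(W_n(k))$. This is precisely the uniqueness statement implicit after the definition of $\rho_n$ (well-definedness of the suitable representative $a$), and once it is in hand, the individual pole orders enumerated above genuinely dominate and their maximum is attained. The Hasse--Arf theorem (applicable because $\Gal(K/k)$ is abelian) furnishes a consistency check, since the resulting $\rho_n(\lfloor\beta\rfloor_n)$ is manifestly a positive integer by construction.
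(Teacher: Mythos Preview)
The paper does not give its own proof of this proposition: it is a rephrasing of \cite[Thm.~5]{KS} and is quoted without argument. So there is nothing to compare your approach to directly; I can only assess whether your sketch stands on its own.

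Your overall architecture is correct and is essentially the standard one: induct on $n$ using the Witt tower, invoke Herbrand's theorem so that the first $n-1$ upper breaks come for free from $K_{n-1}/k$, and then compute the top lower break $b_{n-1}$ as the Artin--Schreier break of $K_n/K_{n-1}$, converting back to $b^{(n-1)}$ via \eqref{convert}. The identity $p^j\cdot(z^{-i},0,\dots,0)=(0,\dots,0,z^{-ip^j},0,\dots,0)$ is the right bookkeeping device, and your translation $\ord(a_i)=p^{n-s_i}$ (with $s_i$ the first nonzero slot of $a_i$) correctly recovers $\rho_n$.

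That said, two steps are asserted rather than established. First, the sentence ``the valuation calculation cascades down the tower via Lemma~\ref{ramlem}'' hides real work: you need to show that the $z_{n-1}$-valuation of $\beta_{n-1}+P_{n-1}(\alpha_0,\dots,\alpha_{n-2};\beta_0,\dots,\beta_{n-2})$, \emph{after} reducing modulo $\wp(K_{n-1})$, is exactly $-\max_i\, i\cdot p^{n-1-s_i}$. This requires knowing the $z_{n-1}$-valuations of all $\alpha_j$ (which Lemma~\ref{ramlem} provides inductively) and then analysing the Witt addition polynomial $P_{n-1}$ term by term, checking that the dominant pole is the one you claim. Second, your ``main obstacle''---that the classes of $p^j\cdot(z^{-i},0,\dots,0)$ are independent modulo $\wp(W_n(k))$ so no cancellation occurs---is correctly identified but not addressed; it amounts to showing that $\wp(W_n(k))$ meets the span of these elements only trivially, which one proves by a valuation/parity argument on each Witt component (coprimality of $i$ to $p$ is what makes $z^{-ip^j}$ survive modulo $p$-th powers). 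Until these two points are written out, the argument remains a plausible outline rather than a proof.
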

Although in this paper, we usually use lower break sequences, the above result is most naturally formulated in terms of upper break sequences; as remarked before, these can be easily changed into each other using Formula (\ref{convert}). The above result allows one to fix not just $p^n$, but also the break sequence from the start, by choosing a suitable Witt vector $\beta\in W_n(k)$. Note that we get the same extension for every $b\in W_n(k)$, but it will be convenient to rewrite certain natural choices of $\beta$ using nonzero $b$. 

\begin{example} We give some examples of constructions with break sequences that we will use later.  \label{exjap}  \mbox { } 
\begin{itemize}
\item[(a)] Choose $\beta=(z^{-1},0,\dots,0) \in W_n(\F_p\lau{z})$ of length $n$, so all $a_i=0$ for $i \neq 1$ and $a_1=(1,0,\dots,0)$. Now $a_1$ is of order $p^n$ in $W_n(\F_p)$ and the break sequence, called the \emph{minimal} one,  is $$\left\langle p^i \right\rangle_{i=0}^{n-1}= \left(\frac{p^{2i+1}+1}{p+1}\right)_{i=0}^{n-1}.$$ 
\item[(b)] For $\beta=(z^{-1},z^{-pm})\in W_2(\F_p\lau{z})$, with $m>p$ coprime to $p$, rewrite $$\beta = a_1 (z^{-1},0) + a_m (z^{-m},0)$$ with $a_1 = (1,0)$ and $a_m=(0,1)$. Now $\ord(a_1) = p^2$ and $\ord(a_m) = p$ in $W_2(\F_p)$, so we find the upper break sequence $$\langle 1, m \rangle = (1,pm-p+1).$$ 
\item[(c)] For $\beta=(z^{-1},z^{-m})\in W_2(\F_p\lau{z})$ with $m>p$ coprime to $p$, we get the same break sequence as the previous example, since $$(z^{-1},z^{-m}) = (z^{-1},z^{-pm}) - \wp((0,z^{-m})).$$ \end{itemize}
\end{example}
 
 \section{Detailed method: Computing $p$-automata using proofs of Christol's theorem}\label{section2a}

\subsection{Abstract algorithm} The following theorem of Christol relates algebraic power series to $p$-auto\-ma\-tic sequences (see \cite{Christol,CKMR}): 
\begin{theorem}[Christol] \label{christol} 
A power series $\us = \sum_{k\geq 0}a_kt^k\in\F_p\pau{t}$ is algebraic over $\F_p(t)$ if and only if the sequence $(a_k)_{k\geq 0}$ is $p$-automatic.
\end{theorem}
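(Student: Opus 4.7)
The plan is to prove both directions via the classical kernel characterisation: a sequence $(a_k)_{k\geq 0}$ is $p$-automatic if and only if its \emph{$p$-kernel} $\{(a_{p^n k + r})_k : n \geq 0,\ 0 \leq r < p^n\}$ is finite (a theorem of Eilenberg). In terms of power series, the $p$-kernel corresponds to the iterated action of the Cartier (section) operators $\Lambda_0, \ldots, \Lambda_{p-1}$ on $\us$, where $\Lambda_r(\sum b_k t^k) = \sum b_{pk+r} t^k$. I will use this as the bridge between the algebraic and automata-theoretic statements, and prove the theorem by exhibiting, in each direction, a finite-dimensional $\F_p(t)$-vector subspace of $\F_p\pau{t}$ that contains $\us$ and is stable under every $\Lambda_r$.

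For the easy direction (automatic implies algebraic), suppose $(a_k)$ has finite $p$-kernel. Let $V$ be the $\F_p(t)$-span of the finitely many series arising from the $p$-kernel, equivalently the $\F_p(t)$-span of all iterates $\Lambda_{r_1} \cdots \Lambda_{r_n} \us$. By construction $V$ is finite-dimensional and stable under each $\Lambda_r$. The key identity in characteristic $p$ is
$$g = \sum_{r=0}^{p-1} t^r \,(\Lambda_r g)^p \qquad\text{for every } g \in \F_p\pau{t},$$
valid because $(\sum c_k t^k)^p = \sum c_k^p t^{pk} = \sum c_k t^{pk}$ over $\F_p$. Combined with $\Lambda_r V \subseteq V$, this shows that every element of $V$ is an $\F_p(t)$-linear combination of $p$-th powers of elements of $V$; iterating, $V$ is stable under the Frobenius $g \mapsto g^p$. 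Since $V$ is finite-dimensional, the sequence $\us, \us^p, \us^{p^2}, \ldots$ is $\F_p(t)$-linearly dependent, which yields an algebraic equation for $\us$ over $\F_p(t)$.

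For the harder direction (algebraic implies automatic), the plan is to build a finite-dimensional $\F_p(t)$-vector subspace $W \subseteq \F_p\pau{t}$ containing $\us$ and stable under every $\Lambda_r$. Starting from an irreducible polynomial $P(X) \in \F_p(t)[X]$ annihilating $\us$, I would take $W$ to be spanned over $\F_p(t)$ by $1, \us, \us^2, \ldots, \us^{d-1}$ (with $d = \deg P$) enlarged by finitely many derived elements to absorb the action of the Cartier operators. The main point is to show that $\Lambda_r \us$ remains algebraic of bounded complexity: using the identity $\us = \sum_r t^r (\Lambda_r \us)^p$ and the fact that the Frobenius is injective on $\F_p\pau{t}$, one extracts the $\Lambda_r \us$ as unique $p$-th roots of explicit rational combinations of $\us$ and $t$, showing that each $\Lambda_r \us$ lies in the finite extension $\F_p(t)(\us)$, hence in a finite-dimensional $\F_p(t)$-space. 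After finitely many enlargements, the space closes up, because the transcendence degree is bounded. Once such a $W$ is in hand, each $\Lambda_r$ is an $\F_p(t)$-linear endomorphism of the finite-dimensional space $W$, and interpreting a basis of $W$ in terms of coefficient sequences shows that the $p$-kernel of $(a_k)$ is finite, which gives $p$-automaticity.

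The main obstacle is the last step: constructing and proving the stability of a finite-dimensional $\F_p(t)$-module under all Cartier operators simultaneously, with a uniform bound on its dimension. A clean packaging that I would ultimately follow is Furstenberg's diagonal representation: algebraic series in one variable over $\F_p(t)$ are precisely the diagonals of bivariate rational functions. On such diagonals the Cartier operators act transparently on the numerator and preserve an evidently finite-dimensional space of rationals with a controlled common denominator, bypassing any delicate closure argument. This route has the additional virtue of being one of the algorithmic proofs referenced in Section \ref{section2a} and actually used to produce the automata in this paper.
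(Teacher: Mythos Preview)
The paper does not give its own proof of Christol's theorem; it states the result with references to \cite{Christol,CKMR} and then outlines three constructive proofs (Ore form, Furstenberg diagonals, differential forms on curves) in Section~\ref{input} for algorithmic purposes. So there is no ``paper's own proof'' to compare against beyond those three sketches.

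Your easy direction (automatic $\Rightarrow$ algebraic) is the standard argument and is fine, though the phrasing ``$V$ is stable under the Frobenius'' is the wrong way round: the identity $g=\sum_r t^r(\Lambda_r g)^p$ shows $V$ sits inside $\F_p(t)$-combinations of $p$-th powers of elements of $V$, not the reverse. What you actually need (and what this gives) is that the $\F_p(t)$-span of $\{\sigma^{p^n}:n\geq 0\}$ is contained in the $\F_p(t)$-span of the finite $p$-kernel, hence finite-dimensional; the linear dependence follows.

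For the hard direction, your intermediate attempt has a genuine gap: from $\sigma=\sum_r t^r(\Lambda_r\sigma)^p$ you cannot conclude that each $\Lambda_r\sigma$ lies in $\F_p(t)(\sigma)$, because that single relation does not pin down the individual summands inside the field. The Cartier operators need not preserve $\F_p(t)(\sigma)$, and ``after finitely many enlargements the space closes up'' is exactly the nontrivial content of the theorem. The standard remedy is to pass first to an equation in \emph{Ore form} $\sum_i B_i(t)\sigma^{p^i}=0$ and work inside an explicit finite $\F_p$-vector space of polynomials in $\sigma,\sigma^p,\ldots,\sigma^{p^{d-1}}$ with bounded $t$-degree, which one checks directly is $\Lambda_r$-stable; this is precisely method~(b) in Section~\ref{input}. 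Your eventual fallback to Furstenberg's diagonal representation is method~(c) there and is correct. So your final plan matches one of the routes the paper references; just drop the unjustified closure claim for $\F_p(t)(\sigma)$.
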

For our applications it is important that there are constructive proofs of this theorem: given an algebraic equation $F(t,X)=0$ with $F(t,X) \in \F_p[t,X]$, the proofs can be turned into algorithms that compute $p$-automata representing the different solutions $X=\sigma \in \F_p\pau{t}$. These algorithms start from a finite $\F_p$-vector space $V$ with a distinguished nonzero vector $s_0 \in V$ and a set $\Lambda$ of `Cartier-style' operators $\Lambda_r \colon V \rightarrow V$ for $r \in \{0,\dots,p-1\}$. From these data, they produce the directed graph structure of an automaton.  
A finite computation (using Hensel's Lemma) then fills in the vertex labels for the different solutions. For three such proofs/algorithms, we briefly indicate the triples $(V, s_0, \Lambda)$ and point to other sources for proofs of correctness, optimised implementations and complexity analysis. 

It follows from the proofs that for a given irreducible equation all solutions can be represented by automata with the same directed graph structure (including edge labels, but excluding vertex labels). Hence the desired algorithm can be broken down into two parts: first, the computation of that directed graph, and second, computing the correct output labels corresponding to the different solutions.

We will make the following assumptions and use the following notations throughout: 
\begin{itemize} 
 \item $F(t,X) \in \F_p[t,X]$ is irreducible,
 \begin{itemize} 
 \item[$\diamond$] $d=\deg_X F$, 
 \item[$\diamond$] $h=\deg_t F$, 
 \item[$\diamond$] $m=\ord_t \mathrm{Res}_X\left(F(t,X),\frac{\partial F}{\partial X}(t,X)\right)$ denotes the $t$-valuation of the resultant of $F$ and its derivative in $X$, and 
 \item[$\diamond$] $g$ denotes the geometric genus of the normalisation $\mathcal X$ of the projective curve  corresponding to the plane affine curve $F(t,X)=0$. 
\end{itemize}
\item For $0\leq r<p$, the \emph{Cartier operator} $\mathcal C_r$ acting on  formal power series in $\F_p\pau{x_1,\dots,x_k}$ is defined by $$\mathcal C_r(\sum a_{i_1,i_2,
 \dots,i_k} x_1^{i_1} \cdots x_k^{i_k}) := \sum a_{pi_1+r,pi_2+r, \dots, pi_k+r} x_1^{i_1} \cdots x_k^{i_k}.$$
\end{itemize} 

For the first part---the construction of the directed graph underlying the automaton---the proofs are based on constructing a graph from the specific set of data $(V, s_0, \Lambda)$, as follows: 
 \begin{algorithm}[Labeled Directed Graph Structure] \mbox{ } \label{algoLDG} 
\begin{quote} \begin{enumerate}
 \item[{\footnotesize \tt Input}] 
 A finite $\F_p$-vector space $V$, $s_0 \in V$, 
 and maps $\Lambda=\{\Lambda_r\colon V \rightarrow V \mbox{ for } 0 \leq r < p\}$.
  \item[{\footnotesize \tt Output}] A finite directed graph with edge labels. 
  \end{enumerate} 
   \end{quote} 
Write $\Gamma$ for the monoid generated by the maps $\Lambda_r$ with $0\leq r<p$. Compute the set of vertices $S$ as the orbit of $s_0$ under the action of $\Gamma$ (by applying the maps $\Lambda_r$ until no new elements appear), let the vertex $s_0$ be labelled `Start', and put a directed edge between $s_1$ and $s_2$ with label $r$ precisely if $s_2=\Lambda_r(s_1)$. 
\hfill $\lrcorner$ 
\end{algorithm} 
 
 The second part can always be dealt with in the following way: 
 \begin{algorithm}[Vertex Labels] \mbox{ } \label{algo2} 
\begin{quote} \begin{enumerate}
 \item[{\footnotesize \tt Input}] A polynomial $F(t,X) \in \F_p[t,X]$ and the directed graph structure (including edge labels) of automata representing all solutions $X=\sigma \in \F_p\pau{t}$ of $F(t,X)=0$.
  \item[{\footnotesize \tt Output}] A finite list of automata corresponding to all these solutions.
  \end{enumerate} 
 \end{quote} 
  For an integer $i$, consider the truncated equation \begin{equation} \label{trunc} F(t,\sigma_0) = O(t^{i+1}) \mbox{ with } \sigma_0 = a_0 + a_1 t +a_2 t^2 +  \dots + a_{i} t^{i}. \end{equation} 
 \begin{enumerate} 
 \item Solve
the truncated Equation (\ref{trunc}) with $i=2m$ 
for all the (finitely many) possible $\sigma_0$. 
Hensel's Lemma implies that for each such $\sigma_0$ there is a unique solution $X=\sigma \in \F_p\pau{t}$ of $F(t,X)=0$ with $\sigma(t) = \sigma_0(t)+ O(t^{m+1})$  (see e.g.\ the introduction of  \cite{BCCD}). 
\item For each fixed $\sigma_0$, run through the  automaton following all base-$p$ expansions of the integers $j=0,1,2,\dots$ and give the final vertex of the walk corresponding to the base-$p$ expansion of $j$ the label $a_j$. For this, it may be necessary to compute  the coefficients $a_j$ of the solution of $F(t,X)=0$ corresponding to $\sigma_0$ for some $j>2m$, which can be done by solving the truncated equation inductively for $i=2m+1,\dots,j$, and use the leading zeros condition.  \hfill $\lrcorner$ 
\end{enumerate}
\end{algorithm} 
As we will indicate below, sometimes the vertex labels can be determined in a more efficient way, depending on the method used to compute the directed graph structure.  

\begin{center}
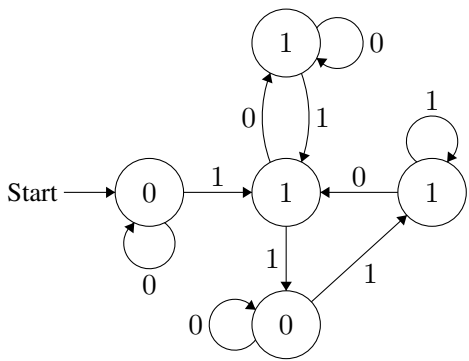
\begin{figure}
\begin{tikzpicture}[scale=0.15]
\tikzstyle{every node}+=[inner sep=0pt]
\draw [black] (24.3,-28.2) circle (3);
\draw (24.3,-28.2) node {$0$};
\draw [black] (36.3,-28.2) circle (3);
\draw (36.3,-28.2) node {$1$};
\draw [black] (36.3,-15) circle (3);
\draw (36.3,-15) node {$1$};
\draw [black] (49.1,-28.2) circle (3);
\draw (49.1,-28.2) node {$1$};
\draw [black] (36.3,-39.9) circle (3);
\draw (36.3,-39.9) node {$0$};
\draw [black] (25.623,-30.88) arc (54:-234:2.25);
\draw (24.3,-35.45) node [below] {$0$};
\fill [black] (22.98,-30.88) -- (22.1,-31.23) -- (22.91,-31.82);
\draw [black] (16.8,-28.2) -- (21.3,-28.2);
\draw (16.3,-28.2) node [left] {$\text{\small{Start}}$};
\fill [black] (21.3,-28.2) -- (20.5,-27.7) -- (20.5,-28.7);
\draw [black] (27.3,-28.2) -- (33.3,-28.2);
\fill [black] (33.3,-28.2) -- (32.5,-27.7) -- (32.5,-28.7);
\draw [black] (34.906,-25.553) arc (-159.74496:-200.25504:11.419);
\fill [black] (34.91,-17.65) -- (34.16,-18.22) -- (35.1,-18.57);
\draw (33.7,-21.6) node [left] {$0$};
\draw [black] (38.98,-13.677) arc (144:-144:2.25);
\draw (43.55,-15) node [right] {$0$};
\fill [black] (38.98,-16.32) -- (39.33,-17.2) -- (39.92,-16.39);
\draw [black] (37.639,-17.676) arc (19.32709:-19.32709:11.857);
\fill [black] (37.64,-25.52) -- (38.38,-24.93) -- (37.43,-24.6);
\draw (38.81,-21.6) node [right] {$1$};
\draw [black] (36.3,-31.2) -- (36.3,-36.9);
\fill [black] (36.3,-36.9) -- (36.8,-36.1) -- (35.8,-36.1);
\draw (35.8,-34.05) node [left] {$1$};
\draw [black] (38.51,-37.88) -- (46.89,-30.22);
\fill [black] (46.89,-30.22) -- (45.96,-30.39) -- (46.63,-31.13);
\draw (43.71,-34.54) node [below] {$1$};
\draw [black] (46.1,-28.2) -- (39.3,-28.2);
\fill [black] (39.3,-28.2) -- (40.1,-28.7) -- (40.1,-27.7);
\draw (42.7,-27.7) node [above] {$0$};
\draw [black] (47.777,-25.52) arc (234:-54:2.25);
\draw (49.1,-20.95) node [above] {$1$};
\fill [black] (50.42,-25.52) -- (51.3,-25.17) -- (50.49,-24.58);
\draw [black] (33.62,-41.223) arc (-36:-324:2.25);
\draw (29.05,-39.9) node [left] {$0$};
\fill [black] (33.62,-38.58) -- (33.27,-37.7) -- (32.68,-38.51);
\draw (30.3,-27.7) node [above] {$1$};
\end{tikzpicture}
\caption{A 2-automaton representing the element $\sigma_{\mathrm{min}}$ of $\No(\F_2)$ of order $4$ with lower break sequence $(1,3)$, corresponding to Equation \eqref{exsmall}.} 
\label{exrunpic}
\end{figure}
\end{center}

\begin{examplecontinued} \label{exrunvert} 
Suppose we know that the directed graph structure of the solutions for Example \ref{exrun} is as given in Figure \ref{exrunpic}, but the possible vertex labels are still unknown. In this case, we have $m=6$, and we are looking for a solution $\sigma$ with $
\sigma=t+O(t^2)$ (already known to exist). Substituting a tentative solution, we compute its initial coefficients: $\us_{\mathrm{min}} = t + t^2 + t^4 + t^5 + t^7 + O(t^8)$. Using the coefficients of $t^0, t^1, t^3, t^7$, the vertex labels are fixed uniquely, except for the label of the vertex reached from the start vertex by following the path $01$. However, the assumption of leading zeros invariance fixes this value to be the same as that of the vertex reached by following the path $1$. The resulting unique vertex labels are given in Figure \ref{exrunpic}. 
\end{examplecontinued}

\subsection{Three methods of constructing the input data}\label{input} What is different in various proofs/algorithms is the construction of $V, s_0$ and $\Lambda$ used as input for the construction of the directed graph. We briefly describe three possible approaches to this. 
 
  \subsubsection*[{\ref{input}.a Using spaces of differential forms}]{{\underline{\ref{input}.a Using spaces of differential forms}}} This method is based on a proof by David Speyer and Andrew Bridy \cite{Bridybounds}. The fact that the algorithm is correct is explained in \cite[\S 3]{Bridybounds}.  A plug-and-play implementation of this algorithm is not available at the current time, but the built-in algorithms for function fields in {\sc Magma} \cite{Magma} include K\"ahler differentials and Cartier operators, making it relatively easy to implement the computations (but not the visualisations). The file \cite{LDG} contains a description of a Magma routine that produces output that can be easily visualised in Mathematica and manipulated using \cite{Rowland}. 
  
Let $\Omega$ denote the $\F_p$-vector space of K\"ahler differentials on $\mathcal X$ and $K$ the function field of $\mathcal X$. Writing $\eta \in \Omega$ as $\eta = (u_0^p+u_1^p t + \dots + u_{p-1}^p t^{p-1}) dt$ for unique $u_i \in K$, define the Cartier operator $\mathcal{C}\colon \Omega\to \Omega$ by the formula $\mathcal C(\eta) := u_{p-1} dt$. 
Set $\omega:=Xdt \in \Omega$ and define the effective divisor $D:=(\omega)_\infty+(t)_\infty$, the sum of polar divisors of the differential $\omega$ and the function $t$. 
In this case:
\begin{itemize} 
\item $V = \Omega(D)$ is the $\F_p$-vector space of differential forms on $\mathcal X$ with divisor $\geq -D$ (of finite dimension $\leq h+3d+g-1$ over $\F_p$ by Riemann--Roch, see \cite[proof of Cor.\ 3.10]{Bridybounds}).
\item $s_0=\omega$.
\item For any $r=0,\dots,p-1$, define $\Lambda_r$ as $\Lambda_r(\eta):= \mathcal C (t^{p-1-r} \eta).$ The maps $\Lambda_r$ map $V$ to itself (see \cite[proof of Cor.\ 3.10]{Bridybounds}).
\end{itemize}

\begin{examplecontinued} \label{excont1}
Continuing the previous Example \ref{exrun}, we find (using  {\sc Magma}) that the curve corresponding to Equation (\ref{exsmall}) is of genus $g=1$, 
the space $\Omega((Xdt)_\infty+(t)_\infty)$ is of dimension $8$ and the subset $S=\Gamma(Xdt)$ has $5$ elements corresponding to the vertices in the automaton. Representing these by the vectors 
{\footnotesize  $$S = \{ (1, 1, 0, 1, 0, 0, 1, 0),
    (1, 0, 0, 0, 0, 0, 0, 0),
    (0, 1, 0, 0, 0, 0, 1, 0),
    (1, 1, 0, 1, 0, 0, 0, 0),
    (1, 0, 0, 0, 0, 1, 0, 0)\}, $$}where the third vector is the start vertex, the action of the operators $\Lambda_0$ and $\Lambda_1$ is given by right multiplication with the following explicit $8 \times 8$ matrices over $\F_2$:  
{\footnotesize $$    
\Lambda_0 = \left(
\begin{array}{cccccccc}
 1 & 0 & 0 & 0 & 0 & 0 & 0 & 0 \\
 0 & 1 & 0 & 0 & 0 & 0 & 1 & 0 \\
 0 & 1 & 0 & 0 & 0 & 0 & 0 & 0 \\
 0 & 0 & 0 & 1 & 0 & 0 & 1 & 0 \\
 0 & 1 & 1 & 0 & 1 & 0 & 1 & 1 \\
 0 & 1 & 0 & 1 & 0 & 0 & 1 & 0 \\
 0 & 0 & 0 & 0 & 0 & 0 & 0 & 0 \\
 0 & 0 & 1 & 0 & 1 & 0 & 0 & 1 \\
\end{array}
\right) \mbox{ and }
\Lambda_1 = \left(
\begin{array}{cccccccc}
 1 & 0 & 0 & 0 & 0 & 1 & 0 & 0 \\
 1 & 0 & 0 & 0 & 0 & 0 & 0 & 0 \\
 0 & 1 & 0 & 0 & 0 & 0 & 1 & 0 \\
 1 & 1 & 0 & 1 & 0 & 1 & 1 & 0 \\
 0 & 0 & 0 & 1 & 0 & 0 & 1 & 0 \\
 0 & 0 & 0 & 0 & 0 & 0 & 0 & 0 \\
 0 & 1 & 0 & 1 & 0 & 0 & 1 & 0 \\
 0 & 0 & 0 & 0 & 0 & 0 & 0 & 0 \\
\end{array}
\right).
$$
}The resulting automaton is the one in Figure \ref{exrunpic}.
\end{examplecontinued}

  \subsubsection*[{\ref{input}.b Using equations in Ore form}]{\underline{\ref{input}.b Using equations in Ore form}} This method is based on the proof from \cite{CKMR}. The fact that the algorithm is correct follows, e.g.\ from tracing through the proof of Christol's theorem in \cite[Thm.\ 12.2.5]{AS} using \cite[12.2.4]{AS} for the expression for the corresponding $p$-kernel and the construction of the automaton corresponding to such a kernel as in the proof of the equivalence of `$p$-automatic' and `finite $p$-kernel', see e.g.\ \cite[Thm.\ 6.6.2]{AS}. (The vector space described there is slightly larger, but the arguments show that the space defined below also works.) An implementation is described in \cite[Rem.\ 4.7]{RYBordeaux} and an actual implementation was done by Rowland in \cite{Rowland} (compare \cite{Rowlandproc}).
 
 One first computes a new polynomial $G(t,X) \in \F_p[t,X]$ in `Ore form', i.e.\ $G(t,X) = \sum_{i=0}^d B_i X^{p^i}$ with $B_i \in \F_p[t], B_0 \neq 0$, whose solution set in $X$ contains the $\F_p$-vector space spanned by the solution set of $F$ in $X$. 
Then the data are defined as follows: 
\begin{itemize}
\item $V$ is the set of linear combinations of elements from $\{X, X^p,\ldots,X^{p^{d-1}}\}$ with coefficients being elements from $\F_p[t]$ of degree at most $$N:=\max(\deg B_0, \max\{\left\lceil \frac{\deg B_i + (p^i-2) \deg B_0}{p-1}\right\rceil -1 \mid 1\leq i \leq d\}).$$ 
\item $s_0:=B_0 X$.
\item For $0\leq r<p$ and $D_k \in \F_p[t]$ of degree at most $ N$, define    
\begin{equation*}
 \Car_r\left(\sum_{k=0}^{d-1}D_kX^{p^k}\right):=\sum_{k=1}^{d-1}\mathcal C_r(D_k-D_0B_k B_0^{p^k-2})X^{p^{k-1}}-\mathcal C_r(D_0B_d B_0^{p^d-2})X^{p^{d-1}}. \label{caroperator} \end{equation*} The bound $N$ on the degrees of $D_k$ is chosen so that $s_0$ belongs to $V$ and  the operators $\Car_r$ map $V$ to itself (for this, note that for a polynomial $D\in \F_p[t]$ we have $\deg \mathcal C_r(D) \leq \lfloor \frac{\deg D}{p}\rfloor$).
\end{itemize}

One may circumvent the use of Algorithm \ref{algo2}: for the solution $\sigma_0$ whose truncation was fixed in (\ref{trunc}) (with $\ell:=\ord_t B_0\geq 1$) we can directly compute the labels of the vertices, as follows. Write 
\begin{equation}
\frac{\sigma_0}{B_0}=b_1 t^{-(\ell-1)}+b_2 t^{-(\ell-2)}+\cdots+b_{\ell-1}t^{-1}+b_\ell+O(t)
\end{equation}
with $b_i\in\F_p$; then the vertex corresponding to $\sum\limits_{k=0}^{d-1}D_k X^{p^k}\in V$, where $D_k = \sum_{j \geq 0} [D_k]_j t^j$ with $[D_k]_j\in \F_p$, has vertex label equal to $\sum\limits_{k=0}^{d-1}\, \sum\limits_{\substack{0\leq i\leq N\\ p^k{\mid}i}}\, [D_k]_i \cdot b_{\ell-i/p^k}$.

\begin{examplecontinued} \label{excont2}
The series $\tau$ from the previous Example \ref{exrun} satisfies the following equation in Ore form: 
$$G(t,X)=(t^8+1)X^8+(t^8+t^4+t^2+1)X^4+(t^7+t^6+t^5+t^4+t^2)X ^2+(t^7+t^5)X =0.$$ Now $\dim V = 150$ and $S$ consists of the following five elements, resulting in the automaton in Figure \ref{exrunpic}: 
{\footnotesize\begin{align*} s_0 &= (t^7+t^5)X, \\ 
s_1&=(t^6+t^3)X+(t^{14}+t^{13}+t^{11}+t^{10}+t^9+t^7)X^2+(t^{28}+t^{27}+t^{26}+t^{25}+t^{20}+t^{19}+t^{18}+t^{17})X^4, \\ 
s_2 &= (t^7+t^6+t^5)X+(t^{13}+t^{11}+t^{10}+t^8)X^2+(t^{28}+t^{26}+t^{20}+t^{18})X^4, \\ 
s_3 &=
t^2X+(t^{13}+t^8+t^7+t^6)X^2+(t^{26}+t^{24}+t^{18}+t^{16})X^4, \\
s_4 & =(t^6+t^4)X.
\end{align*}} 
\end{examplecontinued}

  \subsubsection*[{\ref{input}.c Using diagonals of two-variable power series}]{\underline{\ref{input}.c Using diagonals of two-variable power series}} This method splits the problem into two cases (`non-singu\-lar' and `general') and is based on a theorem of Furstenberg \cite[Prop.\ 2]{Furstenberg} in combination with the proof in \cite{Christol} and an observation in \cite{AdamBell}. In the special case, the algorithm is described in \cite[Algorithms 1 \& 2]{RYBordeaux}. The general algorithm is implemented in \cite{Rowland}. It is somewhat different from the preceding two methods: the non-singular case follows the setup considered before, in that it produces a triple $(V, s_0, \Lambda)$. The general case, however,  might produce a different automaton for every solution. 

 \underline{\emph{Special case}}. Suppose $G \in \F_p[t,X]$ is \emph{non-singular}, meaning that $G(0,0)=0$ and $c:=\partial G/\partial X(0,0)$ is nonzero. We search solutions $\sigma \in \F_p\fl t \fr$ of $G(t,\sigma)=0$ with $\sigma(0)=0$. In this case, by Hensel's lemma, there is a 
\emph{unique} such solution $\sigma$; Furstenberg's theorem says that 
$$ \sigma(t) = \Delta \left(\frac{P(t,X)}{Q(t,X)} \right) (t)\quad \text{with }   P(t,X):= c^{-1}X \frac{\partial G}{\partial X} (tX,X) \mbox{ and }Q(t,X):= c^{-1}X^{-1}G(tX,X), $$
where the \emph{diagonal} $\Delta G$ of a two-variable power series $G(t,X)=\sum a_{r,s} t^r X^s \in \F_p\fl t, X \fr$ is defined as the one-variable power series $(\Delta G)(t):=\sum a_{r,r} t^r\in \F_p\fl t \fr$. To avoid confusion: in the definition of $P$, the derivative is that of $G(t,X)$ w.r.t.\ $X$, after which the result is evaluated at $(tX,X)$, and the constant $c^{-1}$ is introduced so that $Q(0,0)=1$. 
The relevant data are: 

\begin{itemize}
\item $V$ is the space of polynomials in $\F_p[t,X]$ of degree at most  $\max(\deg_t P, \deg_t Q)$ in $t$ and of degree at most $\max(\deg_X P, \deg_X Q)$ in $X$.
\item $s_0:=P(t,X)$.
\item For $0\leq r<p$, $\Lambda_r(s):=\mathcal C_r(sQ^{p-1})$.
\end{itemize}

In this case, Algorithm \ref{algo2} may be avoided: $v \in V$ is a two-variable polynomial, and the corresponding (unique) vertex label is the value of this polynomial at $(0,0)$.

\underline{\emph{General case}}. 
Following \cite[\S 3.1]{AdamBell}, compute the finite list of all possible polynomials $q\in \F_p[t]$ of degree $\leq 2m$ such that  $F(t,q(t))=O(t^{2m+1})$. For each such $q$, set  $s=m+\ord_t (\frac{\partial F}{\partial X}(t,q(t)))$, $G(t,X)=t^{-s} F(t,t^m X+ q(t))$. 
Now $G$ is non-singular; apply the previous case to construct an automaton for the  (unique) power series solution $\tau(t)$ of $G(t,X)=0$ with $\tau(0)=0.$  Modify the automaton producing $\tau$ to an automaton producing a power series solution $\sigma=q+t^m\tau$ of $F(t,X)=0$ using standard constructions with automata (see e.g.\ \cite[Thm.\ 5.4.1 \& Cor.\ 6.8.5]{AS}, which have constructive proofs).

\begin{examplecontinued}\label{excont3}
For Example \ref{exrun}, the polynomial is non-singular and we have 
$$  \begin{array}{l} P(t,X) = t^3X^6+t^2X^5+(t^3+t)X^4+X^3+tX^2+X,\\ Q(t,X)= 
t^3 X^5+\left(t^3+t^2\right) X^4+\left(t^3+t\right) X^3+\left(t^3+t+1\right) X^2+t X+t+1. \end{array}  $$
The space $V$ is of dimension $28$ 
and $V$ consists of $6$ elements: 
{\footnotesize \begin{align*}  &\begin{array}{ll}
s_0 = P =  t^3 X^6+t^2 X^5+(t^3+t) X^4+X^3+t X^2+X, & s_1 = \Lambda_0(s_0) = t^3 X^5+(t^3+t) X^3+t X, \\ s_2 = \Lambda_1(s_0) = t^2 X^4+t^2 X^3+(t+1) X^2+t X+1, & s_3 = \Lambda_0(s_2) = t^2 X^4+X^2+1, \\ s_4 = \Lambda_1(s_2) = t^2 X^4+(t^2+t+1) X^2+X, & s_5 = \Lambda_1(s_4) = t^2 X^4+(t^2+1) X^2+1, 
 \end{array}\\
& \mbox{ with } \Lambda_0(s_1)=s_1, \Lambda_1(s_1) = s_2, \Lambda_0(s_3)=s_3, \Lambda_1(s_3) = s_2,  \Lambda_0(s_4)=s_4, \Lambda_0(s_5) = s_2,  \Lambda_1(s_5)=s_5.
\end{align*} }This leads to an automaton with $6$ states, but the states corresponding to $s_0$ and $s_1$ have the same outgoing edges and the same output labels, and hence can be merged into one state without affecting the automatic sequence produced by the automaton. Doing so leads  again to the automaton in Figure \ref{exrunpic}.
\end{examplecontinued}

\subsection{Bounds on the complexity} \label{boundcomplexitychristol} 
The exact complexity of the algorithms does not appear to be known, but upper bounds on the number of states $\# S$ have been given in terms of $d$ and $h$. In essentially all the known examples, these are obtained by first bounding the dimension of the vector space $V$, and then using the trivial inequality $\# S \leq p^{\dim V}$. In practice, it is often the case that the set $S$ is much smaller than the vector space $V$ (as seen, e.g.\ in Examples \ref{excont1}--\ref{excont3}). We will show in Proposition \ref{d=h} that $d=h$ for series of finite compositional order, and then we have the following upper bounds:
\begin{itemize}
\item Differential forms:  $\log_p \# S \leq 4d+g-1 \leq d(d+2) \approx d^2$ (\cite[Cor.\ 3.10]{Bridybounds} and the  inequality $g\leq (d-1)(h-1)$ of Castelnuovo--Riemann \cite[Cor.\ 3.11.4]{Stichtenoth}); 
\item Ore polynomials: $\log_p \# S \leq d^3p^d(p^{d}-1)/(p-1)\approx d^3 p^{2d-1}$
(using the upper bound $dhp^d$ for the height of the  Ore form equation from \cite[Lem.\ 8.1]{AB12}); 
\item Diagonals (non-singular case):   $\log_p (\# S-1) \leq d(d+1) \approx d^2$ (for this bound it is shown that all states in $S$ except possibly for $s_0$ lie in a vector subspace of $V$ of dimension $d(h+1)$ \cite[Rem.\ 4.7]{RYBordeaux}, \cite[Thm.\ 3.1]{AY}; the latter reference also contains an argument that shows that in the general case, the diagonal method gives a similar upper bound asymptotically in $d$ as the differential forms method).
\end{itemize}
In our running example, $\# S$ is $5$ or $6$, and the respective bounds on $\# S$ are $2^{12},2^{1512}$ and $2^{12}+1$. For more information on the exact complexity of our examples (that appear to require far fewer states than the theoretical general bounds), we refer to Section \ref{cxproperties}.  

\subsection{Our application} Our construction using Witt vectors produces a polynomial $F(t,X) \in \F_p[t,X]$ of which we first check irreducibility (if the polynomial were not irreducible, we would factor it and work with the factors). We know the polynomial has at least one solution $\sigma(t) = t+O(t^2) \in \F_p\pau{t}$, and we search only for such solutions. Most of the time, we can prove that there will be a unique solution of this form, and we then know that this $\sigma$ has the desired finite order under composition. In some cases, we find more than one solution, but in these cases, we can identify the correct series in a different way. 
For actual computations, we relied on implementations of all three algorithms; see the section `How computations and visualisations were done' at the end of the paper  for details.

The results obtained in our running example \ref{exrun}, \ref{exrunvert}, and either one of \ref{excont1}, \ref{excont2} or \ref{excont3}  may be summarised as follows: 

\begin{proposition} \label{sigmaminprop} 
The series $\sigma_{\mathrm{min}}$ corresponding to the automaton in Figure \textup{\ref{exrunpic}} is of order $4$ in $\No(\F_2)$ and has break sequence $(1,3)$ and initial coefficients $\us_{\mathrm{min}}= t+t^2+t^4+t^5+O(t^6)$. \qed
\end{proposition}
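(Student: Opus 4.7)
The plan is to assemble the outputs of the running Example~\ref{exrun} together with one of its three continuations (\ref{excont1}, \ref{excont2}, or \ref{excont3}) and with Example~\ref{exrunvert}, verifying that the series produced by the automaton in Figure~\ref{exrunpic} is precisely the Galois-theoretic element $\tau = \sigma^{\circ 3}$ constructed there. Concretely, I would first invoke Example~\ref{exrun} to recall that, starting from the Witt vector $\beta = (z^{-1},0) \in W_2(\F_2\lau{z})$, the extension $K/k$ is cyclic totally ramified of degree~$4$; choosing the uniformiser $t = (y+1)/(y+x^2)$, the generator $\tau = \sigma^{\circ 3}$ acts on $t$ as a solution $X \in \F_2\pau{t}$ of the irreducible cubic Equation~\eqref{exsmall}. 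The $t$-adic Newton polygon of \eqref{exsmall} (noted at the end of Example~\ref{exrun}) has a single slope corresponding to a solution with $v_t(X)=1$, so there is a \emph{unique} root of \eqref{exsmall} in $t\F_2\pau{t}$ of the form $t + O(t^2)$; this root must be $\tau(t)$.

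Next, I would apply any one of Algorithms~\ref{algoLDG} and \ref{algo2} (say in the differential-forms variant worked out in Example~\ref{excont1}, or alternatively the Ore-polynomial or diagonal variants in \ref{excont2}, \ref{excont3}) to check that the directed labelled graph associated with $F(t,X) = (t+1)^3X^3 + (t^3+t)X^2 + (t^3+t+1)X + t^3 + t$ is exactly the one in Figure~\ref{exrunpic}. To pin down the vertex labels corresponding to the solution $X = \tau(t)$, I would solve \eqref{exsmall} recursively for the first few coefficients, obtaining $\tau(t) = t + t^2 + t^4 + t^5 + t^7 + O(t^8)$, as in Example~\ref{exrunvert}; these initial values force all five vertex labels uniquely when combined with the leading-zeros invariance property. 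This both identifies $\sigma_{\mathrm{min}} = \tau$ and establishes the stated initial coefficients $\sigma_{\mathrm{min}} = t + t^2 + t^4 + t^5 + O(t^6)$.

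Once $\sigma_{\mathrm{min}} = \tau$ is identified, order~$4$ is automatic: $\tau$ generates the Galois group $\Gal(K/k) \cong \Z/4\Z$, so its image in $\No(\F_2)$ under the embedding $\Gal(K/k) \hookrightarrow \Aut(\F_2\pau{t})$ induced by the choice of uniformiser $t$ has order exactly~$4$. For the break sequence, one route is to apply the Kanesaka--Sekiguchi formula to the chosen Witt vector $\beta = (z^{-1},0)$: this is the ``minimal'' case of Example~\ref{exjap}(a) with $n=2$, $p=2$, giving upper break sequence $\langle 1, 2 \rangle$, which by the conversion formula~\eqref{convert} corresponds to lower break sequence $(1,3)$. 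Alternatively, one can compute $\sigma_{\mathrm{min}}^{\circ 2}$ directly from the initial coefficients and read off $d(\sigma_{\mathrm{min}}) = 1$ and $d(\sigma_{\mathrm{min}}^{\circ 2}) = 3$.

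The only genuinely computational obstacle is the verification that the directed-graph structure produced by one of the algorithms of Section~\ref{section2a} really is the graph drawn in Figure~\ref{exrunpic} and not, for instance, a larger automaton equivalent to it up to state-merging. This is a finite check (carried out in Examples~\ref{excont1}--\ref{excont3}), but it is the step that cannot be avoided: everything else---existence, uniqueness, order, and break sequence---follows formally from the Galois-theoretic construction, Hensel's Lemma, and the Witt-vector interpretation of the break sequence.
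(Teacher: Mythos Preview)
Your proposal is correct and follows exactly the approach the paper takes: the proposition is stated in the paper immediately after the running example as a summary, with the preceding text explicitly saying ``The results obtained in our running example \ref{exrun}, \ref{exrunvert}, and either one of \ref{excont1}, \ref{excont2} or \ref{excont3} may be summarised as follows,'' and the proposition carries a \qed with no further argument. Your assembly of these pieces (Witt-vector construction and uniqueness via Newton polygon from Example~\ref{exrun}, graph structure from any of \ref{excont1}--\ref{excont3}, vertex labels from Example~\ref{exrunvert}, order from the Galois group, break sequence from Example~\ref{exjap}(a) or direct computation) matches the paper's intended proof precisely.
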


\section{An enumeration algorithm for automata on at most $N$ states representing finite order series}\label{algoNvertices}

\subsection{An abstract algorithm} Before we start applying our construction in concrete cases, we discuss an enumeration algorithm for finding all `small' (in terms of number of states) minimal automata representing an element in $\No(\F_2)$ of given finite order. The theoretical algorithm, which can readily be generalised to $p$-automata and order $p^n$ elements in $\No(\F_p)$, consists of two parts.

\begin{algorithm}[Compositional Power Automaton] \mbox{} \label{algoRprep} 
\begin{quote} \begin{enumerate}
 \item[{\footnotesize \tt Input}] A $2$-automaton $A$ and an integer $n \geq 0$.
 \item[{\footnotesize \tt Output}] If $\sigma$ denotes the series corresponding to $A$, a $2$-automaton $A_n$ corresponding to the series $\sigma^{\circ 2^n}$.
  \end{enumerate} 
 \end{quote} 

\begin{enumerate} 
\item Find a polynomial $F(t,X)\in\F_2[t,X]$ with $F(t,\sigma)=0$. This can be done by following the proof of Christol's Theorem \ref{christol} (in the direction different from the one used in Section \ref{section2a})---from the automaton, determine the $2$-kernel using \cite[Thm.\ 6.6.2]{AS} and then follow the first part of the proof in \cite[Thm.\ 12.2.5]{AS}.  
\item Composing with $\sigma(t)$ on the right gives  $F(\sigma(t),\sigma^{\circ 2}(t))=0$. Eliminate $Y$  from $F(t,Y)=F(Y,X)=0$ to produce an algebraic equation $F_1(t,X)=0$ satisfied by $X=\sigma^{\circ 2}$. Repeat this procedure to produce an algebraic equation $F_n(t,X)=0$ for $\sigma^{\circ 2^n}$. 
\item Construct an automaton $A_n$ for $\sigma^{\circ 2^n}$ from the equation $F_n(t,X)=0$, using the methods of Section \ref{section2a}.  \hfill $\lrcorner$ 
\end{enumerate}
\end{algorithm} 

We will use the well-known fact that to each automaton $A$ corresponds a unique minimal deterministic finite automaton $\hat A$ with the same corresponding series, and that $\hat A$ can be computed from $A$ by an algorithm, see e.g.\ \cite[\S 2.4]{Linz}. In particular, one can check by an algorithm whether or not two automata $A$ and $B$ correspond to the same series---this happens precisely when $\hat A = \hat B$. 

\begin{algorithm}[Enumeration Bounded Size Automata of Fixed Compositional Order] \mbox{} \label{algoR} 
\begin{quote} \begin{enumerate}
 \item[{\footnotesize \tt Input}] Integers $n\geq 0$ and $N\geq 1$.
 \item[{\footnotesize \tt Output}] A finite list of all minimal 2-automata on  at most $ N$ states representing an element of finite order $2^n$ in $\No(\F_2)$.
  \end{enumerate} 
 \end{quote} 

\begin{enumerate} 
\item Go over all 2-automata on  at most $ N$ states and eliminate those for which the corresponding power series is not of the form $\sigma=t+O(t^{2})$.
\item Remove duplicates from the list by comparing their minimal automata. 
\item For each remaining automaton $A$ use Algorithm \ref{algoRprep} to compute the automaton $A_n$. 
\item Compute the minimal automaton $\hat A_n$ corresponding to $A_n$ and check whether it equals the $3$-state minimal automaton generating the series $t$, depicted in Figure \ref{figt}. \hfill $\lrcorner$ 
\end{enumerate}
\end{algorithm} 
\begin{center} 
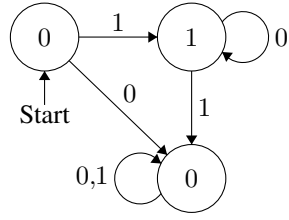
\begin{figure} 
\begin{tikzpicture}[scale=0.15, baseline=-47]
\tikzstyle{every node}+=[inner sep=0pt]
\draw [black] (27.3,-25.2) circle (3);
\draw (27.3,-25.2) node {{\small $0$}};
\draw [black] (40.2,-25.2) circle (3);
\draw (40.2,-25.2) node {{\small $1$}};
\draw [black] (40.2,-37.7) circle (3);
\draw (40.2,-37.7) node {{\small $0$}};
\draw [black] (27.3,-31.2) -- (27.3,-28.2);
\draw (27.3,-31.2) node [below] {{\small Start}};
\fill [black] (27.3,-28.2) -- (26.8,-29) -- (27.8,-29);
\draw [black] (30.3,-25.2) -- (37.2,-25.2);
\fill [black] (37.2,-25.2) -- (36.4,-24.7) -- (36.4,-25.7);
\draw (33.75,-24.7) node [above] {{\small $1$}};
\draw [black] (42.88,-23.877) arc (144:-144:2.25);
\draw (47.45,-25.2) node [right] {{\small $0$}};
\fill [black] (42.88,-26.52) -- (43.23,-27.4) -- (43.82,-26.59);
\draw [black] (37.52,-39.023) arc (-36:-324:2.25);
\draw (32.95,-37.7) node [left] {{\small $0,\hspace*{-0.8mm}1$}};
\fill [black] (37.52,-36.38) -- (37.17,-35.5) -- (36.58,-36.31);
\draw [black] (29.45,-27.29) -- (38.05,-35.61);
\fill [black] (38.05,-35.61) -- (37.82,-34.7) -- (37.12,-35.41);
\draw (34.77,-30.97) node [above] {{\small $0$}};
\draw [black] (40.2,-28.2) -- (40.2,-34.7);
\fill [black] (40.2,-34.7) -- (40.7,-33.9) -- (39.7,-33.9);
\draw (40.7,-31.45) node [right] {{\small $1$}};
\end{tikzpicture} 
\caption{Automaton for the power series $t$.}  \label{figt} 
\end{figure} 
\end{center} 

We do not know of an algorithm that lists all automata of size at most $ N$ corresponding to series of arbitrary but finite compositional order. 

\subsection{A practical implementation with application} A practical implementation of a  more optimal algorithm in \textup{{C}\texttt{++}} was given by Groot Koerkamp \cite{Ragnar} and produces a list of candidates for automata on at most $5$ states representing series of order $2$ and $4$. Running that algorithm, we find a unique candidate automaton corresponding to a series of order $4$. Since we already know from Proposition \ref{sigmaminprop} that $\sigma_{\mathrm{min}}$ is an order-$4$ series which is represented by an automaton with $5$ states, this proves the following. 

\begin{proposition}[{Groot Koerkamp, \cite{Ragnar}}] \label{propsigmamin} 
The \emph{unique} minimal (leading zeros invariant) $2$-automaton \emph{with at most $5$ states} representing a power series of compositional order $4$ is the one corresponding to the series $\sigma_{\mathrm{min}}$ and depicted in Figure \textup{\ref{exrunpic}}. \qed
\end{proposition}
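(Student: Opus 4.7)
The strategy is to invoke Algorithm \ref{algoR} (Enumeration of Bounded Size Automata of Fixed Compositional Order) with parameters $n=2$ and $N=5$, which by construction outputs a complete list of all minimal leading-zeros-invariant $2$-automata on at most $5$ states whose associated power series has order $4$ in $\No(\F_2)$. Since Proposition \ref{sigmaminprop} already exhibits $\sigma_{\mathrm{min}}$ as one such series, represented by the $5$-state automaton in Figure \ref{exrunpic}, it suffices to show that this enumeration produces no other candidate. The plan is therefore to describe how to carry out this (finite) enumeration and to appeal to the exhaustive computer search of \cite{Ragnar} for the verification that the list has length one.

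First, I would enumerate a complete set of representatives, up to relabelling of states, of minimal leading-zeros invariant $2$-automata on at most $5$ states. Concretely, each such automaton is specified by: a state count $s\in\{1,\dots,5\}$; a start state; for each state, two outgoing edges (one labelled $0$, one labelled $1$) with targets among the $s$ states; and a label in $\F_2$ on each state. The constraints to impose are (a) all states accessible from the start, (b) the $0$-labelled edge out of every state landing on a state with the same output label, and (c) minimality in the standard DFA sense (no two distinct states having the same output label and the same successor behaviour). Candidates are reduced modulo isomorphism by fixing, for instance, a breadth-first canonical labelling from the start state.

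Next, I would discard every candidate whose associated power series $\sigma$ is not of the form $t+O(t^2)$: this amounts to inspecting the output at the start vertex (it must be $0$) and at the vertices reached along the walks $1$ and $01$ (which must give $a_1=1$ and $a_2=0$). For each surviving candidate I would run Algorithm \ref{algoRprep} with $n=2$: extract the $2$-kernel of $\sigma$ via \cite[Thm.\ 6.6.2]{AS}, derive an algebraic equation $F(t,X)=0$ for $\sigma$ over $\F_2(t)$ following the proof of Christol's theorem, iterate resultant elimination twice to obtain an equation $F_2(t,X)=0$ satisfied by $\sigma^{\circ 4}$, and test whether $X=t$ is a solution; equivalently, construct the minimal automaton for $\sigma^{\circ 4}$ and compare it with the $3$-state automaton of Figure \ref{figt}.

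The main obstacle, and the reason why the verification is delegated to a machine, is the combinatorial explosion in the enumeration step: even after quotienting by state relabelling and imposing accessibility, leading-zeros invariance, and minimality, the number of $\leq 5$-state candidates is large, and performing a resultant elimination and an order-$4$ check on each is nontrivial. The optimised \textup{C\texttt{++}} implementation of Groot Koerkamp \cite{Ragnar} addresses this by interleaving candidate construction with aggressive early termination (pruning as soon as a partial automaton violates the initial-coefficient condition, symmetry-breaking during canonicalisation, and cheap necessary conditions for finite compositional order). The output of this search is a single candidate automaton, which must therefore coincide with the one of Figure \ref{exrunpic} by Proposition \ref{sigmaminprop}, proving uniqueness.
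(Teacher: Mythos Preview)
Your approach is essentially the same as the paper's: enumerate (leading-zeros-invariant, minimal) $2$-automata on at most $5$ states via Groot Koerkamp's implementation, filter for those producing a series of the form $t+O(t^2)$, test each survivor for compositional order $4$, and conclude uniqueness from the fact that the search returns a single candidate, which must then be $\sigma_{\mathrm{min}}$ by Proposition~\ref{sigmaminprop}.

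There is, however, a slip in your filtering condition: the form $t+O(t^2)$ imposes only $a_0=0$ and $a_1=1$, with no constraint on $a_2$. In fact $\sigma_{\mathrm{min}}=t+t^2+\cdots$ has $a_2=1$, so requiring $a_2=0$ would wrongly discard the very series you are trying to find. The walk you label ``$01$'' (first edge $0$, then edge $1$) computes $a_2$ in the reverse-reading convention; if you intended to check leading-zeros invariance for $a_1$, the relevant walk is ``$10$'' (first edge $1$, then edge $0$), which must land on a vertex with the same label as the walk ``$1$''.
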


\section{Construction and classification of some order-$4$ elements} 

\label{section3}

\subsection{Order $4$, break sequence $(1,3)=\langle 1,2 \rangle$} Below are two known explicit  power series with this order and break sequence: the one discovered by Chinburg and Symonds \cite{ChinburgSymonds} and its compositional inverse, computed by Scherr and Zieve \cite[Remark 1.4]{BCPS}: 
\begin{align} 
&\sigma_{\mathrm{CS}} := t+t^2 + \sum_{k \geq 0} \sum_{\ell=0}^{2^k-1} t^{6\cdot 2^k+2 \ell} = t+t^2+O(t^6);  \label{cseq} \\
&\sigma^{\circ 3}_{\mathrm{CS}} = \sum_{k \geq 0} \left( t^{3\cdot 2^k-2} + t^{4 \cdot 2^k-2}\right)  = t+t^2+t^4+O(t^6). \label{cs3eq}
\end{align} 
An unpublished result of Lubin (\cite{Lubinunpublished}, see \cite[Thm.\ 2.2]{HK} for a proof) implies that there are precisely two conjugacy classes of such elements in $\No(\F_2)$. We now present a slightly more detailed lemma that allows us to distinguish between these conjugacy classes based on the first few coefficients alone.
\begin{lemma} \label{recognise} Let $\sigma \in \No(\F_2)$ be an automorphism of order $4$ with break sequence $(1,3)=\langle 1,2 \rangle$, and write $\us=\sum_{i=1}^{\infty} a_i t^i$ with $a_i\in\F_2$. Then $a_1=a_2=1$, $a_3=0$, and exactly one of the following cases holds\textup{:}\begin{enumerate}  \item[\textup{(}a\textup{)}] $a_4=a_5$ and $\sigma$ is conjugate to $\sigma_{\mathrm{CS}}$\textup{;}\item[\textup{(}b\textup{)}] $a_4 \neq a_5$ and $\sigma$ is conjugate to $\sigma^{\circ 3}_{\mathrm{CS}}$.
\end{enumerate}
\end{lemma}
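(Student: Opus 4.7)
The identities $a_1 = 1$, $a_2 = 1$, $a_3 = 0$ are essentially read off the definitions. The first is built into membership in $\No(\F_2)$. The second is the content of $d(\sigma) = 1$, which over $\F_2$ forces $a_2 = 1$. For the third, the formula recalled in the opening paragraph of the introduction gives the coefficient of $t^4$ in $\sigma^{\circ 2}(t)$ as $a_2(a_3+1)$; since $d(\sigma^{\circ 2}) = 3$ this must be nonzero, so $a_3 = 0$.

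The heart of the proof is to exhibit $\epsilon(\sigma) := a_4 + a_5 \in \F_2$ as a conjugacy invariant. Given arbitrary $\tau = t + c_2 t^2 + c_3 t^3 + c_4 t^4 + c_5 t^5 + O(t^6) \in \No(\F_2)$, set $\sigma' = \tau \circ \sigma \circ \tau^{-1}$ and use the equivalent identity $\sigma' \circ \tau = \tau \circ \sigma$. Expand both sides modulo $t^6$. Because squaring is a ring homomorphism in characteristic $2$, the even powers $\sigma(t)^{2k}$ and $\tau(t)^{2k}$ involve only even-order monomials, which drastically shortens the bookkeeping. Computing $\sigma(t)^i$ and $\tau(t)^i$ for $i \le 5$ to the requisite precision and matching the $t^3, t^4, t^5$ coefficients yields
\[a_3' = 0, \qquad a_4' = a_4 + c_3, \qquad a_5' = a_5 + c_3,\]
so that $a_4' + a_5' = a_4 + a_5$ in $\F_2$. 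Thus $\epsilon$ descends to conjugacy classes of order-$4$ elements of break sequence $(1,3)$.

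To finish, I would simply evaluate $\epsilon$ on the two known representatives. The product formula (\ref{cseq}) for $\sigma_{\mathrm{CS}}$ shows that after $t^2$ the first nonzero coefficient appears at $t^6$, hence $a_4 = a_5 = 0$ and $\epsilon(\sigma_{\mathrm{CS}}) = 0$. The closed form (\ref{cs3eq}) for $\sigma_{\mathrm{CS}}^{\circ 3}$ contributes $t^{3\cdot 2-2} = t^4$ from the $k=1$ term and nothing at $t^5$, so $a_4 = 1$, $a_5 = 0$ and $\epsilon(\sigma_{\mathrm{CS}}^{\circ 3}) = 1$. By Lubin's theorem (cited immediately before the lemma) the only two conjugacy classes in question are those of $\sigma_{\mathrm{CS}}$ and $\sigma_{\mathrm{CS}}^{\circ 3}$, and since $\epsilon$ separates them the dichotomy stated in (a)--(b) follows.

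The only place where care is needed is the middle paragraph: one must carry the expansions of $\sigma(t)^i$ and $\tau(t)^i$ far enough and keep track of the characteristic-$2$ cancellations. The reason the argument succeeds is that after imposing $a_3 = a_3' = 0$ and exploiting Frobenius, the dependence on $c_2, c_4, c_5$ visible in $a_4'$ and $a_5'$ separately cancels in the sum, leaving only the shift by $c_3$ that disappears in $\F_2$. Once this computation is organized (easily verified with a short symbolic calculation), the remaining steps are immediate.
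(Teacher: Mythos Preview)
Your proof is correct and follows essentially the same path as the paper, with one organisational difference. The paper first exhibits the explicit conjugation by $\phi(t)=t+t^3$ to show that each of the four patterns $(a_4,a_5)\in\F_2^2$ is realised by some conjugate of $\sigma_{\mathrm{CS}}$ or $\sigma_{\mathrm{CS}}^{\circ 3}$ (with $a_4=a_5$ falling in the first class and $a_4\neq a_5$ in the second), and then proves that the two classes are genuinely distinct by assuming a conjugating series $\psi$ exists and deriving a contradiction from the $t^4$ and $t^5$ coefficients. Your approach folds both steps into a single computation: you show directly that $\epsilon(\sigma)=a_4+a_5$ is invariant under conjugation by an arbitrary $\tau$, which simultaneously handles ``same $\epsilon$ $\Rightarrow$ possibly conjugate'' and ``different $\epsilon$ $\Rightarrow$ not conjugate''. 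The underlying calculation is the same (your identities $a_4'=a_4+c_3$, $a_5'=a_5+c_3$ are exactly what is behind the paper's contradiction and its choice of $\phi$), and both arguments appeal to Lubin's count of two conjugacy classes to close the loop. Your packaging is slightly more economical; the paper's has the minor advantage of exhibiting an explicit conjugating element between the two truncations in each class.
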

\begin{proof} We have $a_1=1$ since $\sigma\in\No(\F_2)$, and $a_2=1$, $a_3=0$ since $\sigma$ has 
lower break sequence $(1,3)$; for the latter statement, compute the power series  $\sigma^{\circ 2}=t+(1+a_3)t^4+O(t^5)$. The only possibilities for such series up to $O(t^6)$ are hence the four truncated series $\sigma = t + t^2 + a_4 t^4 + a_5 t^5 +O(t^6)$ with $a_4,a_5 \in \F_2$. Two of these correspond to (\ref{cseq}) and (\ref{cs3eq}), and for the other two, we observe that conjugating by $\phi:t\mapsto t+t^3$ gives 
\begin{align*} 
\phi^{-1} \circ \sigma_{\mathrm{CS}} \circ \phi &= t+t^2+t^4+t^5+ O(t^6);  \\
\phi^{-1} \circ \sigma^{\circ 3}_{\mathrm{CS}} \circ \phi &=  t+t^2+t^5+O(t^6).  
\end{align*} 
The quoted result of Lubin in \cite[Thm.\ 2.2]{HK} implies that there are precisely two conjugacy classes of power series with break sequence $(1,3)=\langle 1,2\rangle$. 
To finish the proof it is therefore enough to show that any automorphisms $\sigma, \tau \in \No(\F_2)$ with $$\sigma=t+t^2+O(t^6) \qquad \text{and} \qquad \tau=t+t^2+t^4+O(t^6)$$ are not conjugate in $\No(\F_2)$. Suppose this is the case, and let $\psi \in \No(\F_2)$ be such that $\sigma\circ \psi=\psi\circ \tau$. This implies that  
\begin{equation}\label{eqn:wiewiorka} 
\psi(t)+\psi(t)^2+O(t^6)=\psi(t+t^2+t^4)+O(t^6).
\end{equation} 
Writing $\psi(t)=t + \sum_{i=2}^{\infty}b_i t^i$ with $b_i\in \F_2$ and comparing the coefficients of $t^4$ and $t^5$ in \eqref{eqn:wiewiorka} gives $$b_2^2+b_4=1+b_2+b_3+b_4\qquad \text{and}\qquad b_5=b_3+b_5,$$ which gives a contradiction since $b_2\in\F_2$.
\end{proof}

\begin{corollary} \label{31class} 
The series $\us_{\mathrm{CS}}$ and $\us^{\circ 3}_{\mathrm{CS}}$ form a full set of representatives for the conjugacy classes of elements of order $4$ with break sequence $(1,3)=\langle 1,2 \rangle$ in $\No(\F_2)$. \qed
\end{corollary}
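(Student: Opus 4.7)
The plan is to deduce the corollary directly from Lemma \ref{recognise}, since essentially all the work has already been done there. The approach is to verify two things: (1) that every element of order $4$ with break sequence $(1,3)$ falls into one of the two conjugacy classes represented by $\sigma_{\mathrm{CS}}$ and $\sigma_{\mathrm{CS}}^{\circ 3}$, and (2) that $\sigma_{\mathrm{CS}}$ and $\sigma_{\mathrm{CS}}^{\circ 3}$ are themselves in distinct conjugacy classes.

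For (1), I would simply quote the dichotomy in Lemma \ref{recognise}: any such $\sigma$ has initial expansion $t+t^2+a_4t^4+a_5t^5+O(t^6)$, and depending on whether $a_4=a_5$ or $a_4\neq a_5$, it is conjugate to $\sigma_{\mathrm{CS}}$ or to $\sigma_{\mathrm{CS}}^{\circ 3}$ respectively. Together with the hypothesis that $\sigma$ has break sequence $(1,3)$, this gives coverage of all conjugacy classes.

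For (2), I would just inspect the initial coefficients of the two representatives: from Equation \textup{(\ref{cseq})}, $\sigma_{\mathrm{CS}}=t+t^2+O(t^6)$, so $a_4=a_5=0$, placing $\sigma_{\mathrm{CS}}$ in case (a); from Equation \textup{(\ref{cs3eq})}, $\sigma_{\mathrm{CS}}^{\circ 3}=t+t^2+t^4+O(t^6)$, so $a_4=1$ and $a_5=0$, placing $\sigma_{\mathrm{CS}}^{\circ 3}$ in case (b). The final step in the proof of Lemma \ref{recognise} established that elements satisfying $\sigma\equiv t+t^2\pmod{t^6}$ and $\tau\equiv t+t^2+t^4\pmod{t^6}$ are not conjugate in $\No(\F_2)$, so this immediately yields that the two representatives are not conjugate.

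There is no real obstacle here since this is purely an aggregation of what Lemma \ref{recognise} already delivers; the only thing to be careful about is to make sure one reads off the correct residues of $a_4,a_5$ from the closed-form expressions (\ref{cseq}) and (\ref{cs3eq}) to confirm that the two series land in opposite cases of the dichotomy. The one subtlety worth flagging is that the conjugacy count $2$ used in the lemma comes from the cited unpublished result of Lubin \textup{\cite{Lubinunpublished}}, so the corollary ultimately rests on that input, but beyond invoking it nothing further needs to be proved.
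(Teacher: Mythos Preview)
Your proposal is correct and matches the paper's approach: the corollary is stated with a \qed immediately after it, indicating the authors regard it as an immediate consequence of Lemma~\ref{recognise}, exactly as you have spelled out. Your explicit verification that $\sigma_{\mathrm{CS}}$ and $\sigma_{\mathrm{CS}}^{\circ 3}$ fall into opposite cases of the dichotomy is a reasonable unpacking of what the paper leaves implicit.
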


The following different power series of order $4$ and break sequence $(1,3)$ was found earlier by Jean in \cite{Jeanthesis} as a solution to the equation $(t+1) \us^2 + (t^2+1) \us + t = 0$: 
\begin{equation} 
 \sigma_{\mathrm{J}}  := \sum_{k\geq 0}\frac{t^{2^k}}{(t+1)^{3\cdot 2^k-1}}  
= t+t^2+t^5 +O(t^6). \label{Jeq} 
 \end{equation} 
Lemma \ref{recognise} implies that it is conjugate to $\sigma^{\circ 3}_{\mathrm{CS}}$. 

\begin{center}
\begin{table}[h]
\begin{tabular}{cc}
\hline \\
\begin{tikzpicture}[scale=0.14]
\tikzstyle{every node}+=[inner sep=0pt]
\draw [black] (25.8,-4.8) circle (3);
\draw (25.8,-4.8) node {$0$};
\draw [black] (39.7,-16.8) circle (3);
\draw (39.7,-16.8) node {$1$};
\draw [black] (39.7,-29.6) circle (3);
\draw (39.7,-29.6) node {$0$};
\draw [black] (25.8,-16.8) circle (3);
\draw (25.8,-16.8) node {$0$};
\draw [black] (11.6,-16.8) circle (3);
\draw (11.6,-16.8) node {$1$};
\draw [black] (11.6,-29.6) circle (3);
\draw (11.6,-29.6) node {$1$};
\draw [black] (25.8,-29.6) circle (3);
\draw (25.8,-29.6) node {$0$};
\draw [black] (39.7,-19.8) -- (39.7,-26.6);
\fill [black] (39.7,-26.6) -- (40.2,-25.8) -- (39.2,-25.8);
\draw (39.2,-23.2) node [left] {$1$};
\draw [black] (42.38,-28.277) arc (144:-144:2.25);
\draw (46.95,-29.6) node [right] {$0,\!1$};
\fill [black] (42.38,-30.92) -- (42.73,-31.8) -- (43.32,-30.99);
\draw [black] (28.07,-6.76) -- (37.43,-14.84);
\fill [black] (37.43,-14.84) -- (37.15,-13.94) -- (36.5,-14.7);
\draw (33.76,-10.31) node [above] {$1$};
\draw [black] (25.8,-7.8) -- (25.8,-13.8);
\fill [black] (25.8,-13.8) -- (26.3,-13) -- (25.3,-13);
\draw (26.3,-10.8) node [right] {$0$};
\draw [black] (22.8,-16.8) -- (14.6,-16.8);
\fill [black] (14.6,-16.8) -- (15.4,-17.3) -- (15.4,-16.3);
\draw (18.7,-16.3) node [above] {$1$};
\draw [black] (11.6,-19.8) -- (11.6,-26.6);
\fill [black] (11.6,-26.6) -- (12.1,-25.8) -- (11.1,-25.8);
\draw (11.1,-23.2) node [left] {$0$};
\draw [black] (13.83,-27.59) -- (23.57,-18.81);
\fill [black] (23.57,-18.81) -- (22.64,-18.97) -- (23.31,-19.72);
\draw (17.69,-22.71) node [above] {$1$};
\draw [black] (24.557,-26.878) arc (-162.48069:-197.51931:12.219);
\fill [black] (24.56,-26.88) -- (24.79,-25.96) -- (23.84,-26.27);
\draw (23.49,-23.2) node [left] {$0$};
\draw [black] (23.12,-30.923) arc (324:36:2.25);
\draw (18.55,-29.6) node [left] {$0$};
\fill [black] (23.12,-28.28) -- (22.77,-27.4) -- (22.18,-28.21);
\draw [black] (9.084,-31.213) arc (-29.60769:-317.60769:2.25);
\draw (4.24,-30.46) node [left] {$0$};
\fill [black] (8.79,-28.58) -- (8.34,-27.75) -- (7.85,-28.62);
\draw [black] (8.92,-18.123) arc (-36:-324:2.25);
\draw (4.35,-16.8) node [left] {$1$};
\fill [black] (8.92,-15.48) -- (8.57,-14.6) -- (7.98,-15.41);
\draw [black] (17.2,-4.8) -- (22.8,-4.8);
\draw (16.7,-4.8) node [left] {$\text{{\small Start}}$};
\fill [black] (22.8,-4.8) -- (22,-4.3) -- (22,-5.3);
\draw [black] (42.38,-15.477) arc (144:-144:2.25);
\draw (46.95,-16.8) node [right] {$0$};
\fill [black] (42.38,-18.12) -- (42.73,-19) -- (43.32,-18.19);
\draw [black] (26.779,-19.631) arc (13.46785:-13.46785:15.325);
\fill [black] (26.78,-19.63) -- (26.48,-20.53) -- (27.45,-20.29);
\draw (27.7,-23.2) node [right] {$1$};
\node at (26,-43) {Automaton for $\us_{\mathrm{CS}}$};
\end{tikzpicture}
&
\begin{tikzpicture}[scale=0.155]
\tikzstyle{every node}+=[inner sep=0pt]
\draw [black] (33.2,-32.2) circle (3);
\draw (33.2,-32.2) node {$0$};
\draw [black] (20,-23.9) circle (3);
\draw (20,-23.9) node {$1$};
\draw [black] (46.1,-23.9) circle (3);
\draw (46.1,-23.9) node {$0$};
\draw [black] (20,-4.2) circle (3);
\draw (20,-4.2) node {$1$};
\draw [black] (46.1,-4.2) circle (3);
\draw (46.1,-4.2) node {$1$};
\draw [black] (33.2,-23.9) circle (3);
\draw (33.2,-23.9) node {$0$};
\draw [black] (33.2,-11.2) circle (3);
\draw (33.2,-11.2) node {$0$};
\draw [black] (33.2,-38.2) -- (33.2,-35.2);
\draw (33.2,-38.7) node [below] {$\text{{\small Start}}$};
\fill [black] (33.2,-35.2) -- (32.7,-36) -- (33.7,-36);
\draw [black] (35.72,-30.58) -- (43.58,-25.52);
\fill [black] (43.58,-25.52) -- (42.63,-25.54) -- (43.17,-26.38);
\draw (40.65,-28.55) node [below] {$0$};
\draw [black] (30.66,-30.6) -- (22.54,-25.5);
\fill [black] (22.54,-25.5) -- (22.95,-26.35) -- (23.48,-25.5);
\draw (27.6,-27.55) node [above] {$1$};
\draw [black] (46.1,-20.9) -- (46.1,-7.2);
\fill [black] (46.1,-7.2) -- (45.6,-8) -- (46.6,-8);
\draw (46.6,-14.05) node [right] {$1$};
\draw [black] (43.1,-4.2) -- (23,-4.2);
\fill [black] (23,-4.2) -- (23.8,-4.7) -- (23.8,-3.7);
\draw (33.05,-3.7) node [above] {$0$};
\draw [black] (20,-7.2) -- (20,-20.9);
\fill [black] (20,-20.9) -- (20.5,-20.1) -- (19.5,-20.1);
\draw (19.5,-14.05) node [left] {$0,\!1$};
\draw [black] (48.78,-2.877) arc (144:-144:2.25);
\draw (53.35,-4.2) node [right] {$1$};
\fill [black] (48.78,-5.52) -- (49.13,-6.4) -- (49.72,-5.59);
\draw [black] (43.1,-23.9) -- (36.2,-23.9);
\fill [black] (36.2,-23.9) -- (37,-24.4) -- (37,-23.4);
\draw (39.65,-23.4) node [above] {$0$};
\draw [black] (30.2,-23.9) -- (23,-23.9);
\fill [black] (23,-23.9) -- (23.8,-24.4) -- (23.8,-23.4);
\draw (26.6,-23.4) node [above] {$1$};
\draw [black] (33.2,-20.9) -- (33.2,-14.2);
\fill [black] (33.2,-14.2) -- (32.7,-15) -- (33.7,-15);
\draw (33.7,-17.55) node [right] {$0$};
\draw [black] (22.16,-21.82) -- (31.04,-13.28);
\fill [black] (31.04,-13.28) -- (30.11,-13.47) -- (30.81,-14.19);
\draw (26.1,-15.03) node [below] {$1$};
\draw [black] (21.323,-26.58) arc (54:-234:2.25);
\draw (20,-31.15) node [below] {$0$};
\fill [black] (18.68,-26.58) -- (17.8,-26.93) -- (18.61,-27.52);
\draw [black] (35.88,-9.877) arc (144:-144:2.25);
\draw (40.45,-11.2) node [right] {$0,\!1$};
\fill [black] (35.88,-12.52) -- (36.23,-13.4) -- (36.82,-12.59);
\node at (35,-43) {Automaton for $\us_{\mathrm{CS}}^{\circ 3}$};
\end{tikzpicture}
\\
\hline \\
\begin{tikzpicture}[scale=0.16]
\tikzstyle{every node}+=[inner sep=0pt]
\draw [black] (41.1,-21) circle (3);
\draw (41.1,-21) node {$0$};
\draw [black] (29.6,-21) circle (3);
\draw (29.6,-21) node {$1$};
\draw [black] (51.9,-21) circle (3);
\draw (51.9,-21) node {$0$};
\draw [black] (29.6,-33.3) circle (3);
\draw (29.6,-33.3) node {$1$};
\draw [black] (51.9,-33.3) circle (3);
\draw (51.9,-33.3) node {$0$};
\draw [black] (18.1,-21) circle (3);
\draw (18.1,-21) node {$1$};
\draw [black] (63.2,-21) circle (3);
\draw (63.2,-21) node {$0$};
\draw [black] (18.1,-33.3) circle (3);
\draw (18.1,-33.3) node {$1$};
\draw [black] (63.2,-33.3) circle (3);
\draw (63.2,-33.3) node {$0$};
\draw [black] (41.1,-13.6) -- (41.1,-18);
\draw (41.1,-13.1) node [above] {$\text{{\small Start}}$};
\fill [black] (41.1,-18) -- (41.6,-17.2) -- (40.6,-17.2);
\draw [black] (38.1,-21) -- (32.6,-21);
\fill [black] (32.6,-21) -- (33.4,-21.5) -- (33.4,-20.5);
\draw (35.35,-20.5) node [above] {$1$};
\draw [black] (54.9,-21) -- (60.2,-21);
\fill [black] (60.2,-21) -- (59.4,-20.5) -- (59.4,-21.5);
\draw (57.55,-20.5) node [above] {$0$};
\draw [black] (26.6,-21) -- (21.1,-21);
\fill [black] (21.1,-21) -- (21.9,-21.5) -- (21.9,-20.5);
\draw (23.85,-20.5) node [above] {$0$};
\draw [black] (20.15,-23.19) -- (27.55,-31.11);
\fill [black] (27.55,-31.11) -- (27.37,-30.18) -- (26.64,-30.87);
\draw (24.38,-25.68) node [right] {$1$};
\draw [black] (61.17,-23.21) -- (53.93,-31.09);
\fill [black] (53.93,-31.09) -- (54.84,-30.84) -- (54.1,-30.16);
\draw (57.01,-25.69) node [left] {$1$};
\draw [black] (49.27,-22.45) -- (32.23,-31.85);
\fill [black] (32.23,-31.85) -- (33.17,-31.9) -- (32.69,-31.03);
\draw (34.75,-26.65) node [above] {$1$};
\draw [black] (32.23,-22.45) -- (49.27,-31.85);
\fill [black] (49.27,-31.85) -- (48.81,-31.03) -- (48.33,-31.9);
\draw (46.75,-26.65) node [above] {$1$};
\draw [black] (26.6,-33.3) -- (21.1,-33.3);
\fill [black] (21.1,-33.3) -- (21.9,-33.8) -- (21.9,-32.8);
\draw (23.85,-32.8) node [above] {$0$};
\draw [black] (54.9,-33.3) -- (60.2,-33.3);
\fill [black] (60.2,-33.3) -- (59.4,-32.8) -- (59.4,-33.8);
\draw (57.55,-32.8) node [above] {$0$};
\draw [black] (16.777,-18.32) arc (234:-54:2.25);
\draw (18.1,-13.75) node [above] {$0$};
\fill [black] (19.42,-18.32) -- (20.3,-17.97) -- (19.49,-17.38);
\draw [black] (61.877,-18.32) arc (234:-54:2.25);
\draw (63.2,-13.75) node [above] {$0$};
\fill [black] (64.52,-18.32) -- (65.4,-17.97) -- (64.59,-17.38);
\draw [black] (19.423,-35.98) arc (54:-234:2.25);
\draw (18.1,-40.55) node [below] {$0,\!1$};
\fill [black] (16.78,-35.98) -- (15.9,-36.33) -- (16.71,-36.92);
\draw [black] (64.523,-35.98) arc (54:-234:2.25);
\draw (63.2,-40.55) node [below] {$0,\!1$};
\fill [black] (61.88,-35.98) -- (61,-36.33) -- (61.81,-36.92);
\draw [black] (61.07,-35.41) arc (-49.10662:-130.89338:22.409);
\fill [black] (61.07,-35.41) -- (60.14,-35.56) -- (60.79,-36.31);
\draw (46.4,-41.38) node [below] {$1$};
\draw [black] (49.721,-35.359) arc (-50.34459:-129.65541:23.068);
\fill [black] (20.28,-35.36) -- (20.58,-36.25) -- (21.21,-35.48);
\draw (35,-41.17) node [below] {$1$};
\draw [black] (44.1,-21) -- (48.9,-21);
\fill [black] (48.9,-21) -- (48.1,-20.5) -- (48.1,-21.5);
\draw (46.5,-20.5) node [above] {$0$};
\node at (40,-48) {Automaton for $\us_{\mathrm{J}}$};
\end{tikzpicture}
&
\begin{tikzpicture}[scale=0.16]
\tikzstyle{every node}+=[inner sep=0pt]
\draw [black] (9.9,-35.7) circle (3);
\draw (9.9,-35.7) node {$1$};
\draw [black] (36.7,-35.7) circle (3);
\draw (36.7,-35.7) node {$0$};
\draw [black] (21.1,-35.7) circle (3);
\draw (21.1,-35.7) node {$1$};
\draw [black] (52.6,-35.7) circle (3);
\draw (52.6,-35.7) node {$0$};
\draw [black] (28.9,-25.7) circle (3);
\draw (28.9,-25.7) node {$1$};
\draw [black] (44.5,-25.7) circle (3);
\draw (44.5,-25.7) node {$1$};
\draw [black] (28.9,-16.2) circle (3);
\draw (28.9,-16.2) node {$1$};
\draw [black] (44.5,-16.2) circle (3);
\draw (44.5,-16.2) node {$0$};
\draw [black] (9.9,-9.7) circle (3);
\draw (9.9,-9.7) node {$0$};
\draw [black] (36.7,-9.7) circle (3);
\draw (36.7,-9.7) node {$1$};
\draw [black] (23.8,-9.7) circle (3);
\draw (23.8,-9.7) node {$0$};
\draw [black] (18.1,-35.7) -- (12.9,-35.7);
\fill [black] (12.9,-35.7) -- (13.7,-36.2) -- (13.7,-35.2);
\draw (15.5,-36.2) node [below] {$0$};
\draw [black] (49.6,-35.7) -- (39.7,-35.7);
\fill [black] (39.7,-35.7) -- (40.5,-36.2) -- (40.5,-35.2);
\draw (44.65,-36.2) node [below] {$0$};
\draw [black] (24.1,-35.7) -- (33.7,-35.7);
\fill [black] (33.7,-35.7) -- (32.9,-35.2) -- (32.9,-36.2);
\draw (28.9,-36.2) node [below] {$1$};
\draw [black] (50.189,-37.484) arc (-56.04265:-123.95735:33.906);
\fill [black] (12.31,-37.48) -- (12.7,-38.35) -- (13.25,-37.52);
\draw (31.25,-43.77) node [below] {$1$};
\draw [black] (10.619,-32.8) arc (193.80362:-94.19638:2.25);
\draw (16.02,-30.05) node [above] {$0,\!1$};
\fill [black] (12.64,-34.51) -- (13.54,-34.8) -- (13.3,-33.83);
\draw [black] (35.57,-38.466) arc (5.51717:-282.48283:2.25);
\draw (31.1,-41.63) node [left] {$0,\!1$};
\fill [black] (33.82,-36.49) -- (32.97,-36.07) -- (33.07,-37.06);
\draw [black] (28.9,-19.2) -- (28.9,-22.7);
\fill [black] (28.9,-22.7) -- (29.4,-21.9) -- (28.4,-21.9);
\draw (29.4,-20.95) node [right] {$0$};
\draw [black] (30.393,-23.112) arc (177.74705:-110.25295:2.25);
\draw (35.12,-20.61) node [right] {$0$};
\fill [black] (31.86,-25.31) -- (32.64,-25.84) -- (32.68,-24.84);
\draw [black] (27.79,-18.99) -- (22.21,-32.91);
\fill [black] (22.21,-32.91) -- (22.98,-32.36) -- (22.05,-31.99);
\draw (24.25,-25.07) node [left] {$1$};
\draw [black] (31.66,-26.87) -- (49.84,-34.53);
\fill [black] (49.84,-34.53) -- (49.29,-33.76) -- (48.9,-34.68);
\draw (39.78,-31.21) node [below] {$1$};
\draw [black] (41.74,-26.88) -- (23.86,-34.52);
\fill [black] (23.86,-34.52) -- (24.79,-34.67) -- (24.4,-33.75);
\draw (33.77,-31.21) node [below] {$0$};
\draw [black] (41.547,-25.24) arc (288.87797:0.87797:2.25);
\draw (38.38,-20.45) node [left] {$1$};
\fill [black] (43.07,-23.08) -- (43.28,-22.16) -- (42.34,-22.48);
\draw [black] (44.5,-19.2) -- (44.5,-22.7);
\fill [black] (44.5,-22.7) -- (45,-21.9) -- (44,-21.9);
\draw (44,-20.95) node [left] {$1$};
\draw [black] (45.65,-18.97) -- (51.45,-32.93);
\fill [black] (51.45,-32.93) -- (51.6,-32) -- (50.68,-32.38);
\draw (49.29,-25.03) node [right] {$0$};
\draw [black] (34.4,-11.62) -- (31.2,-14.28);
\fill [black] (31.2,-14.28) -- (32.14,-14.15) -- (31.5,-13.38);
\draw (31.79,-12.46) node [above] {$0$};
\draw [black] (39,-11.62) -- (42.2,-14.28);
\fill [black] (42.2,-14.28) -- (41.9,-13.38) -- (41.26,-14.15);
\draw (41.61,-12.46) node [above] {$1$};
\draw [black] (26.8,-9.7) -- (33.7,-9.7);
\fill [black] (33.7,-9.7) -- (32.9,-9.2) -- (32.9,-10.2);
\draw (30.25,-9.2) node [above] {$1$};
\draw [black] (20.8,-9.7) -- (12.9,-9.7);
\fill [black] (12.9,-9.7) -- (13.7,-10.2) -- (13.7,-9.2);
\draw (16.85,-9.2) node [above] {$0$};
\draw [black] (23.8,-3.5) -- (23.8,-6.7);
\draw (23.8,-3) node [above] {$\text{{\small Start}}$};
\fill [black] (23.8,-6.7) -- (24.3,-5.9) -- (23.3,-5.9);
\draw [black] (12.766,-10.546) arc (101.28243:-186.71757:2.25);
\draw (15.39,-15.74) node [right] {$0$};
\fill [black] (10.97,-12.49) -- (10.64,-13.37) -- (11.62,-13.18);
\draw [black] (9.9,-12.7) -- (9.9,-32.7);
\fill [black] (9.9,-32.7) -- (10.4,-31.9) -- (9.4,-31.9);
\draw (10.4,-22.7) node [right] {$1$};
\node at (31,-48) {Automaton for $\us_{\mathrm{J}}^{\circ 3}$};
\end{tikzpicture} \\ 
\hline\\
\end{tabular}
\caption{Automata corresponding to series of Chinburg--Symonds and Jean and their inverses.} 
\label{sigmatau}
\end{table}
\end{center}

Let us show how the power series of Chinburg--Symonds and Jean fit into our construction, and present the corresponding automata, using the same totally ramified cyclic  extension $\F_2\lau{z}(x,y)/\F_2\lau{z}$ of degree $4$ as in Example \ref{exrun}, but choosing different uniformisers $t$.
\begin{enumerate}
\item First, let $t=yx^{-2}$. After elimination, we find the (irreducible) equations 
\begin{align*}
t^2 X^2 + X + t^2 +t &=0 ; \\
(t^2+1) X^2 + X + t&=0 
\end{align*} for $\sigma$ and $\tau$, respectively. Looking at the valuations of the coefficients, we see that these equations have unique solutions of the form $t+O(t^2)$. The corresponding automata are given in the top right ($\sigma$) and the top left  ($\tau$) of Table \ref{sigmatau}. 
We now briefly indicate how these automata can be used to construct explicit formulas for $\us$ and $\tau$, showing that $\sigma = \sigma_{\mathrm{CS}}^{\circ 3}$ and $\tau = \sigma_{\mathrm{CS}}$. 
\begin{itemize}
\item Write $\tau=\sum_{i\geq 1}a_it^i$ with $a_i\in\F_2$. We will use the automaton corresponding to $\tau$ to determine for which $i\geq 1$ we have $a_i=1$. For such $i$, starting at the start vertex and walking through the automaton following the successive digits of $i$ in base $2$ (beginning with the least significant digit), we end up in a vertex with label $1$. Since we can disregard any leading zeros, this vertex has an incoming edge with label $1$. For $\tau$ note that this property holds precisely for those $i$ for which the base-$2$ expansion is either $1$, $10$ or of the form $11d_k\cdots d_1 0$ for some $k\geq 0$, $d_1,\dots,d_k\in\{0,1\}$, i.e.\ for $i$ equal to $1$, $2$ or such that $6\cdot 2^k \leq i < 8\cdot 2^k$ for some $k\geq 0$. 
 It follows that $\tau$ is given by the formula in \eqref{cseq}. 

\item For the power series $\us=\sum_{i\geq 1}b_it^i$ we see that the positive integers $i$ for which $b_i=1$ are precisely those which have a base-$2$ expansion of the form $1$, $100$, $1^k10$ or $101^k10$ with $k\geq 0$, and these are exactly the base-$2$ expansions of the numbers $1$, $4$, $4\cdot 2^k-2$ and $12\cdot 2^k-2$. This proves the formula for $\us$ given in \eqref{cs3eq}. 
\end{itemize} 
The fact that we can find such an explicit expression appears to be quite special. This relates to the fact that the automaton is `sparse' in the sense of Section \ref{aridsection} below. 
The automaton for $\tau$ is not sparse, but the base-$2$ expansion of the occurring powers has an explicit `closed' form. It turns out that this series is sparse up to multiplication by a rational function. 

\item Second, let $t=xy^{-1}$. Then we find the (irreducible) equations 
\begin{align*}
(t+1) X^2 + (t^2+1) X + t &= 0; \\
t X^2 + (t^2+1) X + t^2 + t &=0
\end{align*} for $\us$ and $\tau$, respectively. From formula \eqref{Jeq} we deduce that $\sigma_{\mathrm{J}}$ satisfies the same algebraic equation as $\sigma$, and since this equation has a unique solution of the form $t+O(t^2)$, we have $\sigma_{\mathrm{J}}=\sigma$. Solving the equations for $\sigma$ and $\tau$ by automata, we find that $\us$ correspond to the bottom left, and $\tau$ to the bottom right automaton depicted in Table \ref{sigmatau}. Converting the automata into explicit series as above, we find (after some rewriting) that
\begin{align*}
\sigma_{\mathrm{J}}=\us &=t+(t^7+t^2)\sum_{k\geq 0}t^{8k}+\sum_{k,\ell\geq 0}\left(t^{4\cdot 2^k(4\ell+1)+1}+t^{4\cdot 2^k(4\ell+3)}\right)  \\
&=t+ \frac{t^7+t^2}{t^8+1}+ \sum _{k\geq 2} \frac{t^{ 3\cdot 2^{k}}+t^{2^k+1}}{t^{4\cdot 2^k}+1}, 
\end{align*} and 
\begin{align} \sigma_{\mathrm{J}}^{\circ 3}=\tau & = 
t+(t^{11}+t^5)\sum_{k\geq 0}t^{16k}+\sum_{k\geq 1,\, \ell\geq 0}\left(t^{2^k(2\ell+1)}+t^{4\cdot 2^k(4\ell+1)-1}+t^{4\cdot 2^k(4\ell+3)+1}\right) \nonumber \\ 
& = t+\frac{t^{11}+t^5}{t^{16}+1} + \frac{t^2}{t^2+1} +\sum_{k\geq 3,\,\ell\geq 0}\left(t^{2^k(4\ell+1)-1}+t^{2^k(4\ell+3)+1}\right).\label{J3eq} 
\end{align}
On the other hand, from the algebraic equation for $\tau$ (which has a unique solution of the form $t+O(t^2)$), we can find directly another explicit form for $\tau$: the series $\tilde\tau:=\frac{t}{t^2+1}\cdot\tau$ 
satisfies $\tilde{\tau} = t^2/(t+1)^3 +\tilde{\tau}^2$, and hence (iteratively) $\tilde{\tau} = \sum_{k \geq 0} (t^2/(t+1)^3)^{2^k}$, leading to the formula  \begin{equation} \sigma_{\mathrm{J}}^{\circ 3} =\tau = \sum_{k\geq 0}\frac{t^{2\cdot 2^k-1}}{(t+1)^{3\cdot 2^k-2}}. \label{J3eq2}\end{equation} The series $\sigma$ and $\tau$ are further closed forms of elements of order $4$ in $\No(\F_2)$ with break sequence $(1,3)$ and conjugate to $\sigma_{\mathrm{CS}}^{\circ 3}$ and $\sigma_{\mathrm{CS}}$, respectively. 
\end{enumerate} 

\begin{figure}
\begin{tikzpicture}[scale=0.15]
\tikzstyle{every node}+=[inner sep=0pt]
\draw [black] (27.6,-13.5) circle (3);
\draw (27.6,-13.5) node {$0$};
\draw [black] (13.7,-25.6) circle (3);
\draw (13.7,-25.6) node {$0$};
\draw [black] (27.6,-25.6) circle (3);
\draw (27.6,-25.6) node {$1$};
\draw [black] (13.7,-38.3) circle (3);
\draw (13.7,-38.3) node {$0$};
\draw [black] (27.6,-38.3) circle (3);
\draw (27.6,-38.3) node {$1$};
\draw [black] (41.2,-25.6) circle (3);
\draw (41.2,-25.6) node {$1$};
\draw [black] (41.2,-38.3) circle (3);
\draw (41.2,-38.3) node {$0$};
\draw [black] (29.84,-15.49) -- (38.96,-23.61);
\fill [black] (38.96,-23.61) -- (38.69,-22.7) -- (38.03,-23.45);
\draw (35.41,-19.06) node [above] {$1$};
\draw [black] (41.2,-28.6) -- (41.2,-35.3);
\fill [black] (41.2,-35.3) -- (41.7,-34.5) -- (40.7,-34.5);
\draw (40.7,-31.95) node [left] {$1$};
\draw [black] (43.717,-36.69) arc (150.34157:-137.65843:2.25);
\draw (48.56,-37.45) node [right] {$0,\!1$};
\fill [black] (44.01,-39.32) -- (44.46,-40.15) -- (44.95,-39.28);
\draw [black] (22.1,-13.5) -- (24.6,-13.5);
\draw (21.6,-13.5) node [left] {$\text{{\small Start}}$};
\fill [black] (24.6,-13.5) -- (23.8,-13) -- (23.8,-14);
\draw [black] (25.34,-15.47) -- (15.96,-23.63);
\fill [black] (15.96,-23.63) -- (16.89,-23.48) -- (16.24,-22.73);
\draw (19.64,-19.06) node [above] {$0$};
\draw [black] (13.7,-28.6) -- (13.7,-35.3);
\fill [black] (13.7,-35.3) -- (14.2,-34.5) -- (13.2,-34.5);
\draw (13.2,-31.95) node [left] {$0$};
\draw [black] (11.02,-39.623) arc (-36:-324:2.25);
\draw (6.45,-38.3) node [left] {$0$};
\fill [black] (11.02,-36.98) -- (10.67,-36.1) -- (10.08,-36.91);
\draw [black] (24.6,-25.6) -- (16.7,-25.6);
\fill [black] (16.7,-25.6) -- (17.5,-26.1) -- (17.5,-25.1);
\draw (20.65,-25.1) node [above] {$1$};
\draw [black] (24.92,-39.623) arc (-36:-324:2.25);
\draw (20.35,-38.3) node [left] {$0$};
\fill [black] (24.92,-36.98) -- (24.57,-36.1) -- (23.98,-36.91);
\draw [black] (26.251,-35.63) arc (-160.85086:-199.14914:11.22);
\fill [black] (26.25,-35.63) -- (26.46,-34.71) -- (25.52,-35.04);
\draw (25.13,-31.95) node [left] {$0$};
\draw [black] (29.114,-28.177) arc (21.92551:-21.92551:10.104);
\fill [black] (29.11,-28.18) -- (28.95,-29.11) -- (29.88,-28.73);
\draw (30.34,-31.95) node [right] {$1$};
\draw [black] (11.02,-26.923) arc (-36:-324:2.25);
\draw (6.45,-25.6) node [left] {$1$};
\fill [black] (11.02,-24.28) -- (10.67,-23.4) -- (10.08,-24.21);
\draw [black] (15.91,-36.28) -- (25.39,-27.62);
\fill [black] (25.39,-27.62) -- (24.46,-27.79) -- (25.13,-28.53);
\draw (19.64,-31.46) node [above] {$1$};
\draw [black] (43.88,-24.277) arc (144:-144:2.25);
\draw (48.45,-25.6) node [right] {$0$};
\fill [black] (43.88,-26.92) -- (44.23,-27.8) -- (44.82,-26.99);
\end{tikzpicture}

\caption{Automaton corresponding to $\us_{\mathrm{CS}}^{\circ 2}\in\No(\F_2)$ of order $2$ with break sequence $(3)$.} 
\label{ka2}
\end{figure}

The element {$\sigma_{\mathrm{min}}$} in Proposition \ref{sigmaminprop} is conjugate to $\sigma_{\mathrm{CS}}$.   

\begin{remark} \label{Carlitz} We outline a construction of an automaton for such a series of order $4$ with minimal break sequence using the Carlitz module construction of abelian extensions of function fields, see e.g.\ \cite{Hayes} (this is a global class field theory version essentially equivalent to the local method based on Lubin--Tate theory used by Jean). 

Let $\rho \colon \F_2[z] \rightarrow \End(\mathbf{G}_a)$ denote the Carlitz module for $K:=\F_2(z)$ defined by $\rho_z(X)=zX+X^2$. Now the extension $K(\rho[z^3])/K$ given by adjoining the roots of $\rho_{z^3}(X)$ is Galois with Galois group $G = \left( \F_2[z]/z^3\right)^* \cong {\Z}/{4{\Z}},$ generated by the class of $z+1$ (of order $4$), where an element $g \in G$ acts on $\alpha \in K(\rho[z^3])$ by $g(\alpha):=\rho_g(\alpha)$. A minimal polynomial for the extension is 
$f:=\rho_{z^3}(X)/\rho_{z^2}(X)=X^4+(z^2+z)X^2+z^2X+z$, its splitting field is a cyclic degree-$4$ extension in which $z$ is totally ramified (and no other place ramifies, cf.\ \cite[Prop.\ 2.2, Thm.\ 3.2]{Hayes}), and a root $t$ is a uniformiser for the extension locally above $z$. The action of a generator of the Galois group is given by  $\sigma(t) = \rho_{z+1}(t) = t+zt+t^2$. 

Eliminating $z$, we find an equation 
$(t+1)X^2+(t^2+1)X+t=0$ for $\sigma$. This is exactly the equation for $\sigma_{\mathrm{J}}$, previously obtained using Witt vectors, and  solved by a series corresponding to the automaton in Table \ref{sigmatau} with $9$ states. 
\end{remark}

 \begin{remark} \label{rem:cs2}
If $\tau$ is an element of order $4$ with break sequence $(1,3)$, then $\tau^{\circ 2}$ has break sequence $(3)$, and hence is conjugate to the Klopsch's series $\sigma_{\mathrm{K},3}$ (see Example \ref{exklopschauto}). Taking $\tau=\sigma_{\mathrm{CS}}$ produces the power series $\sigma:=\sigma_{\mathrm{CS}}^{\circ 2} = t +t^4 +O(t^5)$, which satisfies $(t^2+1) X^2 + X + t^2 + t = 0$. The corresponding automaton is presented in Figure \ref{ka2}, leading to the following explicit formula for an element of order $2$ with break sequence $(3)$: 
\begin{equation*} \label{eqcs2} 
\us_{\mathrm{CS}}^{\circ 2} = t + \sum_{k \geq 0} \sum_{\ell = 0}^{2^k-1} t^{4 \cdot 2^k + 2 \ell} = t +\frac{1}{t^2+1}\sum_{k\geq 1} (t^{2\cdot 2^k}+t^{3\cdot 2^k}). 
\end{equation*} 
 \end{remark}

\subsection{Order $4$, break sequence $(1,5)= \langle 1,3 \rangle$} By Lubin's result (\cite{Lubinunpublished}, \cite[Thm.\ 2.2]{HK}),  there is a unique conjugacy class of such power series. No formula for such a series is known, but  following our philosophy, we can represent the solution by an automaton. 

\begin{proposition} \label{15class}  Up to conjugation, every element in $\No(\F_2)$ of order $4$ with break sequence $(1,5)= \langle 1,3 \rangle$ is given by the power series $\us_{(1,5)}$ corresponding to the automaton in Figure \textup{\ref{d13}} with $13$ states, with initial coefficients 
$$\sigma_{(1,5)}=t+t^2+t^3+t^4+t^6+O(t^{7}).$$
\end{proposition}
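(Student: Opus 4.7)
The plan is to execute the four-step construction of Sections \ref{section2}--\ref{section2a}, adapting the choice of Witt-vector data so that the resulting extension has the required break sequence, and then invoking the cited uniqueness result of Lubin to close the argument.

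First, following Example \ref{exjap}(c) with $p=2$ and $m=3$, I take $\beta=(z^{-1},z^{-3})\in W_2(\F_2\lau{z})$. Theorem \ref{ASW} together with the Kanesaka--Sekiguchi proposition then guarantees that the extension $K=\F_2\lau{z}(x,y)/\F_2\lau{z}$ with $x,y$ satisfying
\[
\left\{
\begin{array}{l}
x^2+x=z^{-1},\\
y^2+y=z^{-3}+z^{-1}x,
\end{array}
\right.
\]
is totally ramified cyclic of degree $4$ with upper break sequence $\langle 1,3\rangle$, which by Formula (\ref{convert}) corresponds to the lower break sequence $(1,5)$; a generator $\sigma$ of $\Gal(K/k)$ is described by Equations (\ref{w4}).

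Next, I select a rational function $t=t(x,y)$ as a uniformiser for $K$, breaking the extension into its Artin--Schreier tower and applying Lemma \ref{ramlem} step by step to track valuations and to confirm $v_{z_2}(t)=1$. From the three equations (the defining equation of the extension, the equation $t=t(x,y)$, and the equation expressing $\sigma(t)$ as a rational function of $x,y$ using Equations (\ref{w4})), I eliminate $x$ and $y$ via a Groebner basis computation, exactly as in Example \ref{exrun}, to produce an irreducible polynomial $F(t,X)\in\F_2[t,X]$ with $F(t,\sigma)=0$. Feeding $F$ into one of the three variants of Christol's theorem described in Section \ref{input}, followed by Algorithm \ref{algo2} to fix the vertex labels on the unique Hensel solution $\sigma=t+O(t^2)$, then yields a $2$-automaton; after minimisation this should coincide with the $13$-state automaton in Figure \ref{d13}. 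The claimed initial coefficients $\sigma_{(1,5)}=t+t^2+t^3+t^4+t^6+O(t^7)$ are read off by walking the automaton on the binary expansions of $0,1,2,\dots,6$, and the break sequence $(1,5)$ is confirmed by computing $\sigma^{\circ 2}\bmod t^7$ and checking that it has depth $5$.

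Finally, uniqueness of the conjugacy class follows immediately from the cited unpublished result of Lubin (\cite{Lubinunpublished}; see also \cite[Thm.\ 2.2]{HK}), so every order-$4$ element of $\No(\F_2)$ with break sequence $(1,5)$ is $\No(\F_2)$-conjugate to $\sigma_{(1,5)}$. The main obstacle is not any single theoretical step but rather the practical one of finding a uniformiser $t$ that produces both a tractable defining polynomial $F(t,X)$ and a small automaton: as already noted in the paper, the state count depends very sensitively on this choice, so the $13$-state presentation is obtained only after experimentation with several candidate uniformisers.
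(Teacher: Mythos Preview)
Your plan is essentially the paper's, but there is one concrete technical snag you have glossed over. With your choice of generators $x=\alpha_0$, $y=\alpha_1$, the second Artin--Schreier equation reads $y^2+y=z^{-3}+z^{-1}x$; since $v_{z_1}(z)=2$, the right-hand side has $z_1$-valuation $-6$, which is \emph{even}, so Lemma~\ref{ramlem} does not apply and you cannot read off $v_{z_2}(y)$ or confirm that $K_2/K_1$ is ramified from the lemma alone. The paper handles this by passing to $y:=\alpha_1+\alpha_0^3+\alpha_0^2$, which yields
\[
\left\{
\begin{array}{l}
x^2+x=z^{-1},\\
y^2+y=x^5+x^3,
\end{array}
\right.
\]
with $v_{z_1}(x^5+x^3)=-5$ odd; now Lemma~\ref{ramlem} gives $v_{z_2}(y)=-5$, $v_{z_2}(x)=-2$. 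With the specific uniformiser $t=x^2y^{-1}$ and the Galois action $\sigma(x)=x+1$, $\sigma(y)=y+x^2+1$ (coming from \eqref{w4} after the change of variables), elimination produces the irreducible cubic
\[
t^2X^3+(t+1)^3X+t^3+t=0,
\]
which has a \emph{unique} solution $t+O(t^2)$ (check the sum and product of the three roots), and this is what feeds into the Christol algorithms to give the $13$-state automaton in Figure~\ref{d13}. So your outline is right, but the change of variables is not optional bookkeeping: it is exactly what makes the valuation computation (and hence the choice of uniformiser) go through.
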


\begin{proof} 
Suitable algebraic equations are found from Witt's theory using Example \ref{re}; following Example \ref{exjap}, we start with the element $\beta:=(z^{-1},z^{-3})\in W_2(\F_2(\!(z)\!))$, 
and rewrite the resulting equation in terms of the variables $x:=\alpha_0$ and $y:=\alpha_1+\alpha_0^3+\alpha_0^2$ as 
\begin{equation}
\left\{ \begin{array}{l}
x^2+x=z^{-1}; \\
 y^2+y=x^5+x^3. \label{xy15} 
   \end{array} \right.
\end{equation}
(The variable $y$ is used instead of $\alpha_1$ since that choice allows us to use Lemma \ref{ramlem}.)  Writing $z_0=z, z_1, z_2$ for uniformisers of the fields in the tower of extensions $$K_0:=\F_2\lau{z} \subsetneq K_1=K_0(x) = \F_2\lau{z_1} \subsetneq K_2=K_1(y) = \F_2\lau{z_2},$$ we have 
$v_{z_1}(x) = v_{z_0} (z^{-1}) = -1$, so $v_{z_1}(x^5+x^3) = -5$, and hence $v_{z_2}(y) = -5$ and $v_{z_2}(x)=-2$. Hence the extensions are all totally ramified and we can choose $t=x^2y^{-1}$ as uniformiser for $K_2$ (since $v_{z_2}(t)=1$). 
A generator $\sigma$ for the Galois group of $K_2/K_0$ is determined by
\begin{equation*}
\left\{ \begin{array}{l}
\sigma(x)=x+1; \\
\sigma(y)=y+x^2+1, 
   \end{array} \right.
\end{equation*}
and with $t=x^2y^{-1}$ we compute that $\sigma(t) = (x^2+1)/(y+x^2+1)$. By eliminating $x$ and $y$ from these last two equations and the two equations in (\ref{xy15}), 
we find that $\us=\sigma(t)$ satisfies the following (irreducible) equation over $\F_2(t)$:  
\begin{equation} \label{eq15}
t^2X^3+(t+1)^3X+t^3+t=0.
\end{equation}
Considering the sum and product of the three solutions, we find that there is a unique solution with $\us=t+O(t^2)$. 
The corresponding automaton with initial coefficients $t + t^2 + t^3 + t^4 + t^6+O(t^7)$ and Equation (\ref{eq15}) produced by the algorithm 
is displayed in Figure \ref{d13}. 
\end{proof}

\begin{center}
\begin{figure}
\begin{tikzpicture}[scale=0.15]
\tikzstyle{every node}+=[inner sep=0pt]
\draw [black] (19.6,-38.5) circle (3);
\draw (19.6,-38.5) node {$0$};
\draw [black] (9.3,-38.5) circle (3);
\draw (9.3,-38.5) node {$1$};
\draw [black] (16.6,-17.2) circle (3);
\draw (16.6,-17.2) node {$1$};
\draw [black] (29.9,-17.2) circle (3);
\draw (29.9,-17.2) node {$1$};
\draw [black] (9.3,-9.8) circle (3);
\draw (9.3,-9.8) node {$1$};
\draw [black] (29.9,-49.2) circle (3);
\draw (29.9,-49.2) node {$1$};
\draw [black] (36.8,-26.2) circle (3);
\draw (36.8,-26.2) node {$0$};
\draw [black] (48.8,-17.2) circle (3);
\draw (48.8,-17.2) node {$0$};
\draw [black] (60.8,-9.8) circle (3);
\draw (60.8,-9.8) node {$0$};
\draw [black] (73.1,-38.5) circle (3);
\draw (73.1,-38.5) node {$1$};
\draw [black] (36.8,-38.5) circle (3);
\draw (36.8,-38.5) node {$0$};
\draw [black] (48.8,-30.4) circle (3);
\draw (48.8,-30.4) node {$0$};
\draw [black] (60.8,-24.1) circle (3);
\draw (60.8,-24.1) node {$0$};
\draw [black] (16.6,-38.5) -- (12.3,-38.5);
\fill [black] (12.3,-38.5) -- (13.1,-39) -- (13.1,-38);
\draw (14.45,-38) node [above] {$1$};
\draw [black] (19.6,-31.7) -- (19.6,-35.5);
\draw (19.6,-31.2) node [above] {$\text{{\small Start}}$};
\fill [black] (19.6,-35.5) -- (20.1,-34.7) -- (19.1,-34.7);
\draw [black] (22.28,-37.177) arc (144:-144:2.25);
\draw (26.85,-38.5) node [right] {$0$};
\fill [black] (22.28,-39.82) -- (22.63,-40.7) -- (23.22,-39.89);
\draw [black] (9.3,-35.5) -- (9.3,-12.8);
\fill [black] (9.3,-12.8) -- (8.8,-13.6) -- (9.8,-13.6);
\draw (8.8,-24.15) node [left] {$1$};
\draw [black] (12.12,-10.81) -- (27.08,-16.19);
\fill [black] (27.08,-16.19) -- (26.49,-15.44) -- (26.15,-16.39);
\draw (20.52,-12.97) node [above] {$0$};
\draw [black] (11.96,-39.88) -- (27.24,-47.82);
\fill [black] (27.24,-47.82) -- (26.76,-47) -- (26.3,-47.89);
\draw (18.61,-44.35) node [below] {$0$};
\draw [black] (29.9,-20.2) -- (29.9,-46.2);
\fill [black] (29.9,-46.2) -- (30.4,-45.4) -- (29.4,-45.4);
\draw (29.4,-33.2) node [left] {$0$};
\draw [black] (30.76,-46.33) -- (35.94,-29.07);
\fill [black] (35.94,-29.07) -- (35.23,-29.7) -- (36.19,-29.98);
\draw (32.58,-37.09) node [left] {$1$};
\draw [black] (34.97,-23.82) -- (31.73,-19.58);
\fill [black] (31.73,-19.58) -- (31.82,-20.52) -- (32.61,-19.91);
\draw (33.92,-20.29) node [right] {$1$};
\draw [black] (15.63,-20.04) -- (10.27,-35.66);
\fill [black] (10.27,-35.66) -- (11,-35.07) -- (10.06,-34.74);
\draw (13.71,-28.58) node [right] {$0$};
\draw [black] (19.19,-18.691) arc (87.79558:-200.20442:2.25);
\draw (21.36,-23.31) node [below] {$1$};
\fill [black] (16.99,-20.16) -- (16.46,-20.94) -- (17.46,-20.98);
\draw [black] (26.9,-17.2) -- (19.6,-17.2);
\fill [black] (19.6,-17.2) -- (20.4,-17.7) -- (20.4,-16.7);
\draw (23.25,-17.7) node [below] {$1$};
\draw [black] (58.25,-11.37) -- (51.35,-15.63);
\fill [black] (51.35,-15.63) -- (52.3,-15.63) -- (51.77,-14.78);
\draw (53.8,-13) node [above] {$0$};
\draw [black] (45.8,-17.2) -- (32.9,-17.2);
\fill [black] (32.9,-17.2) -- (33.7,-17.7) -- (33.7,-16.7);
\draw (39.35,-16.7) node [above] {$1$};
\draw [black] (12.3,-9.8) -- (57.8,-9.8);
\fill [black] (57.8,-9.8) -- (57,-9.3) -- (57,-10.3);
\draw [black] (71.92,-35.74) -- (61.98,-12.56);
\fill [black] (61.98,-12.56) -- (61.84,-13.49) -- (62.76,-13.1);
\draw (67.68,-23.2) node [right] {$1$};
\draw [black] (74.423,-41.18) arc (54:-234:2.25);
\draw (73.1,-45.75) node [below] {$0$};
\fill [black] (71.78,-41.18) -- (70.9,-41.53) -- (71.71,-42.12);
\draw [black] (32.81,-48.48) -- (70.19,-39.22);
\fill [black] (70.19,-39.22) -- (69.29,-38.93) -- (69.53,-39.9);
\draw (52.21,-44.42) node [below] {$0$};
\draw [black] (36.8,-29.2) -- (36.8,-35.5);
\fill [black] (36.8,-35.5) -- (37.3,-34.7) -- (36.3,-34.7);
\draw (37.3,-32.35) node [right] {$0$};
\draw [black] (39.8,-38.5) -- (70.1,-38.5);
\fill [black] (70.1,-38.5) -- (69.3,-38) -- (69.3,-39);
\draw (54.95,-39) node [below] {$1$};
\draw [black] (50.048,-19.92) arc (17.84916:-17.84916:12.657);
\fill [black] (50.05,-27.68) -- (50.77,-27.07) -- (49.82,-26.76);
\draw (51.16,-23.8) node [right] {$0$};
\draw [black] (47.585,-27.664) arc (-162.67777:-197.32223:12.978);
\fill [black] (47.59,-19.94) -- (46.87,-20.55) -- (47.82,-20.85);
\draw (46.5,-23.8) node [left] {$1$};
\draw [black] (62.123,-26.78) arc (54:-234:2.25);
\draw (60.8,-31.35) node [below] {$0,\!1$};
\fill [black] (59.48,-26.78) -- (58.6,-27.13) -- (59.41,-27.72);
\draw [black] (51.46,-29.01) -- (58.14,-25.49);
\fill [black] (58.14,-25.49) -- (57.2,-25.42) -- (57.67,-26.31);
\draw (55.79,-27.75) node [below] {$0$};
\draw [black] (60.8,-12.8) -- (60.8,-21.1);
\fill [black] (60.8,-21.1) -- (61.3,-20.3) -- (60.3,-20.3);
\draw (60.3,-16.95) node [left] {$1$};
\draw [black] (39.29,-36.82) -- (46.31,-32.08);
\fill [black] (46.31,-32.08) -- (45.37,-32.11) -- (45.93,-32.94);
\draw (43.8,-34.95) node [below] {$0$};
\draw (35.52,-9.3) node [above] {$1$};
\end{tikzpicture}
 \caption{Automaton representing a power series $\us_{(1,5)}$  of order $4$ with break sequence $(1,5)$ (unique up to conjugation).} 
\label{d13}
\end{figure}
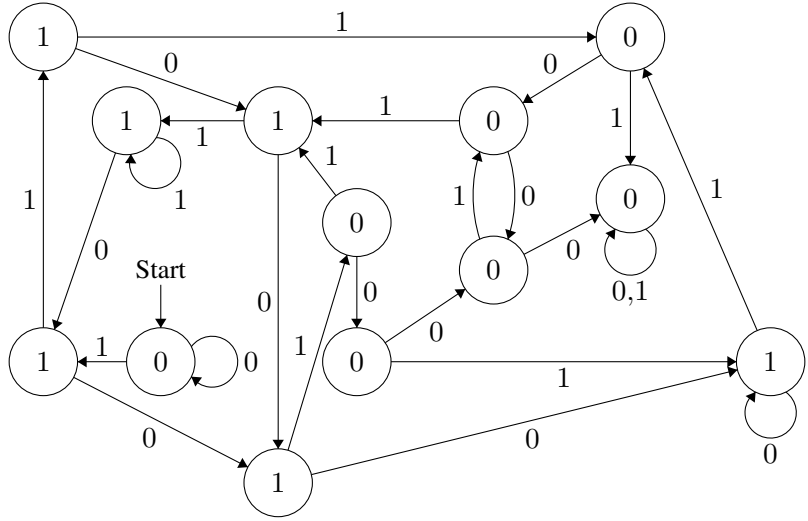
\end{center}

\subsection{Order $4$, break sequence $(1,9) = \langle 1, 5 \rangle$} Again by Lubin's result in loc.\ cit., there is a unique conjugacy class of such power series. A corresponding automaton is found as follows.

\begin{proposition} \label{19class} 
Up to conjugation, every element in $\No(\F_2)$ of order $4$ with break sequence $(1,9)= \langle 1,5 \rangle$ is given by the power series $\us_{(1,9)}$ corresponding to the automaton described as follows using the data in Table \textup{\ref{19fig}:} it has $110$ states, corresponding to the $110$ triples on the displayed ordered list, where the start vertex is the first triple on the list and a triple $(l,i,j)$ occurs on the list precisely if the following three conditions hold\textup{:} it has label $l$, there is a directed edge with label $0$ to the $i$-th triple on the list and there is a directed edge with label $1$ to the $j$-th triple on the list. The initial coefficients of $\us_{(1,9)}$ are 
$$\us_{(1,9)} =t+t^2+t^3+t^4+t^5+t^6+t^7+t^9+t^{11}+t^{12}+t^{13}+t^{17}+t^{18}+O(t^{19}). $$
\end{proposition}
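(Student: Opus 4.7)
The plan is to mirror closely the strategy used in the proof of Proposition \ref{15class}: start from a Witt-vector construction of a suitable cyclic degree-$4$ extension of $\F_2\lau{z}$, engineer the Artin--Schreier equations so that Lemma \ref{ramlem} applies, choose an explicit uniformiser, derive the algebraic equation satisfied by $\sigma(t)$, and then invoke one of the effective versions of Christol's theorem from Section \ref{section2a} to extract the automaton. The first move is to appeal to Lubin's unpublished classification (\cite{Lubinunpublished}, \cite[Thm.\ 2.2]{HK}) guaranteeing a \emph{unique} conjugacy class of order-$4$ elements in $\No(\F_2)$ with break sequence $(1,9) = \langle 1, 5\rangle$; it therefore suffices to exhibit a single representative that can be described by an explicit automaton.

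To obtain such a representative, I would follow Example \ref{exjap}(c) and start with the Witt vector $\beta = (z^{-1}, z^{-5}) \in W_2(\F_2\lau{z})$, which, by the Kanesaka--Sekiguchi formula, yields a totally ramified cyclic degree-$4$ extension with upper break sequence $\langle 1, 5\rangle$. The Witt formulas of Example \ref{re} then give two Artin--Schreier equations for generators $\alpha_0, \alpha_1$; as in the proof of Proposition \ref{15class}, one should rewrite the second using a substitution $y = \alpha_1 + P(\alpha_0)$ for a suitable polynomial $P \in \F_2[\alpha_0]$ so that with $x = \alpha_0$ it takes the form $y^2 + y = x^9 + (\text{lower terms})$, with odd leading exponent $9$. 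Iterated application of Lemma \ref{ramlem} to the tower $\F_2\lau{z} \subsetneq \F_2\lau{z}(x) \subsetneq \F_2\lau{z}(x,y)$ then confirms total ramification and yields $v_{z_1}(x) = -1$, $v_{z_2}(y) = -9$, $v_{z_2}(x) = -2$; from these valuations a rational combination of $x$ and $y$ with $v_{z_2}(t) = 1$ can be chosen as uniformiser.

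With $t$ in hand, I would compute the action of the Galois generator from $\sigma(x) = x+1$ and $\sigma(\alpha_1) = \alpha_1 + x$, propagated to $y$ through the chosen substitution and then to $t$. Collecting the resulting four relations (the two Artin--Schreier equations, the definition of $t$, and the equation $\sigma(t) = $ rational function in $x, y$), a Groebner basis elimination of $x, y, z$ (in {\sc Singular}, as in Example \ref{exrun}) produces an irreducible polynomial $F(t,X) \in \F_2[t,X]$ satisfied by $X = \sigma(t)$. Among its power series solutions, the unique one with $\sigma(t) = t + O(t^2)$ can be isolated from the $t$-adic Newton polygon, as in the proof of Proposition \ref{15class}; it automatically has order $4$, and computing its first terms by iterative coefficient extraction verifies the stated break sequence $(1,9)$ and the displayed initial coefficients up to $O(t^{19})$. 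Applying any of the three algorithms of Section \ref{section2a} to $F(t,X)$ (together with Algorithm \ref{algo2} to determine vertex labels) then produces an automaton, which is to be compared against the list described in Table \ref{19fig}.

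The main obstacle I expect is computational rather than conceptual: the resulting algebraic equation has substantially higher $X$- and $t$-degree than in the $(1,5)$ case, and the output automaton has $110$ states, which already strains the vector-space sizes bounded in Section \ref{boundcomplexitychristol}. A judicious choice of $P$ (and hence of $y$) and of the monomial combination of $x, y$ used for $t$ will be crucial to keep the intermediate Groebner basis and the size of the Cartier-orbit $S$ tractable; a poor choice may produce an equation whose associated automaton is unmanageable or whose minimisation is expensive. Once a workable equation is obtained, however, verification of the table and of the initial coefficients reduces to the standard (automated) pipeline, and the uniqueness statement follows automatically from Lubin's classification together with the check that the constructed series has the correct order and break sequence.
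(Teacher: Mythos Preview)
Your proposal is correct and follows essentially the same approach as the paper. The paper starts from $\beta = (z^{-1}, z^{-10})$ rather than your $\beta = (z^{-1}, z^{-5})$ (both give the same extension by Example~\ref{exjap}(c)), uses the explicit substitution $y = \alpha_1 + \alpha_0^{10} + \alpha_0^9 + \alpha_0^6 + \alpha_0^3 + \alpha_0$ to obtain $y^2 + y = x^9 + x$, and takes the uniformiser $t = x^{-1} y z^2$; elimination then yields a specific irreducible degree-$7$ equation whose unique solution of the form $t + O(t^2)$ is $\sigma_{(1,9)}$. Note that different choices of $\beta$, substitution, or uniformiser will in general produce a different (conjugate) series and hence a different automaton than the one in Table~\ref{19fig}, so to reproduce that particular $110$-state automaton you must match the paper's choices exactly.
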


\begin{proof}
Following Example \ref{exjap}(c), we start with $\beta=(z^{-1},z^{-10}) \in W_2(\F_2\lau{z})$. In the resulting equations $\wp(\alpha)=\beta$, change variables to  $x:=\alpha_0$ and $y:=\alpha_1+\alpha_0^{10}+\alpha_0^9+\alpha_0^6+\alpha_0^3+\alpha_0$ to find 
\begin{equation*}
\left\{ \begin{array}{l}
x^2+x=z^{-1}; \\ 
y^2+y=x^9+x.
\end{array} \right.
\end{equation*}
Writing $z_0=z$, $z_1$, $z_2$ for uniformisers of the fields in the tower of extensions $$K_0:=\F_2\lau{z}\subsetneq K_1=K_0(x)=\F_2\lau{z_1}\subsetneq K_2=K_1(y)=\F_2\lau{z_2},$$
we have $v_{z_1}(x)=-1$, so $v_{z_1}(x^9+x)=-9$, and hence $v_{z_2}(y)=-9$, $v_{z_2}(x)=-2$ and $v_{z_2}(z)=4$. Hence all extensions are totally ramified and we can choose $t=x^{-1}yz^2$ as uniformiser for $K_2$ (since $v_{z_2}(t)=1$). A generator $\sigma$ for the Galois group of $K_2/K_0$ is determined by
\begin{equation*}
\left\{ \begin{array}{l}
\sigma(x)=x+1; \\
\sigma(y)=y+x^4+x^2+x+1, 
   \end{array} \right.
\end{equation*}
By elimination of variables, we find that $\us=\sigma(t)$ satisfies the following (irreducible) equation over $\F_2(t)$: \begin{align*} \label{eq19} t^2\us^7  + t^3\us^6 &+ (t^5 + t^4 + t^2)X^5 + (t^5 + t^3)X^4+  \nonumber \\  & 
 (t^7+t^5+t^4+t^3+t)X^3 + t^5X^2 + (t^3 + t + 1)X + t=0. \end{align*} 
There is a unique solution of the form $t+O(t^2)$, and its initial coefficients are as indicated in the proposition; the corresponding 2-automaton can be found in Table \ref{19fig} and in \cite{Database} (the visual representation in Table \ref{19fig} is more of an illustration but can be manipulated directly in \cite{Database} using standard graph theory algorithms). 
\end{proof} 

\begin{table} 
\hrule
{\footnotesize
\begin{align*}
& ((0, 2, 3), (0, 7, 8), (1, 3, 69), (0, 5, 6), (0, 12, 13), (1, 85, 72), (0, 20, 16), (1, 88, 89), (0, 10, 11), (0, 24, 37), \\
& (1, 84, 74), (0, 24, 40), (1, 104, 76), (0, 15, 16), (0, 20, 37), (1, 8, 72), (0, 18, 19), (0, 30, 13), (1, 73, 74), \\
& (0, 48, 49), (0, 22, 23), (0, 48, 60), (1, 91, 92), (0, 20, 25), (1, 81, 94), (0, 27, 6), (0, 35, 28), (0, 29, 19), \\
& (0, 62, 28), (0, 31, 9), (0, 22, 32), (1, 88, 78), (0, 34, 11), (0, 31, 49), (0, 7, 36), (1, 105, 52), (1, 23, 61), \\
& (0, 7, 39), (1, 106, 14), (1, 95, 17), (0, 42, 36), (0, 20, 40), (0, 22, 36), (0, 45, 46), (0, 31, 60), (1, 40, 55), \\
& (0, 22, 39), (0, 50, 51), (0, 35, 54), (0, 50, 50), (0, 48, 9), (0, 53, 54), (0, 21, 50), (0, 59, 57), (0, 49, 56), \\
& (0, 51, 50), (0, 58, 57), (0, 62, 54), (0, 66, 4), (0, 38, 4), (0, 60, 56), (0, 21, 43), (0, 30, 54), (0, 43, 50), \\
& (0, 21, 51), (0, 21, 7), (0, 65, 4), (0, 47, 9), (1, 8, 98), (1, 70, 71), (1, 100, 92), (1, 75, 76), (1, 97, 98), \\
& (1, 79, 78), (1, 81, 82), (1, 69, 76), (1, 70, 78), (1, 83, 78), (1, 77, 80), (1, 102, 72), (1, 88, 90), (1, 103, 74), \\
& (1, 70, 89), (1, 39, 82), (1, 91, 80), (1, 77, 87), (1, 93, 94), (1, 79, 64), (1, 99, 52), (1, 101, 14), (1, 79, 68), \\
& (1, 36, 61), (1, 106, 68), (1, 107, 63), (1, 91, 96), (1, 108, 17), (1, 106, 41), (1, 16, 55), (1, 77, 92), (1, 70, 90), \\
& (1, 77, 96), (1, 106, 64), (1, 109, 26), (1, 23, 26), (1, 86, 52), (1, 86, 67), (1, 110, 17), (1, 105, 67), (1, 105, 44), \\
& (1, 105, 33))
\end{align*} 
}
\includegraphics[width=13cm]{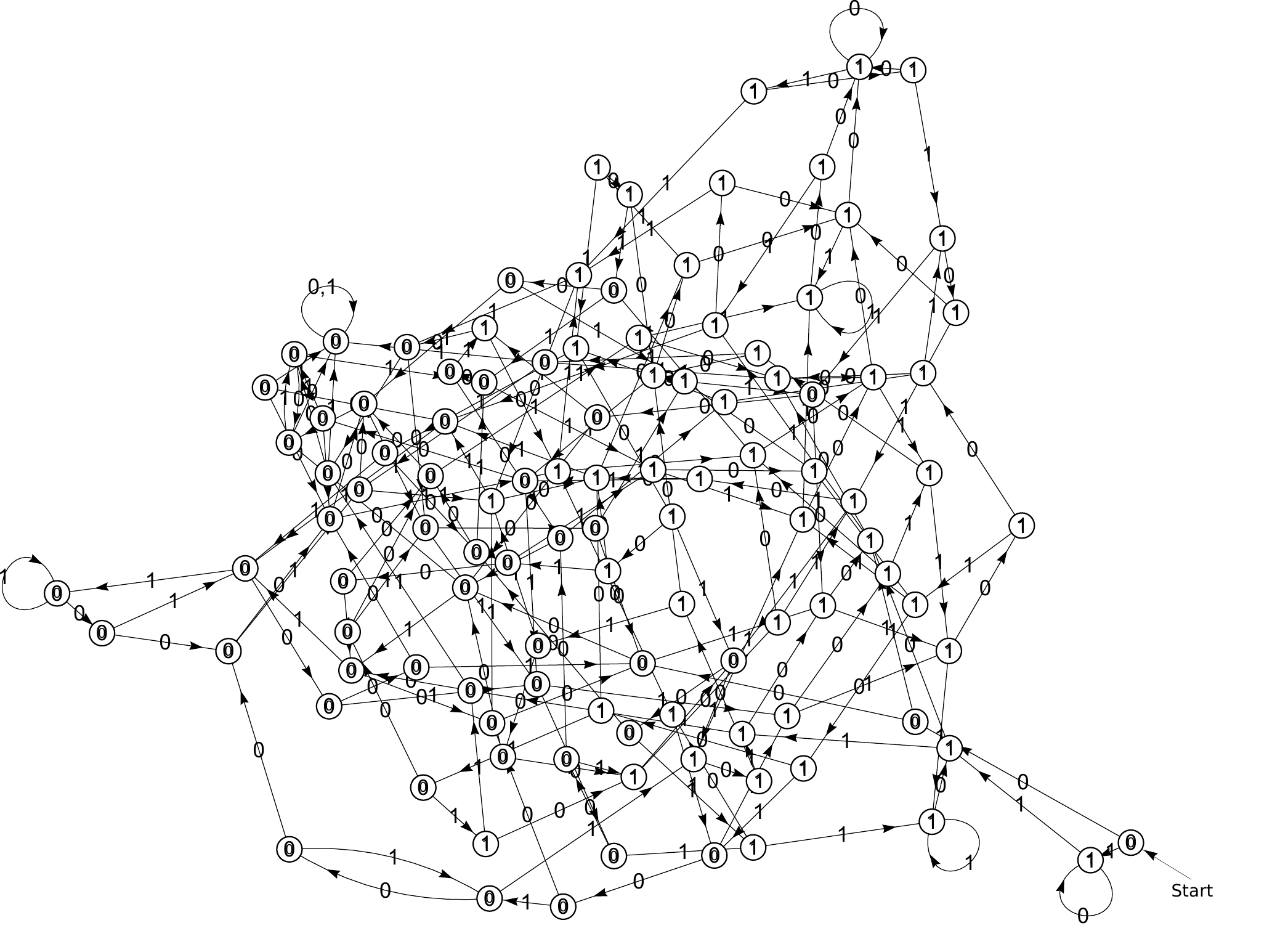}
\hrule 
\caption{Representation of the automaton for the power series $\us_{(1,9)}$ of order $4$ with break sequence $(1,9)$.} \label{19fig}
\end{table}

\section{Some new explicit formulas for power series of order $4$} 

The explicit power series $\us_{\mathrm{CS}}$ and its inverse are a full set of representatives for the conjugacy classes of order-$4$ elements with break sequence $(1,3)$. The series $\us_{\mathrm{J}}$ is another power series with a nice closed formula. We did a larger search for automata corresponding to such power series and found five more for which we could write down reasonably sized closed formulas. One of these is the inverse of Jean's series displayed in Equations (\ref{J3eq}), (\ref{J3eq2}). We list the other four in Table \ref{s5-s8}. 

\begin{table}
\begin{tabular}{l} 
\hline \\[0.5mm]
$\displaystyle{\sigma_{\mathrm{T},1} = t+\sum_{k\geq 2}\left(t^{2^k-2}+t^{2\cdot 2^k-1}+t^{4\cdot 2^k-5}\right)+\sum_{k,\ell\geq 2}t^{2^k(2^\ell-3)+1} } = t+t^2 + O(t^5)$. 
\\[7mm]
$\displaystyle{\sigma_{\mathrm{T},2} =  t+t^2+\sum_{k\geq 3}\left(t^{2^k-4}+t^{2^k-3}+t^{2^k-1}+t^{4\cdot 2^k-6}+t^{4\cdot 2^k-5}+t^{8\cdot 2^k-22}+t^{8\cdot 2^k-21}\right)} + $ \\  
$\displaystyle{\quad \quad  (t+1)\sum_{k,\ell\geq 3}t^{2^k(2^\ell-6)+2}+(t+1)\sum_{k,\ell,m\geq 2}t^{2^{k+\ell}(2^m-3)+2\cdot 2^k-2}}= t+t^2+t^4+t^5 + O(t^7)$. 
\\[7mm]
${\displaystyle \sigma_{\mathrm{T},3} = t+t^8+t^{44}+\sum_{k\geq 2}\left(t^{2^k-2}+t^{3\cdot 2^k-2}+t^{8\cdot 2^k-4}+t^{8\cdot 2^k+4}+t^{8\cdot 2^k+20}+t^{16\cdot 2^k+44}+t^{24\cdot 2^k-4}\right) } + $  \\
${\displaystyle  \quad \quad  \sum_{k,\ell\geq 2}\left(t^{2^k(2^\ell+3)-2}+t^{4\cdot 2^k(2^\ell+2)+4}+t^{8\cdot 2^k(2^\ell+3)-4}+t^{8\cdot 2^k(2^\ell+2)+12}\right)+}$  \\
${\displaystyle \quad \quad  \sum_{k,\ell\geq2,m\geq 1}\left(t^{2^{k+\ell}(2^m+1)+2^k-2}+t^{8\cdot 2^{k+\ell}(2^m+1)+8\cdot 2^k-4}\right)} = t+t^2 +t^6+t^8+t^{10}+ O(t^{13})\,.
$
\\[7mm]
${\displaystyle\sigma_{\mathrm{T},4} = t+t^4+t^8+t^{20}+\sum_{k\geq 2}\left(t^{2^k-2}+t^{8\cdot 2^k-4}+t^{8\cdot 2^k+20}+t^{16\cdot 2^k+12}+t^{16\cdot 2^k+44}\right)}+$ \\
${\displaystyle \quad \quad  \sum_{k,\ell\geq 2}\left(t^{2^k(2^\ell+1)-2}+t^{8\cdot 2^k(2^\ell+1)-4}+t^{4\cdot 2^k(2^\ell+2)+4}+t^{8\cdot 2^k(2^\ell+2)+12}+t^{2^k(2^\ell+3)-2}+t^{8\cdot 2^k(2^\ell+3)-4}\right)} + $\\
${\displaystyle \quad \quad  \sum_{k,\ell\geq2,m\geq 1}\left(t^{2^{k+\ell}(2^m+1)+2^k-2}+t^{8\cdot 2^{k+\ell}(2^m+1)+8\cdot 2^k-4}\right) = t+t^2 + t^4 +t^6+t^8+ O(t^{13}).
}$ 
\\[7mm] \hline \\[1mm] 
\end{tabular} 
\caption{Four explicit power series of order $4$ with break sequence $(1,3)$ (the representation is minimal in the sense that no monomial occurs twice in the same formula).} 
\label{s5-s8} 
\end{table}

We start with the equation from Example \ref{exrun}, but choose yet different uniformisers $t$. Recall that we write $\tau=\sigma^{\circ 3}$. 
\begin{enumerate} 
\item First, let $t=(1+x^2+y)/(x^2+xy)$. Then $\sigma=\sigma_{\mathrm{T},1}$ satisfies $$ t^2X^4+(t^4+t^2+t+1)X^2+(t^3+t^2+t)X+t^3=0$$ and $\tau=\sigma_{\mathrm{T},2}$ satisfies $$t^2X^4+(t+1)X^3+(t^4+t^2+t)X^2+(t^2+t)X+t^2=0.$$
Solving these (irreducible) equations by automata, we find that $\sigma_{\mathrm{T},1}$ and $\sigma_{\mathrm{T},2}$ correspond to the top left, respectively top right automaton depicted in Table \ref{sigmatau2}. It is relatively straightforward to convert the automata into explicit series following the method explained after Corollary \ref{31class}, and the result is 
shown in Table \ref{s5-s8} (including the initial coefficients). 
\item Secondly, let $t=xy/(x^3+y)$. Then $\us=\sigma_{\mathrm{T},3}$ satisfies $$t^4X^4+(t^2+1)X^3+(t^3+t)X^2+t^2X+t^3=0$$
and $\tau=\sigma_{\mathrm{T},4}$ satisfies the same equation as $\us$ (it turns out  that another solution is $\sigma_{\mathrm{T},3}^{\circ 2}=\sigma_{\mathrm{T},4}^{\circ 2}$). Solving this (irreducible) equation by automata, we find that $\us$ and $\tau$ correspond to the bottom left and bottom right automaton depicted in Table \ref{sigmatau2}. Converting the automata into explicit series as before, we find the formulas in Table \ref{s5-s8} (again including the initial coefficients).
\end{enumerate}

By the criterion in Lemma \ref{recognise}, we see easily that $\sigma_{\mathrm{T},2}, \sigma_{\mathrm{T},3}$ and $\sigma_{\mathrm{CS}}$ are conjugate, and so are $\sigma_{\mathrm{T},1}, \sigma_{\mathrm{T},4}$ and $\sigma_{\mathrm{CS}}^{\circ 3}$. 
\begin{center}
\begin{table}[t]
\begin{tabular}{cc}
\hline \\
\begin{tikzpicture}[scale=0.14]
\tikzstyle{every node}+=[inner sep=0pt]
\tikzstyle{every node}+=[inner sep=0pt]
\draw [black] (21.4,-14.3) circle (3);
\draw (21.4,-14.3) node {$0$};
\draw [black] (35.6,-14.3) circle (3);
\draw (35.6,-14.3) node {$1$};
\draw [black] (35.6,-25.6) circle (3);
\draw (35.6,-25.6) node {$0$};
\draw [black] (48.9,-14.3) circle (3);
\draw (48.9,-14.3) node {$1$};
\draw [black] (21.4,-25.6) circle (3);
\draw (21.4,-25.6) node {$0$};
\draw [black] (48.9,-34.7) circle (3);
\draw (48.9,-34.7) node {$1$};
\draw [black] (35.6,-34.7) circle (3);
\draw (35.6,-34.7) node {$1$};
\draw [black] (21.4,-34.7) circle (3);
\draw (21.4,-34.7) node {$1$};
\draw [black] (6.3,-34.7) circle (3);
\draw (6.3,-34.7) node {$0$};
\draw [black] (21.4,-17.3) -- (21.4,-22.6);
\fill [black] (21.4,-22.6) -- (21.9,-21.8) -- (20.9,-21.8);
\draw (20.9,-19.95) node [left] {$0$};
\draw [black] (24.4,-14.3) -- (32.6,-14.3);
\fill [black] (32.6,-14.3) -- (31.8,-13.8) -- (31.8,-14.8);
\draw (28.5,-13.8) node [above] {$1$};
\draw [black] (35.6,-17.3) -- (35.6,-22.6);
\fill [black] (35.6,-22.6) -- (36.1,-21.8) -- (35.1,-21.8);
\draw (35.1,-19.95) node [left] {$1$};
\draw [black] (38.6,-14.3) -- (45.9,-14.3);
\fill [black] (45.9,-14.3) -- (45.1,-13.8) -- (45.1,-14.8);
\draw (42.25,-13.8) node [above] {$0$};
\draw [black] (48.183,-17.201) arc (13.84113:-274.15887:2.25);
\draw (43.54,-19.95) node [below] {$0$};
\fill [black] (46.16,-15.5) -- (45.26,-15.2) -- (45.5,-16.17);
\draw [black] (48.9,-17.3) -- (48.9,-31.7);
\fill [black] (48.9,-31.7) -- (49.4,-30.9) -- (48.4,-30.9);
\draw (49.4,-24.5) node [right] {$1$};
\draw [black] (32.6,-25.6) -- (24.4,-25.6);
\fill [black] (24.4,-25.6) -- (25.2,-26.1) -- (25.2,-25.1);
\draw (28.5,-25.1) node [above] {$0$};
\draw [black] (35.6,-28.6) -- (35.6,-31.7);
\fill [black] (35.6,-31.7) -- (36.1,-30.9) -- (35.1,-30.9);
\draw (35.1,-30.15) node [left] {$1$};
\draw [black] (23.93,-27.22) -- (33.07,-33.08);
\fill [black] (33.07,-33.08) -- (32.67,-32.23) -- (32.13,-33.07);
\draw (29.5,-29.65) node [above] {$1$};
\draw [black] (45.9,-34.7) -- (38.6,-34.7);
\fill [black] (38.6,-34.7) -- (39.4,-35.2) -- (39.4,-34.2);
\draw (42.25,-34.2) node [above] {$0$};
\draw [black] (36.476,-31.843) arc (190.68691:-97.31309:2.25);
\draw (41.25,-29.3) node [above] {$1$};
\fill [black] (38.4,-33.66) -- (39.28,-34) -- (39.09,-33.02);
\draw [black] (32.6,-34.7) -- (24.4,-34.7);
\fill [black] (24.4,-34.7) -- (25.2,-35.2) -- (25.2,-34.2);
\draw (28.5,-34.2) node [above] {$0$};
\draw [black] (18.4,-34.7) -- (9.3,-34.7);
\fill [black] (9.3,-34.7) -- (10.1,-35.2) -- (10.1,-34.2);
\draw (13.85,-34.2) node [above] {$1$};
\draw [black] (24.22,-35.688) arc (98.42508:-189.57492:2.25);
\draw (27.15,-40) node [below] {$0$};
\fill [black] (22.33,-37.54) -- (21.95,-38.4) -- (22.94,-38.26);
\draw [black] (5.315,-31.879) arc (226.98026:-61.01974:2.25);
\draw (7.6,-27.34) node [above] {$0,\!1$};
\fill [black] (7.94,-32.2) -- (8.85,-31.96) -- (8.12,-31.28);
\draw [black] (18.83,-27.15) -- (8.87,-33.15);
\fill [black] (8.87,-33.15) -- (9.81,-33.17) -- (9.3,-32.31);
\draw (12.85,-29.65) node [above] {$0$};
\draw [black] (46.554,-36.568) arc (-54.12182:-125.87818:32.341);
\fill [black] (8.65,-36.57) -- (9,-37.44) -- (9.59,-36.63);
\draw (27.6,-43.2) node [below] {$1$};
\draw [black] (12.4,-14.3) -- (18.4,-14.3);
\draw (11.9,-14.3) node [left] {$\text{{\small Start}}$};
\fill [black] (18.4,-14.3) -- (17.6,-13.8) -- (17.6,-14.8);
\node at (29,-48) {Automaton of $\sigma_{\mathrm{T},1}$};
\end{tikzpicture}
&
\begin{tikzpicture}[scale=0.14]
\tikzstyle{every node}+=[inner sep=0pt]
\draw [black] (36.8,-4.3) circle (3);
\draw (36.8,-4.3) node {$0$};
\draw [black] (21.3,-4.3) circle (3);
\draw (21.3,-4.3) node {$0$};
\draw [black] (53,-4.3) circle (3);
\draw (53,-4.3) node {$1$};
\draw [black] (63.5,-27.1) circle (3);
\draw (63.5,-27.1) node {$0$};
\draw [black] (11.3,-27.1) circle (3);
\draw (11.3,-27.1) node {$1$};
\draw [black] (21.3,-27.1) circle (3);
\draw (21.3,-27.1) node {$1$};
\draw [black] (11.3,-49.5) circle (3);
\draw (11.3,-49.5) node {$0$};
\draw [black] (53,-27.1) circle (3);
\draw (53,-27.1) node {$0$};
\draw [black] (63.5,-49.5) circle (3);
\draw (63.5,-49.5) node {$1$};
\draw [black] (36.8,-27.1) circle (3);
\draw (36.8,-27.1) node {$0$};
\draw [black] (21.3,-49.5) circle (3);
\draw (21.3,-49.5) node {$1$};
\draw [black] (53,-49.5) circle (3);
\draw (53,-49.5) node {$0$};
\draw [black] (36.8,-15.8) circle (3);
\draw (36.8,-15.8) node {$0$};
\draw [black] (36.8,-49.5) circle (3);
\draw (36.8,-49.5) node {$1$};
\draw [black] (47.4,-36.1) circle (3);
\draw (47.4,-36.1) node {$1$};
\draw [black] (36.8,-36.1) circle (3);
\draw (36.8,-36.1) node {$1$};
\draw [black] (26.6,-36.1) circle (3);
\draw (26.6,-36.1) node {$0$};
\draw [black] (41.9,-8.8) -- (39.05,-6.28);
\draw (42.44,-10.25) node [right] {$\text{{\small Start}}$};
\fill [black] (39.05,-6.28) -- (39.32,-7.19) -- (39.98,-6.44);
\draw [black] (33.8,-4.3) -- (24.3,-4.3);
\fill [black] (24.3,-4.3) -- (25.1,-4.8) -- (25.1,-3.8);
\draw (29.05,-3.8) node [above] {$0$};
\draw [black] (39.8,-4.3) -- (50,-4.3);
\fill [black] (50,-4.3) -- (49.2,-3.8) -- (49.2,-4.8);
\draw (44.9,-4.8) node [below] {$1$};
\draw [black] (54.25,-7.02) -- (62.25,-24.38);
\fill [black] (62.25,-24.38) -- (62.36,-23.44) -- (61.46,-23.86);
\draw (57.53,-16.73) node [left] {$1$};
\draw [black] (20.1,-7.05) -- (12.5,-24.35);
\fill [black] (12.5,-24.35) -- (13.28,-23.82) -- (12.37,-23.42);
\draw (15.57,-14.72) node [left] {$1$};
\draw [black] (14.3,-27.1) -- (18.3,-27.1);
\fill [black] (18.3,-27.1) -- (17.5,-26.6) -- (17.5,-27.6);
\draw (16.3,-26.6) node [above] {$0$};
\draw [black] (11.3,-30.1) -- (11.3,-46.5);
\fill [black] (11.3,-46.5) -- (11.8,-45.7) -- (10.8,-45.7);
\draw (10.8,-38.3) node [left] {$1$};
\draw [black] (60.5,-27.1) -- (56,-27.1);
\fill [black] (56,-27.1) -- (56.8,-27.6) -- (56.8,-26.6);
\draw (58.25,-26.6) node [above] {$0$};
\draw [black] (63.5,-30.1) -- (63.5,-46.5);
\fill [black] (63.5,-46.5) -- (64,-45.7) -- (63,-45.7);
\draw (63,-38.3) node [left] {$1$};
\draw [black] (24.3,-27.1) -- (33.8,-27.1);
\fill [black] (33.8,-27.1) -- (33,-26.6) -- (33,-27.6);
\draw (29.05,-27.6) node [below] {$1$};
\draw [black] (12.623,-52.18) arc (54:-234:2.25);
\draw (11.3,-56.75) node [below] {$1$};
\fill [black] (9.98,-52.18) -- (9.1,-52.53) -- (9.91,-53.12);
\draw [black] (64.823,-52.18) arc (54:-234:2.25);
\draw (63.5,-56.75) node [below] {$1$};
\fill [black] (62.18,-52.18) -- (61.3,-52.53) -- (62.11,-53.12);
\draw [black] (60.885,-50.969) arc (-62.80055:-117.19945:40.441);
\fill [black] (23.91,-50.97) -- (24.4,-51.78) -- (24.86,-50.89);
\draw (42.4,-55.94) node [below] {$0$};
\draw [black] (21.3,-30.1) -- (21.3,-46.5);
\fill [black] (21.3,-46.5) -- (21.8,-45.7) -- (20.8,-45.7);
\draw (20.8,-38.3) node [left] {$0$};
\draw [black] (50.385,-50.969) arc (-62.83319:-117.16681:39.938);
\fill [black] (50.38,-50.97) -- (49.44,-50.89) -- (49.9,-51.78);
\draw (32.15,-55.87) node [below] {$0$};
\draw [black] (53,-30.1) -- (53,-46.5);
\fill [black] (53,-46.5) -- (53.5,-45.7) -- (52.5,-45.7);
\draw (53.5,-38.3) node [right] {$0$};
\draw [black] (36.8,-24.1) -- (36.8,-18.8);
\fill [black] (36.8,-18.8) -- (36.3,-19.6) -- (37.3,-19.6);
\draw (37.3,-21.45) node [right] {$0$};
\draw [black] (18.474,-48.528) arc (278.755:-9.245:2.25);
\draw (15.53,-44.23) node [above] {$0$};
\fill [black] (20.35,-46.67) -- (20.72,-45.8) -- (19.74,-45.95);
\draw [black] (53.944,-46.665) arc (189.3226:-98.6774:2.25);
\draw (58.77,-44.22) node [above] {$0$};
\fill [black] (55.82,-48.52) -- (56.69,-48.89) -- (56.53,-47.9);
\draw [black] (23.71,-6.09) -- (34.39,-14.01);
\fill [black] (34.39,-14.01) -- (34.05,-13.13) -- (33.45,-13.94);
\draw (28.05,-10.55) node [below] {$0$};
\draw [black] (50,-49.5) -- (39.8,-49.5);
\fill [black] (39.8,-49.5) -- (40.6,-50) -- (40.6,-49);
\draw (44.9,-49) node [above] {$1$};
\draw [black] (24.3,-49.5) -- (33.8,-49.5);
\fill [black] (33.8,-49.5) -- (33,-49) -- (33,-50);
\draw (29.05,-50) node [below] {$1$};
\draw [black] (39.09,-29.04) -- (45.11,-34.16);
\fill [black] (45.11,-34.16) -- (44.83,-33.26) -- (44.18,-34.02);
\draw (41.09,-32.09) node [below] {$1$};
\draw [black] (39.06,-17.77) arc (45.81063:9.33357:27.662);
\fill [black] (47.07,-33.12) -- (47.44,-32.25) -- (46.45,-32.41);
\draw (44.98,-23.65) node [right] {$1$};
\draw [black] (38.66,-47.15) -- (45.54,-38.45);
\fill [black] (45.54,-38.45) -- (44.65,-38.77) -- (45.43,-39.39);
\draw (42.66,-44.22) node [right] {$0$};
\draw [black] (44.4,-36.1) -- (39.8,-36.1);
\fill [black] (39.8,-36.1) -- (40.6,-36.6) -- (40.6,-35.6);
\draw (42.1,-36.6) node [below] {$0$};
\draw [black] (48.723,-38.78) arc (54:-234:2.25);
\draw (47.4,-43.35) node [below] {$1$};
\fill [black] (46.08,-38.78) -- (45.2,-39.13) -- (46.01,-39.72);
\draw [black] (38.123,-38.78) arc (54:-234:2.25);
\draw (36.8,-43.35) node [below] {$0$};
\fill [black] (35.48,-38.78) -- (34.6,-39.13) -- (35.41,-39.72);
\draw [black] (27.923,-38.78) arc (54:-234:2.25);
\draw (26.6,-43.35) node [below] {$0,\!1$};
\fill [black] (25.28,-38.78) -- (24.4,-39.13) -- (25.21,-39.72);
\draw [black] (34.98,-47.11) -- (28.42,-38.49);
\fill [black] (28.42,-38.49) -- (28.5,-39.43) -- (29.3,-38.82);
\draw (31.13,-44.2) node [left] {$1$};
\draw [black] (33.8,-36.1) -- (29.6,-36.1);
\fill [black] (29.6,-36.1) -- (30.4,-36.6) -- (30.4,-35.6);
\draw (31.7,-36.6) node [below] {$1$};
\draw [black] (26.345,-33.114) arc (-179.56992:-233.78576:19.324);
\fill [black] (26.35,-33.11) -- (26.84,-32.31) -- (25.84,-32.32);
\draw (27.71,-23.18) node [left] {$0$};
\draw [black] (50,-27.1) -- (39.8,-27.1);
\fill [black] (39.8,-27.1) -- (40.6,-27.6) -- (40.6,-26.6);
\draw (44.9,-27.6) node [below] {$1$};
\draw [black] (52.48,-7.25) -- (47.92,-33.15);
\fill [black] (47.92,-33.15) -- (48.55,-32.44) -- (47.57,-32.27);
\draw (49.48,-19.95) node [left] {$0$};
\node at (38,-61) {Automaton of $\sigma_{\mathrm{T},2}$};
\end{tikzpicture}
\\
\hline  \\
\begin{tikzpicture}[scale=0.13]
\tikzstyle{every node}+=[inner sep=0pt]
\draw [black] (56.4,-4.2) circle (3);
\draw (56.4,-4.2) node {$0$};
\draw [black] (33.9,-4.2) circle (3);
\draw (33.9,-4.2) node {$0$};
\draw [black] (33.9,-13.4) circle (3);
\draw (33.9,-13.4) node {$0$};
\draw [black] (33.9,-23.1) circle (3);
\draw (33.9,-23.1) node {$0$};
\draw [black] (21.1,-13.4) circle (3);
\draw (21.1,-13.4) node {$0$};
\draw [black] (11.5,-4.2) circle (3);
\draw (11.5,-4.2) node {$1$};
\draw [black] (11.5,-23.1) circle (3);
\draw (11.5,-23.1) node {$1$};
\draw [black] (21.1,-23.1) circle (3);
\draw (21.1,-23.1) node {$0$};
\draw [black] (33.9,-52.1) circle (3);
\draw (33.9,-52.1) node {$0$};
\draw [black] (11.5,-39.1) circle (3);
\draw (11.5,-39.1) node {$1$};
\draw [black] (21.1,-39.1) circle (3);
\draw (21.1,-39.1) node {$0$};
\draw [black] (11.5,-56.2) circle (3);
\draw (11.5,-56.2) node {$0$};
\draw [black] (26.9,-45.7) circle (3);
\draw (26.9,-45.7) node {$0$};
\draw [black] (26.9,-33.3) circle (3);
\draw (26.9,-33.3) node {$1$};
\draw [black] (56.4,-56.2) circle (3);
\draw (56.4,-56.2) node {$1$};
\draw [black] (44.9,-23.1) circle (3);
\draw (44.9,-23.1) node {$0$};
\draw [black] (44.9,-39.7) circle (3);
\draw (44.9,-39.7) node {$0$};
\draw [black] (53.4,-4.2) -- (36.9,-4.2);
\fill [black] (36.9,-4.2) -- (37.7,-4.7) -- (37.7,-3.7);
\draw (45.15,-3.7) node [above] {$0$};
\draw [black] (52.4,-9.1) -- (54.5,-6.52);
\draw (50,-9.59) node [below] {$\text{{\small Start}}$};
\fill [black] (54.5,-6.52) -- (53.61,-6.83) -- (54.38,-7.46);
\draw [black] (33.9,-7.2) -- (33.9,-10.4);
\fill [black] (33.9,-10.4) -- (34.4,-9.6) -- (33.4,-9.6);
\draw (33.4,-8.8) node [left] {$0$};
\draw [black] (33.9,-16.4) -- (33.9,-20.1);
\fill [black] (33.9,-20.1) -- (34.4,-19.3) -- (33.4,-19.3);
\draw (33.4,-18.25) node [left] {$1$};
\draw [black] (31.51,-21.29) -- (23.49,-15.21);
\fill [black] (23.49,-15.21) -- (23.83,-16.09) -- (24.43,-15.3);
\draw (28.5,-17.75) node [above] {$1$};
\draw [black] (18.93,-11.32) -- (13.67,-6.28);
\fill [black] (13.67,-6.28) -- (13.9,-7.19) -- (14.59,-6.47);
\draw (17.32,-8.32) node [above] {$1$};
\draw [black] (30.9,-4.2) -- (14.5,-4.2);
\fill [black] (14.5,-4.2) -- (15.3,-4.7) -- (15.3,-3.7);
\draw (22.7,-3.7) node [above] {$1$};
\draw [black] (8.82,-5.523) arc (-36:-324:2.25);
\draw (4.25,-4.2) node [left] {$1$};
\fill [black] (8.82,-2.88) -- (8.47,-2) -- (7.88,-2.81);
\draw [black] (33.9,-26.1) -- (33.9,-49.1);
\fill [black] (33.9,-49.1) -- (34.4,-48.3) -- (33.4,-48.3);
\draw (34.4,-37.6) node [right] {$0$};
\draw [black] (21.1,-16.4) -- (21.1,-20.1);
\fill [black] (21.1,-20.1) -- (21.6,-19.3) -- (20.6,-19.3);
\draw (20.6,-18.25) node [left] {$0$};
\draw [black] (11.5,-7.2) -- (11.5,-20.1);
\fill [black] (11.5,-20.1) -- (12,-19.3) -- (11,-19.3);
\draw (11,-13.65) node [left] {$0$};
\draw [black] (11.5,-26.1) -- (11.5,-36.1);
\fill [black] (11.5,-36.1) -- (12,-35.3) -- (11,-35.3);
\draw (11,-31.1) node [left] {$0$};
\draw [black] (21.1,-26.1) -- (21.1,-36.1);
\fill [black] (21.1,-36.1) -- (21.6,-35.3) -- (20.6,-35.3);
\draw (20.6,-31.1) node [left] {$0$};
\draw [black] (11.5,-42.1) -- (11.5,-53.2);
\fill [black] (11.5,-53.2) -- (12,-52.4) -- (11,-52.4);
\draw (11,-47.65) node [left] {$1$};
\draw [black] (19.63,-41.72) -- (12.97,-53.58);
\fill [black] (12.97,-53.58) -- (13.8,-53.13) -- (12.92,-52.64);
\draw (15.64,-46.44) node [left] {$1$};
\draw [black] (8.82,-40.423) arc (-36:-324:2.25);
\draw (4.25,-39.1) node [left] {$0$};
\fill [black] (8.82,-37.78) -- (8.47,-36.9) -- (7.88,-37.71);
\draw [black] (8.82,-57.523) arc (324:36:2.25);
\draw (4.25,-56.2) node [left] {$0$};
\fill [black] (8.82,-54.88) -- (8.47,-54) -- (7.88,-54.81);
\draw [black] (24.557,-47.555) arc (-23.90035:-311.90035:2.25);
\draw (19.55,-47.47) node [left] {$0$};
\fill [black] (24,-44.97) -- (23.47,-44.19) -- (23.07,-45.1);
\draw [black] (31.69,-50.08) -- (29.11,-47.72);
\fill [black] (29.11,-47.72) -- (29.37,-48.63) -- (30.04,-47.9);
\draw (31.41,-48.41) node [above] {$0$};
\draw [black] (22.58,-25.71) -- (25.42,-30.69);
\fill [black] (25.42,-30.69) -- (25.46,-29.75) -- (24.59,-30.24);
\draw (24.66,-26.98) node [right] {$1$};
\draw [black] (14,-24.76) -- (24.4,-31.64);
\fill [black] (24.4,-31.64) -- (24.01,-30.78) -- (23.46,-31.62);
\draw (18.2,-28.7) node [below] {$1$};
\draw [black] (26.9,-42.7) -- (26.9,-36.3);
\fill [black] (26.9,-36.3) -- (26.4,-37.1) -- (27.4,-37.1);
\draw (27.4,-39.5) node [right] {$1$};
\draw [black] (30.95,-52.64) -- (14.45,-55.66);
\fill [black] (14.45,-55.66) -- (15.33,-56.01) -- (15.15,-55.02);
\draw (22.2,-53.56) node [above] {$1$};
\draw [black] (36.15,-15.38) -- (42.65,-21.12);
\fill [black] (42.65,-21.12) -- (42.38,-20.21) -- (41.72,-20.96);
\draw (40.41,-17.76) node [above] {$0$};
\draw [black] (56.4,-7.2) -- (56.4,-53.2);
\fill [black] (56.4,-53.2) -- (56.9,-52.4) -- (55.9,-52.4);
\draw (55.9,-30.2) node [left] {$1$};
\draw [black] (29.27,-35.14) -- (54.03,-54.36);
\fill [black] (54.03,-54.36) -- (53.7,-53.47) -- (53.09,-54.26);
\draw (40.64,-45.25) node [below] {$1$};
\draw [black] (26.368,-30.359) arc (217.98032:-70.01968:2.25);
\draw (29.57,-26.26) node [above] {$0$};
\fill [black] (28.91,-31.09) -- (29.85,-30.99) -- (29.23,-30.2);
\draw [black] (14.5,-56.2) -- (53.4,-56.2);
\fill [black] (53.4,-56.2) -- (52.6,-55.7) -- (52.6,-56.7);
\draw (33.95,-56.7) node [below] {$1$};
\draw [black] (59.08,-54.877) arc (144:-144:2.25);
\draw (63.65,-56.2) node [right] {$0$};
\fill [black] (59.08,-57.52) -- (59.43,-58.4) -- (60.02,-57.59);
\draw [black] (45.88,-25.93) -- (55.42,-53.37);
\fill [black] (55.42,-53.37) -- (55.63,-52.45) -- (54.68,-52.77);
\draw (51.41,-38.91) node [right] {$1$};
\draw [black] (44.9,-26.1) -- (44.9,-36.7);
\fill [black] (44.9,-36.7) -- (45.4,-35.9) -- (44.4,-35.9);
\draw (44.4,-31.4) node [left] {$0$};
\draw [black] (54.68,-53.74) -- (46.62,-42.16);
\fill [black] (46.62,-42.16) -- (46.66,-43.1) -- (47.48,-42.53);
\draw (51.25,-46.59) node [right] {$1$};
\draw [black] (42.026,-38.881) arc (281.83529:-6.16471:2.25);
\draw (39.36,-33.71) node [left] {$0,\!1$};
\fill [black] (43.8,-36.92) -- (44.13,-36.04) -- (43.15,-36.24);
\draw [black] (18.181,-38.459) arc (285.34019:-2.65981:2.25);
\draw (15.27,-33.46) node [left] {$0$};
\fill [black] (19.83,-36.39) -- (20.1,-35.49) -- (19.14,-35.75);
\node at (35,-61) {Automaton of $\sigma_{\mathrm{T},3}$};
\end{tikzpicture}
&
\begin{tikzpicture}[scale=0.13]
\tikzstyle{every node}+=[inner sep=0pt]
\draw [black] (56.4,-4.2) circle (3);
\draw (56.4,-4.2) node {$0$};
\draw [black] (33.9,-4.2) circle (3);
\draw (33.9,-4.2) node {$0$};
\draw [black] (33.9,-13.4) circle (3);
\draw (33.9,-13.4) node {$0$};
\draw [black] (33.9,-23.1) circle (3);
\draw (33.9,-23.1) node {$1$};
\draw [black] (21.1,-13.4) circle (3);
\draw (21.1,-13.4) node {$0$};
\draw [black] (11.5,-4.2) circle (3);
\draw (11.5,-4.2) node {$1$};
\draw [black] (11.5,-23.1) circle (3);
\draw (11.5,-23.1) node {$1$};
\draw [black] (21.1,-23.1) circle (3);
\draw (21.1,-23.1) node {$0$};
\draw [black] (33.9,-52.1) circle (3);
\draw (33.9,-52.1) node {$1$};
\draw [black] (11.5,-39.1) circle (3);
\draw (11.5,-39.1) node {$1$};
\draw [black] (21.1,-39.1) circle (3);
\draw (21.1,-39.1) node {$0$};
\draw [black] (11.5,-56.2) circle (3);
\draw (11.5,-56.2) node {$1$};
\draw [black] (26.9,-45.7) circle (3);
\draw (26.9,-45.7) node {$1$};
\draw [black] (26.9,-33.3) circle (3);
\draw (26.9,-33.3) node {$0$};
\draw [black] (56.4,-56.2) circle (3);
\draw (56.4,-56.2) node {$1$};
\draw [black] (44.9,-23.1) circle (3);
\draw (44.9,-23.1) node {$0$};
\draw [black] (44.9,-39.7) circle (3);
\draw (44.9,-39.7) node {$0$};
\draw [black] (53.4,-4.2) -- (36.9,-4.2);
\fill [black] (36.9,-4.2) -- (37.7,-4.7) -- (37.7,-3.7);
\draw (45.15,-3.7) node [above] {$0$};
\draw [black] (52.4,-9.1) -- (54.5,-6.52);
\draw (50,-9.59) node [below] {$\text{{\small Start}}$};
\fill [black] (54.5,-6.52) -- (53.61,-6.83) -- (54.38,-7.46);
\draw [black] (33.9,-7.2) -- (33.9,-10.4);
\fill [black] (33.9,-10.4) -- (34.4,-9.6) -- (33.4,-9.6);
\draw (33.4,-8.8) node [left] {$0$};
\draw [black] (33.9,-16.4) -- (33.9,-20.1);
\fill [black] (33.9,-20.1) -- (34.4,-19.3) -- (33.4,-19.3);
\draw (33.4,-18.25) node [left] {$1$};
\draw [black] (31.51,-21.29) -- (23.49,-15.21);
\fill [black] (23.49,-15.21) -- (23.83,-16.09) -- (24.43,-15.3);
\draw (28.5,-17.75) node [above] {$1$};
\draw [black] (18.93,-11.32) -- (13.67,-6.28);
\fill [black] (13.67,-6.28) -- (13.9,-7.19) -- (14.59,-6.47);
\draw (17.32,-8.32) node [above] {$1$};
\draw [black] (30.9,-4.2) -- (14.5,-4.2);
\fill [black] (14.5,-4.2) -- (15.3,-4.7) -- (15.3,-3.7);
\draw (22.7,-3.7) node [above] {$1$};
\draw [black] (8.82,-5.523) arc (-36:-324:2.25);
\draw (4.25,-4.2) node [left] {$1$};
\fill [black] (8.82,-2.88) -- (8.47,-2) -- (7.88,-2.81);
\draw [black] (33.9,-26.1) -- (33.9,-49.1);
\fill [black] (33.9,-49.1) -- (34.4,-48.3) -- (33.4,-48.3);
\draw (34.4,-37.6) node [right] {$0$};
\draw [black] (21.1,-16.4) -- (21.1,-20.1);
\fill [black] (21.1,-20.1) -- (21.6,-19.3) -- (20.6,-19.3);
\draw (20.6,-18.25) node [left] {$0$};
\draw [black] (11.5,-7.2) -- (11.5,-20.1);
\fill [black] (11.5,-20.1) -- (12,-19.3) -- (11,-19.3);
\draw (11,-13.65) node [left] {$0$};
\draw [black] (11.5,-26.1) -- (11.5,-36.1);
\fill [black] (11.5,-36.1) -- (12,-35.3) -- (11,-35.3);
\draw (11,-31.1) node [left] {$0$};
\draw [black] (21.1,-26.1) -- (21.1,-36.1);
\fill [black] (21.1,-36.1) -- (21.6,-35.3) -- (20.6,-35.3);
\draw (20.6,-31.1) node [left] {$0$};
\draw [black] (11.5,-42.1) -- (11.5,-53.2);
\fill [black] (11.5,-53.2) -- (12,-52.4) -- (11,-52.4);
\draw (11,-47.65) node [left] {$1$};
\draw [black] (19.63,-41.72) -- (12.97,-53.58);
\fill [black] (12.97,-53.58) -- (13.8,-53.13) -- (12.92,-52.64);
\draw (15.64,-46.44) node [left] {$1$};
\draw [black] (8.82,-40.423) arc (-36:-324:2.25);
\draw (4.25,-39.1) node [left] {$0$};
\fill [black] (8.82,-37.78) -- (8.47,-36.9) -- (7.88,-37.71);
\draw [black] (8.82,-57.523) arc (324:36:2.25);
\draw (4.25,-56.2) node [left] {$0$};
\fill [black] (8.82,-54.88) -- (8.47,-54) -- (7.88,-54.81);
\draw [black] (24.557,-47.555) arc (-23.90035:-311.90035:2.25);
\draw (19.55,-47.47) node [left] {$0$};
\fill [black] (24,-44.97) -- (23.47,-44.19) -- (23.07,-45.1);
\draw [black] (31.69,-50.08) -- (29.11,-47.72);
\fill [black] (29.11,-47.72) -- (29.37,-48.63) -- (30.04,-47.9);
\draw (31.41,-48.41) node [above] {$0$};
\draw [black] (22.58,-25.71) -- (25.42,-30.69);
\fill [black] (25.42,-30.69) -- (25.46,-29.75) -- (24.59,-30.24);
\draw (24.66,-26.98) node [right] {$1$};
\draw [black] (14,-24.76) -- (24.4,-31.64);
\fill [black] (24.4,-31.64) -- (24.01,-30.78) -- (23.46,-31.62);
\draw (18.2,-28.7) node [below] {$1$};
\draw [black] (26.9,-42.7) -- (26.9,-36.3);
\fill [black] (26.9,-36.3) -- (26.4,-37.1) -- (27.4,-37.1);
\draw (27.4,-39.5) node [right] {$1$};
\draw [black] (30.95,-52.64) -- (14.45,-55.66);
\fill [black] (14.45,-55.66) -- (15.33,-56.01) -- (15.15,-55.02);
\draw (22.2,-53.56) node [above] {$1$};
\draw [black] (36.15,-15.38) -- (42.65,-21.12);
\fill [black] (42.65,-21.12) -- (42.38,-20.21) -- (41.72,-20.96);
\draw (40.41,-17.76) node [above] {$0$};
\draw [black] (56.4,-7.2) -- (56.4,-53.2);
\fill [black] (56.4,-53.2) -- (56.9,-52.4) -- (55.9,-52.4);
\draw (55.9,-30.2) node [left] {$1$};
\draw [black] (29.27,-35.14) -- (54.03,-54.36);
\fill [black] (54.03,-54.36) -- (53.7,-53.47) -- (53.09,-54.26);
\draw (40.64,-45.25) node [below] {$1$};
\draw [black] (26.368,-30.359) arc (217.98032:-70.01968:2.25);
\draw (29.57,-26.26) node [above] {$0$};
\fill [black] (28.91,-31.09) -- (29.85,-30.99) -- (29.23,-30.2);
\draw [black] (14.5,-56.2) -- (53.4,-56.2);
\fill [black] (53.4,-56.2) -- (52.6,-55.7) -- (52.6,-56.7);
\draw (33.95,-56.7) node [below] {$1$};
\draw [black] (59.08,-54.877) arc (144:-144:2.25);
\draw (63.65,-56.2) node [right] {$0$};
\fill [black] (59.08,-57.52) -- (59.43,-58.4) -- (60.02,-57.59);
\draw [black] (45.88,-25.93) -- (55.42,-53.37);
\fill [black] (55.42,-53.37) -- (55.63,-52.45) -- (54.68,-52.77);
\draw (51.41,-38.91) node [right] {$1$};
\draw [black] (44.9,-26.1) -- (44.9,-36.7);
\fill [black] (44.9,-36.7) -- (45.4,-35.9) -- (44.4,-35.9);
\draw (44.4,-31.4) node [left] {$0$};
\draw [black] (54.68,-53.74) -- (46.62,-42.16);
\fill [black] (46.62,-42.16) -- (46.66,-43.1) -- (47.48,-42.53);
\draw (51.25,-46.59) node [right] {$1$};
\draw [black] (42.026,-38.881) arc (281.83529:-6.16471:2.25);
\draw (39.36,-33.71) node [left] {$0,\!1$};
\fill [black] (43.8,-36.92) -- (44.13,-36.04) -- (43.15,-36.24);
\draw [black] (18.182,-38.458) arc (285.32151:-2.67849:2.25);
\draw (15.27,-33.46) node [left] {$0$};
\fill [black] (19.83,-36.39) -- (20.11,-35.49) -- (19.14,-35.75);
\node at (35,-61) {Automaton of $\sigma_{\mathrm{T},4}$};
\end{tikzpicture}
\\
\hline \\
\end{tabular} 
\caption{Automata corresponding to the order $4$, break sequence $(1,3)$ series in Table \ref{s5-s8}.} 
\label{sigmatau2}
\end{table}
\end{center}

\section{Construction and classification of some order-$8$ elements} 
\label{section5}

\subsection{Order $8$, break sequence $(1, 3, 11) = \langle 1, 2, 4 \rangle$} 

Up to now, no finite description of any element of $\No(\F_2)$ of order $8$ was known. Our method produces an example. 

\begin{proposition} \label{8class} An element $\sigma_8$ in $\No(\F_2)$ of order $8$ with break sequence $(1, 3, 11) = \langle 1, 2, 4 \rangle$ is given by the automaton described by the data in Table \textup{\ref{8data}:} it has $320$ states, corresponding to the $320$ triples on the displayed ordered list, where the start vertex is the first triple on the list and a triple $(\ell,i,j)$ occurs on the list precisely if the following three conditions hold\textup{:} its vertex label is $\ell$, there is a directed edge with label $0$ to the $i$-th triple on the list and there is a directed edge with label $1$ to the $j$-th triple on the list. The initial terms of $\us_8$ are 
$$\us_8 = t+t^2+t^5+t^6+t^{12}+ O(t^{13}). $$
\end{proposition}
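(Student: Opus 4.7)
The plan is to apply the general construction from Section~2 with Witt vectors of length three over $\F_2\lau{z}$. Specifically, following Example~\ref{exjap}(a) we start from $\beta = (z^{-1}, 0, 0) \in W_3(\F_2\lau{z})$. By the Kanesaka--Sekiguchi proposition, this data produces a totally ramified cyclic Galois extension $K/\F_2\lau{z}$ of degree $8$ whose Galois generator has upper break sequence $\langle 1,2,4\rangle$, matching the lower break sequence $(1,3,11)$ via formula~\eqref{convert}.

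Next, I would unfold the Witt vector equations from Example~\ref{re2} at $\beta = (z^{-1},0,0)$ to get the tower $K_0 \subsetneq K_1 \subsetneq K_2 \subsetneq K_3 = K$ with generators $\alpha_0, \alpha_1, \alpha_2$ satisfying
\[
\alpha_0^2 + \alpha_0 = z^{-1}, \quad \alpha_1^2 + \alpha_1 = \alpha_0 z^{-1}, \quad \alpha_2^2 + \alpha_2 = \alpha_0 \alpha_1 z^{-1} + \alpha_0^3 z^{-1} + \alpha_0 z^{-3}.
\]
Using Lemma~\ref{ramlem} step by step I would verify total ramification (the successive $z_i$-valuations of the right-hand sides are $-1$, $-3$, $-11$), and then choose a rational function $t = t(\alpha_0,\alpha_1,\alpha_2)$ of valuation $1$ as a uniformiser for $K$. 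The Galois generator $\sigma$ acts via the formulas~\eqref{w8}, which determines $\sigma(t)$ as an explicit rational expression in the $\alpha_i$. Elimination of the $\alpha_i$ (by Groebner basis as in \textsc{Singular}) from the Witt equations together with the two relations defining $t$ and $\sigma(t)$ yields an irreducible polynomial $F(t,X) \in \F_2[t,X]$ satisfied by $X = \sigma$.

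I would then feed $F(t,X)$ into one of the three algorithmic proofs of Christol's theorem described in Section~\ref{section2a}, using Rowland's implementation~\cite{Rowland} or the differential forms routine~\cite{LDG}. This produces candidate automata for each power series solution of $F(t,X)=0$ in $\F_2\pau{t}$. Among these, the unique solution of the form $t + O(t^2)$ is the one corresponding to $\sigma_8$. I would verify by direct symbolic expansion that $\sigma_8 = t+t^2+t^5+t^6+t^{12}+O(t^{13})$, and check that $\sigma_8^{\circ 2}, \sigma_8^{\circ 4} \neq t$ while $\sigma_8^{\circ 8} = t$ by computing enough initial coefficients and using Algorithm~\ref{algoRprep} to obtain algebraic equations for the compositional powers. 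The break sequence $(1,3,11)$ can be read off from the $t$-valuations of $\sigma_8^{\circ 2^i}(t) - t$ for $i=0,1,2$, confirming the output of the Kanesaka--Sekiguchi criterion.

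The principal obstacle will be computational complexity. The algebraic equation arising after elimination is likely to have high degree in $X$ (degree $8$ or more) and high degree in $t$, so the state space produced by Christol's algorithms may be enormous: the bounds in Subsection~\ref{boundcomplexitychristol} are completely impractical for $d = 8$. Achieving the quoted $320$-state minimal automaton will require a careful choice of uniformiser $t$ (different choices yield genuinely different $F$ and wildly different automaton sizes, as already seen for order $4$), followed by minimisation of the resulting automaton via the standard algorithm referenced in Section~\ref{algoNvertices}. Distinguishing $\sigma_8$ from $\sigma_8^{\circ k}$ for odd $k$ (which satisfy the same irreducible equation if the equation for $\sigma$ is Galois-invariant, as occurred in Example~\ref{excont3}) will be handled by matching initial coefficients.
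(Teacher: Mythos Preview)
Your approach is the same as the paper's in outline, and largely correct. There is, however, a concrete error in your ramification verification that would cause the argument to fail as written. The right-hand side of your third Artin--Schreier equation, $\alpha_0 \alpha_1 z^{-1} + \alpha_0^3 z^{-1} + \alpha_0 z^{-3}$, does \emph{not} have $z_2$-valuation $-11$: since $v_{z_2}(x) = -2$, $v_{z_2}(y) = -3$, and $v_{z_2}(z) = 4$, the three terms have valuations $-9$, $-10$, $-14$, so the sum has valuation $-14$. This is even, and Lemma~\ref{ramlem} does not apply. The paper remedies this by replacing $\alpha_2$ with $w := \alpha_2 + \alpha_0^2 \alpha_1$, which (after using $z^{-1} = x^2+x$ and $y^2+y = x^3+x^2$) transforms the third equation into $w^2 + w = x^4 y + x^3 y$, whose right-hand side has valuation $-11$. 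Only then does Lemma~\ref{ramlem} give $v_{z_3}(w) = -11$. Such preparatory changes of variable are a recurring necessity in these Witt-vector constructions (compare the proofs of Propositions~\ref{15class} and~\ref{19class}), and your proposal skips this step.

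Two smaller remarks. First, with the paper's choice of uniformiser $t = (w+y)/(x^3+y)$, elimination produces an irreducible equation of degree only $6$ in $X$, namely
\[
t^6 X^6 + (t^6+t^2) X^4 + (t^6+t^5+t^4+t^3+t^2+1) X^2 + (t+1)^3 X + t^6 + t^5 + t^2 + t = 0,
\]
not degree $8$ or higher as you feared; this is what keeps the automaton to $320$ states. Second, the paper verifies the order and break sequence simply by expanding $\sigma_8$, $\sigma_8^{\circ 2}$, $\sigma_8^{\circ 4}$ to the needed precision, rather than by producing algebraic equations for the compositional powers via Algorithm~\ref{algoRprep}; your more elaborate verification plan is not needed here.
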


We refrain from including a pictorial representation, but the automaton is stored in standard Mathematica form in \cite{Database}, making it easy to manipulate. 

\begin{proof} We refer to Example \ref{re2} on how to use Witt vectors of length $3$ to construct cyclic order-$8$ extensions. 
We choose $\beta=(z^{-1},0,0) \in W_3(\F_2\lau{z})$ and rewrite the resulting equations in (\ref{re2eq}) in terms of the variables $x:=\alpha_0$, $y:=\alpha_1$ and $w:=\alpha_2+\alpha_0^2 \alpha_1$ to find 
\begin{equation*} 
\left\{ \begin{array}{l}
x^2+x = z^{-1}; \\ 
y^2+y=x z^{-1}; \\   
w^2+w = x^4y+x^3y. 
\end{array} \right.
\end{equation*}
Choosing uniformisers $z_0=z,z_1,z_2,z_3$ for the intermediate fields in the tower of field extensions $$K_0=\F_2\lau{z} \subsetneq K_1=K_0(x) = \F_2\lau{z_1}\subsetneq K_2 = K_1(y) = \F_2\lau{z_2} \subsetneq K_3= K_2(w) = \F_2\lau{z_3} 
$$ 
and using Lemma \ref{ramlem} as in Example \ref{exrun}, we see that the extension $K_3/K_0$ is totally ramified.
We find the relevant valuations (following Lemma \ref{ramlem}): 
\begin{align*} & v_{z_1}(x)=-1, \  v_{z_1}(z)=2; \\ &  v_{z_2}(y)=-3; \  v_{z_2}(x)=-2, \ v_{z_2}(z)=4;  \\ &  
v_{z_3}(w)=-11, v_{z_3}(x)=-4, \ v_{z_3}(y)=-6, \ v_{z_3}(z)=8. \end{align*}        
We choose the uniformiser $t$ as $t=(w+y)/(x^3+y)$. Then indeed $v_{z_3}(t)=1$, and the action of the generator of the Galois group  on $\alpha_i$ is given by (\ref{w8}), which implies that for our choice of variables we have 
\begin{equation}
\left\{ \begin{array}{l}
\sigma(x)=x+1; \\ 
\sigma(y)=y+x; \\ 
\sigma(w)=w+xy+y,
\end{array} \right.
\end{equation}
and so by elimination we find the (irreducible) equation 
\begin{equation*} \label{eq8} 
t^6 X^6 + (t^6+t^2) X^4 + (t^6+t^5+t^4+t^3+t^2+1) X^2 + (t+1)^3 X + t^6 + t^5 + t^2 + t = 0
\end{equation*}
for $\us=\us_8$. 
The initial coefficients are as indicated, and we readily verify the lower break sequence $(1,3,11)$ from 
\begin{equation*} \us_{8} = t + t^2 + O(t^3),\ \us_{8}^{\circ 2} = t+t^4+ O(t^5),\ \us_{8}^{\circ 4} = t+t^{12} + O(t^{13}). \tag*{\qedhere} 
\end{equation*}
\end{proof} 

\subsection{Detecting conjugacy using local class field theory} 

\begin{proposition} \label{conj8}  The number of conjugacy classes of elements of order $8$ in $\No(\F_2)$ with `minimal' break sequence $(1, 3, 11) = \langle 1, 2, 4 \rangle$ is $4$.
\end{proposition}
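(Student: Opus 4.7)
The strategy is to apply Lubin's local class field theoretic count of conjugacy classes of finite-order elements in $\No(\F_p)$ \cite{Lubin} (see also \cite[Thm.~2.2]{HK}), combined with the Artin--Schreier--Witt parameterization from Example \ref{re2}. Conjugacy classes of order-$p^n$ elements of $\No(\F_p)$ with a given upper break sequence $\mathfrak b$ are in bijection with equivalence classes of pairs $(K/k,\sigma)$ under the $\Aut(k)$-action, where $k=\F_p\lau{z}$, $K/k$ is a totally ramified cyclic $p^n$-extension with upper break sequence $\mathfrak b$, and $\sigma$ is a generator of $\Gal(K/k)$.

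My first step would be to enumerate these pairs via Witt vectors. By the Kanesaka--Sekiguchi formula, for $p=2$, $n=3$, and $\mathfrak b=\langle 1,2,4\rangle$, they correspond to classes
\[ \beta \equiv a_1\,(z^{-1},0,0) + a_3\,(z^{-3},0,0) \pmod{\wp W_3(k)}, \]
with $a_1\in W_3(\F_2)^{*}$ ranging over the four units of $W_3(\F_2)\cong \Z/8$ and $a_3\in\{0,(0,0,1)\}$ ranging over the $2$-torsion subgroup; the condition $\rho_3(\beta)=4$ forces the torsion restriction on $a_3$ and the vanishing of $a_i$ for odd $i\geq 5$, as in Example \ref{exjap}. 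This yields $4\cdot 2=8$ Witt vectors.

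The second step is to count orbits. Rescaling $\beta\mapsto u\beta$ for $u\in W_3(\F_2)^{*}\cong (\Z/8)^{*}$ preserves the extension while cycling through the four generators of $\Gal(K/k)$, and partitions the $8$ Witt vectors into $2$ extension classes distinguished by whether $a_3$ vanishes. For each extension, the $\Aut(k)$-action by change of uniformiser sends the normalizer of $\Gal(K/k)$ in $\Aut(K)$ onto a subgroup of order $2$ in $\Aut(\Z/8)\cong (\Z/8)^{*}$, pairing the four generators into $2$ conjugacy classes per extension. Together, this gives $2\cdot 2=4$ conjugacy classes, matching the four explicit representatives $\sigma_8,\sigma_8^{\circ 3},\sigma_{8,2},\sigma_{8,2}^{\circ 3}$ from Propositions \ref{8class} and \ref{sigma81} (the latter constructed via the Carlitz module).

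The main obstacle is the explicit orbit computation in the second step: tracking how changes of uniformiser $z\mapsto \phi(z)$ induce transformations of the coefficients $(a_1,a_3)$ modulo $\wp$, and verifying that they identify exactly two pairs of generators in each extension while distinguishing the two extensions. This Witt-vector computation can be bypassed by directly invoking Lubin's general counting formula \cite[Thm.~2.2]{HK} applied to the break sequence $\langle 1,2,4\rangle$, which gives the value $4$; the matching with the four explicit representatives then certifies the count.
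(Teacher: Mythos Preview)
Your Witt-vector parametrisation of the eight pairs (extension, generator) is correct and dual to the paper's approach via Lubin's characters $\eta\colon U_1\to\Z/8\Z$ modulo strict equivalence. But the decisive step is left open. Your claim that the normaliser of $\Gal(K/k)$ in $\Aut(K)$ maps onto a subgroup of order~$2$ in $\Aut(\Z/8)$ is precisely what needs proving---it amounts to showing that $\sigma\sim\sigma^{\circ 5}$ while $\sigma\not\sim\sigma^{\circ 3}$, and you offer no computation for it; nor do you verify that the two extensions are not exchanged by some automorphism of $k$. The paper carries out exactly this computation on the character side: it writes down the eight characters $\eta_{a,b}$ on $U_1/U_5\cong\Z/8\Z\times\Z/2\Z$, and for each admissible $u\in\No(\F_2)$ evaluates $\eta_{a,b}(x\circ u)$ at $x=1+z$ and $x=1+z^3$, finding that the only nontrivial identification is $\eta_{a,b}\sim\eta_{5a,b}$.

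Your proposed fallback does not rescue the argument. The reference \cite[Thm.~2.2]{HK} treats torsion of order $p^2$ only and gives no counting formula for order~$8$. Invoking the four representatives from Propositions~\ref{8class} and~\ref{sigma81} is circular, since Proposition~\ref{sigma81} is proved \emph{after} and \emph{using} the present proposition; and even granting four pairwise non-conjugate representatives would only yield a lower bound of~$4$, not equality. To make your approach go through you must actually carry out the orbit computation you flag as the ``main obstacle'': track how a change of uniformiser $z\mapsto\phi(z)$ transforms $(a_1,a_3)$ modulo $\wp W_3(k)$ and show the orbits have size exactly~$2$.
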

\begin{proof}
We follow the method of Lubin \cite{Lubin}. For $k\geq 1$, write $U_k$ for the multiplicative group of units $U_k=1 + z^k \F_2\fl z \fr$. By \cite[Thm.\ 2.2]{Lubin}  elements of exact order $2^n$ in $\No(\F_2)$  up to conjugation correspond bijectively to continuous surjective characters $\eta \colon U_1 \rightarrow {\Z}/2^n{\Z}$ up to so-called strict equivalence (the bijection arises from the restriction of the local reciprocity map to $U_1$). Strict equivalence  of characters $\eta$ and $\eta'$ means that there exists $u \in \No(\F_2)$ with $\eta(u(z)/z)=0$ and $\eta'(x) = \eta(x \circ u)$ for all $x \in U_1$. Moreover, the upper break sequence $\langle b^{(0)},\ldots,b^{(n-1)}\rangle$ can be read off from the corresponding character: $\eta(U_{b^{(i)}}) = 2^{i}{\Z}{/}2^n{\Z}$ and $\eta(U_{b^{(i)}+1}) = 2^{i+1}{\Z}{/}2^n{\Z}$  \cite[Prop.\ 3.2]{Lubin}.

In our case of order $8$ elements with minimal break sequence, this implies that the corresponding characters factor through $U_1/U_5$ and map $U_3$ to $4{\Z}/8{\Z}$. Since we have an isomorphism of groups  \begin{align*} {\Z}/8{\Z} \times {\Z}{/}{2{\Z}} &\to U_1/U_5\\ (c,d) &\mapsto (1+z)^c (1+z^3)^d U_5,\end{align*}  there are eight such characters $\eta_{a,b}$ determined by $\eta_{a,b}(1+z) = a \in({\Z}/8{\Z})^*$ and $\eta_{a,b}(1+z^3)=4b$ with $b\in{\Z}/2{\Z}$. We need to determine which of these are strictly equivalent. Write any $u\in \No(\F_2)$ in the form $u(z) = z(1+z)^{\alpha}(1+z^3)^{\beta}u_5$ with $\alpha \in \{0,\dots,7\}, \beta \in \{0,1\}$ and $u_5 \in U_5$. We have $\eta_{a,b}(u(z)/z)=a\alpha+4b\beta \mbox{ mod } 8$, and hence $\eta_{a,b}(u(z)/z)=0$ if and only if $u(z) \equiv z \bmod z^6$ or $u(z) \equiv z+z^4+bz^5 \bmod z^6$. Suppose then that
\begin{equation} \label{eta} \eta_{a',b'}(x) = \eta_{a,b}(x \circ u), \end{equation} and evaluate both sides for $x=1+z$ and $x=1+z^3$, respectively. For the first choice of $u$, we immediately find that $a'=a$ and $b'=b$. For the second choice of $u$, for $x=1+z$, the left hand side of (\ref{eta}) evaluates to $a'$ and the right hand side to $\eta_{a,b}((1+z)^5U_5) = 5a$. For $x=1+z^3$, the left hand side is $b'$ and the right hand side $\eta_{a,b}((1+z^3)U_5)=b$.  

We conclude that the strict equivalence class of $\eta_{a,b}$ consists of $\eta_{a,b}$ and $\eta_{5a,b}$, and there are indeed four strict equivalence classes in total. 
\end{proof}

\begin{table} \label{8data}
\hrule
{\footnotesize 
\begin{align*}
& ((0, 2, 3), (0, 58, 59), (1, 82, 185), (0, 5, 3), (0, 65, 66), (0, 7, 8), (0, 136, 137), (1, 278, 43), (0, 10, 11), (0, 140, 141), \\ 
& (1, 281, 43), (0, 13, 8), (0, 147, 38), (0, 15, 11), (0, 151, 152), (0, 17, 18), (0, 76, 77), (1, 279, 117), (0, 20, 18), \\ 
& (0, 78, 79), (0, 22, 23), (0, 60, 61), (1, 280, 117), (0, 25, 23), (0, 70, 72), (0, 9, 27), (1, 89, 190), (0, 24, 27), (0, 30, 31), \\
& (0, 44, 41), (1, 87, 190), (0, 32, 31), (0, 32, 34), (1, 72, 189), (0, 33, 36), (1, 224, 160), (0, 33, 38), (1, 214, 154), \\ 
& (0, 40, 41), (0, 51, 109), (1, 84, 185), (0, 43, 3), (0, 115, 116), (0, 96, 101), (0, 46, 3), (0, 80, 68), (0, 35, 48), \\
& (1, 272, 112), (0, 37, 50), (1, 290, 45), (0, 35, 8), (0, 37, 53), (1, 282, 39), (0, 55, 18), (0, 99, 100), (0, 57, 27), \\
& (0, 142, 143), (0, 60, 93), (1, 238, 128), (0, 60, 106), (1, 236, 87), (0, 63, 64), (0, 58, 112), (1, 265, 48), (0, 151, 129), \\
& (1, 242, 66), (0, 65, 68), (1, 260, 59), (0, 70, 71), (0, 151, 179), (1, 293, 305), (1, 231, 97), (0, 74, 75), (0, 95, 199), \\
& (1, 232, 97), (0, 51, 26), (1, 268, 48), (0, 44, 192), (1, 267, 143), (0, 81, 82), (0, 195, 154), (1, 256, 143), (0, 81, 84), \\
& (1, 246, 273), (0, 86, 87), (0, 195, 162), (1, 273, 34), (0, 86, 89), (1, 149, 220), (0, 91, 92), (0, 47, 12), (1, 234, 87), \\
& (0, 94, 89), (0, 111, 107), (0, 96, 97), (0, 125, 123), (1, 23, 34), (0, 99, 79), (0, 125, 128), (1, 251, 273), (1, 222, 220), \\
& (0, 103, 101), (0, 52, 114), (0, 105, 89), (0, 85, 89), (0, 4, 107), (1, 3, 221), (0, 54, 109), (1, 221, 221), (0, 56, 107), \\
& (0, 129, 130), (0, 113, 114), (0, 138, 139), (1, 262, 303), (0, 131, 132), (1, 266, 303), (0, 118, 116), (0, 148, 149), \\
& (0, 120, 114), (0, 150, 50), (0, 62, 68), (0, 73, 123), (1, 264, 59), (0, 90, 123), (0, 126, 127), (0, 126, 75), (1, 296, 305), \\
& (1, 240, 66), (0, 153, 28), (1, 71, 189), (0, 153, 42), (1, 215, 154), (0, 131, 134), (1, 225, 160), (0, 129, 27), (0, 55, 164), \\
& (1, 213, 156), (0, 98, 172), (1, 288, 42), (0, 63, 16), (1, 291, 45), (0, 63, 6), (1, 270, 112), (0, 140, 145), (1, 283, 39), \\
& (0, 142, 11), (0, 52, 122), (0, 193, 187), (1, 212, 156), (0, 49, 104), (0, 148, 194), (1, 289, 42), (0, 58, 124), (0, 155, 121), \\
& (0, 95, 203), (0, 157, 121), (0, 33, 206), (0, 159, 46), (0, 69, 179), (0, 161, 46), (0, 144, 182), (0, 163, 122), (0, 47, 19), \\
& (0, 165, 122), (0, 99, 21), (0, 167, 124), (0, 133, 106), (0, 169, 124), (0, 83, 40), (0, 171, 26), (0, 176, 29), (0, 173, 28), \\
& (0, 65, 46), (0, 175, 26), (0, 184, 26), (0, 44, 189), (0, 178, 32), (0, 142, 168), (0, 86, 32), (0, 181, 29), (0, 210, 192), \\
& (0, 183, 29), (0, 211, 186), (0, 51, 45), (0, 154, 186), (0, 191, 192), (0, 129, 188), (0, 194, 192), (0, 179, 188), \\
& (0, 162, 186), (0, 198, 164), (0, 187, 193), (0, 193, 193), (0, 205, 166), (0, 148, 191), (0, 197, 162), (0, 67, 194), \\ 
& (0, 98, 177), (0, 200, 164), (0, 37, 180), (0, 202, 162), (0, 146, 208), (0, 204, 166), (0, 126, 108), (0, 49, 110), \\
& (0, 207, 168), (0, 135, 16), (0, 209, 168), (0, 88, 24), (0, 55, 156), (0, 52, 119), (1, 319, 100), (1, 294, 313), \\ 
& (1, 310, 71), (1, 316, 308), (1, 212, 164), (1, 218, 172), (1, 319, 79), (1, 218, 177), (1, 68, 187), (1, 66, 187), \\
& (1, 223, 158), (1, 318, 36), (1, 311, 145), (1, 304, 284), (1, 227, 203), (1, 297, 97), (1, 227, 199), (1, 230, 192), \\
& (1, 297, 101), (1, 230, 189), (1, 233, 190), (1, 233, 31), (1, 235, 154), (1, 244, 72), (1, 237, 160), (1, 241, 141), \\
& (1, 239, 196), (1, 271, 122), (1, 241, 170), (1, 257, 16), (1, 243, 129), (1, 248, 194), (1, 243, 179), (1, 246, 162), \\
& (1, 248, 191), (1, 246, 154), (1, 249, 187), (1, 249, 193), (1, 216, 199), (1, 252, 201), (1, 258, 42), (1, 217, 172), \\
& (1, 255, 174), (1, 277, 104), (1, 257, 6), (1, 260, 112), (1, 260, 124), (1, 258, 28), (1, 261, 93), (1, 261, 106), \\
& (1, 263, 12), (1, 292, 26), (1, 244, 44), (1, 228, 102), (1, 229, 14), (1, 247, 9), (1, 269, 104), (1, 294, 12), (1, 302, 222), \\
& (1, 317, 53), (1, 312, 134), (1, 271, 119), (1, 275, 119), (1, 218, 286), (1, 277, 110), (1, 317, 50), (1, 309, 84), \\
& (1, 277, 306), (1, 320, 127), (1, 315, 314), (1, 226, 128), (1, 295, 303), (1, 285, 30), (1, 245, 87), (1, 287, 30), \\
& (1, 258, 307), (1, 249, 221), (1, 259, 130), (1, 276, 48), (1, 219, 274), (1, 223, 8), (1, 292, 45), (1, 223, 48), \\
& (1, 294, 19), (1, 297, 39), (1, 280, 123), (1, 299, 112), (1, 252, 132), (1, 301, 43), (1, 247, 82), (1, 242, 68), \\
& (1, 250, 128), (1, 244, 71), (1, 152, 130), (1, 298, 307), (1, 284, 130), (1, 300, 307), (1, 245, 89), (1, 247, 84), \\
& (1, 241, 145), (1, 256, 11), (1, 253, 274), (1, 254, 274), (1, 259, 27), (1, 252, 134), (1, 318, 38), (1, 233, 34), \\
& (1, 280, 128), (1, 320, 75))
\end{align*} 
\hrule 
}
\caption{Representation of the automaton for the power series $\us_8$ of order $8$ with break sequence $(1,3,11)$.
} 
\end{table}

We state below an analogue of Lemma \ref{recognise} that allows us to distinguish between these four conjugacy classes based on the first few coefficients of the power series.

\begin{proposition} \label{recognise2} Let $\sigma \in \No(\F_2)$ be an automorphism of order $8$ with break sequence $(1,3,11)=\langle 1,2,4 \rangle$, and write $\us=\sum_{i=1}^{\infty} a_i t^i$ with $a_i\in\F_2$. Then $a_1=a_2=1$, $a_3=0$, $a_5\neq a_7$, and $\us$ is conjugate to a series $\sigma_{8,(b_4,b_{11})}$ of order $8$ that has initial coefficients  \begin{equation*} \label{b4b11} \sigma_{8,(b_4,b_{11})} = t+t^2+b_4t^4+t^7+b_{11} t^{11}+O(t^{12})
\end{equation*} for a unique choice of $b_4, b_{11} \in \F_2$. In particular, the conjugacy class of $\sigma$ depends only on $\sigma \bmod t^{12}$.

The series $\sigma_8$ is conjugate to $\sigma_{8,(1,1)}$ and $\sigma_8^{\circ 3}$ is conjugate to $\sigma_{8,(0,1)}$. These give representatives of two of the four conjugacy classes of minimally ramified series of order $8$. 
\end{proposition}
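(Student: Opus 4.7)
The strategy mirrors Lemma \ref{recognise} but requires more delicate bookkeeping because the break sequence reaches depth $11$ and the target normal form fixes coefficients up to $t^{11}$.

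\textbf{Step 1 (Initial constraints).} First I would extract the conditions on $a_1,a_2,a_3,a_5,a_7$ from the break sequence alone. That $a_1=a_2=1$ and $a_3=0$ follows exactly as in the proof of Lemma \ref{recognise}: since $\sigma\in\No(\F_2)$ and $d(\sigma)=1$, and since $\sigma^{\circ 2}=t+(1+a_3)t^4+O(t^5)$, the requirement $d(\sigma^{\circ 2})=3$ forces $a_3=0$. For the assertion $a_5\neq a_7$, I would compute $\sigma^{\circ 4}$ symbolically modulo $t^{12}$ in terms of the free parameters $a_4,a_5,\dots,a_{11}$ (incorporating the constraints already obtained), and then impose $d(\sigma^{\circ 4})=11$, i.e.\ the vanishing of the coefficients of $t^j$ in $\sigma^{\circ 4}$ for $j=2,\dots,11$. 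One of these vanishing conditions (typically the coefficient of $t^6$ or $t^8$) will simplify to the identity $a_5+a_7=1$ in $\F_2$, establishing $a_5\neq a_7$.

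\textbf{Step 2 (Reduction to normal form).} Next I would show that any such $\sigma$ is conjugate to some $\sigma_{8,(b_4,b_{11})}$ by applying conjugations $\phi_k^{-1}\sigma\phi_k$ with $\phi_k=t+ct^k$, $c\in\F_2$ and $k=4,5,\dots,10$, in increasing order of $k$. A direct expansion shows that conjugation by $\phi_k$ changes only the coefficients $a_{k+1},a_{k+2},\dots$, and modifies $a_{k+1}$ additively by a constant (times $c$) that depends on the already-fixed earlier coefficients. Processing $k=5,6,7,8,9,10$ in this order, one can successively kill $a_5$, $a_6$, force $a_7=1$ (using $a_5\neq a_7$ together with the fact that $a_7$ has just been freed from $a_5$), and kill $a_8,a_9,a_{10}$, while leaving $a_4$ free. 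The remaining free parameters after the reduction are $a_4=b_4$ and $a_{11}=b_{11}$, yielding the four candidate normal forms $\sigma_{8,(b_4,b_{11})}$.

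\textbf{Step 3 (Counting and bijection with conjugacy classes).} Step 2 shows that every element of order $8$ with break sequence $(1,3,11)$ is conjugate to at least one $\sigma_{8,(b_4,b_{11})}$, so there are at most $4$ conjugacy classes with such a representative. Proposition \ref{conj8} shows there are exactly $4$ such classes. Hence the four forms $\sigma_{8,(b_4,b_{11})}$ represent distinct conjugacy classes, and the conjugacy class of $\sigma$ is determined by $\sigma\bmod t^{12}$. To pin down $\sigma_8$ and $\sigma_8^{\circ 3}$, I would read off enough coefficients from the $320$-state automaton in Table \ref{8data} (and from its compositional cube, computed as in Algorithm \ref{algoRprep}) to obtain $\sigma_8\bmod t^{12}$ and $\sigma_8^{\circ 3}\bmod t^{12}$, then apply the explicit reduction of Step 2 to each, and verify that the resulting normal forms are $\sigma_{8,(1,1)}$ and $\sigma_{8,(0,1)}$ respectively.

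\textbf{Main obstacle.} The hardest part is Step 2: the successive conjugations couple the coefficients $a_k$ in a nonlinear way through the composition $\phi_k^{-1}\sigma\phi_k$, and each step must be verified not to destroy the constraints among $a_1,\dots,a_k$ already enforced by the break sequence (which constrains not only $\sigma$ but also $\sigma^{\circ 2}$ and $\sigma^{\circ 4}$ up to the relevant orders). In practice, the cleanest route is to carry out all computations symbolically modulo $t^{12}$ and record, after each conjugation, the effect on every coefficient simultaneously; a computer algebra verification of the truncated identity $\phi^{-1}\circ\sigma\circ\phi\equiv\sigma_{8,(b_4,b_{11})}\pmod{t^{12}}$ for explicit $\phi\in\No(\F_2)$ of appropriate depth would render the argument routine.
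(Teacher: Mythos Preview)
Your overall strategy matches the paper's, but Step~2 contains a concrete error. When $\sigma$ has depth~$1$, i.e.\ $\sigma=t+t^2+O(t^3)$, conjugation by $\phi_k=t+ct^k$ changes the coefficient $a_{k+1}$ by exactly $c\cdot k\pmod 2$: one computes $\phi_k^{-1}\circ\sigma\circ\phi_k \equiv \sigma + ck\,t^{k+1} \pmod{t^{k+2}}$ (this is also visible from the commutator formula in the associated graded Lie algebra of $\No(\F_2)$). Hence for even $k$ your ``constant'' is zero, and an increasing-$k$ scheme cannot adjust $a_5$ via $\phi_4$ nor $a_9$ via $\phi_8$. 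The paper circumvents this by using the \emph{non-monotone} sequence of conjugations $\chi_3,\chi_5,\chi_2,\chi_6,\chi_4$: the conjugation $\chi_3$ flips $a_5$ (also flipping $a_4$, harmlessly, since $a_4$ remains a free parameter), and $\chi_2$ is deployed to flip $a_9$, exploiting its higher-order effect on coefficients far beyond $a_3$.

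A smaller slip in Step~1: the relation $a_5\neq a_7$ arises from the \emph{non}-vanishing of the $t^{12}$-coefficient of $\sigma^{\circ 4}$ (the paper computes $\sigma^{\circ 4}=t+(a_5+a_7)t^{12}+O(t^{13})$), not from a vanishing condition at $t^6$ or $t^8$. Your Step~3 pigeonhole via Proposition~\ref{conj8} is correct; the paper additionally verifies non-conjugacy of the four normal forms by a direct computation, but the count alone already suffices for distinctness.
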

\begin{proof}  We will show that any such $\sigma$ is conjugate to some $\sigma_{8,(b_4,b_{11})}$ modulo $t^{12}$, and that the series $\sigma_{8,(b_4,b_{11})}$ are not conjugate modulo $t^{12}$ for the four different choices of $(b_4,b_{11})$. Since we know that there are $4$ conjugacy classes of series $\sigma$ satisfying the required assumptions, this shows that actual series $\sigma_{8,(b_4,b_{11})}$ of order $8$ with minimal break sequence do exist. 

We first note that $d(\sigma)=1$ implies $a_1=a_2=1$; computing $\sigma^{\circ 2}$, we get $\sigma^{\circ 2}=t+(1+a_3)t^4 + O(t^5)$, and $d(\sigma^{\circ 2})=3$ gives $a_3=0$; finally, $\sigma^{\circ 4}=t+(a_5+a_7)t^{12} + O(t^{13})$, and since $d(\sigma^{\circ 4})=11$, we get $a_5\neq a_7$. 

We will now prove that $\sigma$ is conjugate to $\sigma_{8,(b_4,b_{11})}$ for some  $b_4, b_{11} \in \F_2$. We do this by conjugating with selected elements of $\No(\F_2)$ in the following steps (in each step the symbols $a_i$ denote the coefficients of the `new' power series, obtained by performing the conjugations described in the previous steps):

{\bf Step I} (conjugating with $\chi_3\colon t \mapsto t+t^3$). We have $\chi_3^{\circ -1}=t+t^3+t^5+t^9+t^{11}+O(t^{12})$, yielding $$\chi_3 \circ \sigma \circ \chi_3^{\circ -1}=t+t^2+(1+a_4)t^4+(1+a_5)t^5+O(t^6),$$ so conjugating if necessary by $\chi_3$ we may and do assume that $a_5=0$; then $a_7=1$, since $a_5\neq a_7$.

{\bf Step II} (conjugating with $\chi_5\colon t \mapsto t+t^5$). We have $\chi_5^{\circ -1}=t+t^5+t^9+O(t^{12})$, yielding $$\chi_5 \circ \sigma \circ \chi_5^{\circ -1}=t+t^2+a_4t^4+(1+a_6)t^6+O(t^7),$$ 
so conjugating if necessary by $\chi_5$ we may and do assume that $a_6=0$.

{\bf Step III} (conjugating with $\chi_2\colon t \mapsto t+t^2$). We have $\chi_2^{\circ -1}=t+t^2+t^4+t^8+O(t^{12})$, yielding $$\chi_2 \circ \sigma \circ \chi_2^{\circ -1}=t+t^2+a_4t^4+t^7+(1+a_8)t^8+(1+a_9)t^9+(a_9+a_{10})t^{10}+(1+a_{11})t^{11} +O(t^{12}),$$ so conjugating if necessary by $\chi_2$ we may and do assume that $a_9=0$.

{\bf Step IV} (conjugating with $\chi_6\colon t \mapsto t+t^6$). We have $\chi_6^{\circ -1}=t+t^6+O(t^{12})$, yielding $$\chi_6 \circ \sigma \circ \chi_6^{\circ -1}=t+t^2+a_4t^4+t^7+(1+a_8)t^8+(1+a_{10})t^{10}+a_{11}t^{11}+O(t^{12}),$$ so conjugating if necessary by $\chi_6$ we may and do assume that $a_8=0$.

{\bf Step V} (conjugating with $\chi_4\colon t \mapsto t+t^4$). We have $\chi_4^{\circ -1}=t+t^4+O(t^{12})$, yielding $$\chi_4 \circ \sigma \circ \chi_4^{\circ -1}=t+t^2+a_4t^4+t^7+(1+a_{10})t^{10}+a_{11}t^{11}+O(t^{12}),$$ so conjugating if necessary by $\chi_4$ we may and do assume that $a_{10}=0$.

This ends the proof that $\sigma$ is conjugate  to $\sigma_{8,(b_4,b_{11})}$ for some  $b_4, b_{11} \in \F_2$.

We will now prove that the power series $\sigma_{8,(b_4,b_{11})}$ and $\sigma_{8,(c_4,c_{11})}$ are not conjugate in $\No(\F_2)$ unless $(b_4,b_{11})=(c_4,c_{11})$.  Indeed, suppose that $\sigma_{8,(b_4,b_{11})}$ and $\sigma_{8,(c_4,c_{11})}$ are conjugate, and let $\tau \in \No(\F_2)$ be a conjugating power series, so that $\sigma_{8,(b_4,b_{11})}\circ \tau = \tau \circ \sigma_{8,(c_4,c_{11})}$. Write $\tau=t+\sum_{i=2}^{\infty} d_i t^i$. Computing $\sigma_{8,(b_4,b_{11})}\circ \tau - \tau \circ \sigma_{8,(c_4,c_{11})}$, we get \begin{align*}\sigma_{8,(b_4,b_{11})}&\circ \tau - \tau \circ \sigma_{8,(c_4,c_{11})}=(d_3+b_4+c_4)t^4+d_3t^5+(d_5+d_3c_4)t^6 + \\& (d_2+d_6+d_7+d_2b_4+d_2c_4+d_3c_4+d_5c_4)t^8+(d_2+d_5+d_7+d_3c_4)t^9 + \\& (d_2+d_4+d_6+d_7+d_9+d_3c_4+d_7c_4)t^{10}+ 
(d_2+d_2d_3+d_7+b_{11}+c_{11})t^{11}+O(t^{12}).\end{align*}
Considering the coefficients at $t^5$, $t^6$ and $t^9$ gives $d_3=d_5=d_2+d_7=0$; looking then at the coefficients at $t^4$ and $t^{11}$ gives $b_4=c_4$ and $b_{11}=c_{11}$. 

Applying the algorithm from the above proof, we find that $\sigma_8$ is conjugate to $\sigma_{8,(1,1)}$ and $\sigma_8^{\circ 3}$ is conjugate to $\sigma_{8,(0,1)}$. (This requires computing more coefficients than we have specified in Steps I and II, but the computations are easy.)
\end{proof}

\begin{corollary} \label{cor:recognise2} Let $\sigma \in \No(\F_2)$ be an automorphism of order $8$ with break sequence $(1,3,11)=\langle 1,2,4 \rangle$. Then $\sigma$ and $\sigma^{\circ 5}$ are conjugate in $\No(\F_2)$, while $\sigma$ and $\sigma^{\circ 3}$ are not.\end{corollary}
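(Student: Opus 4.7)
The plan is to translate the question into the character-theoretic language used in the proof of Proposition \ref{conj8} and reduce the corollary to a simple calculation in $(\Z/8\Z)^*$.

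First I would recall that by the proof of Proposition \ref{conj8}, the conjugacy class of any order-$8$ element $\sigma$ with minimal break sequence corresponds to a character $\eta_{a,b}\colon U_1\to\Z/8\Z$ (with $a\in(\Z/8\Z)^*$ and $b\in\Z/2\Z$) up to the strict equivalence $\eta_{a,b}\sim \eta_{5a,b}$. The character attached to $\sigma$ is characterised by $\sigma^{\circ \eta(u)}=\mathrm{rec}(u)|_L$, where $\mathrm{rec}$ is the local reciprocity map restricted to $U_1$ and $L/K$ is the cyclic extension generated by $\sigma$.

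Next I would establish the key transformation rule: if $\sigma\leftrightarrow\eta$, then $\sigma^{\circ k}\leftrightarrow k^{-1}\eta$ for every $k\in(\Z/8\Z)^*$. This is immediate from
\[
\sigma^{\circ \eta(u)}=\mathrm{rec}(u)|_L=(\sigma^{\circ k})^{\circ k^{-1}\eta(u)},
\]
which identifies the character attached to the new generator $\sigma^{\circ k}$. Since each element of $(\Z/8\Z)^*\cong V$ is its own inverse, one obtains $\sigma^{\circ 3}\leftrightarrow\eta_{3a,b}$ and $\sigma^{\circ 5}\leftrightarrow\eta_{5a,b}$; note that multiplication by an odd integer fixes $4b\bmod 8$, so the second coordinate $b$ is preserved.

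The conjugacy assertions then follow at once from the description of strict equivalence classes in the proof of Proposition \ref{conj8}. Since the class of $\eta_{a,b}$ is exactly $\{\eta_{a,b},\eta_{5a,b}\}$, the characters of $\sigma$ and $\sigma^{\circ 5}$ coincide up to strict equivalence, giving conjugacy of $\sigma$ and $\sigma^{\circ 5}$ in $\No(\F_2)$. On the other hand, for any odd $a$ neither $3a\equiv a$ nor $3a\equiv 5a\pmod 8$ holds (each would force $2a\equiv 0\pmod 8$), so $\eta_{3a,b}$ lies in the other strict equivalence class, showing that $\sigma$ and $\sigma^{\circ 3}$ are not conjugate.

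The point I expect to require the most care is the transformation rule for the character under a change of generator, since Proposition \ref{conj8} does not spell it out explicitly and the sign of the exponent depends on the convention used for the local reciprocity map. A purely computational alternative, bypassing class field theory, would be to apply the normalisation procedure from the proof of Proposition \ref{recognise2} directly to $\sigma_{8,(b_4,b_{11})}^{\circ 3}$ and $\sigma_{8,(b_4,b_{11})}^{\circ 5}$ computed modulo $t^{12}$, and to verify case by case that the pair $(b_4,b_{11})$ is preserved by the fifth power but altered by the cube for each of the four admissible pairs.
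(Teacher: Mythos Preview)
Your proposal is correct and follows essentially the same route as the paper: both argue via the character $\eta_{a,b}$ from Proposition~\ref{conj8}, observe that passing from $\sigma$ to $\sigma^{\circ k}$ multiplies the character by $k$ (your $k^{-1}$ agrees with the paper's $k$ since every element of $(\Z/8\Z)^*$ is its own inverse), and then invoke the strict-equivalence classification $\{\eta_{a,b},\eta_{5a,b}\}$. The paper also carries out in full the computational alternative you sketch at the end, showing directly via the normalisation of Proposition~\ref{recognise2} that $\sigma\sim\sigma_{8,(b_4,b_{11})}$ implies $\sigma^{\circ 3}\sim\sigma_{8,(b_4+1,b_{11})}$ and $\sigma\equiv\sigma^{\circ 5}\bmod t^{12}$.
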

\begin{proof} This follows from the proof of Proposition \ref{conj8}---if an element $\sigma$ corresponds to the character $\eta_{a,b}$, then for $k$ odd the element $\sigma^{\circ k}$ corresponds to $k\eta_{a,b}=\eta_{ka,kb}=\eta_{ka,b}$. Since $\eta_{a,b}$ and $\eta_{5a,b}$ are strictly equivalent, while $\eta_{a,b}$ and $\eta_{3a,b}$ are not, the claim follows.

It is also possible to give a direct proof using the method of Proposition \ref{recognise2}, as follows. Denote the relation of being conjugate by $\sim$. By Proposition \ref{recognise2}, we may assume without loss of generality that $\sigma = t+t^2+b_4t^4+t^7+b_{11} t^{11}+O(t^{12})$ for some $b_4, b_{11}\in \F_2$. Then \[ \sigma^{\circ 2}=t+t^4+t^8+t^9+(1+b_4) t^{10} + t^{11}+O(t^{12}),\qquad \sigma^{\circ 4}=t+O(t^{12}),\] and hence  $\sigma = \sigma^{\circ 5} + O(t^{12})$ and $$\sigma^{\circ 3}  = t+t^2+(1+b_4)t^4+t^7+t^9+b_4t^{10}+(1+b_{11})t^{11}+O(t^{12}).$$ Following the algorithm of the proof of Proposition \ref{recognise2} (and using the notation therein), we may conjugate $\sigma^{\circ 3}$ in turn by $\chi_2$, $\chi_6$ and in the case where $b_4=1$ also $\chi_4$ to arrive at $$\sigma^{\circ 3} \sim t+t^2+(1+b_4)t^4+t^7+b_{11}  t^{11} +O(t^{12}),$$ 
i.e.\ if $\sigma \sim \sigma_{8,(b_4,b_{11})}$, then $\sigma^{\circ 3} \sim  \sigma_{8,(b_4+1,b_{11})}$. Applying Proposition \ref{recognise2} again shows that $\sigma \sim \sigma^{\circ 5}$ and $\sigma \not\sim \sigma^{\circ 3}$.\end{proof}

\subsection{Finding representatives via explicit class field theory}\label{subsecCarlitz}

We have already constructed representatives of two out of four conjugacy classes of minimally ramified series of order $8$. In order to construct the representatives for the remaining conjugacy classes, we will extend the method using the Carlitz module from Remark \ref{Carlitz}.

Let $\rho$ be the Carlitz module for $K=\F_2(z)$. We know from \cite[Obs.\ 4 \& Sect.\ 5]{Lubin} that the characters $\eta \colon U_1 \to {\Z}{/}{8{\Z}}$ corresponding to minimally ramified order-$8$ elements factor through $U_5$, and the corresponding Galois extensions can be obtained as a subextension of  $K(\rho[z^5])/K$. The extension $K(\rho[z^5])/K$ has Galois group $$G = \left( \F_2[z]/z^5\right)^* \cong {\Z}/{8{\Z}} \times {\Z}/{2{\Z}} = \langle z+1\bmod z^5 \rangle \times \langle z^3+1 \bmod z^5 \rangle. $$ The group $G$ has two subgroups with quotient ${\Z}/{8{\Z}}$: $$H_1 = \langle z^3+1 \bmod z^5 \rangle \qquad \text{and} \qquad H_2=\langle z^4+z^3+1 \bmod z^5 \rangle.$$

The field $K(\rho[z^5])$ is generated by a root $\alpha$ of the degree-$16$ polynomial $\rho_{z^5}(X)/\rho_{z^4}(X)$. The fixed fields $L_1$ and $L_2$ of $H_1$ and $H_2$, respectively, are generated by the elements $$\beta_1:=\alpha\cdot \rho_{z^3+1}(\alpha) \qquad \text{and} \qquad \beta_2:=\alpha\cdot \rho_{z^4+z^3+1}(\alpha).$$ 
Recalling that $L_i/K$ has Galois group cyclic of order $8$ generated by $\sigma$ acting as $\sigma(\alpha) = z \alpha + \alpha + \alpha^2$, we can compute $\sigma(\beta_i)$ and we find that  
\begin{align*} 
& \begin{cases}
\beta_1=\alpha^9 + (z^4 + z^2 + z)\alpha^5 + (z^4 + z^3 + z^2)\alpha^3 + (z^3 + 1)\alpha^2; \\
 \sigma(\beta_1) = \alpha^{10} +  (z + 1)\alpha^9 + (z^4 + z^2 + z)\alpha^6 + (z^5 + z^4 + z^3 + z)\alpha^5 +(z^4 + z^3 + z^2 + 1)\alpha^4+ \\ 
 \end{cases} \\[-1mm]
 & \qquad\qquad\qquad   (z^5 + z^3 + z^2)\alpha^3 + (z^4 + z^3 + z^2 + z + 1)\alpha^2 + (z^2 + z)\alpha;
 \end{align*}
and
\begin{align*} 
& \begin{cases}
 \beta_2=\alpha^9 + (z^4 + z^2 + z)\alpha^5 + (z^4 + z^3 + z^2)\alpha^3 + (z^3 + 1)\alpha^2 + z\alpha; \\
 \sigma(\beta_2)=\alpha^{10} + (z + 1)\alpha^9 + (z^4 + z^2 + z)\alpha^6 + (z^5 + z^4 + z^3 + z)\alpha^5 + (z^4 + z^3 + z^2 + 1)\alpha^4 +\\
 \end{cases} \\[-1mm]
 & \qquad\qquad\qquad (z^5 + z^3 + z^2)\alpha^3 + (z^4 + z^3 + z^2 + z + 1)\alpha^2 + (z^2 + z)\alpha.
 \end{align*}
Since $z$ is the only ramified place and it is totally ramified in $K(\rho[z^5])$, the same is true in $L_i$. We can choose $t=\beta_i$  as a uniformiser for the place above $z$ in $L_i$. Elimination of $z$ and $\alpha$ leads to the following equation for the element $\sigma_{8,1}=\sigma_{8,1}(t)$ of order $8$ with $t=\beta_1$: 
\begin{align*} t X^6 +(t+1) X^5 &+   \left(t^5+t^3+t\right) X^4+\left(t^5+t^2+t\right) X^3 +    \\ &  \left(t^6+t^3+t\right)X^2+ t^4 X + t^6+t^5+t^4+t^3=  0; \end{align*}
and to the following equation for the element $\sigma_{8,2}=\sigma_{8,2}(t)$ of order $8$ with $t=\beta_2$: 
\begin{align*}
 t X^6 +(t+1) X^5 &+   \left(t^5+t^3\right) X^4+\left(t^5+t+1\right) X^3 +    \\ &  \left(t^6+t^5+t^4+t^3+t\right)X^2+ \left(t^4 +t^2\right) X + t^4+t^3=  0. 
\end{align*}
These equations define algebraic curves of geometric genus $7$, solved by the series $$\sigma_{8,1}(t)=t+t^2+t^5+t^{11}+O(t^{13}) \qquad \text{and} \qquad \sigma_{8,2}(t)=t+t^2+t^5+t^9+t^{11}+O(t^{13})$$ of order $8$, which are produced by automata with $668$ and $926$ states, respectively. Furthermore, $\sigma_{8,1}$ is conjugate to $\sigma_{8,(1,1)}$ and $\sigma_{8,2}$ is conjugate to $\sigma_{8, (1,0)}$ by the method from Proposition \ref{recognise2}.  
We may summarise the above discussion as follows:

\begin{proposition} \label{sigma81} There are four conjugacy classes of order-$8$ elements with break sequence $(1, 3, 11) = \langle 1, 2, 4 \rangle$ and their representatives are the series $\sigma_{8,1}$, $\sigma_{8,1}^{\circ 3}$ \textup{(}conjugate to $\sigma_8$ and $\sigma_8^{\circ 3}$, respectively\textup{)}, $\sigma_{8,2}$ and $\sigma_{8,2}^{\circ 3}$. The series $\sigma_{8,2}$ may be found in \cite{Database}. \qed
\end{proposition}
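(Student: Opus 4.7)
The strategy is to combine the three preceding results: Proposition \ref{conj8} (there are exactly four conjugacy classes), Proposition \ref{recognise2} (a normal-form classification via invariants $(b_4,b_{11})\in\F_2\times\F_2$), and the formula extracted in the proof of Corollary \ref{cor:recognise2} (the effect of cubing on the normal-form invariants). Once these are combined, the statement is reduced to a finite coefficient computation.

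First I would produce enough initial coefficients of $\sigma_{8,1}$ and $\sigma_{8,2}$ from the automata generated by the two Carlitz-module equations constructed just above the proposition. The published initial expansions already show $\sigma_{8,1}=t+t^2+t^5+t^{11}+O(t^{13})$ and $\sigma_{8,2}=t+t^2+t^5+t^9+t^{11}+O(t^{13})$, and extending these expansions to order $t^{12}$ is immediate from the automata. I would then run the conjugation algorithm of Proposition \ref{recognise2} (conjugation by $\chi_3,\chi_5,\chi_2,\chi_6,\chi_4$ in succession) on each, which places both in normal form. As flagged in the preceding paragraph, the outcome is $\sigma_{8,1}\sim\sigma_{8,(1,1)}$ and $\sigma_{8,2}\sim\sigma_{8,(1,0)}$; verifying this is the only real computational step.

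Next I would invoke the identity established inside the proof of Corollary \ref{cor:recognise2}, namely that if $\sigma\sim\sigma_{8,(b_4,b_{11})}$ then $\sigma^{\circ 3}\sim\sigma_{8,(b_4+1,b_{11})}$. Applying this to the two series just normalised yields
\begin{equation*}
\sigma_{8,1}^{\circ 3}\sim\sigma_{8,(0,1)}\qquad\text{and}\qquad\sigma_{8,2}^{\circ 3}\sim\sigma_{8,(0,0)}.
\end{equation*}
Thus the four series $\sigma_{8,2}^{\circ 3},\,\sigma_{8,1}^{\circ 3},\,\sigma_{8,2},\,\sigma_{8,1}$ are conjugate respectively to the four distinct normal forms $\sigma_{8,(0,0)},\,\sigma_{8,(0,1)},\,\sigma_{8,(1,0)},\,\sigma_{8,(1,1)}$. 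By Proposition \ref{recognise2} these normal forms are pairwise non-conjugate, so the four listed series lie in pairwise distinct conjugacy classes. Since Proposition \ref{conj8} asserts that only four such classes exist, they form a complete set of representatives.

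Finally, the identification $\sigma_{8,1}\sim\sigma_8$ is immediate: Proposition \ref{recognise2} already records $\sigma_8\sim\sigma_{8,(1,1)}$, and we have just shown $\sigma_{8,1}\sim\sigma_{8,(1,1)}$; composing the conjugating series and cubing then yields $\sigma_{8,1}^{\circ 3}\sim\sigma_8^{\circ 3}$ as well. The only obstacle is the bookkeeping for the coefficient computation of $\sigma_{8,1}$ and $\sigma_{8,2}$ up to $O(t^{12})$ and the successive conjugations by the $\chi_i$; this is purely mechanical once the automata are in hand, and nothing conceptually new is required beyond the results already proved in this section.
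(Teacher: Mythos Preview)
Your proposal is correct and follows essentially the same route as the paper: the proposition is stated with an immediate \qed because it summarises the preceding discussion, which constructs $\sigma_{8,1}$ and $\sigma_{8,2}$ via the Carlitz module, identifies their normal forms as $\sigma_{8,(1,1)}$ and $\sigma_{8,(1,0)}$ by the algorithm of Proposition~\ref{recognise2}, and then relies on the cubing formula from the proof of Corollary~\ref{cor:recognise2} together with Proposition~\ref{conj8} to conclude that the four listed series exhaust the conjugacy classes. You have simply made explicit the bookkeeping that the paper leaves implicit.
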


\begin{remark}
We have constructed order-$8$ elements by considering the Galois extension $K(\rho[z^5])/K$ with Galois group ${\Z}/8{\Z} \times {\Z}/2{\Z}$, and looking at its subextensions $L_i/K$ with Galois group ${\Z}/8{\Z}$. We could instead look at an extension $K(\rho[z^5])/M$ with Galois group ${\Z}/8{\Z}$. This would work, but would produce a non-minimally ramified series generated by an automaton with many more states---the automaton corresponding to $\sigma(t)=\rho_{1+z}(t)$ with $t=\alpha$ has 136600 states.
\end{remark}

\section{Embedding the Klein four-group in $\No(\F_2)$ using automata} 
Since every $p$-group embeds in $\No(\F_p)$, we may ask for a representation for generators of a given $p$-group through automata. We show how to do this for the easiest case, that of the Klein four-group $V={\Z}/{2}{\Z} \times {\Z}/{2}{\Z}$
for $p=2$, by describing two automata that correspond to two commuting power series of order two in characteristic two (with minimal admissible break sequences), answering a question that Klopsch asked us. 

\subsection{Embedding with small conductor} For a general field $\F$, define the Nottingham group $\No(\F)$ to be the group of power series $\sigma(t)\in \F[\![t]\!]$ of the form $t+O(t^2)$ under composition. The following lemma shows that it is easy to embed $V$ into the Nottingham group over any proper field extension $\F$ of $\F_2$ such that all nontrivial elements of $V$ have break sequence $(1)$ (i.e.\ have depth $1$), but one cannot do so over $\F_2$.

\begin{proposition}\label{thm:Klein1} There is an embedding of the Klein four-group $V={\Z}/{2}{\Z} \times {\Z}/{2}{\Z}$ in the Nottingham group $\No(\F)$ over a field $\F$ of characteristic two with all nontrivial elements of $V$ having break sequence $(1)$ if and only if $\F \neq \F_2$. 
\end{proposition}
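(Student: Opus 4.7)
The plan is to exhibit a very concrete family of commuting order-$2$ involutions that generates an embedding when $\mathbb{F} \neq \mathbb{F}_2$, and to show a short counting obstruction in the remaining case.

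\smallskip

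For the forward direction, assume $\mathbb{F}\neq \mathbb{F}_2$, so $\mathbb{F}$ contains at least two distinct nonzero elements $a,b$. For $c \in \mathbb{F}$, set
\[
\sigma_c(t) := \frac{t}{1+ct} = \sum_{k\geq 0} c^k t^{k+1} \in \mathcal{N}(\mathbb{F}).
\]
A direct computation shows $\sigma_c \circ \sigma_d = \sigma_{c+d}$, so $c \mapsto \sigma_c$ is a group homomorphism from the additive group $(\mathbb{F},+)$ into $\mathcal{N}(\mathbb{F})$, with trivial kernel. In particular each $\sigma_c$ with $c\neq 0$ has order $2$ (since $\mathrm{char}\,\mathbb{F}=2$) and depth $1$, and the $\sigma_c$ pairwise commute. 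Taking $\sigma := \sigma_a$ and $\tau := \sigma_b$, the subgroup $\{e,\sigma,\tau,\sigma\tau\} = \{\sigma_0,\sigma_a,\sigma_b,\sigma_{a+b}\}$ is isomorphic to $V$; since $a$, $b$, and $a+b$ are all nonzero (the last because $a\neq b$ in characteristic $2$), every nontrivial element has depth~$1$.

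\smallskip

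For the converse, suppose $\mathbb{F} = \mathbb{F}_2$ and, towards a contradiction, that $V \hookrightarrow \mathcal{N}(\mathbb{F}_2)$ with images $\sigma, \tau, \sigma\tau$ all of depth~$1$. Any depth-$1$ element of $\mathcal{N}(\mathbb{F}_2)$ has the form $t + t^2 + O(t^3)$, because $\mathbb{F}_2^\ast = \{1\}$ forces the coefficient of $t^2$ to be $1$. Hence $\sigma(t) = t + t^2 + O(t^3)$ and $\tau(t) = t + t^2 + O(t^3)$, so
\[
(\sigma\circ\tau)(t) = \sigma(\tau(t)) = \tau(t) + \tau(t)^2 + O(t^3) = t + O(t^3),
\]
which means $\sigma\circ\tau$ has depth $\geq 2$ (or is the identity), contradicting the assumption.

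\smallskip

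The argument is almost formality once one writes down the Möbius-like family $\sigma_c(t) = t/(1+ct)$; the only potential subtlety is to confirm that these really are the generic order-$2$ depth-$1$ series (up to higher-order corrections, which Klopsch's classification recalled in \eqref{eqKm} already guarantees) and to check the composition identity $\sigma_c \circ \sigma_d = \sigma_{c+d}$ cleanly. No difficult step is anticipated; the whole statement reduces to observing that $V$ embeds into the additive group $(\mathbb{F},+)$ precisely when $|\mathbb{F}|\geq 3$, which in characteristic $2$ is equivalent to $\mathbb{F}\neq \mathbb{F}_2$.
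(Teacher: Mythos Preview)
Your proof is correct and essentially identical to the paper's: both use the family $\sigma_c(t)=t/(1+ct)$ (the paper phrases it as choosing a two-dimensional $\F_2$-subspace $U\subseteq\F$, which amounts to your choice of distinct nonzero $a,b$) for the forward direction, and both observe that over $\F_2$ any two depth-$1$ elements are $t+t^2+O(t^3)$, forcing their composite to have depth $\geq 2$. Your final paragraph of commentary is unnecessary but harmless.
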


Note that all nontrivial elements having break sequence $(1)$ means that the corresponding $V$-extension is \emph{weakly ramified}, i.e.\ has trivial second ramification group. A much more general statement that implies Lemma \ref{thm:Klein1} is given in \cite[Korollar 3.2]{CKCrelle}, but we give a short direct proof.

\begin{proof} Assume $\F \neq \F_2$ and let $U$ be a two-dimensional $\F_2$-vector subspace of $\F$.  Then the power series $t/(ut+1)=t+ut^2+O(t^3)$ taken over $u\in U$ form a subgroup of $\No(\F)$ isomorphic to the Klein four-group. 

For the converse, assume we have an embedding of $V=\{\mathrm{id},\sigma,\tau,\sigma \circ \tau\}$ into $ \No(\F_2)$ with nontrivial elements having break sequence $(1)$. Then $\sigma$ and $\tau$ are of the form $t+t^2+O(t^3)$, implying that $\sigma \circ \tau = t+O(t^3)$, a contradiction. 
\end{proof} 

There are further restrictions on possible depths of elements of the Klein four-group embedded in $\No(\F_2)$. In the next subsection, we will construct an embedding  with nontrivial elements having depths $1, 1$ and $5$. The next lemma shows that these are the minimal possible values.

\begin{proposition}\label{lem:Kleinfourbr} For every embedding of the Klein four-group $V$ in the Nottingham group $\No(\F_2)$ some nontrivial element of $V$ has depth at least $5$.\end{proposition}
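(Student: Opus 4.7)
The plan is to argue by contradiction: assume every nontrivial element of $V$ has depth at most $3$. By Klopsch's theorem, each of the three order-$2$ elements has odd depth, so the three depths lie in $\{1,3\}$. A short composition calculation in characteristic $2$ gives the depth formula: if $d(\sigma)\neq d(\tau)$ then $d(\sigma\circ\tau)=\min(d(\sigma),d(\tau))$; if $d(\sigma)=d(\tau)=m$, then both leading coefficients equal $1\in\F_2^\times$ and cancel modulo $2$, so $d(\sigma\circ\tau)\geq m+1$, and odd-depth-ness of the order-$2$ product forces $d(\sigma\circ\tau)\geq m+2$. Enumerating triples $(d(\sigma),d(\tau),d(\sigma\tau))\in\{1,3\}^3$, and using that any two nontrivial elements of $V$ may be taken as generators, every configuration except the unordered triple $\{1,1,3\}$ is immediately contradictory. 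Thus it remains to kill a single case: a pair of commuting involutions $\sigma,\tau\in\No(\F_2)$ with $d(\sigma)=3$ and $d(\tau)=1$.

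For this, I would first normalise $\tau$: by Klopsch, after simultaneously conjugating both $\sigma$ and $\tau$ by a common element of $\No(\F_2)$ (which preserves commutation and depths), I may assume $\tau(t)=t/(1-t)$, i.e.\ the $[-1]$-endomorphism of $\hat{\mathbb{G}}_m$ in characteristic $2$. Set $s:=t+\tau(t)=t^2/(1+t)$. Then $s$ is $\tau$-invariant and $t$ satisfies $t^2+st+s=0$ over $\F_2\pau{s}$, so $\F_2\pau{t}^{\tau}=\F_2\pau{s}$. Commutativity of $\sigma$ with $\tau$ forces $\sigma$ to preserve this fixed subring, yielding a restriction $\bar\sigma\in\Aut(\F_2\pau{s})$ with $\bar\sigma(s)=s+O(s^2)$ and $\bar\sigma^{\circ 2}=\mathrm{id}$.

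The finish is a direct substitution: writing $\sigma(t)=t+e(t)$ with $e(t)=t^4+O(t^5)$, one computes
\[
\sigma(s)-s \;=\; \frac{(t^2+e^2)(1+t)-t^2(1+t+e)}{(1+t+e)(1+t)} \;=\; \frac{e\bigl(e(1+t)+t^2\bigr)}{(1+t+e)(1+t)} \;=\; t^6+O(t^7).
\]
Since $s^2=t^4+O(t^5)$ and $s^3=t^6+O(t^7)$, matching coefficients gives $\bar\sigma(s)=s+s^3+O(s^4)$, so $\bar\sigma$ has depth exactly $2$. But Klopsch's theorem applied to the involution $\bar\sigma$ forces its depth to be odd whenever $\bar\sigma\neq\mathrm{id}$; and if $\bar\sigma=\mathrm{id}$, then $\sigma$ fixes $s$ and so lies in $\Gal(\F_2\lau{t}/\F_2\lau{s})=\{1,\tau\}$, contradicting $d(\sigma)=3$. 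Either alternative yields a contradiction. The step I expect to be the main obstacle is identifying the correct descent: one has to recognise that commuting with $\tau$ lets $\sigma$ act on the $\tau$-invariant uniformiser $s$, and that for $d(\sigma)=3$ this halving of depth is sharp enough to land on the even integer $2$, triggering the parity obstruction; the case reduction and the substitution itself are then routine.
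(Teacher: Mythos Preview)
Your proof is correct and takes a genuinely different route from the paper's.

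The paper proceeds, after the same reduction to the triple $\{1,1,3\}$, by a direct coefficient computation: it normalises one depth-$1$ element to $t/(t+1)$, writes the second depth-$1$ element as $\tau=t+t^2+\sum_{i\ge 3} a_i t^i$, and expands $\tau^{\circ 2}$ and $\sigma\circ\tau-\tau\circ\sigma$ up to $O(t^9)$; the resulting constraints ($a_3=1$, $a_4=a_5$, $a_7=1$, then $a_4=1$) force $d(\sigma\circ\tau)\ge 5$.

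Your argument is more structural: you normalise the depth-$1$ element, pass to its invariant subring $\F_2\pau{s}$ with $s=t^2/(1+t)$, and observe that commutation forces the depth-$3$ element $\sigma$ to descend to an involution $\bar\sigma$ of $\F_2\pau{s}$. A single substitution then gives $\sigma(s)-s=t^6+O(t^7)=s^3+O(s^4)$, so $d(\bar\sigma)=2$, contradicting the odd-depth constraint for finite-order elements. This replaces the paper's coefficient chase (roughly eight relations up to $t^8$) by one Galois-theoretic observation and one valuation computation; it also makes transparent \emph{why} depth $3$ cannot survive---the quotient by the depth-$1$ action halves the $t$-order and lands on an even depth. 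The paper's approach, on the other hand, is entirely elementary and avoids invoking the fixed-field identification $\F_2\pau{t}^\tau=\F_2\pau{s}$, which you use implicitly (and which is justified since $t^2+st+s=0$ exhibits a totally ramified degree-$2$ extension).
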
 
\begin{proof} Suppose the contrary. By Proposition \ref{thm:Klein1} some nontrivial element has  depth at least $2$. Every element of finite order has odd depth:  if $\sigma$ had even depth, writing $\sigma=t+t^k+O(t^{k+1})$ with $k$ odd, we would find by induction that $\sigma^{\circ 2^n}=t+t^{2^n(k-1)+1}+O(t^{2^n(k-1)+2})$ for all $n\geq 1$,  so $\sigma$ would not be of finite order.  
Also note that for every $k\geq 1$ the elements of depth at least $k$ form a subgroup. Thus, the only possible sequences of depths $<5$ of series in $\No(\F_2)$ representing nontrivial elements of $V$ are $1, 1, 3$ and $3, 3, 3$. The latter is impossible, since the product of two elements of depth $k$ has depth at least $k+1$. 

It remains to treat the case where the depths of the nontrivial elements are $1, 1, 3$. By Klopsch's theorem \cite{Klopsch} every element of order $2$ and depth $1$ is conjugate to $t/(t+1)$, so without loss of generality we may  assume that $V=\{\mathrm{id},\sigma,\tau,\sigma\circ  \tau\}$ with \begin{equation*} \sigma(t)=\frac{t}{t+1}\qquad\mbox{and}\qquad \tau(t)=t+t^2+\sum_{i\geq 3}a_i t^i.\end{equation*} We will reach a contradiction by computing up to order $O(t^9)$. We have \begin{align*} \tau^{\circ 2}(t)&=t+(1+a_3)t^4+(a_3a_4+a_5)t^6+(a_3+a_3a_4+a_4a_5+a_6+a_3a_6+a_7)t^8+O(t^9).\end{align*} Since $\tau^{\circ 2}=\mathrm{id}$, this gives $a_3=1$, $a_4=a_5$, and $a_7=1$. Substituting these values allows us to compute \begin{align*} (\sigma\circ\tau)(t)&=t+(1+a_4)t^4+(1+a_4)t^5+(a_4+a_6)t^6+(1+a_4)t^7+\\ & \qquad \qquad (1+a_4+a_6+a_8)t^8+O(t^9);\\ (\tau\circ\sigma)(t)&=t+(1+a_4)t^4+(1+a_4)t^5+(a_4+a_6)t^6+(1+a_4)t^7+(a_6+a_8)t^8+O(t^9).\end{align*} Since $\sigma\circ\tau =\tau\circ \sigma$, this gives $a_4=1$, and  shows that the depth of $\sigma\circ \tau$ is at least $5$. 
\end{proof}

\subsection{Using automata} We now show how to use automata to embed the Klein four-group $V$ into $\No(\F_2)$. We start with the $V$-extension $\F_2\lau{z}(x,y)$ of $\F_2\lau{z}$ given by 
$x^2+x=z^{-1}$ and $y^2+y=z^{-3}$ with two generators $\sigma_{V,1},\sigma_{V,2}$ of $V$ acting as 
$$ \left\{ \begin{array}{l} \sigma_{V,1}(x)=x+1; \\ \sigma_{V,1}(y)=y \end{array} \right.   \qquad \mbox{and} \qquad \left\{ \begin{array}{l}  \sigma_{V,2}(x)=x; \\ \sigma_{V,2}(y)=y+1. \end{array}\right. $$
Since $\sigma_{V,1},\sigma_{V,2}$ are different, of order two and commute, they generate the group $V$. 
Set $w=y+x^3+x^2+x$. We may regard $\F_2\lau{z}(x,y)$ as the extension $\F_2\lau{z}(x,y)=\F_2\lau{z}(x,w)$ of $\F_2\lau{z}$ given by $$\left\{ \begin{array}{l} x^2+x=z^{-1}; \\  w^2+w=x^5+x \end{array} \right.$$ with the two generators $\sigma_{V,1}$ and $\sigma_{V,2}$ acting on $x$ and $w$ as 
$$ \left\{ \begin{array}{l} \sigma_{V,1}(x)=x+1; \\ \sigma_{V,1}(w)=w+x^2+x+1\end{array} \right.   \qquad \mbox{and} \qquad \left\{ \begin{array}{l}  \sigma_{V,2}(x)=x; \\ \sigma_{V,2}(w)=w+1. \end{array} \right.$$
 Writing $z_0=z$, $z_1$,  $z_2$   for uniformisers of the fields in the tower of field extensions $$K_0:=\F_2\lau{z}\subsetneq K_1=K_0(x)=\F_2\lau{z_1}\subsetneq K_2=K_1(w)=\F_2\lau{z_2},$$ we have $v_{z_1}(x)=-1$, $v_{z_1}(x^5+x)=-5$, and hence $v_{z_2}(w)=-5$ and $v_{z_2}(x)=-2$. Choosing a uniformiser $t=x^2w^{-1}$ (note that $v_{z_2}(t)=1$), we find by elimination of the variables $z,x,w$ that $ \sigma_{V,1}= \sigma_{V,1}(t)$ and $\sigma_{V,2}=\sigma_{V,2}(t)$ satisfy, respectively, 
\begin{align*}
t^4 X^4+t^3X^3+ X^2+(t+1) X+t^2+t&=0; \\
(t^4+1)X^4+tX^2+t^2X+t^4&=0. 
\end{align*}
This is solved with respective initial coefficients
$$
 \sigma_{V,1}= t+t^2 + O(t^3) \qquad \text{and} \qquad \sigma_{V,2}= t + t^6+O(t^{7}). 
$$
 The corresponding automata have $18$ and  $14$ states, respectively. 

\begin{proposition} \label{propkleinfour} 
The series $ \sigma_{V,1}$ and $\sigma_{V,2}$  have break sequences $(1)$ and $(5)$ and satisfy $ \sigma_{V,1}^{\circ 2} = \sigma_{V,2}^{\circ 2}=t$ and $ \sigma_{V,1} \circ \sigma_{V,2} = \sigma_{V,2} \circ \sigma_{V,1}$, and hence exhibit an explicit embedding of the Klein four-group ${\Z}/{2}{\Z} \times {\Z}/{2}{\Z}$ into $\No(\F_2)$. The corresponding automata are depicted in Table \textup{\ref{Klein}}. \qed
\end{proposition}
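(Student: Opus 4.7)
The plan is to exploit the Galois-theoretic construction preceding the proposition so that the group-theoretic identities become automatic, and then to read the break sequences off the initial coefficients of the two displayed power series.

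First I would observe that by construction, $\sigma_{V,1}$ and $\sigma_{V,2}$ are two distinct commuting involutions in $\Gal(K_2/K_0)$, where $K_2 = \F_2\lau{z}(x,w)$ is a $V$-extension of $K_0 = \F_2\lau{z}$; together with the identity they exhaust the Klein four-group. Since $t = x^2/w$ is a uniformiser for $K_2 = \F_2\lau{t}$ (the authors have already checked $v_{z_2}(t) = 1$), each $\sigma_{V,i}$ acts as a continuous $\F_2$-algebra automorphism of $\F_2\fl t \fr$, determined by its value on $t$. Its having $2$-power order forces $\sigma_{V,i}(t) \in t + t^2\F_2\fl t \fr$, so $\sigma_{V,i} \in \No(\F_2)$ as explained in Section \ref{section2}. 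The identities $\sigma_{V,i}^{\circ 2} = t$ and $\sigma_{V,1}\circ \sigma_{V,2} = \sigma_{V,2}\circ \sigma_{V,1}$ are then inherited directly from the analogous identities in the abelian Galois group under the natural map to $\No(\F_2)$.

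Second, to read off the break sequences and confirm that both generators are nontrivial, I would apply the depth formula $d(\sigma) = \ord_t(\sigma(t)-t) - 1$ to the initial terms $\sigma_{V,1} = t+t^2+O(t^3)$ and $\sigma_{V,2} = t+t^6+O(t^7)$, obtaining $d(\sigma_{V,1}) = 1$ and $d(\sigma_{V,2}) = 5$. Since each series has order $2$, the break sequence consists of this single depth, namely $(1)$ and $(5)$ respectively. In particular, $\sigma_{V,1}$ and $\sigma_{V,2}$ are distinct from $t$ and from one another, so the resulting map $V \to \No(\F_2)$ is an injection.

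The only genuinely computational ingredient---and hence the main potential obstacle---is verifying those initial coefficients. Concretely, for each of the two displayed quartic equations one must confirm the existence and uniqueness of a $t$-adic solution of the form $t+O(t^2)$ (by inspecting the $t$-adic Newton polygon, or equivalently the lowest-valuation terms of the coefficients as polynomials in $t$), and then extract the first few coefficients via Hensel-style recursion; our Galois automorphism, producing a power series of the correct form, must be that unique solution. This step is entirely routine, but it is the only content of the proof not already automatic from the Galois-theoretic setup.
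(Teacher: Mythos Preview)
Your proposal is correct and matches the paper's approach: the proposition is marked \qed\ with no separate proof precisely because everything follows from the preceding Galois-theoretic construction, exactly as you outline. The group identities are inherited from $\Gal(K_2/K_0)\cong V$, the break sequences are read off from the displayed initial coefficients via $d(\sigma)=\ord_t(\sigma(t)-t)-1$, and the only computational content is confirming those initial coefficients (which the paper handles by solving the quartic equations and noting the stated leading terms).
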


 For completeness, writing $\sigma_{V,3}=\sigma_{V,1}\circ\sigma_{V,2}$ for the third nontrivial element of $V$, we find that $\sigma_{V,3}$ satisfies 
\begin{equation*} t^4X^4+(t+1)^3X^3+(t^3+t^2+t)X^2+(t+1)^3X+t^3+t=0
\end{equation*} 
with initial coefficients $\sigma_{V,3}=t + t^2 + t^3 +O(t^{5})$, leading to an automaton with $25$ states. The automaton is stored in standard Mathematica form in \cite{Database}.

\begin{remark} 
In principle, since any finite $p$-group can be realised explicitly as the  Galois group of an extension of $\F_2\lau{z}$, the Galois-theoretic method can be used to find equations satisfied by generators of any finite $p$-group embedded into $\No(\F_p)$, and thus to represent them explicitly by automata. 

The examples in the current paper do not constitute the computational limit of the method. For example, we can give an embedding of ${\Z}/4{\Z} \times {\Z}/2{\Z}$ into $\No(\F_2)$ with two generators being produced by automata with $128$ states, the order-$4$ element being minimally ramified and the order-$2$ element having depth $7$; we can also obtain an order-$9$ element in $\No(\F_3)$ with break sequence $(1,7)=\langle1,3\rangle$  produced by an automaton with $3634$ states, etc. However, we refrain from further expanding the catalogue of examples.
\end{remark}

\begin{center}
\begin{table}[t]
\begin{tabular}{cc}
\hline \\
\begin{tikzpicture}[scale=0.1]
\tikzstyle{every node}+=[inner sep=0pt]
\draw [black] (4.1,-3.1) circle (3);
\draw (4.1,-3.1) node {$1$};
\draw [black] (4.1,-55.9) circle (3);
\draw (4.1,-55.9) node {$0$};
\draw [black] (16.8,-3.1) circle (3);
\draw (16.8,-3.1) node {$1$};
\draw [black] (16.8,-55.9) circle (3);
\draw (16.8,-55.9) node {$0$};
\draw [black] (25.8,-3.1) circle (3);
\draw (25.8,-3.1) node {$1$};
\draw [black] (36.8,-3.1) circle (3);
\draw (36.8,-3.1) node {$1$};
\draw [black] (25.8,-12.8) circle (3);
\draw (25.8,-12.8) node {$1$};
\draw [black] (36.8,-12.8) circle (3);
\draw (36.8,-12.8) node {$0$};
\draw [black] (36.8,-43) circle (3);
\draw (36.8,-43) node {$0$};
\draw [black] (64.4,-17.9) circle (3);
\draw (64.4,-17.9) node {$1$};
\draw [black] (64.4,-30.7) circle (3);
\draw (64.4,-30.7) node {$1$};
\draw [black] (73.7,-12.8) circle (3);
\draw (73.7,-12.8) node {$0$};
\draw [black] (36.8,-52.2) circle (3);
\draw (36.8,-52.2) node {$1$};
\draw [black] (73.7,-55.9) circle (3);
\draw (73.7,-55.9) node {$0$};
\draw [black] (64.4,-43) circle (3);
\draw (64.4,-43) node {$0$};
\draw [black] (64.4,-52.2) circle (3);
\draw (64.4,-52.2) node {$1$};
\draw [black] (36.8,-23.3) circle (3);
\draw (36.8,-23.3) node {$0$};
\draw [black] (36.8,-33.1) circle (3);
\draw (36.8,-33.1) node {$0$};
\draw [black] (7.1,-55.9) -- (13.8,-55.9);
\fill [black] (13.8,-55.9) -- (13,-55.4) -- (13,-56.4);
\draw (10.45,-56.4) node [below] {$0$};
\draw [black] (4.1,-52.9) -- (4.1,-6.1);
\fill [black] (4.1,-6.1) -- (3.6,-6.9) -- (4.6,-6.9);
\draw (3.6,-29.5) node [left] {$1$};
\draw [black] (7.1,-3.1) -- (13.8,-3.1);
\fill [black] (13.8,-3.1) -- (13,-2.6) -- (13,-3.6);
\draw (10.45,-2.6) node [above] {$0$};
\draw [black] (7.6,-51.9) -- (6.08,-53.64);
\draw (10,-51.41) node [above] {{$\text{{\small Start}}$}};
\fill [black] (6.08,-53.64) -- (6.98,-53.37) -- (6.23,-52.71);
\draw [black] (16.8,-6.1) -- (16.8,-52.9);
\fill [black] (16.8,-52.9) -- (17.3,-52.1) -- (16.3,-52.1);
\draw (16.3,-29.5) node [left] {$1$};
\draw [black] (28.8,-3.1) -- (33.8,-3.1);
\fill [black] (33.8,-3.1) -- (33,-2.6) -- (33,-3.6);
\draw (31.3,-2.6) node [above] {$0$};
\draw [black] (19.8,-3.1) -- (22.8,-3.1);
\fill [black] (22.8,-3.1) -- (22,-2.6) -- (22,-3.6);
\draw (21.3,-2.6) node [above] {$0$};
\draw [black] (25.8,-6.1) -- (25.8,-9.8);
\fill [black] (25.8,-9.8) -- (26.3,-9) -- (25.3,-9);
\draw (25.3,-7.95) node [left] {$1$};
\draw [black] (36.8,-6.1) -- (36.8,-9.8);
\fill [black] (36.8,-9.8) -- (37.3,-9) -- (36.3,-9);
\draw (36.3,-7.95) node [left] {$1$};
\draw [black] (33.8,-12.8) -- (28.8,-12.8);
\fill [black] (28.8,-12.8) -- (29.6,-13.3) -- (29.6,-12.3);
\draw (31.3,-12.3) node [above] {$1$};
\draw [black] (17.41,-52.96) -- (25.19,-15.74);
\fill [black] (25.19,-15.74) -- (24.53,-16.42) -- (25.51,-16.62);
\draw (20.56,-34.01) node [left] {$1$};
\draw [black] (34.28,-44.63) -- (19.32,-54.27);
\fill [black] (19.32,-54.27) -- (20.26,-54.26) -- (19.72,-53.42);
\draw (25.8,-48.95) node [above] {$0$};
\draw [black] (26.83,-15.62) -- (35.77,-40.18);
\fill [black] (35.77,-40.18) -- (35.97,-39.26) -- (35.03,-39.6);
\draw (30.54,-28.69) node [left] {$1$};
\draw [black] (39.44,-4.52) -- (61.76,-16.48);
\fill [black] (61.76,-16.48) -- (61.29,-15.66) -- (60.81,-16.54);
\draw (51.6,-10) node [above] {$0$};
\draw [black] (62.18,-19.92) -- (39.02,-40.98);
\fill [black] (39.02,-40.98) -- (39.95,-40.81) -- (39.27,-40.07);
\draw (51.61,-30.94) node [below] {$1$};
\draw [black] (61.992,-19.67) arc (-25.94611:-313.94611:2.25);
\draw (57.03,-19.33) node [left] {$0$};
\fill [black] (61.53,-17.07) -- (61.03,-16.27) -- (60.59,-17.17);
\draw [black] (72.32,-15.46) -- (65.78,-28.04);
\fill [black] (65.78,-28.04) -- (66.6,-27.56) -- (65.71,-27.1);
\draw (68.36,-20.61) node [left] {$1$};
\draw [black] (39.8,-12.8) -- (70.7,-12.8);
\fill [black] (70.7,-12.8) -- (69.9,-12.3) -- (69.9,-13.3);
\draw (55.25,-12.3) node [above] {$0$};
\draw [black] (64.4,-27.7) -- (64.4,-20.9);
\fill [black] (64.4,-20.9) -- (63.9,-21.7) -- (64.9,-21.7);
\draw (64.9,-24.3) node [right] {$1$};
\draw [black] (28.52,-14.06) -- (61.68,-29.44);
\fill [black] (61.68,-29.44) -- (61.16,-28.65) -- (60.74,-29.55);
\draw (46.08,-21.24) node [above] {$0$};
\draw [black] (62.03,-32.54) -- (39.17,-50.36);
\fill [black] (39.17,-50.36) -- (40.11,-50.26) -- (39.49,-49.47);
\draw (49.59,-40.95) node [above] {$0$};
\draw [black] (73.7,-15.8) -- (73.7,-52.9);
\fill [black] (73.7,-52.9) -- (74.2,-52.1) -- (73.2,-52.1);
\draw (74.2,-34.35) node [right] {$0$};
\draw [black] (65.062,-40.086) arc (194.92496:-93.07504:2.25);
\draw (70.41,-37.27) node [above] {$0,1$};
\fill [black] (67.12,-41.75) -- (68.02,-42.03) -- (67.76,-41.06);
\draw [black] (61.561,-51.266) arc (279.51902:-8.48098:2.25);
\draw (59.06,-46) node [left] {$0$};
\fill [black] (63.41,-49.38) -- (63.78,-48.51) -- (62.79,-48.67);
\draw [black] (64.4,-49.2) -- (64.4,-46);
\fill [black] (64.4,-46) -- (63.9,-46.8) -- (64.9,-46.8);
\draw (64.9,-47.6) node [right] {$1$};
\draw [black] (39.8,-52.2) -- (61.4,-52.2);
\fill [black] (61.4,-52.2) -- (60.6,-51.7) -- (60.6,-52.7);
\draw (50.6,-51.7) node [above] {$0$};
\draw [black] (71.95,-53.47) -- (66.15,-45.43);
\fill [black] (66.15,-45.43) -- (66.22,-46.37) -- (67.03,-45.79);
\draw (69.64,-48.07) node [right] {$0$};
\draw [black] (39.8,-43) -- (61.4,-43);
\fill [black] (61.4,-43) -- (60.6,-42.5) -- (60.6,-43.5);
\draw (50.6,-43.5) node [below] {$1$};
\draw [black] (70.7,-55.9) -- (19.8,-55.9);
\fill [black] (19.8,-55.9) -- (20.6,-56.4) -- (20.6,-55.4);
\draw (45.25,-56.4) node [below] {$1$};
\draw [black] (33.85,-52.75) -- (19.75,-55.35);
\fill [black] (19.75,-55.35) -- (20.63,-55.7) -- (20.45,-54.72);
\draw (26.29,-53.46) node [above] {$1$};
\draw [black] (36.8,-20.3) -- (36.8,-15.8);
\fill [black] (36.8,-15.8) -- (36.3,-16.6) -- (37.3,-16.6);
\draw (37.3,-18.05) node [right] {$1$};
\draw [black] (18.37,-53.34) -- (35.23,-25.86);
\fill [black] (35.23,-25.86) -- (34.39,-26.28) -- (35.24,-26.8);
\draw (26.16,-38.32) node [left] {$0$};
\draw [black] (36.8,-26.3) -- (36.8,-30.1);
\fill [black] (36.8,-30.1) -- (37.3,-29.3) -- (36.3,-29.3);
\draw (36.3,-28.2) node [left] {$0$};
\draw [black] (38.127,-30.422) arc (181.37791:-106.62209:2.25);
\draw (42.74,-27.6) node [right] {$0$};
\fill [black] (39.73,-32.52) -- (40.54,-33) -- (40.52,-32.01);
\draw [black] (36.8,-36.1) -- (36.8,-40);
\fill [black] (36.8,-40) -- (37.3,-39.2) -- (36.3,-39.2);
\draw (37.3,-38.05) node [right] {$1$};
\draw [black] (6.969,-3.935) arc (101.49796:-186.50204:2.25);
\draw (9.61,-9.12) node [right] {$1$};
\fill [black] (5.18,-5.89) -- (4.85,-6.77) -- (5.83,-6.57);
\node at (38,-63) {Automaton of $\sigma_{V,1}$};
\end{tikzpicture}
& 
\begin{tikzpicture}[scale=0.1]
\tikzstyle{every node}+=[inner sep=0pt]
\draw [black] (14.3,-17.8) circle (3);
\draw (14.3,-17.8) node {$0$};
\draw [black] (7,-8.1) circle (3);
\draw (7,-8.1) node {$1$};
\draw [black] (27.9,-17.8) circle (3);
\draw (27.9,-17.8) node {$0$};
\draw [black] (7,-55.2) circle (3);
\draw (7,-55.2) node {$0$};
\draw [black] (13.7,-41.7) circle (3);
\draw (13.7,-41.7) node {$0$};
\draw [black] (36.9,-26.9) circle (3);
\draw (36.9,-26.9) node {$1$};
\draw [black] (52.3,-26.9) circle (3);
\draw (52.3,-26.9) node {$1$};
\draw [black] (60.9,-17.8) circle (3);
\draw (60.9,-17.8) node {$1$};
\draw [black] (27.9,-41.7) circle (3);
\draw (27.9,-41.7) node {$0$};
\draw [black] (60.9,-41.7) circle (3);
\draw (60.9,-41.7) node {$1$};
\draw [black] (73.8,-55.2) circle (3);
\draw (73.8,-55.2) node {$0$};
\draw [black] (36.9,-55.2) circle (3);
\draw (36.9,-55.2) node {$0$};
\draw [black] (73.8,-8.1) circle (3);
\draw (73.8,-8.1) node {$0$};
\draw [black] (60.9,-8.1) circle (3);
\draw (60.9,-8.1) node {$0$};
\draw [black] (14.3,-24.9) -- (14.3,-20.8);
\draw (14.3,-25.4) node [below] {{$\text{{\small Start}}$}};
\fill [black] (14.3,-20.8) -- (13.8,-21.6) -- (14.8,-21.6);
\draw [black] (17.3,-17.8) -- (24.9,-17.8);
\fill [black] (24.9,-17.8) -- (24.1,-17.3) -- (24.1,-18.3);
\draw (21.1,-17.3) node [above] {$0$};
\draw [black] (12.5,-15.4) -- (8.8,-10.5);
\fill [black] (8.8,-10.5) -- (8.89,-11.44) -- (9.68,-10.84);
\draw (11.23,-11.55) node [right] {$1$};
\draw [black] (5.677,-5.42) arc (234:-54:2.25);
\draw (7,-0.85) node [above] {$0$};
\fill [black] (8.32,-5.42) -- (9.2,-5.07) -- (8.39,-4.48);
\draw [black] (7,-11.1) -- (7,-52.2);
\fill [black] (7,-52.2) -- (7.5,-51.4) -- (6.5,-51.4);
\draw (6.5,-31.65) node [left] {$1$};
\draw [black] (26.37,-20.38) -- (15.23,-39.12);
\fill [black] (15.23,-39.12) -- (16.07,-38.69) -- (15.21,-38.18);
\draw (20.15,-28.49) node [left] {$0$};
\draw [black] (34.37,-28.51) -- (16.23,-40.09);
\fill [black] (16.23,-40.09) -- (17.17,-40.08) -- (16.63,-39.23);
\draw (24.3,-33.8) node [above] {$1$};
\draw [black] (11.658,-39.519) arc (250.8489:-37.1511:2.25);
\draw (10.27,-34.66) node [above] {$0,1$};
\fill [black] (14.19,-38.75) -- (14.92,-38.16) -- (13.98,-37.83);
\draw [black] (35.577,-24.22) arc (234:-54:2.25);
\draw (36.9,-19.65) node [above] {$0$};
\fill [black] (38.22,-24.22) -- (39.1,-23.87) -- (38.29,-23.28);
\draw [black] (49.3,-26.9) -- (39.9,-26.9);
\fill [black] (39.9,-26.9) -- (40.7,-27.4) -- (40.7,-26.4);
\draw (44.6,-26.4) node [above] {$0$};
\draw [black] (58.84,-19.98) -- (54.36,-24.72);
\fill [black] (54.36,-24.72) -- (55.27,-24.48) -- (54.55,-23.79);
\draw (56.07,-20.88) node [left] {$0$};
\draw [black] (57.95,-17.27) -- (9.95,-8.63);
\fill [black] (9.95,-8.63) -- (10.65,-9.27) -- (10.83,-8.28);
\draw (34.44,-12.36) node [above] {$1$};
\draw [black] (27.9,-20.8) -- (27.9,-38.7);
\fill [black] (27.9,-38.7) -- (28.4,-37.9) -- (27.4,-37.9);
\draw (27.4,-29.75) node [left] {$1$};
\draw [black] (49.73,-28.46) -- (30.47,-40.14);
\fill [black] (30.47,-40.14) -- (31.41,-40.16) -- (30.89,-39.3);
\draw (39.1,-33.8) node [above] {$1$};
\draw [black] (30.9,-41.7) -- (57.9,-41.7);
\fill [black] (57.9,-41.7) -- (57.1,-41.2) -- (57.1,-42.2);
\draw (44.4,-41.2) node [above] {$1$};
\draw [black] (30.78,-42.55) -- (70.92,-54.35);
\fill [black] (70.92,-54.35) -- (70.3,-53.65) -- (70.01,-54.61);
\draw (51.67,-47.9) node [above] {$0$};
\draw [black] (60.9,-38.7) -- (60.9,-20.8);
\fill [black] (60.9,-20.8) -- (60.4,-21.6) -- (61.4,-21.6);
\draw (60.4,-29.75) node [left] {$0$};
\draw [black] (57.99,-42.43) -- (9.91,-54.47);
\fill [black] (9.91,-54.47) -- (10.81,-54.76) -- (10.56,-53.79);
\draw (33.23,-47.88) node [above] {$1$};
\draw [black] (8.33,-52.51) -- (12.37,-44.39);
\fill [black] (12.37,-44.39) -- (11.56,-44.88) -- (12.46,-45.33);
\draw (9.65,-47.35) node [left] {$1$};
\draw [black] (9.52,-53.57) -- (25.38,-43.33);
\fill [black] (25.38,-43.33) -- (24.44,-43.34) -- (24.98,-44.18);
\draw (16.45,-47.95) node [above] {$0$};
\draw [black] (70.8,-55.2) -- (39.9,-55.2);
\fill [black] (39.9,-55.2) -- (40.7,-55.7) -- (40.7,-54.7);
\draw (55.35,-55.7) node [below] {$0$};
\draw [black] (37.595,-52.294) arc (194.27935:-93.72065:2.25);
\draw (42.22,-49.52) node [above] {$0$};
\fill [black] (39.63,-53.98) -- (40.53,-54.27) -- (40.28,-53.3);
\draw [black] (33.9,-55.2) -- (10,-55.2);
\fill [black] (10,-55.2) -- (10.8,-55.7) -- (10.8,-54.7);
\draw (21.95,-55.7) node [below] {$1$};
\draw [black] (73.8,-52.2) -- (73.8,-11.1);
\fill [black] (73.8,-11.1) -- (73.3,-11.9) -- (74.3,-11.9);
\draw (73.3,-31.65) node [left] {$1$};
\draw [black] (72.72,-10.9) -- (61.98,-38.9);
\fill [black] (61.98,-38.9) -- (62.73,-38.33) -- (61.8,-37.97);
\draw (68.1,-25.74) node [right] {$1$};
\draw [black] (70.8,-8.1) -- (63.9,-8.1);
\fill [black] (63.9,-8.1) -- (64.7,-8.6) -- (64.7,-7.6);
\draw (67.35,-7.6) node [above] {$0$};
\draw [black] (60.9,-11.1) -- (60.9,-14.8);
\fill [black] (60.9,-14.8) -- (61.4,-14) -- (60.4,-14);
\draw (60.4,-12.95) node [left] {$1$};
\draw [black] (58.02,-8.95) -- (30.78,-16.95);
\fill [black] (30.78,-16.95) -- (31.69,-17.21) -- (31.4,-16.25);
\draw (43.58,-12.4) node [above] {$0$};
\node at (38,-63) {Automaton of $\sigma_{V,2}$};
\end{tikzpicture}
\\
\hline \\
\end{tabular} 
\caption{Automata corresponding to the elements $ \sigma_{V,1}$ and $\sigma_{V,2}$ that generate a copy of the Klein four-group in $\No(\F_2)$.}\label{Klein}
\end{table}
\end{center}

\section{State complexity of automata representing finite order elements in $\No(\F_p)$}  \label{cxproperties} 

\subsection{General bounds on state complexity} How `complex' is an automaton that computes a power series $\us \in\No(\F_p)$ of given order and break sequence? This is usually measured by `state complexity', i.e.\ the minimal number of states in an automaton that computes the series.

This complexity can be bounded theoretically. The currently best results arise from the differential forms method described in Section \ref{section2}: start with an algebraic equation (assumed irreducible) satisfied by $\us=\sigma(t)$ with coefficients from $\F_p[t]$, and consider it instead as a two-variable equation $F(t,X)=0$ describing a (possibly singular) 
algebraic curve over $\F_p$. Consider the \emph{degree} $$d_\sigma:=[\F_p(\us,t):\F_p(t)] = \deg_X F$$ 
and the \emph{height} $$h_\sigma:=[\F_p(\us,t):\F_p(\us)]  = \deg_t F$$ (the latter two equalities hold by the irreducibility of $F$),  and let $g_\sigma$ denote the genus of the normalisation $\mathcal X$ of the projective curve defined by $F(t,X)=0$. Bridy has proven that the series $\sigma$ can be realised by an automaton with less than $$p^{h_\sigma+3d_\sigma+g_\sigma-1}$$ states (see \cite[Cor.\ 3.10]{Bridybounds}, a result that assumes, like this paper, the leading zeros convention, see \cite[Remark 2.1]{Bridybounds}). Concerning the optimality of the upper bound, Bridy has shown in  \cite[Prop.\ 3.14]{Bridybounds} for every $h \geq 1$, there are power series with $d_\sigma=1, h_\sigma=h, g_\sigma=0$ that require at least $\geq p^h$ states. A lower bound for the minimal amount of states required to realise the given power series is given by $\log_p (d_\sigma+1)$ \cite[Prop.\ 2.13]{Bridybounds}; this bound appears optimal when running over all algebraic power series (loc.\ cit.).

\subsection{Degree equals height for series of finite order in $\No(\F_p)$} In our situation we have the following extra information. 

\begin{proposition} \label{d=h}
Let $\sigma(t) \in \F_p\lau{t}$ be an algebraic power series over $\F_p(t)$ of finite compositional order. 
Then  $d_\sigma=h_\sigma$. 
\end{proposition}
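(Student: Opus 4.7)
The plan is to exhibit a finite extension $M$ of $L := \F_p(t, \sigma)$ together with an $\F_p$-automorphism $\varphi$ of $M$ that sends the subfield $\F_p(t)$ isomorphically onto the subfield $\F_p(\sigma)$. Such an automorphism forces $[M : \F_p(t)] = [M : \F_p(\sigma)]$. Applying the tower law to the chains $\F_p(t) \subseteq L \subseteq M$ and $\F_p(\sigma) \subseteq L \subseteq M$ then gives
\[
[M : L] \cdot d_\sigma = [M : \F_p(t)] = [M : \F_p(\sigma)] = [M : L] \cdot h_\sigma,
\]
and cancelling the positive integer $[M : L]$ yields $d_\sigma = h_\sigma$.

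The natural candidate for $\varphi$ is right-composition with $\sigma$: the map $\varphi \colon \F_p\lau{t} \to \F_p\lau{t}$ defined by $\varphi(f(t)) := f(\sigma(t))$. This is a genuine field automorphism because $\sigma \in \No(\F_p)$ admits a compositional inverse in $\No(\F_p)$, and it satisfies $\varphi(t) = \sigma$ and, more generally, $\varphi(\sigma^{\circ k}) = \sigma^{\circ(k+1)}$. Since $\sigma$ has compositional order $p^n$, the orbit $\{t, \sigma, \sigma^{\circ 2}, \ldots, \sigma^{\circ(p^n - 1)}\}$ consists of $p^n$ distinct elements of $\F_p\lau{t}$ that are cyclically permuted by $\varphi$. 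Take $M \subset \F_p\lau{t}$ to be the subfield generated over $\F_p$ by this orbit, so that $\varphi(M) = M$ by construction, and the restriction of $\varphi$ to $\F_p(t) \subset M$ is the $\F_p$-algebra map $f(t) \mapsto f(\sigma)$, whose image is by definition $\F_p(\sigma)$.

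The main obstacle is to verify that $M/\F_p(t)$ is a \emph{finite} extension, without which the tower-law step is vacuous. This reduces to the claim that each iterate $\sigma^{\circ k}$ is algebraic over $\F_p(t)$, which I would prove by induction on $k$: applying $\varphi^{\circ k}$ to the defining relation $F(t, \sigma) = 0$ gives $F(\sigma^{\circ k}, \sigma^{\circ(k+1)}) = 0$, so that $\sigma^{\circ(k+1)}$ is algebraic over $\F_p(\sigma^{\circ k})$ and hence over $\F_p(t)$ by the induction hypothesis. Once the finiteness of $M/\F_p(t)$ is secured, the argument sketched above closes the proof.
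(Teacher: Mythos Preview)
Your proof is correct and is essentially identical to the paper's argument: your field $M$ is exactly the paper's $K = \F_p(t,\sigma(t),\ldots,\sigma^{\circ(n-1)}(t))$, your automorphism $\varphi$ is the paper's restriction of $\sigma$ to $K$, your inductive algebraicity argument matches the paper's, and both conclude via the same tower-law computation through $\F_p(t,\sigma)$.
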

\begin{proof}
Write $n$ for the compositional order of $\sigma(t)$. The map $\sigma$, regarded as an automorphism of $\F_p(\!(t)\!)$, restricts to an automorphism of the field $$K:=\F_p(t,\sigma(t),\sigma^{\circ 2}(t),\ldots,\sigma^{\circ (n-1)}(t)).$$ Since $\sigma(t)$ is algebraic over $\F_p(t)$, successive application of the automorphism $\sigma$ shows that $\F_p(\sigma^{\circ k}(t))$ is algebraic over $\F_p(\sigma^{\circ (k-1)}(t))$ for $k\geq 1$, and hence the extension $K/{\F_p(t)}$ is algebraic. Since the automorphism $\sigma$ maps $\F_p(t)$ onto $\F_p(\sigma(t))$, we have $[K:\F_p(t)]=[K:\F_p(\sigma(t))]$, and hence \[d_{\sigma}=[\F_p(t,\sigma(t)):\F_p(t)]=\frac{[K:\F_p(t)]}{[K:\F_p(t,\sigma(t))]}=\frac{[K:\F_p(\sigma(t))]}{[K:\F_p(t,\sigma(t))]}=[\F_p(t,\sigma(t)):\F_p(\sigma(t))]=h_{\sigma}.\qedhere\]
\end{proof}

 In Table \ref{tablebounds} we give the state complexity for the automata we constructed (where the first two rows refer to series that are considered in the next section), plus the theoretical upper and lower bounds (computed using {\sc Singular} \cite{Singular} and {\sc Magma} \cite{Magma}). We observe that the required number of states is much lower than the (generically almost tight, at least in the genus zero case) upper bounds. The reader may be convinced of this non-generic behaviour by perturbing some of the coefficients in the equation for $\sigma_8$ and using \cite{LDG} to compute the number of states required to solve those perturbed equations (which typically also have higher genus).  
 
 \begin{remark} \label{statesklopsch} 
Table \ref{tablebounds} lacks a general formula for the minimal number of states in a $2$-automaton computing Klopsch's series $\sigma_{\mathrm{K},m}$ for general $m$. For $m=1,3,5,\dots,1023$  we computed this in \cite{Rowland} and \cite{LDG} to be $2,6,14,9,28,53,67,12,54,127,\dots,30$. One may show that for $m=2^\mu-1$ such an automaton has $3\mu$ states. We conjecture that for $m=2^\mu+1$ it has $3 \cdot 2^{\mu}+2\mu-2$ states. For $m=2^\mu+3$, we find the sequence $14,9,53,127,90,931,2675,770,\dots$, which we could not fit into any mould. 
 \end{remark} 
 
\begin{center}
\begin{table} 
\hspace*{-7mm}\begin{tabular}{lcccHccc}
\hline 
series & order &  breaks & $d_\sigma=h_\sigma$ & $h$ & $g_\sigma$ & bounds 
& \# of states  \\
\hline 
$ \sigma_{\mathrm{S},1} $ & $2$ & $(1)$ & $2$ & $ $ & $1$ & $[1,2^{8}]$ & $5$ \\
$ \sigma_{\mathrm{S},m=2^\mu-1>1}$ & $2$ & $(m)$ & $\frac{m+1}{2}$ & $ $ & $\frac{m-1}{2}$ & $[\mu-1,2^{\frac{5m+1}{2}}]$ & $\mu+3$ \\
$ \sigma_{\mathrm{S},m=2^\mu+1}$ & $2$ & $(m)$ & $m-1$ & $ $ & $\frac{(m-1)(m-2)}{2}$ & $[\mu,2^{\frac{m^2+5m-8}{2}}]$ & $2^\mu + 3^\mu$? \\
$\us_{\mathrm{K},3}$ & $2$ & $(3)$ & $3$ & $3$ & $1$ & $[2,2^{12}]$ & $6$  \\
$\us_{\mathrm{K},m}$ & $2$ & $(m)$ & $m$ & $m$ & $\frac{(m-1)(m-2)}{2}$ & $[\lfloor \log_2(m+1)\rfloor,2^{\frac{m(m+5)}{2}}]$ & $\times$  \\
$\us_{\mathrm{CS}}^{\circ 2}$ & $2$ & $(3)$ & $2$ & $2$ & $1$ & $[1,2^8]$ & $7$  \\
$ \sigma_{V,1}$ & $2$ & $(1)$ & $4$ & $4$ & $2$ & $[2,2^{17}]$ & $18$ \\
$ \sigma_{V,2}$ & $2$ & $(5)$ & $4$ & $4$ & $2$ & $[2,2^{17}]$ & $14$ \\
$ \sigma_{V,3}$ & $2$ & $(1)$ & $4$ & $4$ & $2$ & $[2,2^{17}]$ & $25$ \\
$\us_{\mathrm{min}}$ & $4$ & $(1,3)$ & $3$ & $3$ & $1$ & $[2,2^{12}]$ & $5$    \\
$\us_{\mathrm{CS}}$ & $4$ &  $(1,3)$    & $2$ & $2$ & $1$ & $[1,2^8]$ & $7$   \\
$\us^{\circ 3}_{\mathrm{CS}}$ & $4$ &  $(1,3)$    & $2$ & $2$ & $1$ & $[1,2^8]$ & $7$    \\
$\us_{\mathrm{J}}$ & $4$ &  $(1,3)$    & $2$ & $2$ & $1$ & $[1,2^8]$ & $9$  \\
$\us^{\circ 3}_{\mathrm{J}}$ & $4$ &  $(1,3)$    & $2$ & $2$ & $1$ & $[1,2^8]$ & $11$   \\
$\sigma_{\mathrm{T},1}$ & $4$ & $(1,3)$ & $4$ & $4$ & $1$ & $[2,2^{16}]$ & $9$     \\
$\sigma_{\mathrm{T},2}, \sigma_{\mathrm{T},3}, \sigma_{\mathrm{T},4}$ & $4$ & $(1,3)$ & $4$ & $4$ & $1$ & $[2,2^{16}]$ & $17$    \\
$\us_{(1,5)}$ & $4$ & $(1,5)$  & $3$  & $3$ & $2$ & $[2,2^{13}]$ & $13$   \\
$\us_{(1,9)}$ & $4$ & $(1,9)$  & $7$  & $7$ & $4$ & $[3,2^{31}]$ & $110$    \\
$\us_8$ & $8$ & $(1,3,11)$ & $6$ & $6$ & $7$ & $[2, 2^{30}]$ & $320$    \\
\hline\\
\end{tabular} 
\caption{For each series, we give: its compositional order, lower break sequence, the degree $d_\sigma$ 
and genus $g_\sigma$ of the algebraic equation it satisfies, the theoretical interval $[\lfloor \log_2(d_\sigma+1) \rfloor,2^{4d_\sigma+g_\sigma-1}]$ for the number of states of a minimal automaton and the actual number of states (`?' means we conjecture this to be the correct answer, `$\times$' means we do not know the answer; see Remark \ref{statesklopsch}).}
\label{tablebounds}
\end{table}
\end{center}

\section{A hierarchy of complexity of power series based on sparseness} \label{aridsection} 

Previously known examples of finite order elements of $\No(\F_2)$ were described as power series having as coefficients binomial coefficients modulo $2$ 
(such as Klopsch's series) or by explicit formulas for the location of the nonzero coefficients (such as the Chinburg--Symonds series $\us_{\mathrm{CS}}$ and $\us_{\mathrm{CS}}^{\circ 3}$). Our automatic description is somewhat different. In this section, we discuss the relation between the existence of `closed/explicit formulas' and properties of the automaton. \subsection{Sparse power series} We propose a definition of a `closed formula' for a power series based on the notion of sparseness (the concept occurs in the literature under various names such as `arid', `poly-slender', `polynomial growth', and `bounded'; compare \cite[\S 3]{BK2020}).

\begin{definition} For a  power series $\us = \sum a_k t^k\in\F_2\pau{t}$ over $\F_2$, let $E(\us)$ denote the \emph{support} of $\sigma$, i.e.\ the set of integers $k$ for which $a_k=1$. 
A power series $\us$ (as well as the corresponding automaton and automatic sequence, if they exist) is called \emph{sparse}  if 
$$\# E(\us) \cap \{0,1,\dots,N\} = O(\log(N)^r)$$ for some $r\geq 0$. The infimum of such $r$ is called the \emph{rank of sparseness} of $\us$. We say that $\us$ is $r$-sparse if the rank of sparseness is at most $r$.  If $\sigma$ is automatic, then this infimum is attained and is an integer (this follows from Proposition \ref{SY} below). 
\end{definition} 

Note that polynomials are sparse, sums of sparse series are sparse, and products of sparse series are sparse. More precisely, if $\us$ is $r$-sparse and $\tau$ is $s$-sparse, then $\us+\tau $ is at most $\max(r,s)$-sparse and $\us \tau$ is at most $(r+s)$-sparse; this follows from the definition, since $E(\us + \tau) \subseteq E(\us)\cup E(\tau)$ and $E(\us \tau) \subseteq E(\us)+E(\tau)$. For automatic sequences, Cobham showed the following dichotomy for the word growth in the associated regular language. 

\begin{proposition}[{Cobham \cite{CobhamTag}}] An automatic sequence $\us \in\F_2\pau{t}$ is either sparse, or  $\# E(\us) \cap \{0,1,\dots,N\} \geq N^\alpha$ for some real $\alpha>0$  and sufficiently large $N$. \qed
\end{proposition}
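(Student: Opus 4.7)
The plan is to pass from the analytic statement about $\# E(\us) \cap \{0,\ldots,N\}$ to a combinatorial statement about the regular language $L \subseteq \{0,\ldots,p-1\}^*$ consisting of base-$p$ representations of elements of $E(\us)$ (with leading zeros allowed, by the leading zeros invariance convention). Indeed, for $N$ with $p^{n-1} \leq N < p^n$, the quantity $\# E(\us) \cap \{0,\ldots,N\}$ is, up to a factor bounded by $p$, the same as the number of words of length exactly $n$ that are accepted by the DFA obtained from the minimal automaton of $\us$ by making the states of label $1$ the accepting states. Thus the dichotomy is equivalent to a well-known dichotomy for the growth function $\gamma_L(n) := \#\{w \in L : |w| = n\}$ of a regular language: either $\gamma_L(n)$ is bounded by a polynomial in $n$, or $\gamma_L(n) \geq c^n$ for some $c>1$ and all sufficiently large $n$.

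To prove this dichotomy, I would work with the minimal DFA $A$ for $L$ and restrict attention to the subgraph of states that are both reachable from the start state and co-reachable to an accepting state (all others contribute nothing). The key case distinction is whether there exists a state $q$ in this subgraph that lies on two distinct simple cycles—concretely, whether the strongly connected component of $q$ contains more than one cycle, equivalently whether some state admits two distinct return words. If such a $q$ exists, I would produce two loops at $q$ of lengths $\ell_1, \ell_2$ whose labels are not powers of a common word; concatenating these loops in all $2^k$ possible orders yields exponentially many distinct walks of length $k \cdot \mathrm{lcm}(\ell_1,\ell_2)$ from $q$ to itself, all of which can be prefixed by a fixed walk from the start and suffixed by a fixed walk to an accepting state. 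This forces $\gamma_L(n) \geq c^n$ for some $c > 1$, giving the $N^\alpha$ lower bound with $\alpha = \log_p c$.

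In the opposite case—every reachable-and-co-reachable strongly connected component contains a unique cycle—the automaton is structurally a chain of elementary cycles joined by acyclic paths, so each accepted word of length $n$ is specified by choosing non-negative integers $n_1,\ldots,n_r$ with $\sum n_i \leq n$ recording how many times each of the $r$ cycles is traversed. The number of such tuples is $O(n^r)$, giving $\gamma_L(n) = O(n^r)$ and hence $\# E(\us)\cap \{0,\ldots,N\} = O((\log N)^r)$ with $r$ an integer bounded by the number of cycles in $A$; this also justifies the remark in the definition that the infimum rank of sparseness is attained and is an integer. The main obstacle is the precise formulation of the structural case distinction—showing that having a single cycle per strongly connected component really does force the chain-of-cycles normal form—which is exactly the content of the Szilard--Yu--Zhang--Shallit characterisation cited as \cite{SYZS} later in the paper; everything else is either routine graph-theoretic bookkeeping or a standard pumping-style argument.
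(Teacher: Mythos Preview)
The paper does not actually prove this proposition: it is stated with an immediate \qed and attributed to Cobham \cite{CobhamTag}. So there is no ``paper's own proof'' to compare against; your sketch supplies what the paper deliberately omits.

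Your outline is the standard argument and is essentially correct. Two small points of precision. First, in the exponential case you phrase the criterion as ``two distinct simple cycles'' and then ask for loops ``whose labels are not powers of a common word''; it is slightly cleaner (and avoids the Fine--Wilf detour) to use the formulation the paper adopts later in Definition~\ref{def:tiedvert} and Proposition~\ref{SYZScheck}: a vertex with two distinct closed walks \emph{of the same length}. Then the $2^k$ concatenations are automatically distinct words, since in a DFA the walk is determined by the start vertex and the label word. Second, in the polynomial case the structure is not literally a ``chain'' of cycles but a DAG of them (the condensation of the useful part of the automaton is an acyclic digraph whose nodes are either trivial or single cycles); your counting argument survives unchanged once you sum over the finitely many source-to-sink paths in that DAG.

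It is worth noting that the combinatorial criterion you isolate is exactly the content of the paper's Proposition~\ref{SYZScheck} (also stated without proof, citing \cite{SYZS} and \cite{BK2020}), so your sketch in effect proves both propositions at once.
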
 

Define a \emph{simple sparse set of rank at most $r$} to be a set of integers whose base-$2$  expansion is of the form $v_r w_r^{\ell_r} \cdots v_1 w_1^{\ell_1} v_0$ with $\ell_i \in \Z_{\geq 0}$  for some fixed binary words $v_0,\dots,v_r,w_1,\dots,w_r$. 

\begin{proposition}[Szilard, Yu, Zhang and Shallit \cite{SYZS}] \label{SY} A series $\sigma$ is automatic and sparse of rank at most $r$ precisely if $E(\sigma)$ is a finite union of pairwise disjoint simple sparse sets of rank at most $r$.
\end{proposition}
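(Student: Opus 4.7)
The implication from decomposition to automaticity and sparseness is the straightforward direction. A simple sparse set with parameters $v_0, \ldots, v_r, w_1, \ldots, w_r$ is described by the regular expression $v_r w_r^* v_{r-1} w_{r-1}^* \cdots w_1^* v_0$; hence each such set is a regular language and a finite union of them is regular. Combined with the characterisation of $p$-automatic sequences via regularity of the language of base-$p$ expansions of their support \cite[Thm.\ 6.6.2]{AS}, this yields automaticity. For the size estimate, a word of the above form has length $\sum_i |v_i| + \sum_i \ell_i |w_i|$, so for it to correspond to an integer $\leq N$ one needs $\sum_i \ell_i |w_i| \leq \log_2 N + O(1)$; the number of admissible tuples $(\ell_1, \ldots, \ell_r)$ is $O((\log N)^r)$, and passage to finite unions preserves this bound.

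For the converse, let $A$ be the minimal DFA whose accepted language consists of the base-$2$ expansions of elements of $E(\sigma)$. The plan is to read off the structure of $E(\sigma)$ from the strongly connected components (SCCs) of $A$, using the fact that once an accepting path leaves an SCC it cannot re-enter it, so each SCC is visited at most once. The central structural claim is that every SCC of $A$ is either a single state or reduces (up to cyclic rotation of the base point) to a single simple directed cycle. Suppose instead that some SCC contained a state $q$ with two distinct cycle labels $u, v$ at $q$ that are not both powers of a common word. By the Fine--Wilf theorem $u^{|v|} \neq v^{|u|}$, so passing to these two equal-length cycle labels, the $2^k$ sequences in $\{u^{|v|}, v^{|u|}\}^k$ produce $2^k$ pairwise distinct cycle labels at $q$, all of the same length $k|u||v|$. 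Each extends via a fixed prefix from the start state and a fixed suffix to an accept state to a distinct accepted word; distinct words of equal length encode distinct integers, so this forces $|E(\sigma) \cap \{0,\ldots,N\}|$ to grow at least like a positive power of $N$, contradicting sparseness.

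Granted the unicyclic-SCC structure, any accepted word in $A$ decomposes as an alternating concatenation of fixed ``connector'' subpaths (joining consecutive SCCs or leading to accept states) and cycle iterations $w_i^{\ell_i}$ along the non-trivial SCCs visited, in some order $(C_1, \ldots, C_k)$. Fixing this ordered data yields a simple sparse set of rank $k$, and every accepted word lies in one such set. A counting argument parallel to the forward direction shows that if some accepted word visits $k > r$ non-trivial SCCs with cycle lengths $|w_1|, \ldots, |w_k|$, then $E(\sigma)$ contains a subset in bijection with $\{(\ell_1,\ldots,\ell_k) \in \Z_{\geq 0}^k : \sum_i \ell_i |w_i| \leq L\}$, of size $\Omega(L^k)$, contradicting rank-$r$ sparseness (taking $L = \log_2 N$). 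Hence $k \leq r$ for every accepting path, and enumerating the finitely many valid choices of SCC sequence and connectors expresses $E(\sigma)$ as a finite union of simple sparse sets of rank $\leq r$. Pairwise disjointness can then be arranged by iterating the observation that intersections of simple sparse sets remain finite unions of simple sparse sets of no greater rank.

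The principal obstacle is the unicyclic-SCC claim in the middle paragraph: it requires the word-combinatorics input (Fine--Wilf) and careful use of minimality of $A$ to guarantee that distinct cycle labels translate into genuinely distinct accepted walks, and hence distinct integers in $E(\sigma)$. The counting estimates and the final disjointification step are routine bookkeeping.
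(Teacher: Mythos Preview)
The paper does not actually give a proof of this proposition: it simply cites \cite{SYZS} for the statement without the disjointness clause, and \cite[Cor.~3.10]{BK2020} for the disjointness refinement (with a remark that a shorter argument via the automaton structure of Proposition~\ref{SYZScheck} is possible). So you are not so much offering an alternative proof as supplying the content that the paper outsources to the literature.

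Your sketch is essentially the standard argument from \cite{SYZS}: analyse the strongly connected components of the minimal automaton, show that sparseness forces each nontrivial SCC to be a single simple cycle, and read off the simple-sparse decomposition from the sequence of SCCs traversed by an accepting path. This is correct. Two comments. First, the appeal to Fine--Wilf is overkill: since the automaton is deterministic, an SCC that is not a simple cycle contains a state with two outgoing edges inside the SCC, and the two resulting cycles already differ in their first letter, so $u^{|v|}\neq v^{|u|}$ is immediate. Second, your disjointification step (``intersections of simple sparse sets remain finite unions of simple sparse sets of no greater rank, then iterate'') is exactly the step the paper flags as requiring work---it is the content of \cite[Cor.~3.10]{BK2020}, described there as a ``tedious combinatorial verification''---and you would need to say a bit more to make it self-contained, since disjointifying a union requires set differences, not just intersections. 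The remark following the proposition in the paper suggests that exploiting the unicyclic-SCC structure you have already established gives a cleaner route to disjointness than the direct combinatorial one.
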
 

\begin{proof} Except for the claim of `pairwise disjointness', this is proven in \cite{SYZS}. The claim that the occurring simple sparse sets can be chosen pairwise disjoint is proven in detail in \cite[Cor.\ 3.10]{BK2020}. 
\end{proof} 

\begin{remark} 
The proof in \cite[Cor.\ 3.10]{BK2020} is a tedious combinatorial verification. Jason Bell pointed out to use that a much simpler  argument is possible if ones uses the structure of the corresponding automaton that results from Proposition \ref{SYZScheck} below.
\end{remark}

\begin{example} \label{exes} The support of $\us_{\mathrm{CS}}^{\circ 3}$ is $E(\us_{\mathrm{CS}}^{\circ 3}) = \{ 3 \cdot 2^k-2 \mid k \geq 0\} \cup \{ 4 \cdot 2^k-2 \mid k \geq 0\},$ and consists of the integers whose base-$2$  expansion is  $1$, $101^\ell0$ or $1^{\ell}1 0$ for some $\ell \in \Z_{\geq 0}$. Similarly, all  power series in Table \ref{s5-s8} are sparse.  On the other hand, the description of the support of $\sigma_{\mathrm{K},3}$ in Example \ref{exklopschauto} in terms of the base-$4$ representation with only half the possible digits allowed shows that $\# E(\sigma_{\mathrm{K},3}) \cap \{1,\dots,N\}$ grows as $\sqrt{N}f(N)$ for  a function $f$ that is bounded away from both $0$ and infinity, and so $\sigma_{\mathrm{K},3}$ is not sparse.
\end{example}

\begin{remark} A sparse automatic series is `easy' in the sense that the full set consisting of the first $N$ terms of the series can be computed in `polylogarithmic time', i.e.\ polynomial time in $\log(N)$, given the words $v_i,w_i$ as in the definition of a simple sparse set, which allow one to output the nonzero exponents in the series. In contrast to this, computation of the $n$-th coefficient of a general automatic sequence can be done in time $O(\log(n))$ (by base-$2$ expansion and running through the automaton), so computing all first $N$ coefficients would require $O(\log(N!))= O(N\log N)$ time. 
\end{remark} 

\subsection{Conjugating to a sparse representative} One may ask whether every series of finite order in $\No(\F_2)$ can be conjugated to a sparse series. We have no general answer to this question, not even for series of order $2$, which form a unique conjugacy class for every value of the break sequence $(m)$, represented by Klopsch's series $\sigma_{\mathrm{K},m}=t{/}\!{\sqrt[m]{1+t^m}}$. Klopsch's series itself is not sparse, since its $m$-th power $\sigma_{\mathrm{K},m}^m=t^m/(1+t^m)=\sum\limits_{k\geq 1} t^{km}$ is not. Nevertheless, for special values of the break sequence we can find a sparse representative. 

\begin{proposition} \label{KSprop} Let  $m$ be an integer of the form $m=2^{\mu}\pm 1$ for $\mu \geq 1$. Then any power series of order $2$ and break sequence $(m)$ is conjugate to a sparse power series. More precisely, we have the following\textup{:} \begin{enumerate} \item \label{KSPropm1} Any power series of order $2$ and break sequence $(1)$ is conjugate to the power series
\begin{equation} \label{order2m1} \sigma_{\mathrm{S},1} = t + \sum_{k \geq 2} \left( t^{2^k-2} + t^{2^k-1} \right), \end{equation}  which is sparse of rank $1$. 
The corresponding automaton is displayed in Table \textup{\ref{sparse2m}}. 
\item \label{KSPropm2}  If $m=2^{\mu}-1>1$, then any power series of order $2$ and break sequence $(m)$ is conjugate to the  power series 
\begin{equation} \label{order2m}  \sigma_{\mathrm{S},m}  = t + \sum_{k \geq 1} t^{\frac{m+1}{m-1} \left( m \cdot \left(\frac{m+1}{2}\right)^{k-1}-1 \right)}, \end{equation} 
which is sparse of rank $1$. The set of exponents occurring in $\sigma$ consists of the integers whose base-$2$ representation is either $1$ or $10^{\mu-1}(10^{\mu-2})^\ell 0 $ for some $\ell \in \Z_{\geq 0}$. The corresponding automata are displayed in Table \textup{\ref{sparse2m}}. 
\item \label{KSPropm3} If $m=2^{\mu}+1$, then any power series of order $2$ and break sequence $(m)$ is conjugate to the  power series 
\begin{equation} \label{order2mplus}  \sigma_{\mathrm{S},m}  = \hspace*{-3mm}  \sum_{\substack{\emptyset \neq J \subseteq \{0,\ldots,\mu-1\} \\ k \colon J \to \Z_{\geq 0}}}
\hspace*{-3mm} t^{\left(\sum\limits_{ j\in J}2^j(m-1)^{k(j)}\right)m-m+1}, \end{equation} 
%
%
which is sparse of rank $\mu$\textup{:} the  support of $ \sigma_{\mathrm{S},m}$ consists precisely of the integers $m(\ell-1)+1$ with $ \ell\geq 1$ an integer whose base-$2$ expansion contains at most $\mu$ occurrences of the digit $1$ and all these occurrences are at distinct positions modulo $\mu$. 
\end{enumerate}
\end{proposition}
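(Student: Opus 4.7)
My plan is to reduce the proof to a common template: in each case I will produce a symmetric polynomial $F(t,X) = F(X,t) \in \F_2[t,X]$ satisfied by $X = \sigma_{\mathrm{S},m}(t)$ such that the equation $F=0$ has a unique power-series solution with $X = t + O(t^2)$. The $t \leftrightarrow X$ symmetry then immediately implies that $(\sigma_{\mathrm{S},m}(t), t)$ is also a solution pair, so by uniqueness $\sigma_{\mathrm{S},m}(\sigma_{\mathrm{S},m}(t)) = t$; the leading terms of the series read off depth $m$; and Klopsch's classification \cite{Klopsch} identifies the conjugacy class. The sparseness claims follow from direct descriptions of the supports as finite disjoint unions of simple sparse sets, via the converse direction of Proposition \ref{SY}.

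For \ref{KSPropm1} I introduce the auxiliary series $h(t) := \sum_{k \ge 1} t^{2^k - 1}$. Splitting off the $k = 1$ term and applying the Frobenius gives $t\, h(t)^2 = h(t) + t$, while $h(t)^2 = \sum_{k \ge 2} t^{2^k - 2}$ lets one rewrite $\sigma_{\mathrm{S},1}(t) = t + (1+t)\, h(t)^2$ as $h(t)^2 + h(t)$; eliminating $h$ between these two relations yields the symmetric quadratic $t^2 X^2 + (1+t)X + t = 0$. For \ref{KSPropm2}, with $q := 2^{\mu-1}$, the exponents satisfy $a_1 = 2q$ and $a_{k+1} = q a_k + q$; writing $P(t) := \sigma_{\mathrm{S},m}(t) + t$ and using $P(t^q) = P(t)^q$ (valid in characteristic two because $q$ is a power of $2$), one obtains $P(t) = t^{2q} + t^q P(t)^q$, which after substituting $X = t + P$ collapses, using $t^{2q}+t^{2q}=0$, to the symmetric equation $(tX)^{(m+1)/2} = t + X$. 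For \ref{KSPropm3}, I introduce $f(s) := \sum_{k \ge 0} s^{2^{\mu k}}$ so that $f^{m-1} + f + s = 0$; the characteristic-two telescoping identity $\prod_{j=0}^{\mu-1}(1 + f^{2^j}) = (1 + f^{m-1})/(1 + f)$ combined with the ``product over subsets'' shape of the formula for $\sigma_{\mathrm{S},m}$ collapses the defining sum to the closed form $\sigma_{\mathrm{S},m}(t) = t/(1 + f(t^m))$, and eliminating $f(t^m)$ between this relation and $f(t^m)^{m-1} + f(t^m) = t^m$ produces the symmetric equation $t^{m-2} + X^{m-2} + (tX)^{m-1} = 0$.

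In each of the three cases, existence and uniqueness of a solution $X = t + O(t^2)$ is a routine Newton-polygon / Hensel argument applied to the substitution $X = tY$, and the lowest nontrivial exponent in $\sigma_{\mathrm{S},m}$ reads off as $m + 1$ in every case, giving depth $m$ as required. For the sparseness claim: in \ref{KSPropm1} and \ref{KSPropm2} the support is manifestly a disjoint union of at most three simple sparse sets of rank at most $1$ (namely $\{1\}$, $\{2^k - 2 : k \ge 2\}$, $\{2^k - 1 : k \ge 2\}$ in the first case, and $\{1\} \cup \{a_k : k \ge 1\}$ in the second); in \ref{KSPropm3} the support, presented as $\{m(\ell - 1)+1\}$ for $\ell \ge 1$ whose base-$2$ expansion uses at most $\mu$ ones at distinct positions modulo $\mu$, decomposes as a disjoint union indexed by nonempty $J \subseteq \{0, \ldots, \mu - 1\}$ of simple sparse sets of rank $|J| \le \mu$.

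The step I expect to require the most care is \ref{KSPropm3}. On the algebraic side, the telescoping identity applied to $y = f$ and the substitution $f^{m-1} = f + s$ must be tracked through a division by $1 + f$ to confirm that the product $\prod_{j=0}^{\mu-1}(1 + f_j(t^m)) - 1$ collapses exactly to $t^m/(1 + f(t^m))$, and this bookkeeping is what makes the whole identification go through. On the combinatorial side, verifying that the parametrisation $\ell = \sum_{j \in J} 2^{j + \mu k(j)}$ is a bijection between pairs $(J, k)$ and positive integers with the stated digit structure is what rules out cancellations in the formula and guarantees that the rank of sparseness is exactly $\mu$; while elementary, this bijection (essentially unique base-$2$ expansion after grouping positions modulo $\mu$) is the linchpin both for the algebraic collapse and for the sparseness count.
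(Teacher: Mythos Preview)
Your proposal is correct and follows essentially the same strategy as the paper: both rely on exhibiting a polynomial $F(t,X)=F(X,t)$ symmetric under $t\leftrightarrow X$ with a unique power-series solution $X=t+O(t^2)$ (this is the content of the paper's Lemma~\ref{symlem}), and both arrive at exactly the same three polynomials---$(tX)^2+(tX)+X+t$, $(tX)^{(m+1)/2}+X+t$, and $(tX)^{m-1}+X^{m-2}+t^{m-2}$. The only real difference is direction: the paper \emph{starts} from the symmetric equation, invokes Hensel for existence/uniqueness, and then iteratively solves to reach the explicit formula, whereas you start from the formula given in the statement, introduce an auxiliary series ($h$, $P$, or $f$), and eliminate it to \emph{derive} the symmetric equation. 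In case~\ref{KSPropm3} your substitution $f(t^m)=t/\sigma-1$ is precisely the paper's $\tau=t/\sigma$, and your telescoping identity $\prod_{j}(1+f^{2^j})=(1+f^{m-1})/(1+f)$ matches the paper's product computation for $\tau^{q-1}$; so the computations are isomorphic, only organised in reverse. One minor remark: the algebraic collapse to $\sigma=t/(1+f(t^m))$ holds as an identity in $\F_2\pau{t}$ \emph{regardless} of whether the parametrisation $(J,k)\mapsto\ell$ is injective, since it comes from the formal expansion $\prod_j(1+f_j)-1=\sum_{\emptyset\neq J}\prod_{j\in J}f_j$; the bijectivity is needed only for the support description and the exact rank of sparseness, not for the ``algebraic collapse'' as you put it.
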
 

\begin{table}[t]

\begin{tabular}{ccccccc}
\hline \\
\begin{tikzpicture}[scale=0.15]
\tikzstyle{every node}+=[inner sep=0pt]
\draw [black] (37.2,-12.3) circle (3);
\draw (37.2,-12.3) node {$0$};
\draw [black] (24.7,-12.3) circle (3);
\draw (24.7,-12.3) node {$1$};
\draw [black] (37.2,-34.7) circle (3);
\draw (37.2,-34.7) node {$0$};
\draw [black] (24.7,-22.5) circle (3);
\draw (24.7,-22.5) node {$1$};
\draw [black] (24.7,-34.7) circle (3);
\draw (24.7,-34.7) node {$0$};
\draw [black] (34.2,-12.3) -- (27.7,-12.3);
\fill [black] (27.7,-12.3) -- (28.5,-12.8) -- (28.5,-11.8);
\draw (30.95,-11.8) node [above] {$1$};
\draw [black] (37.2,-15.3) -- (37.2,-31.7);
\fill [black] (37.2,-31.7) -- (37.7,-30.9) -- (36.7,-30.9);
\draw (36.7,-23.5) node [left] {$0$};
\draw [black] (37.2,-5.9) -- (37.2,-9.3);
\draw (37.2,-5.4) node [above] {Start};
\fill [black] (37.2,-9.3) -- (37.7,-8.5) -- (36.7,-8.5);
\draw [black] (34.2,-34.7) -- (27.7,-34.7);
\fill [black] (27.7,-34.7) -- (28.5,-35.2) -- (28.5,-34.2);
\draw (30.95,-34.2) node [above] {$0$};
\draw [black] (24.7,-25.5) -- (24.7,-31.7);
\fill [black] (24.7,-31.7) -- (25.2,-30.9) -- (24.2,-30.9);
\draw (24.2,-28.6) node [left] {$1$};
\draw [black] (22.02,-36.023) arc (-36:-324:2.25);
\draw (17.45,-34.7) node [left] {$0,1$};
\fill [black] (22.02,-33.38) -- (21.67,-32.5) -- (21.08,-33.31);
\draw [black] (22.02,-13.623) arc (-36:-324:2.25);
\draw (17.45,-12.3) node [left] {$1$};
\fill [black] (22.02,-10.98) -- (21.67,-10.1) -- (21.08,-10.91);
\draw [black] (35.74,-32.08) -- (26.16,-14.92);
\fill [black] (26.16,-14.92) -- (26.12,-15.86) -- (26.99,-15.37);
\draw (31.61,-22.29) node [right] {$1$};
\draw [black] (24.7,-15.3) -- (24.7,-19.5);
\fill [black] (24.7,-19.5) -- (25.2,-18.7) -- (24.2,-18.7);
\draw (24.2,-17.4) node [left] {$0$};
\draw [black] (22.02,-23.823) arc (324:36:2.25);
\draw (17.45,-22.5) node [left] {$0$};
\fill [black] (22.02,-21.18) -- (21.67,-20.3) -- (21.08,-21.11);
\end{tikzpicture}
& \quad \quad & 
\begin{tikzpicture}[scale=0.15]
\tikzstyle{every node}+=[inner sep=0pt]
\draw [black] (18.5,-24.5) circle (3);
\draw (18.5,-24.5) node {$1$};
\draw [black] (18.5,-37.9) circle (3);
\draw (18.5,-37.9) node {$0$};
\draw [black] (32.7,-37.9) circle (3);
\draw (32.7,-37.9) node {$0$};
\draw [black] (32.7,-24.5) circle (3);
\draw (32.7,-24.5) node {$0$};
\draw [black] (25.9,-13.9) circle (3);
\draw (25.9,-13.9) node {$0$};
\draw [black] (18.5,-44.3) -- (18.5,-40.9);
\draw (18.5,-44.8) node [below] {Start};
\fill [black] (18.5,-40.9) -- (18,-41.7) -- (19,-41.7);
\draw [black] (18.5,-34.9) -- (18.5,-27.5);
\fill [black] (18.5,-27.5) -- (18,-28.3) -- (19,-28.3);
\draw (18,-31.2) node [left] {$1$};
\draw [black] (21.5,-37.9) -- (29.7,-37.9);
\fill [black] (29.7,-37.9) -- (28.9,-37.4) -- (28.9,-38.4);
\draw (25.6,-38.4) node [below] {$0$};
\draw [black] (32.7,-34.9) -- (32.7,-27.5);
\fill [black] (32.7,-27.5) -- (32.2,-28.3) -- (33.2,-28.3);
\draw (32.2,-31.2) node [left] {$0$};
\draw [black] (34.023,-40.58) arc (54:-234:2.25);
\draw (32.7,-45.15) node [below] {$1$};
\fill [black] (31.38,-40.58) -- (30.5,-40.93) -- (31.31,-41.52);
\draw [black] (29.7,-24.5) -- (21.5,-24.5);
\fill [black] (21.5,-24.5) -- (22.3,-25) -- (22.3,-24);
\draw (25.6,-24) node [above] {$1$};
\draw [black] (21.349,-25.402) arc (100.15733:-187.84267:2.25);
\draw (23.9,-30.65) node [right] {$0$};
\fill [black] (19.52,-27.31) -- (19.17,-28.19) -- (20.15,-28.01);
\draw [black] (28.58,-12.577) arc (144:-144:2.25);
\draw (33.15,-13.9) node [right] {$0,1$};
\fill [black] (28.58,-15.22) -- (28.93,-16.1) -- (29.52,-15.29);
\draw [black] (31.08,-21.97) -- (27.52,-16.43);
\fill [black] (27.52,-16.43) -- (27.53,-17.37) -- (28.37,-16.83);
\draw (29.92,-17.89) node [right] {$0$};
\draw [black] (20.22,-22.04) -- (24.18,-16.36);
\fill [black] (24.18,-16.36) -- (23.31,-16.73) -- (24.13,-17.3);
\draw (21.6,-17.84) node [left] {$1$};
\end{tikzpicture}
& \quad \quad & 
\begin{tikzpicture}[scale=0.15]
\tikzstyle{every node}+=[inner sep=0pt]
\draw [black] (19.1,-11.6) circle (3);
\draw (19.1,-11.6) node {$1$};
\draw [black] (19.1,-24.8) circle (3);
\draw (19.1,-24.8) node {$0$};
\draw [black] (41.4,-24.8) circle (3);
\draw (41.4,-24.8) node {$0$};
\draw [black] (41.4,-11.6) circle (3);
\draw (41.4,-11.6) node {$0$};
\draw [black] (30.7,-11.6) circle (3);
\draw (30.7,-11.6) node {$0$};
\draw [black] (17.777,-8.92) arc (234:-54:2.25);
\draw (19.1,-4.35) node [above] {$0$};
\fill [black] (20.42,-8.92) -- (21.3,-8.57) -- (20.49,-7.98);
\draw [black, dashed] (43.526,-13.692) arc (34.62756:-34.62756:7.934);
\fill [black] (43.53,-22.71) -- (44.39,-22.33) -- (43.57,-21.77);
\draw (45.43,-18.2) node [right] {$0, \mu-2$};
\draw [black] (41.4,-14.6) -- (41.4,-21.8);
\fill [black] (41.4,-14.6) -- (41.9,-15.4) -- (40.9,-15.4); 
\draw (40.9,-18.2) node [left] {$1$};
\draw [black] (33.7,-11.6) -- (38.4,-11.6);
\fill [black] (38.4,-11.6) -- (37.6,-11.1) -- (37.6,-12.1);
\draw (36.05,-11.1) node [above] {$0$};
\draw [black] (27.7,-11.6) -- (22.1,-11.6);
\fill [black] (22.1,-11.6) -- (22.9,-12.1) -- (22.9,-11.1);
\draw (24.9,-11.1) node [above] {$1$};
\draw [black] (30.7,-16.9) -- (30.7,-14.6);
\draw (30.7,-17.4) node [below] {Start};
\fill [black] (30.7,-14.6) -- (30.2,-15.4) -- (31.2,-15.4);
\draw [black] (38.4,-24.8) -- (22.1,-24.8);
\fill [black] (22.1,-24.8) -- (22.9,-25.3) -- (22.9,-24.3);
\draw (30.25,-24.3) node [above] {$0$};
\draw [black] (19.1,-21.8) -- (19.1,-14.6);
\fill [black] (19.1,-14.6) -- (18.6,-15.4) -- (19.6,-15.4);
\draw (18.6,-18.2) node [left] {$1$};
\end{tikzpicture}
\\
\hline
\\
\end{tabular}
\caption{Automata corresponding to the power series $ \sigma_{\mathrm{S},1} $ (left), $ \sigma_{\mathrm{S},2}$ (middle) and $ \sigma_{\mathrm{S},2^\mu-1} (\mu \geq 3)$ (right) in Proposition \ref{KSprop}. The dashed arrow replaces a path consisting of $\mu-3$ vertices and $\mu-2$ edges, all with label zero. The remaining missing edges (in the right automaton) all connect to a unique vertex with label $0$, which has been omitted in order to simplify the graphical representation of the automaton. } \label{sparse2m} 
\end{table}

The crucial observation used in the proof is stated in the following lemma.

\begin{lemma} \label{symlem} 
If  a polynomial $F(t,X)=0 \in \F_2[t,X]$ is symmetric in $t$ and $X$, i.e.\ $F(t,X)=F(X,t)$, and, when regarded as an algebraic equation in $X$ over $\F_2(\!(t)\!)$, has,  for some $m \geq 1$, a unique solution  $\sigma\in \No(\F_2)$ of the form $\sigma = t+t^{m+1}+O(t^{m+2})$, then $\sigma$ is of order 2. \end{lemma}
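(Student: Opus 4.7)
The strategy is to exploit the symmetry of $F$ to show that $\sigma^{\circ -1}$ is another solution of $F(t,X)=0$ with the same leading behaviour as $\sigma$, and then invoke uniqueness.

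First, I will transfer the equation $F(t,\sigma(t))=0$ to an equation for $\sigma^{\circ -1}$. The symmetry $F(t,X)=F(X,t)$ immediately gives $F(\sigma(t),t)=0$. Substituting $\sigma^{\circ -1}(t)$ for $t$ in this relation produces $F(t,\sigma^{\circ -1}(t))=0$, so $\sigma^{\circ -1}\in\No(\F_2)$ is also a power series solution of $F(t,X)=0$.

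Next, I will check that $\sigma^{\circ -1}$ has the form $t+t^{m+1}+O(t^{m+2})$. Writing $\sigma^{\circ -1}(t)=t+\sum_{i\geq 2}b_i t^i$, the identity $\sigma\circ\sigma^{\circ -1}=t$ forces $b_i=0$ for $2\leq i\leq m$ (since the lowest-order correction coming from $\sigma$ appears only at order $m+1$), and then equating coefficients of $t^{m+1}$ yields $b_{m+1}+1=0$, i.e.\ $b_{m+1}=1$ in characteristic $2$. Hence $\sigma^{\circ -1}=t+t^{m+1}+O(t^{m+2})$.

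By the uniqueness hypothesis, $\sigma^{\circ -1}=\sigma$, i.e.\ $\sigma^{\circ 2}=t$. Since $\sigma\neq t$ (as $\sigma=t+t^{m+1}+\cdots$ with a nonzero coefficient at $t^{m+1}$), the order of $\sigma$ is exactly $2$. There is no real obstacle here: the main thing to verify carefully is the coefficient computation for $\sigma^{\circ -1}$, where the characteristic $2$ sign convention is essential (in other characteristics one would obtain $\sigma^{\circ -1}=t-t^{m+1}+\cdots$, and uniqueness would force $\sigma$ to be of order $2$ only after ruling out the sign discrepancy).
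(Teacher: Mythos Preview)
Your proof is correct and follows essentially the same approach as the paper: both use the symmetry of $F$ together with a substitution to show that $\sigma^{\circ -1}$ is another solution with the same initial form, then invoke uniqueness. The only cosmetic difference is the order of operations (the paper composes $F(t,\sigma)=0$ with $\sigma^{\circ -1}$ first and then applies symmetry, while you apply symmetry first and then substitute), and you spell out the coefficient check for $\sigma^{\circ -1}=t+t^{m+1}+O(t^{m+2})$ that the paper leaves implicit.
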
 

\begin{proof} 
Composing the equality $F(t,\sigma)=0$ on the right with $\sigma^{\circ -1}$ gives $F(\sigma^{\circ -1},t)=0$, and hence, by symmetry of $F$, $F(t,\sigma^{\circ -1})=0.$ Now note that if $\sigma = t+t^{m+1}+O(t^{m+2})$, then also $\sigma^{\circ -1} = t+t^{m+1}+O(t^{m+2})$. By uniqueness, it follows that $\sigma^{\circ -1} = \sigma$, so $\sigma$ is of order $2$.  
\end{proof} 

\begin{proof} [Proof of Proposition \ref{KSprop}]
We know that there is a unique conjugacy class of order-$2$ power series with a given break sequence $(m)$, so it suffices to construct such a sparse series. When $m=2^{\mu}\pm1$, we will construct a sparse representative by exhibiting a symmetric algebraic equation $F(t,X)=0$ over $\F_2$ as in Lemma \ref{symlem}. 
Choose the polynomial as follows: 
$$ \begin{cases} 
F(t,X)=(tX)^2+(tX)+X+t &\text{for } m=1;  \\
F(t,X)=(tX)^{2^{\mu-1}}+X+t &\text{for }m=2^{\mu}-1>1; \\ 
F(t,X)=(tX)^{2^\mu}+X^{2^\mu-1} + t^{2^\mu-1}& \text{for }  m=2^{\mu}+1. 
 \end{cases}  
 $$
In all cases, Hensel's Lemma implies the existence and uniqueness of a solution $\sigma=t+t^{m+1} + O(t^{m+2})$, so Lemma \ref{symlem} applies.  
We can find an explicit solution iteratively, as follows. 

For $m=1$ we have $$\sigma = \frac{t}{t+1}+ \frac{t^2}{t+1}\sigma^2 = \frac{t}{t+1}+\frac{t^4}{(t+1)^3} + \frac{t^6}{(t+1)^3}\sigma^4=\dots=\frac{t+1}{t^2}\sum_{k\geq 1} \frac{t^{3\cdot 2^{k-1}}}{(t+1)^{2^k}}. $$ The latter sum is  $$\sum_{k\geq 1} \frac{t^{3\cdot 2^{k-1}}}{(t+1)^{2^k}}=\sum_{k\geq 1} \sum_{m\geq 1} t^{(2m+1)\cdot 2^{k-1}} = \frac{t}{t+1}+\sum_{k\geq 1} t^{2^{k-1}},$$ leading to the stated formula for $\sigma= \sigma_{\mathrm{S},1} $. 

For $m=2^{\mu}-1>1$, the same procedure leads to $$ \sigma_{\mathrm{S},m}=t+t^{2^{\mu-1}}\sigma^{2^{\mu-1}}=\dots=t+\sum_{k\geq 0} t^{2^{\mu-1}+2^{2(\mu-1)}+\dots+2^{k(\mu-1)}+2\cdot 2^{(k+1)(\mu-1)}}, $$ which is equivalent to the stated formula.

Finally, for $m=2^{\mu}+1$, we let $\tau = t/\sigma$ and $q=2^{\mu}=m-1$. Then $\tau = 1 +O(t)$ satisfies \begin{equation} \label{ASC} \tau= t^{q+1} + \tau^q\end{equation} and hence \begin{equation*} \tau = 1+ \sum_{k\geq 0} t^{q^k(q+1)}.\end{equation*} We find \begin{align*} t^q \sigma &= 1+ \tau^{q-1} =1+ \tau \cdot \tau^2 \cdot \tau^4  \cdots \tau^{2^{\mu-1}} =1+ \prod_{j=0}^{\mu-1} \left(1+\sum_{k_j\geq 0} t^{(q+1)2^jq^{k_j}}\right), \end{align*}
which  is equivalent to the stated formula.
\end{proof}

\begin{remark}\label{Rem:sparserep1}

For odd $m\geq 1$ consider the degree-$2$ extension $\F_2\lau{z}(x)$ of $\F_2\lau{z}$ with $x^2+x=z^{-m}$. The element $t=xz^{\frac{m+1}{2}}$ is a uniformiser, and the generator $\sigma$ of the Galois group acts by $\sigma(t)=(x+1)z^{\frac{m+1}{2}}$. We can eliminate the variables $x$ and $z$ by hand, obtaining the  equation $ (tX)^{\frac{m+1}{2}}+X+t=0 $. This equation always has a unique solution in $\No(\F_2)$, which has depth $m$, but is not sparse unless $m+1$ is a power of $2$ and $m\neq 1 $ (this follows from Proposition \ref{ABetter} below).  \end{remark}

\begin{remark}\label{Rem:sparserep2} The power series $ \sigma_{\mathrm{S},1} $ from Proposition \ref{KSprop}(\ref{KSPropm1}) is conjugate to Klopsch's series $\sigma_{\mathrm{K},1}:=t/(t+1)$. In this case, the conjugacy can be done  using the simple \emph{algebraic} power series $\chi=t/(t^2+1)$. 
Indeed, with $\psi:= \sum\limits_{k\geq 1} t^{2^k-1}$, we have $$\chi \cdot (\psi \circ \chi) = (t\cdot \psi)\circ \chi = \chi^2+\chi^4+\chi^8+\cdots=t^2/(t^2+1),$$ since the support of $t^2/(t^2+1)$ consists of all even integers, and the support of $\chi^{2^k}$ consists of the odd multiples of $2^k$. Hence $\chi \cdot (\psi \circ \chi) = \chi \cdot t$, so
$\chi^{\circ -1}= \psi$. 
We have $\chi \circ \sigma_{\mathrm{K},1} = t+ t^2$, and hence $$\chi \circ \sigma_{\mathrm{K},1} \circ \chi^{\circ-1} = \chi^{\circ -1} + (\chi^{\circ -1})^2 =  \sum_{k\geq 1} t^{2^k-1} +  \sum_{k\geq 1} t^{2^{k+1}-2} =  \sigma_{\mathrm{S},1} .$$ 
\end{remark}

\begin{remark}\label{Rem:sparserep3} In Table (\ref{tablebounds}), we have used that the genus of the smooth projective curve corresponding to $F(t,X) = (tX)^{k}+X+t$ is $k-1$. This follows easily by the change of variables $t=y/x^k, X=x^{k-1}/y$, leading to the Artin--Schreier equation $y^2+y=x^{2k-1}$, which has genus $k-1$ (see e.g.\ \cite[Thm.\ 6.4.1]{Stichtenoth}). For the case $m=2^\mu+1$, we also used that the genus of the Artin--Schreier curve \eqref{ASC} is $2^{\mu-1}(2^\mu-1)$. 
\end{remark}

\begin{remark}\label{Rem:sparserep4} We did not produce the general form of the automaton for $m=2^\mu+1$. Whereas the series for $m=2^\mu-1>1$ requires $\mu+3 \approx \log(m)$ states and the rank of sparseness is $1$, if $m=2^\mu+1$ an educated guess for the number of states of the minimal automaton is $2^\mu+3^\mu \approx m^{\log(3)/\log(2)}$ and the rank of sparseness is (provably) $\mu$. This looks somewhat similar to what happens with the Klopsch's series $\sigma_{\mathrm{K},m}$ for such values of $m$, cf.\ Remark \ref{statesklopsch}. In all these families, the number of states appears to be logarithmic or polynomial in the genus, and never exponential, as is theoretically possibly by Bridy's bound discussed in Section \ref{cxproperties}. 
\end{remark}

\subsection{Quasi-sparse series} 
Sparse series form an $\F_2[t]$-algebra that we will denote by $S$. Consider the larger $\F_2[t]$-algebra $\hat{S}$ 
consisting  of power series in $\F_2\fl t \fr$ that can be written as products of sparse series and rational functions in $\F_2(t)$.  Elements of this algebra can also be regarded as having nice `closed formulas'. We have the following characterisation: 

\begin{proposition}\label{hatA} Let $\sigma=\sum_{k\ge 0}a_k t^k \in \F_2\fl t \fr$ be a power series. The following conditions are equivalent\textup{:} \begin{enumerate} \item\label{hatA1}  $\sigma \in \hat{S}$\textup{;} \item\label{hatA2}  there exists an integer $m\geq 1$ such that $(t^m+1)\sigma$ is sparse\textup{;} \item\label{hatA3}  there exists an integer $m\geq 1$ such that $\sum_{k \geq 0} (a_k + a_{k+m}) t^k $ is sparse\textup{;}\item \label{hatA4} 
there exists an integer $m\geq 1$ such that for all integers $q\geq 0$ the series $\sum_{k \geq 0} (a_k + a_{k+2^q m}) t^k$ is sparse.
\end{enumerate}
\end{proposition}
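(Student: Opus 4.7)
The plan is to prove the ring of equivalences (i) $\Leftrightarrow$ (ii) $\Leftrightarrow$ (iii) and the implications (ii) $\Rightarrow$ (iv) $\Rightarrow$ (iii), noting that (iv) $\Rightarrow$ (iii) is immediate by taking $q=0$.

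First I would handle the easy equivalence (ii) $\Leftrightarrow$ (iii). Setting $\tau := \sum_{k \geq 0}(a_k + a_{k+m})t^k$, a direct computation gives
\[
(t^m+1)\sigma - t^m\tau = \sum_{k=0}^{m-1}a_k t^k,
\]
a polynomial of degree $<m$. Since polynomials are sparse and the support of $t^m\tau$ is a translate of the support of $\tau$, the series $(t^m+1)\sigma$ is sparse if and only if $\tau$ is.

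Next, the implication (ii) $\Rightarrow$ (i) is immediate from the definition: if $(t^m+1)\sigma$ is sparse, then $\sigma = (t^m+1)\sigma \cdot (t^m+1)^{-1}$ displays $\sigma$ as a sparse series times a rational function, so $\sigma \in \hat{S}$. For the converse (i) $\Rightarrow$ (ii), write $\sigma = f/q$ with $f \in S$ and $q \in \F_2[t]$, and factor $q = t^i q'$ with $\gcd(q', t) = 1$. Since the unit group of the finite ring $\F_2[t]/q'$ is finite, $t$ has finite multiplicative order modulo $q'$; calling this order $m$, we get $q' \mid t^m+1$, say $t^m+1 = q'h$ in $\F_2[t]$. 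Then
\[
(t^m+1)\sigma = \frac{q' h f}{t^i q'} = \frac{hf}{t^i}.
\]
Since $\sigma \in \F_2\fl t \fr$, the power series $f = q\sigma$ is divisible by $t^i$, so $g := f/t^i$ is again a power series whose support is a translate of $E(f)$, hence sparse. Therefore $(t^m+1)\sigma = hg$ is a polynomial times a sparse series, which is sparse since $S$ is an $\F_2[t]$-algebra.

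Finally, for (ii) $\Rightarrow$ (iv), the Frobenius identity in characteristic $2$ yields $(t^m+1)^{2^q} = t^{2^q m}+1$ for every $q \geq 0$, so
\[
(t^{2^q m}+1)\sigma = (t^m+1)^{2^q-1}\cdot\bigl((t^m+1)\sigma\bigr)
\]
is a polynomial times a sparse series, hence sparse; applying the equivalence (ii) $\Leftrightarrow$ (iii) with $m$ replaced by $2^q m$ gives (iv). The only step requiring any care is (i) $\Rightarrow$ (ii), where one has to separate the $t$-part of the denominator from its $t$-coprime part and use the finiteness of $(\F_2[t]/q')^*$ to produce the cyclotomic-type cofactor $t^m+1$; all the other implications are formal manipulations with the support and the $\F_2[t]$-algebra structure of $S$.
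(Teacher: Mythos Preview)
Your proof is correct and follows essentially the same route as the paper's: reduce (i)$\Leftrightarrow$(ii) to the fact that every $p\in\F_2[t]$ coprime to $t$ divides some $t^m+1$ (you use finiteness of $(\F_2[t]/q')^*$, the paper uses the splitting field, which is equivalent), handle (ii)$\Leftrightarrow$(iii) by the obvious support calculation, and get (ii)$\Rightarrow$(iv) from the Frobenius identity $(t^m+1)^{2^q}=t^{2^qm}+1$. The only cosmetic difference is that you spell out the steps (including the $t$-power bookkeeping in (i)$\Rightarrow$(ii)) more explicitly than the paper does.
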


\begin{proof} Since sparse power series form a ring and include polynomials, $\sigma \in \hat S$ if and only if there exists a nonzero $p\in \F_2[t]$ such that $p\sigma \in S$. Moreover, we may assume that $p$ is not divisible by $t$ since the class of sparse sequences in closed under shifts.  The equivalence of \eqref{hatA1} and \eqref{hatA2} then follows from the fact that every $p\in \F_2[t]$ that is not divisible by $t$ divides the polynomial $t^m+1$ for some $m\geq 1$: take $m=2^{k}(2^r-1)$ with $r$ and $k$ chosen so that the splitting field of $p$ is  $\F_{2^r}$ and every root of $p$ has multiplicity $\leq 2^k$. The equivalence of \eqref{hatA2} and \eqref{hatA3}, with the same value of $m$, is easy. Finally, the equivalence of \eqref{hatA2} and \eqref{hatA4} follows from the fact that if $(t^m+1)\sigma$ is sparse, then so is $(t^m+1)^{2^q}\sigma = (t^{2^{q}m}+1)\sigma$ for all $q\geq 0$.
\end{proof}

A final operation that we allow without affecting our sense of `admitting a closed formula' is for elements of $\hat{{S}}$ to be twisted by an automorphism of $\F_2(t)$, as follows. There is a unique nontrivial field automorphisms of $\F_2(t)$ that is also an element of $\No(\F_2)$,  given by the map $$\varphi \colon t \mapsto t/(t+1).$$ The order of $\varphi$ is two. It might happen that a power series $\sigma\in \No (\F_2)$ is not in $S$ or $\hat S$, but that $\sigma \circ \varphi$ is. This is equivalent with $\sigma$ being in the algebra of sparse series \emph{in the variable $t/(t+1)$}. Note that while composing with $\varphi$ preserves the property of being an algebraic power series (if $\us$ is a root of $F(t,X)$, then $\us \circ\varphi$ is a root of $F(\varphi(t),X)$), the property of being of finite order need not be preserved.

\begin{definition} A series $\sigma=\sigma(t) \in \F_2\pau{t}$ is called \emph{quasi-sparse} if either $\sigma \in \hat{S}$ or $\sigma \circ \varphi \in \hat{S}$. We denote the collection of quasi-sparse series by $\QS$.  
\end{definition} 

This leads to a \emph{hierarchy of complexity} for power series 
$$ 
S \subset \hat{S} \subset \QS \subset  \F_2\pau{t},
$$
where every inclusion is strict. In the next two sections, we will study whether our series $\sigma$ of finite order are in $S, \hat S$ or $\QS$. The next section will employ field-theoretic methods, whereas the following one will be based purely on characterisations in terms of automata. We believe both methods have their merits.

\section{Detecting sparseness properties using field theory} \label{sparsefield}

\subsection{Field-theoretic characterisation of sparseness} Recently, Albayrak and Bell \cite[Thm.\ 1.1(b)]{Bell-sparse} gave an exact field-theoretic characterisation of sparseness for generalized (Hahn) power series in arbitrary positive characteristic. We will use a special case of one direction of their characterisation, of which we include a short, self-contained proof.
 
 The following result will be used without further reference.  
 
 \begin{lemma} For any algebraic power series $\tau \in \bar{\F}_2\fl t \fr$, the field extension $\F_2(t)(\tau)/\F_2(t)$ is separable.
 \end{lemma}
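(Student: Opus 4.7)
The plan is to argue by contradiction, using the Laurent expansion of $\tau$ in $\bar\F_2\fl t\fr$. First I would carry out a standard reduction to purely inseparable degree $2$. Writing the minimal polynomial of $\tau$ over $\F_2(t)$ as $m_s(X^{2^e})$ with $m_s$ separable and $e\geq 1$ maximal, set $\gamma := \tau^{2^{e-1}} \in \bar\F_2\fl t\fr$ and $L := \F_2(t)(\gamma^2)$. Then $\gamma^2 = \tau^{2^e}$ has separable minimal polynomial $m_s$, so $L/\F_2(t)$ is separable algebraic, while by maximality of $e$ we have $\gamma \notin L$ and $[L(\gamma):L] = 2$ is purely inseparable.

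The heart of the proof will be to derive a contradiction by showing $\gamma \in L$ after all. Since $L/\F_2(t)$ is finite separable in characteristic $2$, one has $[L:L^2] = [\F_2(t):\F_2(t)^2] = 2$; moreover $t \in L \subset \bar\F_2\lau{t}$ is not a square since $v_t(t)=1$ is odd and squares have even valuation. Hence $\{1, t\}$ is an $L^2$-basis of $L$, and there are unique $\alpha, \beta \in L$ with
\[
\gamma^2 = \alpha^2 + \beta^2 t.
\]
Now $\gamma = \sum c_i t^i \in \bar\F_2\fl t\fr$ forces $\gamma^2 = \sum c_i^2 t^{2i}$ to involve only even exponents of $t$; the same holds of $\alpha^2$ and of $\beta^2$. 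Thus the odd part $\beta^2 t$ of $\alpha^2 + \beta^2 t$ must vanish, giving $\beta = 0$ and $\gamma^2 = \alpha^2$, whence $\gamma = \alpha \in L$ by uniqueness of square roots in characteristic $2$ --- contradicting $\gamma \notin L$.

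The main obstacle will be to verify cleanly that $[L:L^2] = 2$ and that $\{1,t\}$ remains an $L^2$-basis of $L$ for arbitrary finite separable $L/\F_2(t)$ inside $\bar\F_2\lau{t}$; this follows because Frobenius restricts to a bijection $L \xrightarrow{\sim} L^2$ compatible with the isomorphism $\F_2(t) \xrightarrow{\sim} \F_2(t)^2 = \F_2(t^2)$, so that $[L:L^2]\cdot [L^2:\F_2(t)^2] = [L:\F_2(t)]\cdot [\F_2(t):\F_2(t)^2]$ rearranges to $[L:L^2] = 2$. A slicker alternative is to invoke the MacLane/Bourbaki criterion of separability for transcendental extensions: $\bar\F_2\lau{t} \otimes_{\F_2(t)} \F_2(t^{1/2}) \cong \bar\F_2\lau{t}[s]/(s^2-t)$ is a field, because $t$ is not a square in $\bar\F_2\lau{t}$ for the same valuation reason, so $\bar\F_2\lau{t}/\F_2(t)$ is separable as a field extension, and every algebraic subextension of a separable extension is separable algebraic --- which directly yields the lemma.
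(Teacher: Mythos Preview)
Your proof is correct and takes a genuinely different route from the paper's. The paper argues in one stroke with the Cartier operator $\mathcal C_r$ on power series: if the minimal polynomial were $f(X)=\sum c_i(t)X^{2i}$, applying $\mathcal C_r$ to the relation $f(\tau)=0$ and using the semilinearity $\mathcal C_r(\psi\rho^2)=\rho\,\mathcal C_r(\psi)$ yields $\sum \mathcal C_r(c_i(t))\tau^i=0$, a nonzero relation of half the degree over $\F_2(t)$, contradicting minimality. This is a two-line computational trick that reuses the Cartier operators already central to the paper's algorithms.

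Your argument instead reduces to a purely inseparable step of degree $2$ and then exploits the valuation on $\bar\F_2\lau t$ together with the $p$-basis $\{1,t\}$ of $L$ over $L^2$ to force $\gamma\in L$. This is more field-theoretic and self-contained, requiring no Cartier machinery; your alternative via the MacLane separability criterion (checking that $t$ is not a square in $\bar\F_2\lau t$) is even more conceptual and dispatches the lemma in a single sentence. The paper's version is shorter in context because $\mathcal C_r$ is already on hand; yours would be the natural choice in a setting where Cartier operators have not been introduced.
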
 
 
 \begin{proof} 
 If the extension is not separable, the minimal polynomial $f \in \F_2(t)[X]$ of $\tau$ is of the form $f=\sum c_i(t) X^{2i}$. Since the Cartier operator satisfies $\mathcal C_r(\psi \tau^2 )=\tau \mathcal C_r(\psi)$, applying this to the equation $f(\tau)=0$, we find that $\sum \mathcal C_r(c_i(t)) \tau^{i}=0$. This gives a polynomial of strictly smaller degree satisfied by $\tau$ and nonzero for at least one value of $r \in \{0,1\}$. This contradiction shows the result. 
 \end{proof} 
 
\begin{proposition}[{Albayrak--Bell \cite{Bell-sparse}, special case}] \label{ABetter} Let $\sigma \in \No(\F_2)$ denote a power series  that is algebraic over $\F_2(t)$. 
Consider the field $$\mathcal F = \bigcup\limits_{\substack{\ell \geq 1,\\ \ell \, \text{{\rm odd}}}} \bar \F_2(t^{1/\ell}),$$ where $\bar \F_2$ is an algebraic closure of $\F_2$.  If $\sigma$ is sparse, then the following conditions hold\textup{:} \begin{enumerate} \item \label{ABetter1} $\sigma$ is integral over $\bar{\F}_2[t,t^{-1}]$\textup{;} \item \label{ABetter2} the extension  $\bar{\F}_2(t)(\sigma)/\bar{\F}_2(t)$ is unramified outside of $0,\infty$\textup{;} \item  \label{ABetter3} the splitting field of $\sigma$ over $\mathcal F$ has degree a power of two. \end{enumerate}
\end{proposition}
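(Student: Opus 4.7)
The plan is to reduce to \emph{simple} sparse series via Proposition \ref{SY}, and then proceed by induction on the rank.

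First I would show that the subset $\mathcal A \subset \bar\F_2\pau{t}$ of series satisfying (i)--(iii) forms a $\bar\F_2$-subalgebra. Integrality is classically closed under sums and products. For (ii), the compositum of two extensions of $\bar\F_2(t)$ unramified outside $\{0,\infty\}$ is again unramified there, and subfields inherit this property. For (iii), the Galois closure of a compositum of two $2$-power Galois extensions of $\mathcal F$ is again a $2$-power Galois extension, since its Galois group embeds into the product of the factors' Galois groups. Proposition \ref{SY} expresses any sparse $\sigma$ as a finite sum of simple sparse series, so the problem reduces to treating a single simple sparse series with parameters $(v_0, w_1, v_1, \ldots, w_r, v_r)$.

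I would then induct on the rank $r$. The base case $r=0$ (polynomials) is trivial. For $r=1$, a direct computation shows the exponent function takes the form $e(\ell)=u+v\cdot 2^{W_1\ell}$ for rationals $u, v$ with $(2^{W_1}-1)u, (2^{W_1}-1)v\in\Z$. Setting $T=t^{1/(2^{W_1}-1)}$ (an odd root of $t$, hence in $\mathcal F$) and factoring out a monomial in $T$, one recognises $\sigma$ as $T^N\cdot g_m(T)$ where $g_m(T)=\sum_{\ell\geq 0}T^{m\cdot 2^{W_1\ell}}$ satisfies the monic additive equation $g_m^{2^{W_1}}+g_m+T^m=0$. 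This equation yields integrality of $\sigma$ over $\bar\F_2[T, T^{-1}]\supseteq \bar\F_2[t, t^{-1}]$, produces a Galois group $(\F_{2^{W_1}},+)$ which is a $2$-group, and is ramified only where $T^m$ has poles, i.e.\ at $t=\infty$.

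For the inductive step, separating the contribution from $v_0, w_1^{\ell_1}, v_1$ in the exponent yields
\[
\sigma(t) = t^\alpha \sum_{\ell_1\geq 0} t^{\gamma \beta^{\ell_1}}\cdot \tau(t^{\mu \beta^{\ell_1}})
\]
with $\beta=2^{W_1}$, where $\tau$ is a simple sparse series of rank $r-1$ formed from the remaining pattern data $(v_2, w_2, \ldots, v_r, w_r)$, and $\alpha, \gamma, \mu$ are explicit rationals with $(\beta-1)\alpha, (\beta-1)\gamma, (\beta-1)\mu \in \Z$. The odd-degree substitution $Y=t^{1/(\beta-1)}$ produces $\sigma=Y^{\alpha(\beta-1)}\cdot H(Y)$ with $H(Y)=\sum_{\ell_1\geq 0}\phi(Y)^{\beta^{\ell_1}}$ and $\phi(Y)=Y^p\tau(Y^q)$ for integers $p,q$; here the crucial identity $\phi(Y^{\beta^{\ell_1}})=\phi(Y)^{\beta^{\ell_1}}$ uses that $\phi$ has $\F_2$-coefficients and that we work in characteristic two. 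Telescoping gives the Artin--Schreier-type relation $H^\beta+H+\phi=0$, so $\bar\F_2(Y,\phi)(H)/\bar\F_2(Y,\phi)$ is a $2$-group extension ramified only at the poles of $\phi$. Combining this with the inductive conclusion for $\tau$ (which transfers to $\phi$) and with the fact that $\bar\F_2(Y)/\bar\F_2(t)$ has odd degree and is ramified only at $\{0,\infty\}$ (so $\bar\F_2(Y)\subset\mathcal F$) yields (i)--(iii) for $\sigma$. The main obstacle I anticipate is the combinatorial bookkeeping: after separating $\ell_1$ one must verify that the inner sum is \emph{genuinely} a simple sparse series of rank $r-1$ with concrete parameters derived from a uniform position shift, and that the rationals $\alpha, \gamma, \mu$ (which are not integers in general) are correctly absorbed into the odd-degree Kummer extension $\bar\F_2(Y)\subset\mathcal F$.
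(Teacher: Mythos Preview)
Your proposal is correct and follows essentially the same route as the paper: decompose via Proposition~\ref{SY}, note that (i)--(iii) are preserved under finite sums, and induct on $r$ by exhibiting an additive (Artin--Schreier type) relation linking $\sigma$ to a rank-$(r-1)$ simple sparse series~$\tau$. The paper streamlines your bookkeeping by writing this relation directly over $\F_2(t)$ with \emph{integer} exponents, namely
\[
\sigma^{2^{k_1}}-t^{(2^{k_1}-1)m_0-2^{k_0}m_1}\,\sigma \;=\; t^{2^{k_1}m_0-2^{k_0}m_1}\,\tau
\]
(where $k_0=|v_0|$, $k_1=|w_1|$, and $m_0,m_1$ are the integers with binary expansions $v_0,w_1$); this gives (i) and (ii) at once without first passing to $Y=t^{1/(\beta-1)}$, and the odd-root substitution is invoked only for~(iii).
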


\begin{proof} The essence of the proof is to show that for sparse power series the combinatorial structure of the support $E(\sigma)$ allows one to construct a tower of  Artin--Schreier extensions of $\mathcal F$ that contains $\sigma$. 

By Proposition 8.2 a series $\us$ is sparse precisely if $E(\us)$ is a finite union of pairwise disjoint simple sparse sets. Properties \eqref{ABetter1}--\eqref{ABetter3} hold for the sum of several power series whenever they hold for the individual summands (for unramifiedness, use \cite[Cor.\ 3.9.3]{Stichtenoth}), and hence it is sufficient to prove that they hold for power series with simple sparse support. This will be done by induction on the rank of sparseness $r$.

Suppose that the support of $\us$ is a simple sparse set, consisting of integers whose base-$2$  expansion is of the form $v_r w_r^{\ell_r} \cdots v_1 w_1^{\ell_1} v_0$ with $\ell_i \in \Z_{\geq 0}$  for some fixed binary words $v_0,\dots,v_r,w_1,\dots,w_r$. If $r=0$, then $\us$ is a monomial, and properties  \eqref{ABetter1}--\eqref{ABetter3} hold. Suppose that $r\geq 1$ so $w_1$ is nontrivial. Let  $k_0 = |v_0|$ and $k_1=|w_1|$ be the lengths of the words $v_0$ and $w_1$, and let  $m_0$ and $m_1$ be the integers whose base-$2$ expansion is $v_0$ and $w_1$. Let $\tau$ be the power series whose support consists of the integers with base-$2$ expansion of the form $v_r w_r^{\ell_r} \cdots w_2^{\ell_2} v_1 0^{k_0}$ with $\ell_i \in \Z_{\geq 0}$. By induction, we know that properties  \eqref{ABetter1}--\eqref{ABetter3} hold for $\tau$. The relation between the supports of $\sigma$ and $\tau$ leads directly to the formula \begin{equation} \label{eqn:ABproof} \sigma^{2^{k_1}}-t^{(2^{k_1}-1)m_0-2^{k_0}m_1}\sigma = t^{2^{k_1}m_0-2^{k_0}m_1}\tau.\end{equation} This allows us to deduce the properties  \eqref{ABetter1}--\eqref{ABetter3} for $\us$ from the corresponding properties of $\tau$.

 First of all, $\sigma$ is integral over $\bar{\F}_2[t,t^{-1}][\tau]$, and hence also over $\bar{\F}_2[t,t^{-1}]$. 
 
 Secondly, the form of Equation \eqref{eqn:ABproof} makes it very easy to compute the ramification of the extension $\bar{\F}_2(t)(\sigma)/\bar{\F}_2(t)(\tau)$. If $f$ is the minimal polynomial of $\sigma$, then \cite[Cor.\ 3.5.11]{Stichtenoth} implies that the extension is unramified at all places $P$ for which $f$ is $P$-integral and $v_P(f'(\sigma))=0$. The same result then holds for any monic (not necessarily minimal) polynomial $g$ satisfied by $\sigma$, since it is divisible by $f$. We apply this with $g$ the polynomial in $\sigma$ given in \eqref{eqn:ABproof}, and we find that the extension is unramified at all places $P$ of $\bar{\F}_2(t)(\tau)$ with $v_P(t)=0$ and $v_P(\tau)\geq 0$, and that for all places $P'$ of $\bar{\F}_2(t)(\sigma)$ lying above such $P$ we have $v_{P'}(\sigma)\geq 0$. This implies that $\bar{\F}_2(t)(\sigma)/\bar{\F}_2(t)$ is unramified outside of $0,\infty$.
 
Finally, multiplying Equation \eqref{eqn:ABproof} by an appropriate (fractional) power of $t$ leads to an equation of the form $(t^c\sigma)^{2^{k_1}}-(t^c \sigma)=t^d \tau$ for some $c$ and $d$, which are rational numbers with odd denominators (more precisely, $c=-m_0+\frac{2^{k_0}m_1}{2^{k_1}-1}$ and $d=\frac{2^{k_0}m_1}{2^{k_1}-1}$). This shows that the extension $\mathcal{F}(\sigma)/\mathcal{F}(\tau)$ is contained in a tower of Artin--Schreier extensions, and hence so is $\mathcal{F}(\sigma)/\mathcal{F}$. Thus, its Galois closure is of degree a power of two.
\end{proof}

\subsection{Field-theoretic test for membership in the hierarchy} From Proposition \ref{ABetter}, we can deduce a method of establishing that a series is not in $S$ or $\hat S$. Since the properties \eqref{ABetter2} and \eqref{ABetter3} depend only on the field $\bar{\F}_2(t)(\us)$, and not on $\us$ itself, any proof that uses them to show that $\us \not \in S$ will establish the stronger property that $\us\not \in \hat{S}$. Actually, the method we will use to show that for a particular $\sigma$ property \eqref{ABetter3} does not hold will even show that $\sigma \not \in \QS$. On the other hand, the integrality property \eqref{ABetter1} will be used to show that certain series are in $\hat S$, but not in $S$. 

The basic ingredient is the following field-theoretic result, restricting possible factorisations of polynomials after extension of the base field. 

\begin{lemma} \label{Gal} Let $L/K$ be a (possibly infinite) Galois extension with Galois group $G$, let $f\in K[X]$ be a monic irreducible polynomial, and let $g\in L[X]$ be a monic irreducible factor of $f$ in $L[X]$. Denote by $H$ the stabiliser of $g$ in $G$. Then 
\begin{equation*} \label{prodconj} 
f =\prod_{\phi \in G/H} g^{\phi}, 
\end{equation*} 
i.e.\ $f$ is the product of all (pairwise distinct) Galois conjugates $g^\phi$ for $\phi$ running through the coset space $G/H$. 

\end{lemma}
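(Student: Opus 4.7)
The plan is to derive the factorization from the standard action of the Galois group $G$ on $L[X]$ (acting on coefficients of polynomials). Since $f \in K[X]$, the polynomial $f$ is fixed by this action, so for every $\phi \in G$ we have $g^\phi \mid f^\phi = f$ in $L[X]$. Moreover, each $g^\phi$ is monic irreducible in $L[X]$ because $g$ is, and $\phi$ induces a ring automorphism of $L[X]$. The polynomial $f$ has only finitely many monic irreducible divisors in $L[X]$; $G$ permutes this finite set, so the stabiliser $H$ of $g$ has finite index, and the elements $g^\phi$ for $\phi$ ranging over $G/H$ are pairwise distinct monic irreducible factors of $f$.

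Pairwise distinct monic irreducible polynomials are pairwise coprime in $L[X]$, so their product
\[
P \;:=\; \prod_{\phi \in G/H} g^{\phi}
\]
divides $f$ in $L[X]$. Next I would verify that $P \in K[X]$: the set $\{g^\phi : \phi \in G/H\}$ is permuted by any $\psi \in G$, so $P^\psi = P$, meaning every coefficient of $P$ is fixed by $G$. By the fundamental theorem of (possibly infinite) Galois theory, $L^{G} = K$, so each coefficient lies in $K$, and $P \in K[X]$.

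Thus $P$ is a monic divisor of $f$ in $K[X]$ with $P \in K[X]$. Since $f$ is irreducible in $K[X]$ and $\deg P \geq \deg g \geq 1$, we must have $P = f$. The main conceptual step, and the only place that requires a little care, is the passage from $P$ being $G$-invariant to $P \in K[X]$ in the possibly infinite Galois setting; this is handled by the coefficient-wise invariance argument together with the equality $L^G = K$. Everything else reduces to elementary facts about factorization in the UFD $L[X]$ and the finiteness of the set of monic irreducible divisors of $f$, which ensures that $[G:H] < \infty$ and that the displayed product is a finite product.
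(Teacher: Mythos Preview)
Your proof is correct and follows essentially the same approach as the paper: both define the product $P=\prod_{\phi\in G/H} g^{\phi}$, argue that the factors are pairwise distinct irreducible divisors of $f$ in $L[X]$ (so $P\mid f$), and use $G$-invariance to place $P$ in $K[X]$. The only minor difference is in the final step: the paper introduces a root $\alpha$ of $g$ and observes that $f$, being the minimal polynomial of $\alpha$ over $K$, must divide $P$ (giving a two-sided divisibility), whereas you conclude directly from the irreducibility of $f$ in $K[X]$ that the nontrivial monic divisor $P\in K[X]$ equals $f$; your version also makes explicit the finiteness of $[G:H]$ and the use of $L^{G}=K$, which the paper leaves implicit.
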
 

\begin{proof} 
 Let $\alpha$ denote a root of $g$ in an algebraic closure of $L$; then $g$ is the minimal polynomial of $\alpha$ over $L$. Put 
 $\tilde{f}:=\prod g^{\phi}$, the product being taken over all $\phi$ running through the coset space $G/H$. By construction, $\tilde{f}$ lies in $K[X]$ and has $\alpha$ as a root, hence $f$ divides $\tilde{f}$. Conversely, $g$ divides $f$ in $L[X]$, and hence so does $g^{\phi}$ for all $\phi \in G$. Since the elements $g^{\phi}$ are irreducible and pairwise distinct for $\phi \in G/H$, the polynomial $\tilde{f}$ divides $f$. Hence, $f=\tilde{f}$.
\end{proof} 

This implies the following valuation-theoretic result that can be used to check whether a polynomial stays irreducible under base field extension. 

\begin{lemma} \label{NP} Let $L/K$ be a (possibly infinite) Galois extension with Galois group $G$, and let $v\colon L\to \R\cup \{\infty\}$ be an (additive) valuation that is $G$-invariant, in the sense that $v\circ \phi = v$ for all $\phi \in G$. Let $\bar{L}$ be an algebraic closure of $L$, and let  $\tilde v$ be an extension of the valuation $v$ to $\bar L$. For a polynomial $f\in K[X]$, denote by $V_v(f)$ the multiset of valuations $\tilde{v}(\alpha)$ of all the roots $\alpha$ of $f$ in $\bar{L}$. If $f$ is irreducible over $K$, but becomes reducible over $L$, then the multiplicities of the elements of $V_v(f)$ have a nontrivial common divisor. \end{lemma}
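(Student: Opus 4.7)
The plan is to combine Lemma \ref{Gal} with the $G$-invariance of $v$ to show that all the factors of $f$ over $L$ contribute the same multiset of root-valuations, which forces the common multiplicity to divide every multiplicity in $V_v(f)$.

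First I would apply Lemma \ref{Gal}: since $f$ is irreducible over $K$ but becomes reducible over $L$, there is an irreducible factor $g \in L[X]$ of $f$ with stabiliser $H \subsetneq G$, and
\[
f = \prod_{\phi \in G/H} g^{\phi},
\]
a product of $n := [G:H] \ge 2$ pairwise distinct irreducible factors of equal degree. The strategy is then to prove that $V_v(g^{\phi}) = V_v(g)$ as multisets for every $\phi$, so that
\[
V_v(f) = \biguplus_{\phi \in G/H} V_v(g^{\phi})
\]
consists of $n$ disjoint copies of $V_v(g)$, and every multiplicity in $V_v(f)$ is a multiple of $n$.

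Second, I would establish a preliminary observation: the multiset $V_v(h)$ attached to any $h \in K[X]$ is independent of the chosen extension of $v$ to $\bar L$. Indeed, any two extensions of $v$ to $\bar L$ differ by an element of $\Gal(\bar L/L)$, and the set of roots of $h$ in $\bar L$ is stable under $\Gal(\bar L/K) \supseteq \Gal(\bar L/L)$; permuting the roots does not change the multiset of their $\tilde v$-valuations.

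The key step is comparing $V_v(g^\phi)$ and $V_v(g)$. Pick any lift $\tilde\phi \in \Gal(\bar L/K)$ of $\phi \in G$. Then $\tilde\phi$ sends the roots of $g$ bijectively onto the roots of $g^\phi$. Consider the map $w := \tilde v \circ \tilde\phi^{-1}$ on $\bar L$; restricted to $L$ it equals $v \circ \phi^{-1} = v$ by the $G$-invariance hypothesis, so $w$ is also an extension of $v$ to $\bar L$. Computing $V_v(g^\phi)$ using this extension $w$ gives exactly the multiset of $\tilde v$-valuations of $\tilde\phi^{-1}$-images of roots of $g^\phi$, which is $V_v(g)$ computed with $\tilde v$. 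By the independence from the choice of extension established in the previous paragraph, this is the desired equality $V_v(g^\phi) = V_v(g)$, and the lemma follows.

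The main subtlety I anticipate is the book-keeping around extending $\phi \in G = \Gal(L/K)$ to an automorphism of $\bar L$ and verifying that $\tilde v \circ \tilde\phi^{-1}$ genuinely defines a valuation extending $v$; this is where the $G$-invariance hypothesis does the essential work, and it is what prevents the argument from collapsing when the extension of $v$ to $\bar L$ is not unique.
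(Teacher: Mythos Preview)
Your proof is correct and follows the same overall plan as the paper: use Lemma~\ref{Gal} to factor $f = \prod_{\phi \in G/H} g^\phi$, show that all the conjugates $g^\phi$ have the same multiset of root valuations, and conclude that every multiplicity in $V_v(f)$ is divisible by $[G:H] \ge 2$. The difference lies only in how you establish $V_v(g^\phi) = V_v(g)$. The paper does this via Newton polygons: since $v$ is $G$-invariant, the coefficients of $g^\phi$ have the same $v$-valuations as those of $g$, so $\mathrm{NP}(g^\phi) = \mathrm{NP}(g)$; and since $V_v$ is read off from the slopes of the Newton polygon, the equality follows in one line without ever passing to $\bar L$. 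Your route---lifting $\phi$ to $\tilde\phi \in \Gal(\bar L/K)$, observing that $\tilde v \circ \tilde\phi^{-1}$ is another extension of $v$ to $\bar L$, and then invoking independence of $V_v$ from the choice of extension---is equally valid but requires the extra bookkeeping you flag at the end. One small slip: you state the independence observation for $h \in K[X]$ but then apply it to $g^\phi \in L[X]$; fortunately the argument you actually give (two extensions of $v$ differ by an element of $\Gal(\bar L/L)$, which already permutes the roots of any polynomial over $L$) works verbatim for $h \in L[X]$, so there is no real gap.
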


Elements of the set $V_v(f)$ are minus the slopes of the Newton polygon $\mathrm{NP}(f)$ of $f = \sum\limits_{i=0}^n a_i X^i$, where $\mathrm{NP}(f)$ is given as the lower convex hull in $\R^2$ of the set of points $(i, v(a_i))$ for $0\leq i\leq n$.
\begin{proof}  Since we assume that $v \circ \phi = v$, we have $\mathrm{NP}(g^\phi)= \mathrm{NP}(g)$.
Since the multiset $V_v(h)$ of a polynomial $h$ is determined by its Newton polygon (and hence by the valuations of its coefficients), it follows from the decomposition $f=\prod\limits g^{\phi}$ as in  Lemma \ref{Gal} that $V_v(f)$ is the union of $[G : H]>1$ copies of $V_v(g)$ (as multisets).\end{proof}

\begin{proposition} \label{method} Let $f \in \F_2(t)[X]$ be a separable irreducible polynomial. If the multiplicities of the elements of the multiset $V_t(f)$ for the $t$-adic valuation have no nontrivial common divisor, then $f$ remains irreducible over $\mathcal F$.  
\end{proposition}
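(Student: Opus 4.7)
The plan is to apply Lemma \ref{NP} with $K = \F_2(t)$, $L = \mathcal F$, and $v$ the $t$-adic valuation extended appropriately. First I would verify that $\mathcal F/\F_2(t)$ is Galois. Each finite layer $\bar{\F}_2(t^{1/\ell})$ (odd $\ell$) is the compositum of the constant field extension $\bar{\F}_2(t)/\F_2(t)$ with the Kummer extension $\F_2(\zeta_\ell,t^{1/\ell})/\F_2(t)$; the latter is Galois because $\gcd(\ell,2)=1$ guarantees $\zeta_\ell \in \bar{\F}_2$, and both extensions are Galois, hence so is their compositum. As a directed union of Galois extensions, $\mathcal F/\F_2(t)$ is Galois.

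Next I would show that the $t$-adic valuation $v_t$ admits a unique extension $v$ to $\mathcal F$, forced to satisfy $v(t^{1/\ell})=1/\ell$. For each odd $\ell$, the extension $\bar{\F}_2(t^{1/\ell})/\F_2(t)$ has a unique place above $t=0$: the Kummer step $\F_2(\zeta_\ell, t^{1/\ell})/\F_2(t)$ is totally ramified of degree $\ell$ at $t=0$, and the subsequent constant field extension corresponds to passing from $\F_2$ to $\bar{\F}_2$ in the coordinate $s=t^{1/\ell}$, for which there is obviously a single place above $s=0$. Taking the union over all odd $\ell$ preserves uniqueness. Consequently, for any $\phi \in \Gal(\mathcal F/\F_2(t))$ the valuation $v\circ\phi$ also extends $v_t$, and uniqueness forces $v\circ\phi=v$; thus $v$ is $\Gal(\mathcal F/\F_2(t))$-invariant, as required by Lemma \ref{NP}.

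Fixing an algebraic closure $\overline{\mathcal F}$ (which is also an algebraic closure of $\F_2(t)$), any extension $\tilde v$ of $v$ to $\overline{\mathcal F}$ simultaneously extends $v_t$, so the multiset $V_v(f)$ in the sense of Lemma \ref{NP} coincides with the multiset $V_t(f)$ from the statement. Now suppose for contradiction that the irreducible polynomial $f$ factors nontrivially in $\mathcal F[X]$. Separability of $f$ ensures the hypotheses of Lemma \ref{NP} are met, and the conclusion yields a common divisor strictly greater than one of the multiplicities of the elements of $V_v(f) = V_t(f)$, contradicting the assumption. Hence $f$ remains irreducible over $\mathcal F$.

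The only nontrivial point is the verification that $\mathcal F$ has a unique place above $t=0$, since this is what turns the unique extendability of the valuation into the Galois invariance hypothesis of Lemma \ref{NP}; the rest of the argument is an immediate translation of that lemma to the present setting.
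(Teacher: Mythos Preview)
Your proof is correct and follows essentially the same approach as the paper: verify that $\mathcal F/\F_2(t)$ is Galois, observe that the $t$-adic valuation extends uniquely to $\mathcal F$ (hence is Galois-invariant), and then apply Lemma~\ref{NP}. The paper's proof is more terse, simply asserting the Galois property and uniqueness of the extended valuation, whereas you spell out why each layer $\bar\F_2(t^{1/\ell})$ has a unique place above $t=0$; but the structure and key ingredients are identical.
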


\begin{proof}
The extension ${\mathcal{F}}{ /}{\F_2(t)}$ is Galois. The $t$-adic valuation on $\F_2(t)$ has a unique extension to $\mathcal F$ (which coincides on each $\bar{\F}_2(t^{1/j})$ with the $t^{1/j}$-adic valuation $v$ normalised so that $v(t^{1/j})=1/j$). By uniqueness, this extension is Galois invariant. The claim follows from  Lemma \ref{NP}.
\end{proof} 

\begin{corollary} \label{odddeg}
Let $\sigma \in \No(\F_2)$ denote a power series that is algebraic over $\F_2(t)$ with minimal polynomial $F(t,X)$. Assume that $F$ is  of degree not a pure power of two, and that the multiplicities of the elements of the multiset $V_t(\sigma):=V_t(F)$ for the $t$-adic valuation have no nontrivial common divisor. Then $\sigma \notin \QS$.  
\end{corollary}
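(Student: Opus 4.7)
The plan is to combine Proposition \ref{ABetter}(3) with Proposition \ref{method}, reducing the case of $\sigma \circ \varphi$ to that of $\sigma$ by observing that the substitution $t \mapsto \varphi(t) = t/(t+1)$ preserves the $t$-adic Newton polygon data.

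First, I would handle the case $\sigma \in \hat{S}$. By Proposition \ref{hatA} one can write $\tau := p\sigma \in S$ for some nonzero $p \in \F_2[t]$; since $p \in \mathcal{F}$, this gives $\mathcal{F}(\sigma) = \mathcal{F}(\tau)$. Proposition \ref{ABetter}(3) applied to $\tau$ forces the splitting field of $\mathcal{F}(\sigma)/\mathcal{F}$ to have degree a power of two, so $[\mathcal{F}(\sigma):\mathcal{F}]$ is itself a power of two. On the other hand, Proposition \ref{method} applied to the hypothesis on $V_t(F)$ guarantees that $F$ stays irreducible over $\mathcal{F}$, so that $[\mathcal{F}(\sigma):\mathcal{F}] = \deg F$, which by assumption is not a power of two. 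This contradiction shows $\sigma \notin \hat{S}$.

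I would then reduce the remaining case $\sigma \circ \varphi \in \hat{S}$ to the previous one by checking that the hypotheses of the corollary are inherited by $\sigma \circ \varphi$. Since $\varphi$ is a field automorphism of $\F_2(t)$, the polynomial $F(\varphi(t), X)$ is irreducible in $X$ over $\F_2(t)$ of degree $\deg F$, and is therefore a unit multiple of the minimal polynomial $G(t,X)$ of $\sigma \circ \varphi$. The key observation is that $v_t(\varphi(t)) = 1$, so $v_t(f(\varphi(t))) = v_t(f(t))$ for every $f \in \F_2(t)$; consequently the $t$-adic Newton polygons of $F(\varphi(t),X)$ and $F(t,X)$ coincide, and $V_t(G) = V_t(F)$ as multisets. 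Applying the first paragraph to $\sigma \circ \varphi$ then completes the proof.

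The only non-routine ingredient is the Newton polygon invariance under $t \mapsto \varphi(t)$, which itself reduces to the fact that $\varphi$ preserves the $t$-adic valuation; every other step is a direct appeal to the quoted results.
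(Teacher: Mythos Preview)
Your proof is correct and follows essentially the same route as the paper: use Proposition~\ref{method} to keep $F$ irreducible over $\mathcal{F}$, contradict Proposition~\ref{ABetter}\eqref{ABetter3} to rule out $\hat{S}$, and then observe that the substitution $t\mapsto\varphi(t)$ preserves both the degree and the $t$-adic Newton polygon to handle $\sigma\circ\varphi$. Your write-up just spells out the Newton polygon invariance (via $v_t(\varphi(t))=1$) a bit more explicitly than the paper does.
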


\begin{proof} 
We conclude from Proposition \ref{method} that $F$ is the minimal polynomial of $\sigma$ over $\mathcal F$, and so $[\mathcal F(\sigma):\mathcal F]$ is not a pure power of two, contradicting Proposition \ref{ABetter}\eqref{ABetter3}. Hence $\sigma \notin S$. Since the field $\mathcal{F}(\us)$ does not change after multiplying $\us$ by a rational function, we get that $\sigma \notin \hat S$.

For the final claim, observe that replacing $\sigma$ by $\sigma \circ \varphi$ changes neither the degree of the minimal polynomial of $\sigma$ over $\F_2(t)=\F_2(t/(t+1))$ nor the set $V_t(\sigma)$. Hence the same reasoning applied to $\sigma \circ \varphi$ shows that $\sigma \notin \QS$. 
\end{proof}

\begin{corollary} \label{deg4}
Let $\sigma \in \No(\F_2)$ denote a power series that is algebraic over $\F_2(t)$ with minimal polynomial $F(t,X)$ of degree $4$
$$F(t,X)=a_4 X^4 +a_3 X^3 + a_2 X^2 + a_1 X + a_0$$
and with cubic resolvent 
$$R_3[F]:=a_4^3 X^3+a_2 a_4^2 X^2 + a_1 a_3 a_4 X + a_0 a_3^2+a_1^2 a_4.$$
Assume that $R_3[F]$ is irreducible over $\F_2(t)$ and that the multiplicities of the elements of the multisets $V_t(F)$ and $V_t(R_3[F])$ for the $t$-adic valuation have no nontrivial common divisor. Then $\sigma \notin \QS$.  
\end{corollary}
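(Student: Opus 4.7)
The plan is to argue by contradiction, adapting the strategy of Corollary~\ref{odddeg}. Suppose $\sigma \in \QS$. As in the last paragraph of the proof of Corollary~\ref{odddeg}, we may replace $\sigma$ by $\sigma \circ \varphi$ if necessary and thereby assume $\sigma \in \hat{S}$: the substitution $t \mapsto \varphi(t) = t/(t+1)$ preserves the degree of the minimal polynomial of $\sigma$, and because $\varphi(t) = t + O(t^2)$ one has $v_t(\alpha \circ \varphi) = v_t(\alpha)$ for every root $\alpha \in \bar{\F}_2\lau{t}$ of $F$, so the multisets $V_t(F)$ and $V_t(R_3[F])$ are unchanged. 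Since $R_3[F(\varphi(t),X)] = R_3[F](\varphi(t),X)$ and $\F_2(t)=\F_2(\varphi(t))$, the irreducibility hypothesis on $R_3[F]$ is also preserved.

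From $\sigma \in \hat{S}$ it follows via Proposition~\ref{hatA} that $(t^m+1)\sigma \in S$ for some $m \geq 1$, so $\bar{\F}_2(t)(\sigma) = \bar{\F}_2(t)((t^m+1)\sigma)$ is generated by a sparse series. Proposition~\ref{ABetter}\eqref{ABetter3} then implies that the splitting field $L$ of $F$ over $\mathcal{F}$ has degree a power of $2$. The next step is to show that both $F$ and $R_3[F]$ remain irreducible over $\mathcal{F}$. The polynomial $F$ is separable over $\F_2(t)$ by the lemma preceding Proposition~\ref{ABetter} (applied to its root $\sigma \in \bar{\F}_2\lau{t}$), while $R_3[F]$ is separable as an irreducible polynomial of odd degree in characteristic $2$. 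Proposition~\ref{method} applied separately to $F$ and $R_3[F]$, using the two multiplicity hypotheses on $V_t(F)$ and $V_t(R_3[F])$, then yields that both polynomials remain irreducible over $\mathcal{F}$.

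The final step invokes the Galois-theoretic content of the cubic resolvent in the relevant characteristic-$2$ setting: for an irreducible separable quartic $F$ over a field $K$, with cubic resolvent $R_3[F]$ as in the statement, the Galois group $\Gal(F/K)$ is a transitive subgroup of $S_4$, and $R_3[F]$ is irreducible over $K$ precisely when $\Gal(F/K) \in \{A_4, S_4\}$. Applied over $K=\mathcal{F}$, this forces $|\Gal(L/\mathcal{F})| \in \{12,24\}$, neither of which is a power of~$2$, contradicting the conclusion of the previous paragraph.

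The main obstacle is verifying the Galois-theoretic classification in characteristic~$2$ with the specific resolvent formula used in the statement (which differs from the classical char-$\neq 2$ resolvent); this is a standard but not entirely trivial fact, and I would invoke the relevant literature (Kappe--Warren style treatments of quartics in characteristic two) rather than re-derive it. Once this ingredient is cited, the remainder is a direct assembly of Propositions~\ref{ABetter}, \ref{hatA} and~\ref{method}, exactly parallel to the proof of Corollary~\ref{odddeg}.
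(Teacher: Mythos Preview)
Your proof is correct and follows essentially the same approach as the paper's: both show via Proposition~\ref{method} that $F$ and $R_3[F]$ remain irreducible over $\mathcal{F}$, invoke the characteristic-$2$ Galois theory of quartics (the paper cites \cite[Thm.\ 3.4]{Kconrad} for precisely the fact you flag as the main external ingredient) to conclude that the Galois group over $\mathcal{F}$ is $A_4$ or $S_4$, and then contradict Proposition~\ref{ABetter}\eqref{ABetter3}. The only cosmetic difference is that you organise the argument as a single proof by contradiction (reducing to $\sigma \in \hat S$ up front), whereas the paper directly shows $\sigma \notin \hat S$ and then repeats the argument for $\sigma \circ \varphi$.
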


\begin{proof} 
The possible Galois groups of an irreducible separable quartic are $S_4$, $A_4$ $D_4$, ${\Z}/{4}{\Z}$ and ${\Z}/{2}{\Z} \times {\Z}/{2}{\Z}$. Only the last three of these are $2$-groups, and those occur precisely when the cubic resolvent is reducible (see \cite[Thm.\ 3.4]{Kconrad}). 

Since $F$ is separable, so is its cubic resolvent $R_3[F]$. From the hypotheses and Proposition \ref{method}, we conclude that $F$ and $R_3[F]$ are irreducible over $\mathcal F$. Therefore, the Galois group of $\sigma$ over $\mathcal F$ is not a $2$-group, and $\sigma \notin S$ by  Proposition \ref{ABetter}\eqref{ABetter3}. Since this argument uses only the information about the field $\mathcal{F}(\us)$, we conclude that $\sigma \notin \hat{S}$. 

Finally, since changing $\sigma$ to $\sigma\circ \varphi$ affects neither the irreducibility of $F$ and $R_3[F]$ nor the sets $V_t(F)$ and $V_t(R_3[F])$, we find similarly that $\sigma \circ \varphi \notin \hat S$, and so $\sigma \notin \QS$. 
\end{proof}
\begin{theorem}  \label{enumarid} We have the following membership properties (see also Table \textup{\ref{tableeqs}})\textup{:}  
\begin{enumerate}
\item \label{enumarid1} $ \sigma_{\mathrm{S},2^\mu\pm1} (\mu \geq 1), \us_{\mathrm{CS}}^{\circ 3}, \sigma_{\mathrm{T},1},\dots,\sigma_{\mathrm{T},4} \in S$\textup{;}
\item  \label{enumarid2} $ \us_{\mathrm{CS}}^{\circ 2}, \us_{\mathrm{CS}} \in \hat{{S}} \setminus S$\textup{;}
\item  \label{enumarid3} $ \us_{\mathrm{J}},  \us_{\mathrm{J}}^{\circ 3} \in \QS\setminus \hat{{S}}$\textup{;}
\item  \label{enumarid4} $\us_{\mathrm{K},m} (m\geq 3),  \sigma_{V,1}, \sigma_{V,2}, \sigma_{V,3}, \us_{\mathrm{min}}\textup{,} \us_{(1,5)}, \sigma_{(1,9)}, \us_8 \notin \QS$.
\end{enumerate}
\end{theorem}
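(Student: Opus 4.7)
My plan is to treat each of the four containment statements separately. The positive sparseness (or quasi-sparseness) assertions in parts (\ref{enumarid1})--(\ref{enumarid3}) are established by direct inspection of explicit closed-form expressions provided earlier in the paper, using Proposition \ref{SY} to certify simple-sparse-set decompositions and Proposition \ref{hatA} to convert rational factors into $\hat{S}$-membership. The negative assertions in parts (\ref{enumarid2})--(\ref{enumarid4}) rely on the three necessary conditions of Proposition \ref{ABetter}---integrality over $\bar\F_2[t,t^{-1}]$, non-ramification outside $\{0,\infty\}$, and Galois closure over $\mathcal F$ of $2$-power degree. Each of these conditions depends only on the field $\bar\F_2(t)(\sigma)$ and is preserved under multiplication by rational functions, so their failure already rules out $\hat{S}$-membership; membership in $\QS$ is then handled by repeating the analysis after composing with $\varphi(t)=t/(t+1)$.

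For (\ref{enumarid1}), Proposition \ref{KSprop} handles $\sigma_{\mathrm{S},m}$; Equation \eqref{cs3eq} presents $E(\us_{\mathrm{CS}}^{\circ 3})$ as a finite union of simple sparse sets; and each $\sigma_{\mathrm{T},i}$ in Table \ref{s5-s8} is written as a finite $\F_2$-sum of terms $t^{a(k,\ell,\ldots)}$ whose base-$2$ expansions fit simple sparse patterns, so Proposition \ref{SY} applies. For (\ref{enumarid2}), collapsing the inner geometric sum of Equation \eqref{cseq} gives $(1+t^2)\us_{\mathrm{CS}}=\text{polynomial}+\sum_{k\geq 0}(t^{6\cdot 2^k}+t^{8\cdot 2^k})$, which is sparse, and similarly for $(1+t^2)\us_{\mathrm{CS}}^{\circ 2}$ via Remark \ref{rem:cs2}; both series therefore lie in $\hat{S}$ by Proposition \ref{hatA}. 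Exclusion from $S$ follows from their minimal equations $(t^2+1)X^2+X+(t\ \text{or}\ t^2+t)=0$: after normalising, the coefficient $1/(t+1)^2$ lies outside $\bar\F_2[t,t^{-1}]$, contradicting Proposition \ref{ABetter}\eqref{ABetter1}.

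For (\ref{enumarid3}), the substitution $X=(t+1)Z$ converts $\us_{\mathrm{J}}$'s equation to the Artin--Schreier form $Z^2+Z=t/(t+1)^3$, which in the local parameter $u=t+1$ reads $u^{-3}+u^{-2}$; since $u^{-3}$ has odd pole order and no $g^2+g$ can cancel it, the extension $\bar\F_2(t)(\us_{\mathrm{J}})$ ramifies at $t=1$, forcing $\us_{\mathrm{J}}\notin\hat{S}$ via Proposition \ref{ABetter}\eqref{ABetter2}. An analogous reduction of $\us_{\mathrm{J}}^{\circ 3}$'s equation gives the class $t^2/(t+1)^3$, which differs from the previous by $\wp(1/(t+1))=t/(t+1)^2$, so $\us_{\mathrm{J}}^{\circ 3}$ generates the same extension and the same exclusion applies. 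Composing with $\varphi$ transforms the Artin--Schreier right-hand sides to $t+t^3$ and $t^2+t^3$ respectively, whose power-series solutions $\sum_{k\geq 0}(t^{2^k}+t^{3\cdot 2^k})$ and $\sum_{k\geq 1}t^{2^k}+\sum_{k\geq 0}t^{3\cdot 2^k}$ are sparse; dividing by $(t+1)$ respectively $t(t+1)$ places $\us_{\mathrm{J}}\circ\varphi$ and $\us_{\mathrm{J}}^{\circ 3}\circ\varphi$ in $\hat{S}$, giving $\us_{\mathrm{J}},\us_{\mathrm{J}}^{\circ 3}\in\QS$.

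For (\ref{enumarid4}), each of $\us_{\mathrm{min}},\us_{(1,5)},\us_{(1,9)},\us_8$ has minimal polynomial of degree $3,3,7,6$ respectively (none a power of two); a direct $t$-adic Newton polygon computation verifies the multiplicity-gcd condition, and Corollary \ref{odddeg} yields non-membership in $\QS$. The Klein-four generators $\sigma_{V,1},\sigma_{V,2},\sigma_{V,3}$ have degree $4$; I compute their cubic resolvents $R_3[F]$, certify $\F_2(t)$-irreducibility via a finite rational-root search bounded using the Newton polygons of the constant and leading coefficients, verify the multiplicity-gcd conditions on both $V_t(F)$ and $V_t(R_3[F])$, and invoke Corollary \ref{deg4}. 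Klopsch's series $\sigma_{\mathrm{K},m}$ ($m\geq 3$) falls outside both corollaries since all roots of $(1+t^m)X^m-t^m=0$ share $t$-valuation $1$; instead I argue by Kummer theory that $1+t^m$ and $(t+1)^m+t^m$ are both squarefree in $\bar\F_2[t]$ (the latter having the $m-1$ distinct roots $1/(\zeta+1)$ indexed by nontrivial $m$-th roots of unity $\zeta$), so neither is a $p$-th power in $\mathcal F$ for any odd prime $p\mid m$; hence $[\mathcal F(\sigma_{\mathrm{K},m}):\mathcal F]=[\mathcal F(\sigma_{\mathrm{K},m}\circ\varphi):\mathcal F]=m$, and Proposition \ref{ABetter}\eqref{ABetter3} rules out both from $\hat{S}$, so $\sigma_{\mathrm{K},m}\notin\QS$. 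The chief obstacle is certifying irreducibility of the three cubic resolvents in the Klein-four cases, which is a finite but tedious case analysis best handled with computer algebra.
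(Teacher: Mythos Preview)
Your proposal is correct and follows essentially the same strategy as the paper's proof: direct closed-form inspection for part (\ref{enumarid1}); the identity $(t+1)^2\sigma\in S$ plus non-integrality (Proposition \ref{ABetter}\eqref{ABetter1}) for part (\ref{enumarid2}); ramification at $t=1$ (Proposition \ref{ABetter}\eqref{ABetter2}) plus an explicit $\hat S$-identity after composing with $\varphi$ for part (\ref{enumarid3}); and Corollaries \ref{odddeg}, \ref{deg4} together with a Kummer/squarefree argument for $\sigma_{\mathrm{K},m}$ in part (\ref{enumarid4}). The only notable difference is that for part (\ref{enumarid3}) you carry out the Artin--Schreier reduction explicitly (and observe that $\sigma_{\mathrm{J}}$ and $\sigma_{\mathrm{J}}^{\circ 3}$ generate the same quadratic extension), whereas the paper simply reads off ramification at $t+1$ from the minimal polynomial and then verifies the $\QS$-membership via the identities $(t+1)\sigma_{\mathrm{J}}(\varphi(t))=(t+1)^2\sigma_{\mathrm{CS}}(t)$ and $(t^2+t)\sigma_{\mathrm{J}}^{\circ 3}(\varphi(t))=\sum_{k\geq 0}(t^{3\cdot 2^k}+t^{2\cdot 2^k})$; your Artin--Schreier route recovers exactly these identities.
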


\begin{proof}
The series $\sigma_{\mathrm{S},2^\mu\pm1}$ are sparse by Proposition \ref{KSprop}. The sparseness of the series $\sigma_{\mathrm{CS}}^{\circ 3},\sigma_{\mathrm{T},1},\dots \sigma_{\mathrm{T},4}$  follows by representing $E(\sigma)$ in the same way as was done for $E(\us_{\mathrm{CS}}^{\circ 3})$ in Example \ref{exes}, using the closed formulas for the series in Table \ref{s5-s8}. 

The series $\sigma_{\mathrm{CS}}^{\circ 2}$ and $\sigma_{\mathrm{CS}}$ are not sparse by Proposition \ref{ABetter} since their minimal polynomials are not $\bar \F_2[t,t^{-1}]$-integral. To show the series are in $\hat S$, we have the following explicit relations, obtained from Remark \ref{rem:cs2} and  Equation \eqref{cseq}, with sparse right hand side: 
$$ (t+1)^2 \us_{\mathrm{CS}}^{\circ 2} = t+t^3 + \sum_{k \geq 1} \left( t^{2\cdot 2^k}+ t^{3 \cdot 2^{k}} \right) \qquad \mbox{and}\qquad
 (t+1)^2\us_{\mathrm{CS}} = \sum\limits_{k \geq 0}\left(t^{2^k}+t^{3\cdot 2^k}\right). $$

If $\sigma$ is any of the series $\sigma_{\mathrm{J}}$ and $\sigma_{\mathrm{J}}^{\circ 3}$, then it is not in $\hat S$.  Indeed, from their minimal polynomial we can read out that the extension ${\bar{\F}_2(t)(\sigma)}{/}{\bar{\F}_2(t)}$ is ramified above $t+1$, and the conclusion follows from Proposition \ref{ABetter}\eqref{ABetter2}. 
To prove the series are in $\QS$, we use the following explicit relations with sparse right hand side: 
 $$ (t+1)\us_{\mathrm{J}}(\varphi(t)) = (t+1)^2\us_{\mathrm{CS}}(t) \qquad \mbox{and}\qquad (t^2+t)  \us_{\mathrm{J}}^{\circ 3}\left(\varphi(t)\right) = \sum_{k \geq 0} \left( t^{3 \cdot 2^k} + t^{2\cdot 2^k} \right). $$
Indeed, for the former equation, one verifies that $\sigma_{\mathrm{CS}}$ and $\sigma_{\mathrm{J}}(\varphi(t))/(t+1)$ are equal, since they satisfy the same irreducible algebraic equation (\ref{cseq}) having a unique solution $t+O(t^2)$. For the latter equation, the left hand side is the unique solution to $\tau^2+\tau=t^3+t^2$ of the form $t^2+O(t^3)$. But this solution is clearly equal to the right hand side. 

To prove that $\sigma_{\mathrm{K},m} \notin \hat S$ for any odd $m\geq 3$, we use Proposition \ref{ABetter}\eqref{ABetter3}. To this end, it suffices to check that $(t^m+1)X^m+t^m$ is irreducible over $\mathcal F$, which by \cite[Prop.\ 3.7.3]{Stichtenoth} is equivalent to showing that $t^m/(t^m+1)$ is not a $d$-th power in $\mathcal{F}$ for any $d>1$, $d{\mid}m$, or, equivalently, that $t^{mj}/(t^{mj}+1)$ is not a $d$-th power in $\F_2(t)$ for any odd $j$. This holds since $t^{mj}+1$ has only simple roots in $\bar\F_2$. Similarly, $\sigma_{\mathrm{K},m}  \circ \varphi$ satisfies $(t^m + (t+1)^m)X^m+t^m=0$, and the polynomial $t^{mj}+(t+1)^{mj}$ has only simple roots in $\bar \F_2$ (as can be seen from computing its derivative); hence  for the same reason $\sigma_{\mathrm{K},m}\circ \varphi \notin \hat S$. We conclude that $\sigma_{\mathrm{K},m} \notin \QS$.

The multisets of slopes for the minimal polynomials of $\sigma_{V,1}$, $\sigma_{V,2}$ and $\sigma_{V,3}$ can be found in Table \ref{tableeqs}. The cubic resolvent for the minimal polynomial of $\sigma_{V,1}$ is 
$t^{12}X^3+t^8 X^2 + t^7(t+1) X + t^4(t^4+t^3+t^2+1)$, which is irreducible over $\F_2(t)$ with $v_t$-slopes $\{ -4,(-2)^2\}$. (A convenient way to check irreducibility of the cubic resolvent over $\F_2(t)$ is to consider the $v_{t^{-1}}$-slopes for  the $t^{-1}$-adic valuation.)   
 Similarly, the minimal polynomial for $\sigma_{V,2}$ has resolvent 
$(t+1)^{12}X^3+t(t+1)^8X^2+(t+1)^4 t^4$, which is irreducible over $\F_2(t)$ and has $v_t$-slopes $\{ 1,(3/2)^2\}$, and  the minimal polynomial for $\sigma_{V,3}$ has resolvent 
$t^{12}X^3+t^9(t^2+t+1)X^2+t^4(t+1)^6 X+t(t+1)^6(t^3+t^2+1)$, which is irreducible over $\F_2(t)$ and has $v_t$-slopes $\{ (-4)^2,-3\}$.  By Corollary \ref{deg4} we conclude that $\sigma_{V,1},\sigma_{V,2}, \sigma_{V,3} \notin \QS$. 

For all further series, $\deg F$ is not a pure power of $2$ and $V_t(F)$ has no nontrivial common divisor of multiplicities (listed in Table \ref{tableeqs}), so we immediately conclude that $\sigma \notin \QS$ by Corollary \ref{odddeg}.  
 This finishes the proof. 
\end{proof}

\def\arraystretch{1.4}
\begin{table}
{\small 
\hspace*{-1cm}  \begin{tabular}{llllll}
\hline 
series &   {\footnotesize $\in S$} & {\footnotesize $\in \hat{S}$} & {\footnotesize $\in \QS$} & minimal polynomial  $F$ & method \\
\hline 
$ \sigma_{\mathrm{S},1} $  & \checkmark (1) & \checkmark & \checkmark  & $ t^2 X^2 + (t+1)X+t$ & direct  \\
$ \sigma_{\mathrm{S},m=2^\mu-1>1}$ & \checkmark (1) & \checkmark & \checkmark  & $ (tX)^{(m+1)/2} + X+t$ & direct  \\
$ \sigma_{\mathrm{S},m=2^\mu+1}$ & \checkmark ($\mu$) & \checkmark & \checkmark  & $ (tX)^{m-1} + X^{m-2}+t^{m-2}$ & direct  \\
$\us^{\circ 3}_{\mathrm{CS}}$  & \checkmark (1) & \checkmark & \checkmark  & $ t^2 X^2 + X + t^2 +t $ & direct  \\
$\sigma_{\mathrm{T},1}$ &  \checkmark (2) & \checkmark & \checkmark & $t^2X^4+(t^4+t^2+t+1)X^2+(t^3+t^2+t)X+t^3$ &  direct \\
$\sigma_{\mathrm{T},2}$ &  \checkmark (3) & \checkmark & \checkmark &  $t^2X^4+(t+1)X^3+(t^4+t^2+t)X^2+(t^2+t)X+t^2$ & direct\\
$\sigma_{\mathrm{T},3},\sigma_{\mathrm{T},4}$ &  \checkmark (3) & \checkmark & \checkmark & $t^4X^4+(t^2+1)X^3+(t^3+t)X^2+t^2X+t^3$ &  direct \\
\hline
$\us_{\mathrm{CS}}^{\circ 2}$  &   $\times$ & \checkmark & \checkmark  & $(t+1)^2 X^2 + X + t^2 + t$ & not integral \\
$\us_{\mathrm{CS}}$ &  $\times$ & \checkmark & \checkmark  & $(t+1)^2 X^2 + X + t$ & not integral   \\
\hline
$\us_{\mathrm{J}}$ &  $\times$ & $\times$ & \checkmark & $(t+1)X^2 + (t^2+1)X + t$ & not unramified \\
$\us^{\circ 3}_{\mathrm{J}}$ &    $\times$ & $\times$ & \checkmark & $t X^2 + (t^2+1) X+ t^2 + t$& not unramified  \\
\hline
$\us_{\mathrm{K},m}$ &  $\times$ & $\times$ & $\times$ & { $(t^{m}+1)X^{m}+t^m$} & odd deg \& direct\\ 
$ \sigma_{V,1}$ & $\times$ & $\times$ & $\times$ &$ t^4X^4+t^3X^3+X^2+(t+1)X+t^2+t$ & $R_3$ \& $V_t=\{(-2)^2,0,1\}$ \\
$\sigma_{V,2}$  & $\times$ & $\times$ & $\times$ & $(t+1)^4 X^4+tX^2+t^2X+t^4$ & $R_3$ \& $V_t=\{\left(\frac{1}{2}\right)^2,1,2\}$ \\
$\sigma_{V,3}$  & $\times$ & $\times$ & $\times$ & $t^4 X^4+(t+1)^3 X^3+t(t^2+t+1)X^2+(t+1)^3 X+t(t+1)^2$ & $R_3$ \& $V_t=\{-4,0^2,1\}$ \\
$\us_{\mathrm{min}}$ &  $\times$ & $\times$ & $\times$  & { $(t+1)^3X^3+(t^3+t)X^2+(t^3+t+1)X+t^3+t$ } & odd deg \& $V_t=\{0^2,1\}$ \\
$\us_{(1,5)}$ & $\times$ & $\times$ & $\times$ & { $t^2X^3+(t+1)^3X+t^3+t$} & odd deg \& $V_t=\{(-1)^2,1\}$ \\
$\us_{(1,9)}$ & $\times$ & $\times$ & $\times$ &  { $t^2X^7  + t^3X^6 + (t^5 + t^4 + t^2)X^5 + (t^5 + t^3)X^4 +$} & odd deg \&  \\
&  &   &  &  { $ + (t^7+t^5+t^4+t^3+t)X^3 + t^5X^2 + (t^3 + t + 1)X + t$}   &  $V_t=\{\left(-\frac{1}{3}\right)^6,1\}$  \\  
$\us_8$ &  $\times$ & $\times$ & $\times$  & { $t^6 X^6 + (t^6+t^2) X^4 + (t^6+t^5+t^4+t^3+t^2+1) X^2 +$} &  deg not a power of $2$ \& \\
&  &   &  &     { $ +(t+1)^3 X + t^6 + t^5 + t^2 + t$} &  $V_t=\{(-2)^2,(-1)^2,0,1\}$ \\ 
\hline\\
\end{tabular} 
}
\caption{For each series, in column `$\in S$'  the symbol `$\times$' indicates the series is not sparse and `$\checkmark (r)$' indicates the series is $r$-sparse; the column `$\in \hat{S}$' describes the property of being sparse up to multiplication with a rational function; the column `$\in \QS$' indicates whether or not the series itself or its composition with $t\mapsto t/(t+1)$ is in $\hat{S}$; `minimal polynomial $F$' is the minimal polynomial of the series over $\F_2(t)$; `method' indicates the method of proof, where $V_t:=V_t(F)$ is the multiset of $t$-adic valuations of the roots of $F$.}
\label{tableeqs}
\end{table}
\def\arraystretch{1}


\section{Sparseness and automaton properties} \label{sparseauto}

\subsection{Combinatorial characterisation of sparseness} We describe automaton-theoretic methods to verify whether a series $\sigma$ is in $S, \hat S$ or $\QS$. In \cite{SYZS}, it is shown that sparseness may be checked directly using a corresponding automaton (recall our convention that all states in the automaton are accessible, which is also part of the conditions below). 

\begin{definition} \label{def:tiedvert}
Call a vertex $v$ of an automaton \emph{tied} if the following two properties hold: 
\begin{enumerate} 
\item[(a)] \label{def:tiedvert1} there exists a (possibly empty)  path from $v$ to a vertex with output $1$ [`$v$ is co-accessible']; 
\item[(b)] \label{def:tiedvert2} there exist two different walks of the same length from $v$ to itself.\end{enumerate} 
\end{definition}

\begin{proposition}[{\cite{SYZS}, \cite[Prop.\ 3.4]{BK2020}}] \label{SYZScheck} An automatic series $\us$ is not sparse if and only if there  exists a tied vertex $v$ in a corresponding automaton. \qed \end{proposition}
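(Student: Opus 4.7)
The proof of Proposition~\ref{SYZScheck} splits into two implications. For the forward direction, suppose $v$ is a tied vertex. The plan is to use $v$ to embed exponentially many indices into $E(\sigma)$. Fix a path $\pi$ from the start vertex to $v$, two distinct walks $\alpha \neq \beta$ from $v$ to $v$ of common length $\ell \geq 1$ (granted by condition~(b), since the empty walk is the unique length-$0$ closed walk), and a path $q$ from $v$ to a vertex with output~$1$. For each $k \geq 1$ and each binary choice $(i_1,\dots,i_k) \in \{1,2\}^k$, the composite walk $\pi \cdot \gamma_{i_1} \cdots \gamma_{i_k} \cdot q$ (with $\gamma_1 = \alpha$, $\gamma_2 = \beta$) starts at the start vertex, ends at an output-$1$ vertex, and has fixed length $L = |\pi| + k\ell + |q|$. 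Determinism of the automaton implies that distinct walks of equal length from a common source carry distinct edge-label sequences, so the $2^k$ choices yield $2^k$ distinct base-$p$ words of length $L$, hence $2^k$ distinct integers in $E(\sigma) \cap \{0,\dots,p^L\}$. Setting $N = p^L$ gives $\#E(\sigma) \cap \{0,\dots,N\} \geq N^{\log 2/(\ell \log p)}$, which exceeds every power of $\log N$, so $\sigma$ is not sparse.

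For the converse, assume no vertex of the automaton is tied, and plan to exhibit $E(\sigma)$ as a finite union of simple sparse sets so that Proposition~\ref{SY} yields sparseness. Non-co-accessible vertices contribute no nonzero coefficients, so I restrict to co-accessible vertices, for each of which condition~(b) fails: there is at most one closed walk of each length. The pivotal structural claim is that every strongly connected component $C$ containing a co-accessible vertex is either trivial (a single vertex without self-loop) or a single directed cycle. Suppose for contradiction that $C$ contained two distinct simple cycles $c_1, c_2$. If they share a common vertex $v$, then $c_1^{\mathrm{len}(c_2)}$ and $c_2^{\mathrm{len}(c_1)}$ are distinct closed walks at $v$ of equal length $\mathrm{len}(c_1)\mathrm{len}(c_2)$ (distinctness follows by considering the first edge at which $c_1$ and $c_2$ diverge). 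If $c_1, c_2$ are vertex-disjoint, strong connectivity of $C$ provides paths $\pi_1 \colon v \to u$ and $\pi_2 \colon u \to v$ for any $v \in c_1, u \in c_2$, and the spliced closed walk $d := \pi_1 \cdot c_2 \cdot \pi_2$ at $v$ traverses edges of $c_2$ (disjoint from $c_1$), so $d$ is not a power of $c_1$; then $c_1^{\mathrm{len}(d)}$ and $d^{\mathrm{len}(c_1)}$ are the required pair. Once this structural dichotomy is in hand, any walk from the start to an output-$1$ vertex visits a bounded sequence of strongly connected components (none revisited, by acyclicity of the condensation), wrapping around each cyclic component some number $m_j \geq 0$ of times before exiting via a fixed edge; its label has the shape $u_0 w_1^{m_1} u_1 \cdots w_r^{m_r} u_r$ with fixed words $u_i, w_j$ depending only on the component pattern and exit choices. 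Since there are only finitely many such patterns, reversing (for the reverse-reading convention) expresses $E(\sigma)$ as a finite union of simple sparse sets.

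The main obstacle is the structural dichotomy above, specifically verifying genuine distinctness of the two closed walks in both subcases: in the shared-vertex case one must locate the first edge of divergence between $c_1$ and $c_2$, and in the vertex-disjoint case one must ensure that splicing $c_2$ via bridges really produces a closed walk that is not a power of $c_1$ (which is why traversal of edges of $c_2$ is decisive). The subsequent decomposition of general walks into bridges and cyclic segments is then a routine combinatorial accounting exercise.
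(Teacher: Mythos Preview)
The paper does not prove this proposition; it is stated with a terminal \qed and attributed to \cite{SYZS} and \cite[Prop.~3.4]{BK2020}. Your argument is a correct self-contained proof along the lines of those references: the forward direction manufactures exponentially many accepting walks of a common length, and the converse reduces to the structure of the strongly connected components in the co-accessible part of the automaton, followed by the standard decomposition of accepting walks into bridges and cycle-powers.

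One small simplification for the converse: rather than splitting on whether the two simple cycles share a vertex, observe directly that a non-trivial strongly connected component which is not a single directed cycle must contain a vertex $v$ with two distinct outgoing edges lying inside the component (out-degree exactly one everywhere would force a single cycle). Extending each of these edges to a closed walk at $v$ by strong connectivity and raising to suitable powers yields two closed walks of equal length that already differ in their \emph{first} edge, so distinctness is immediate. This subsumes your vertex-disjoint subcase, where you (correctly, but with more bookkeeping) use that $d$ visits vertices of $c_2$ while $c_1^{|d|}$ stays inside $c_1$. Your forward-direction bound $N^{\log 2/(\ell\log p)}$ is off by a fixed multiplicative constant coming from $|\pi|+|q|$, but this is irrelevant for contradicting sparseness.
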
 

This criterion can be used immediately to verify that the series $ \sigma_{\mathrm{S},2^\mu-1} (\mu \geq 1), \sigma_{\mathrm{CS}}^{\circ 3},\sigma_{\mathrm{T},1},\dots, \sigma_{\mathrm{T},4}$ are sparse. 

\begin{example} \label{Klopschsparse}

The $2$-automaton $A$ corresponding to the expansion of the series $(1+t)^{-1/m}$ can be succinctly described as follows.  
Let $\varpi$ denote the multiplicative order of $2$ modulo $m$ and consider the base-$2$ expansion $(2^\varpi-1)/m=\sum_{i=0}^{\varpi -1} x_i 2^i$. 
The set of vertices of $A$ is $\{ v_0,\dots,v_{\varpi-1}, w\}$. All $v_j$ have vertex label $1$, $w$ has vertex label $0$, and $v_0$ is the start vertex. For any $j$, $v_j$ is connected to $v_{j+1\, \mathrm{mod}\, \varpi}$, always by an edge with label $0$, and by an edge with label $1$ exactly if $x_j=1$. If $x_j=0$, an edge with label $1$ connects $v_j$ to $w$. Finally, $w$ has two self-loops labelled $0$ and $1$.  
The automaton $A$ is not sparse since any vertex $v_j$ with $x_j=1$ is not tied: $0^\varpi$ and $0^{\varpi-1}1$ are two paths that satisfy condition (b). 
(This incidentally provides another proof of the non-sparseness of Klopsch's series $\sigma_{\mathrm{K},m} (t) = {t}{/}\!{\sqrt[m]{1+t^m}}$; however, we do not have a synthetic description for an automaton corresponding to $\sigma_{K,m}$ for general $m$ and, in particular, do not have a formula for the minimal number of states as a function of $m$, cf.\ Table \ref{tablebounds}.)

A similar description of a minimal $p$-automaton for $(1+at)^{-1/m} \in \F_p\pau{t}$ for any prime $p$, $m$ coprime to $p$ and $a \in \F_p^*$ is given in \cite{Djurre-thesis}. 
\end{example}

\subsection{Combinatorial tests for membership in the hierarchy} 
We have not been able to find a necessary and sufficient condition for a series to be in $\hat S$ in terms of the automaton alone. We will however give a  simple necessary criterion, from which one may deduce all statements in Theorem \ref{enumarid}, \emph{except} the facts that $\sigma_{\mathrm{min}} \notin \hat S$ and  $\sigma_{(1,9)} \circ \varphi \notin \hat S$.  

In applying the criterion, it is necessary to move the `start' label to other vertices. This might produce non-accessible vertices, which should then be removed from the automaton; this does not affect the resulting automatic sequence.

\begin{proposition} \label{hatA.2} 
Let $\sigma(t)=\sum_{k\geq 0}a_kt^k\in\F_2\pau{t}$ be a power series generated by an automaton $A$. Then $\sigma(t)\notin\hat S$ if there exists a vertex $v$ in $A$ 
satisfying the following two properties\textup{:}
\begin{enumerate}
\item there exist arbitarily long walks from the start vertex to $v$\textup{;}
\item let  $v_0$ and $v_1$ denote the vertices reached by following the edge starting at $v$ and labelled $0$ and $1$, respectively, and let $A_i$ be the automaton obtained from $A$ by changing the start vertex to $v_i$. Then exactly one of the automata $A_0$ and $A_1$ is sparse (and the other one is not sparse).  
\end{enumerate}
\end{proposition}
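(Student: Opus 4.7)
The plan is to proceed by contradiction: assume $\sigma \in \hat{S}$, so by Proposition \ref{hatA}(ii) there exists $m \geq 1$ with $(t^m+1)\sigma \in S$. Writing $m = 2^s m_0$ with $m_0$ odd, the telescoping identity
\[
a_n + a_{n - cm} = \sum_{j=0}^{c-1}\bigl(a_{n - jm} + a_{n - (j+1)m}\bigr)
\]
(each summand being a shift of the sparse sequence $(a_n + a_{n-m})_n$) together with closure of sparse sequences under shifts and finite sums shows that $(a_n + a_{n-L})_n$ is sparse for every positive multiple $L$ of $m$.

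Next, I would invoke condition (1) to choose some $k \geq s$ and a walk $w$ of length $k$ from the start of $A$ to $v$, and set $n_w \in [0, 2^k-1]$ to be its corresponding integer. Without loss of generality suppose $\sigma_0 \in S$ and $\sigma_1 \notin S$; the opposite assignment is handled symmetrically by restricting to the sibling progression $n = n_w + 2^k + 2^{k+1}M$ instead. Set $L := 2^k m_0$, a positive multiple of $m$; restricting the sparse sequence $(a_n + a_{n-L})_n$ to the arithmetic progression $n = n_w + 2^{k+1}M$, $M \geq 0$, produces again a sparse sequence (in $M$), whose first term is $\sigma_0(M)$ by the definition of $A_0$.

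The key computation is identifying the second term. Writing $n_w + 2^{k+1}M - L = n_w + 2^k(2M - m_0)$, one observes that $2M - m_0$ is odd and positive for $M$ large; hence the base-$2$ expansion (LSB first) of $n_w + 2^k(2M-m_0)$ consists of the bits of $n_w$, followed by a $1$ at position $k$, followed by the bits of $M - (m_0+1)/2$. Tracing $A$ along these bits passes through $w$ to reach $v$, then takes the $1$-edge into $v_1$, then reads the bits of $M - (m_0+1)/2$ via $A_1$, so that $a_{n_w + 2^{k+1}M - L} = \sigma_1\bigl(M - (m_0+1)/2\bigr)$.

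Combining, $\bigl(\sigma_0(M) + \sigma_1(M - (m_0+1)/2)\bigr)_M$ is sparse, and since $\sigma_0 \in S$ this forces $\sigma_1$ to be sparse (a finite shift of the index preserves sparseness), contradicting the assumption $\sigma_1 \notin S$. The main obstacle is the construction of the shift $L$: it must simultaneously be a positive multiple of $m$ (to ensure that $(a_n + a_{n-L})_n$ is sparse) and have a $1$ in position $k$ of its base-$2$ expansion (so that subtracting it toggles exactly the bit that distinguishes the two arithmetic progressions passing through $v_0$ and $v_1$). The choice $L = 2^k m_0$ meets both requirements, crucially using that condition (1) lets us take $k$ as large as needed, in particular $k \geq s = v_2(m)$.
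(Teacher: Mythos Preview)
Your proof is correct and follows essentially the same line as the paper's own argument: both hinge on the observation that a shift by $2^k$ times an odd integer toggles the bit at position $k$, thereby switching between the progressions feeding into $A_0$ and $A_1$, and hence expresses a difference $a_n - a_{n\pm L}$ along a suitable progression as (sparse) $+$ (non-sparse). The only cosmetic differences are that you argue by contradiction from a fixed $m$ via Proposition~\ref{hatA}(\ref{hatA2}) together with telescoping, whereas the paper shows directly that $\sum_k (a_k + a_{k+2^\ell m}) t^k$ is non-sparse for every odd $m$ and every sufficiently large $\ell$, and then invokes Proposition~\ref{hatA}(\ref{hatA4}).
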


\begin{remark}\label{rem:hatA.2}
Since the automaton is finite, the existence of arbitarily long walks from the start vertex to $v$ is equivalent to  the existence of paths $w_0$, $w_1$ and $w_2$ such that $w_1$ is nontrivial and for every integer $\ell\geq 0$ the walk $w_2w_1^{\ell}w_0$ goes from the start vertex to $v$.
\end{remark}

\begin{proof}[Proof of Proposition \ref{hatA.2}]

For the purpose of the proof, we let $(n)_2$ denote the base-$2$ expansion of an integer $n\geq 0$. 

Consider a walk from the start vertex to $v$, say of length $\ell$, and let $w$ be the binary word given by the concatenation of its labels. Let $c$ be the integer such that $(c)_2=w$. It follows directly from the definition that the automatic sequences produced by $A_0$ and $A_1$ are $(a_{2^{\ell+1}n+c})_{n\geq 0}$ and $(a_{2^{\ell+1}n+2^\ell+c})_{n\geq 0}$, respectively. Let $i\in\{0,1\}$ be such that the automaton $A_i$ is not sparse; the automaton $A_{1-i}$ is then sparse.

Let $m\geq 1$ be a fixed arbitrary odd integer. Consider integers $k$ of the form $k=k(n)=2^{\ell+1}n+2^{\ell}i+c$ (where $\ell$ and $i=0,1$ are fixed while $n$ runs through $\Z_{\geq 0}$). The base-$2$ expansion of $k+2^{\ell}m$ is of the form $(k+2^{\ell}m)_2 = u(1-i)w$ for some binary word $u$, and hence the walk given by it leads from the start vertex to a vertex in $A_{1-i}$. Since $A_{1-i}$ is sparse, the number of $n\leq N$ such that $a_{k+2^{\ell}m}=1$ grows as $O(\log(N)^r)$ for some $r\geq 0$. On the other hand, the base-$2$ expansion of $k$ is $(k)_2=(n)_2 i w$, the automaton $A_i$ is not sparse, and hence the number of $n\leq N$ such that $a_{k}=1$ grows faster than $\log(N)^r$ for any $r\geq 0$, and so does the number of $n$ such that $a_{k} + a_{k+2^{\ell}m}=1$. It follows that the power series \[\sum_{n\geq 0} \left(a_{k(n)} + a_{k(n)+2^{\ell}m}\right)t^n\] is not sparse, and hence neither is  the series \[\label{eqn:notShatcrit} \sum_{n\geq 0} \left(a_{n} + a_{n+2^{\ell}m}\right)t^n.\] Since the integer $m\geq 1$ was arbitrary odd, and since the walk from the start vertex to $v$ can be chosen with $\ell$ arbitrarily large, we conclude from Proposition \ref{hatA}\eqref{hatA4} that $\sigma$ is not in $\hat S$.\end{proof}

A heuristics to apply  Proposition \ref{hatA.2} is now as follows. To verify that one of the automata $A_0, A_1$ is non-sparse, we can use Proposition \ref{SYZScheck}; for this one can use cycle-finding algorithms. The tricky part is to verify that the other automaton is sparse---to this end, we need to exclude the existence of appropriate walks in the graph. To simplify this problem one may insist that the sparse of the automata $A_0$, $A_1$ be very simple; in fact, in all the examples discussed below it is possible to find such an automaton consisting of only one state, with label $0$, making the verification obvious. Inverting this logic, we can hope to apply the criterion by first finding a vertex $w$ with label $0$ and two self-loops (a so-called `absorbing state', cf.\ Section \ref{nonran} below), and then going through all the vertices $v$ admitting an edge from $v$ to $w$, and checking if any of them satisfies the conditions of Proposition \ref{hatA.2}. 

\begin{proof}[Sketch of a second proof of (part of) Theorem \ref{enumarid}] The verification that certain series belong to $S, \hat S$ or $\QS$ is direct and the same as in the first proof. The verification that certain series do not belong to $S, \hat S$ or $\QS$ can be done by studying the corresponding automata and using Propositions \ref{SYZScheck} and \ref{hatA.2}. We have summarised some of the combinatorial data for this in Tables \ref{proofnotinS} \&\ref{proofnotin}. For small automata, these data can be easily found just by looking at the graphical representation. This is the case for all the series in  Tables \ref{proofnotinS} \&\ref{proofnotin} except for $\sigma_{(1,9)}$. To illustrate how one can use a computer algebra system  to find these data for larger automata, we have written a {\sc Mathematica} notebook doing this for the series $\sigma_{(1,9)}$, generated by an automaton with $110$ states, see \cite{Database}.

To verify that a series is not in $S$, one indicates a path from the start vertex to a tied vertex $v$ and two different walks of the same length from $v$ to itself. One also checks that $v$ is co-accessible by indicating a path from $v$ to a vertex with output $1$. These data are gathered in Table \ref{proofnotinS}. 

To verify that a series is not in $\hat S$ one indicates paths $w_0, w_1, w_2$ such that every walk $w_2w_1^\ell w_0$ leads from the start vertex to the same vertex $v$; a digit $i\in\{0,1\}$ such that the automaton $A_i$ (resp.\ $A_{1-i}$) obtained by moving the start vertex to the endpoint $v_i$ of the edge starting at $v$ and labelled $i$ is non-sparse (resp.\ sparse); a path from $v_i$ to a tied vertex; a path from that tied vertex to a vertex with output $1$; and different walks of the same length from the tied vertex to itself, verifying that the automaton $A_i$ is non-sparse. In all the cases listed in Table \ref{proofnotin} the vertex $v_{1-i}$ has label zero and two self-loops, implying that the automaton $A_{1-i}$ is sparse, and providing the final step of the verification that the considered series is not in $\hat S$.

We have listed the combinatorial data only for some of the series, but a similar procedure can be performed for all the series considered in Table  \ref{tableeqs} except $\sigma_{\mathrm{min}}$ and $\sigma_{(1,9)}\circ \varphi$, which are not in $\hat S$, but for which the criterion from Proposition \ref{hatA.2} is not satisfied. \end{proof}

\begin{table} 
\begin{tabular}{llll}
\hline 
\text{series}  & \text{path} & \text{path to vertex} & $(p_1,p_2)$   \\
 & & \text{with output $1$} &  \\ \hline
$\sigma_{\mathrm{CS}}$ & $0$   & $1$& $(101,100)$  \\
$\sigma_{\mathrm{CS}}^{\circ2}$ & $0$ & $10$ & $(1101,1110)$ \\
$\sigma_{\mathrm{min}}$ &  $1$  & $\epsilon$ & $(011,100)$  \\

\hline \\ 
\end{tabular}
\caption{`path' indicates a path from the start vertex to a tied vertex; `path to vertex with output $1$' indicates a path from the tied vertex to a vertex with output $1$; $p_1$ and  $p_2$ are walks of the same length that connect the tied vertex to itself, indicating that the series in non-sparse; $\epsilon$ indicates the empty path.}
\label{proofnotinS}
\end{table}

\begin{table} 
\begin{tabular}{llllll}
\hline 
\text{series}  & $(w_2,w_1,w_0)$ & \text{edge to} & \text{path}  & \text{path to vertex } & $(p_1,p_2)$ \\
 && \text{non-sparse} && \text{with output $1$} &    \\ \hline
$\sigma_{\mathrm{J}}$ & $(1, 0, 00)$ & $1$ & $\epsilon$  & $\epsilon$  & $(0,1)$  \\
$\sigma_{\mathrm{J}}^{\circ 3}$ & $(1, 0, 001)$ & $1$ & $\epsilon$  & $\epsilon$ & $(0, 1)$ \\
$\sigma_{V,1}$ & (1, 0,000) & $0$ & $\epsilon$ & $1$ & $(1001,0100)$   \\
$\sigma_{V,2}$ & $(1, 0, 1)$ & $0$ & $\epsilon$ & $1$ & $(1001,0100)$   \\
$\sigma_{\mathrm{K},3}$ & $(\epsilon, 00, 0)$ & $0$ & $01$  & $\epsilon$ & $(00, 11)$  \\
$\sigma_{(1,5)}$ & $(1, 0, 001)$ & $0$ & $1$  & $\epsilon$ & $(11001,01011)$  \\
$\sigma_{(1,9)}$ & $(0^5 1010,1,1^30^3)$ & $1$ &  $001$ & $1$ & $(0^2 1^2 0^5 1^2 0^2 10^2 101^2 0^2,$  \\
&&&&\multicolumn{2}{r}{$0^3 1^2 0^2 10^2 101^20^4 1^2 0^2 )$} \\
\hline \\ 
\end{tabular}
\caption{ The words $w_i$ are the words needed to apply Remark \ref{rem:hatA.2}; `edge to non-sparse' has value $i\in\{0,1\}$ if the automaton $A_i$ considered in Proposition \ref{hatA.2} is non-sparse; `path' indicates a path from the vertex $v_i$ from Proposition \ref{hatA.2} to a tied vertex; `path to vertex with output $1$' indicates a path from the tied vertex to a vertex with output $1$; $p_1$ and $p_2$ are walks of the same length that connect the tied vertex to itself, indicating that the series is not in $\hat S$; $\epsilon$ indicates the empty path.}
\label{proofnotin}
\end{table}
\section{`Non-randomness' of the series and synchronisability of the automata} \label{nonran}

\subsection{Synchronising automata} Recall that an automaton is called \emph{synchronising} if there is an input string (a `synchronising word' $p_{\mathrm{sync}}$), which, when followed from an arbitrary vertex, always leads to the same end vertex; this means that the word resets the automaton---if the base-$2$ expansion of $n$ contains the word $p_{\mathrm{sync}}$, the corresponding coefficient $a_n$ depends only on the part of the expansion that is to the left of the occurrence of $p_{\mathrm{sync}}$. 

\begin{example} 
The word $1011$ is synchronising for $\sigma_{\mathrm{K,3}}$. Following this word (right to left) starting at any state of the automaton leads to the state in the middle of the bottom row of Figure \ref{ka1}.
\end{example}

Synchronisation is particularly easy to check when there is an \emph{absorbing state} $v$, meaning that both outgoing edges from $v$ are loops.
\begin{lemma} \label{absorbing} If an automaton $A$ has an absorbing state $v$, then $A$ is synchronising if and only  for any vertex $w$ in $A$ there is a path from $w$ to $v$ (in particular, $A$ is not synchronising if there is more than one absorbing state). 
\end{lemma}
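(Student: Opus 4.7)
The plan is to prove both implications by exploiting the defining property of an absorbing state: once the walk reaches $v$, every further edge is a loop at $v$, so the walk stays at $v$ forever. This rigidity forces any synchronising word to terminate precisely at $v$, which pins down the necessary condition, and makes the sufficient condition amenable to an inductive concatenation argument.

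First I would handle the forward direction. Suppose $A$ is synchronising with synchronising word $p_{\mathrm{sync}}$, and let $v^\ast$ be the common end vertex of all walks labelled by $p_{\mathrm{sync}}$. Applying $p_{\mathrm{sync}}$ starting at $v$ must end at $v^\ast$; on the other hand, since $v$ is absorbing, following any edge from $v$ stays at $v$, so the walk labelled by $p_{\mathrm{sync}}$ starting at $v$ ends at $v$. Thus $v^\ast=v$, and for every vertex $w$ the walk labelled by $p_{\mathrm{sync}}$ starting at $w$ ends at $v$, providing the required path from $w$ to $v$.

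Next I would handle the reverse direction by an inductive construction. Enumerate the vertices as $w_1,\dots,w_n$. Set $q_0$ to be the empty word, and suppose $q_{i-1}$ has been defined so that the walk labelled by $q_{i-1}$ starting at any of $w_1,\dots,w_{i-1}$ ends at $v$. Let $w_i'$ be the endpoint of the walk labelled by $q_{i-1}$ starting at $w_i$; by hypothesis there is a path from $w_i'$ to $v$, and we let $q_i$ be the concatenation of $q_{i-1}$ with the sequence of edge labels of that path. Because $v$ is absorbing, the walks from $w_1,\dots,w_{i-1}$ remain at $v$ after appending the new part, while the walk from $w_i$ now ends at $v$. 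After $n$ steps the word $q_n$ synchronises $A$ to $v$.

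For the parenthetical remark, assume $v_1\neq v_2$ are two absorbing states. For any word $p$, the walk labelled by $p$ starting at $v_i$ ends at $v_i$ ($i=1,2$). Hence the endpoints of walks labelled by $p$ starting at $v_1$ and $v_2$ are distinct, so no word can be synchronising. The main (very minor) subtlety is bookkeeping in the inductive step to ensure that the vertices already synchronised to $v$ are not dislodged when we append further letters; this is handled automatically by the absorbing property, so there is no real obstacle.
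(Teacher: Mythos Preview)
Your proof is correct and uses essentially the same ideas as the paper. The forward direction is identical; for the reverse direction, the paper argues by choosing a word $p$ minimising the number of end vertices and deriving a contradiction (appending a path from a surviving end vertex to $v$ strictly decreases this number), whereas you build the synchronising word by explicit induction over an enumeration of the vertices. Both arguments rest on the same key observation---appending further letters never dislodges a vertex that has already reached the absorbing state $v$---so the difference is purely one of presentation (extremal principle versus explicit construction).
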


\begin{proof}
Since $v$ is mapped to itself by any word, the synchronising word should map any vertex to $v$. In particular, for $A$ to be synchronising, any vertex needs to be connected by a path to $v$. If this holds, choose an input string $p$ for which the number of end vertices of all paths with label $p$ and arbitrary beginning vertex is minimal. If the only such end vertex is the absorbing state $v$, $p$ is a synchronising word. If not, let $v_1$ denote another such end vertex and choose a path $p_1$ from $v_1$ to $v$ (which exists by assumption). Now, the number of end vertices of paths with label $p_1 p$ is strictly smaller than for $p$ (since both $v_1$ and $v$ are connected to $v$ by a path with label $p_1$), contradicting the minimality. 
\end{proof}

As the number $N$ tends to infinity, the fraction of synchronising automata with $N$ states tends to $1$ \cite{Berlinkov}, but the fraction of automata with $N$ states having an absorbing state tends to $0$.  The next lemma shows something very different happens for the class of minimal \emph{sparse} automata. 

\begin{lemma} \label{absorbingsparse}
If an automaton $A$ is minimal and sparse, then $A$ has a unique absorbing state $v$, and for any vertex $w$ in $A$ there is a path from $w$ to $v$.
\end{lemma}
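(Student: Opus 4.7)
The plan is to combine the tied-vertex characterisation of sparseness from Proposition \ref{SYZScheck} with the collapsing effect of minimality on states that cannot reach any output-$1$ vertex. Call a vertex \emph{non-co-accessible} if there is no path from it to a vertex with output $1$. The crucial preliminary observation is that all non-co-accessible vertices produce the identically zero output sequence, hence are pairwise equivalent and must be identified in a minimal automaton; so $A$ contains at most one non-co-accessible vertex. Moreover, the set of non-co-accessible vertices is forward-closed, so if such a vertex $v$ exists, both of its out-edges must return to $v$, making $v$ absorbing; and then $v$ has output $0$ since it cannot reach any vertex with output $1$ (including itself).

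The bulk of the work consists in showing that a non-co-accessible vertex does exist, by analysing a bottom strongly connected component $B$ of $A$ (one which exists because the condensation of any finite digraph has a sink). Since $B$ has no out-edges to other components and every vertex has out-degree $2$, the induced subgraph on $B$ is $2$-out-regular. If $|B|=1$ then the single vertex has two self-loops (distinct edges labelled $0$ and $1$), so for every $n$ there are $2^n$ distinct closed walks of length $n$ at this vertex, in particular condition (b) of being tied holds. If $|B|\ge 2$ then pick any $v\in B$; strong connectivity provides two closed walks at $v$ starting respectively with its two distinct out-edges (their sequences of edges differ in the first position), say of lengths $k_1,k_2$. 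Traversing them $k_2$ and $k_1$ times respectively yields two distinct closed walks at $v$ of common length $k_1k_2$, so condition (b) again holds. In either case, since $A$ is sparse, Proposition \ref{SYZScheck} forces condition (a) to fail on every vertex of $B$: every vertex of $B$ is non-co-accessible, proving existence.

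Combining steps, $A$ has a unique non-co-accessible vertex $v$, which is absorbing with output $0$. For uniqueness of absorbing states: any absorbing vertex is itself a bottom SCC of size $1$ with two self-loops, hence non-co-accessible by the case analysis above, hence equal to $v$. For the reachability claim, given any vertex $w$ the set $R(w)$ of vertices reachable from $w$ is closed under out-edges, so the bottom SCCs of the induced subgraph on $R(w)$ are also bottom SCCs of $A$; any such bottom SCC lies in $R(w)$ and consists of non-co-accessible vertices, all of which equal $v$ by minimality, so $w$ can reach $v$. The main technical obstacle is the second step: producing two closed walks of the \emph{same} length at an arbitrary vertex of a $2$-out-regular strongly connected component, which is what the cycle-interleaving trick $k_1k_2 = k_2\cdot k_1 = k_1\cdot k_2$ is designed to handle; everything else is bookkeeping around minimality and forward-closure.
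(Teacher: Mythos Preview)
Your proof is correct and follows essentially the same approach as the paper: both show that every vertex in a bottom strongly connected component satisfies condition (b) of being tied, deduce from sparseness that condition (a) fails there, and then invoke minimality to collapse all such vertices to a single absorbing state reachable from everywhere. Your organisation via non-co-accessible vertices and your cycle-power trick $P_1^{k_2}$ versus $P_2^{k_1}$ differ only cosmetically from the paper's collapse-then-minimise argument and its $pq$ versus $qp$ trick.
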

\begin{proof}
Call any maximal subgraph of $A$ that is connected as a directed graph a \emph{strongly connected component}. For example, any absorbing state is a strongly connected component. 

Let $U$ denote the union of all strongly connected components. For any vertex $v$ of $A$ let $n(v)$ be the number of vertices that can be reached from $v$ by following some directed path. It is easy to see that if for some vertex $w$ there is a path from $v$ to $w$, then $n(w) \leq n(v)$, and that equality holds for all such $w$ exactly if  $v$ lies in $U$. Choosing $w$ to be a vertex admitting a path from $v$ to $w$ for which the value of $n(w)$ is minimal, we see that for any vertex there is a path from that vertex to a vertex in $U$. An argument analogous as in the proof of Lemma \ref{absorbing} (but with $U$ playing the role of the vertex $v$ in that proof) shows that there is an input string $p$ such that for every path with label $p$ originating from any vertex, the end vertex lies in some strongly connected component.

We now assume that $A$ is \emph{sparse}, and we claim that then all vertices in $U$  have vertex label $0$. Indeed, by the combinatorial criterion in Proposition \ref{SYZScheck}, $A$ has no tied vertices, but any vertex $v$ with label $1$ lying in some strongly connected component is tied: by strong connectedness, two directed edges starting at $v$ with different labels can each be continued to paths $p$ and $q$ leading back to $v$, and then $pq$ and $qp$ are two different paths of the same length connecting $v$ to itself.

Thus, the automaton $A'$ obtained by replacing every vertex in $U$ with a single absorbing state with vertex label $0$ produces the same output as $A$. We conclude that if $A$ is \emph{sparse and minimal}, it has only one strongly connected component, and this component is an absorbing state with label $0$. \end{proof}

\subsection{`Non-randomness'} A power series corresponding to a synchronising automaton with an absorbing state is not `random' at all: if the binary expansion of $n$ contains a synchronising word $p_{\mathrm{sync}}$ leading to an absorbing state, the corresponding coefficient $a_n$ will always be the same, namely the output value of the absorbing state. Since most integers have binary expansions containing $p_{\mathrm{sync}}$, it follows that $a_n$ is constant for `almost all' $n$, i.e.\  there is some $c>0$ such that $a_n$ takes the same value for all except $O(N^{1-c})$ values $n<N$.

So far, we used the convention that our automata were leading-zero invariant, which we now drop. In order to produce automatic sequences from automata, we  used the backwards-reading convention (starting from the least significant digit), and sequences obtained in this manner from synchronising automata may be more properly called \emph{backwards synchronising} to distinguish them from the forwards-reading convention (starting from the most significant digit), which leads to the notion of a \emph{forwards synchronising} automatic sequence. For a given sequence, these two notions are not equivalent (the sequence $(n \bmod 2)$ is forwards synchronising, but not backwards synchronising, and we will see below that the sequence of coefficients of the series $\sigma_{\mathrm{min}}$ is backwards synchronising, but not forwards synchronising). With both of these notions at hand, we may now refer to the following precise result about structured versus random sequences. 
 In \cite[Thm.\ C]{BKM} it was shown that any $\C$-valued automatic sequence (such as our sequences with the output alphabet $\F_2$ lifted to $\{0,1\} \subset \C$) can be decomposed as a sum of a `structured sequence', in which the $n$-th coefficient is a function of the $n$-th coefficients of a periodic sequence and forwards and backwards synchronising sequences, and a `random  sequence', meaning a highly Gowers uniform sequence. (Since in this sense sequences that are $0$ almost everywhere are `random', the terminology is somewhat loose.) The classical Thue--Morse sequence is an example of a highly Gowers uniform sequence \cite{Kon}. By contrast, it turns out that our sequences are very structured and non-random in the sense of this decomposition. As an example, consider the series $\sigma_{\mathrm{CS}}$: it follows from Equation \eqref{cseq} that the value of its $n$-th coefficient for $n\geq 3$ depends  only on the two leading digits and the final digit of the base-$2$ expansion of $n$.

\begin{proposition}
For all series $\sigma=\sum a_n t^n$ in Table \textup{\ref{tableeqs}} the sequence $(a_n)$ is \emph{structured}\textup{:} there exists a backwards synchronising sequence $(b_n)$, a forwards synchronising sequence $(f_n)$ and a function $F \colon  \F_2^2 \rightarrow \F_2$ such that $a_n=F(b_n,f_n)$ for all $n$.
\end{proposition}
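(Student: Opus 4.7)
The plan is to verify the structured decomposition case by case across the series in Table \ref{tableeqs}, leveraging the classification into $S$, $\hat{S}$, and $\QS$ established in Theorem \ref{enumarid} together with the synchronisation tools in Lemmas \ref{absorbing} and \ref{absorbingsparse}.

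First, I would treat the series in the sparseness class $S$ (namely $\sigma_{\mathrm{S},1}$, $\sigma_{\mathrm{S},m}$, $\sigma_{\mathrm{CS}}^{\circ 3}$, and $\sigma_{\mathrm{T},1},\dots,\sigma_{\mathrm{T},4}$). By Lemma \ref{absorbingsparse} the minimal automaton has a unique absorbing state with label $0$ to which every vertex has a directed path, and by Lemma \ref{absorbing} the automaton is synchronising. Since the reading convention is backwards, $(a_n)$ is itself a backwards synchronising sequence, so one can set $b_n := a_n$, let $(f_n)$ be the constant $0$ sequence (which is both forwards and backwards synchronising), and take $F(x,y) := x$.

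Second, for the series that are not in $\QS$ I would inspect the minimal automaton directly. For Klopsch's series $\sigma_{\mathrm{K},m}$ with odd $m\geq 3$, the automaton described in Example \ref{Klopschsparse} has $w$ as an absorbing state, and since $m>1$ forces $(2^{\varpi}-1)/m<2^{\varpi}-1$ at least one digit $x_j$ vanishes, giving every vertex on the cycle $v_0,\dots,v_{\varpi-1}$ a path to $w$; thus the automaton is backwards synchronising by Lemma \ref{absorbing}, and the trivial decomposition again applies. For the remaining non-quasi-sparse series ($\sigma_{V,1}, \sigma_{V,2}, \sigma_{V,3}, \sigma_{\mathrm{min}}, \sigma_{(1,5)}, \sigma_{(1,9)}, \sigma_8$), the pictures in Figures \ref{exrunpic}, \ref{d13}, \ref{Klein}, the description in Table \ref{19fig}, and the explicit data in \cite{Database} each exhibit a vertex with two self-loops (an absorbing state), and the same trivial decomposition works once one verifies that this state is reachable from every other vertex.

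Third, for the genuinely two-sided cases $\sigma_{\mathrm{CS}}, \sigma_{\mathrm{CS}}^{\circ 2} \in \hat{S}\setminus S$ and $\sigma_{\mathrm{J}}, \sigma_{\mathrm{J}}^{\circ 3} \in \QS \setminus \hat{S}$, I would use the explicit closed-form formulas. Taking $\sigma_{\mathrm{CS}}$ as the model case, Equation (\ref{cseq}) shows that for $n\geq 3$ one has $a_n=1$ precisely when $n$ is even and the binary expansion of $n/2$ begins with $11$. Setting $b_n$ to be the parity indicator of $n$ (backwards synchronising via a 1-bit automaton) and $f_n$ to be the indicator of whether the two most significant digits of the binary expansion of $n$ equal $11$ (forwards synchronising via a small forwards-reading automaton), one obtains $a_n = (1-b_n)f_n$ for $n\geq 3$, with the finitely many small $n$ absorbed into a periodic adjustment (simultaneously forwards and backwards synchronising). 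Analogous decompositions are extracted from Remark \ref{rem:cs2} for $\sigma_{\mathrm{CS}}^{\circ 2}$ and from (\ref{J3eq})--(\ref{J3eq2}) together with the conjugating identity $(t+1)\sigma_{\mathrm{J}}(\varphi(t)) = (t+1)^2\sigma_{\mathrm{CS}}(t)$ for $\sigma_{\mathrm{J}}$ and $\sigma_{\mathrm{J}}^{\circ 3}$.

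The main obstacle is verifying reachability of the candidate absorbing state in the larger automata for $\sigma_{(1,9)}$ (110 states) and especially $\sigma_8$ (320 states), which is not feasible by hand but is a routine directed-graph search on the machine-readable representations in \cite{Database}. A secondary difficulty is isolating the forwards- and backwards-synchronising components of $\sigma_{\mathrm{J}}$ and $\sigma_{\mathrm{J}}^{\circ 3}$, since composition with the M\"obius automorphism $\varphi(t)=t/(t+1)$ entangles low- and high-order digits; here the explicit formulas (\ref{J3eq})--(\ref{J3eq2}) and their relation to $\sigma_{\mathrm{CS}}$ are essential to make the separation precise.
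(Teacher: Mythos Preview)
Your case-by-case strategy is largely sound and matches the paper's approach for the sparse series and for most of the non-$\QS$ series. However, there is one genuine error: the automaton for $\sigma_{\mathrm{min}}$ in Figure~\ref{exrunpic} has \emph{no} absorbing state. Each of its five vertices carries exactly one self-loop, never two, so Lemma~\ref{absorbing} does not apply. The paper handles $\sigma_{\mathrm{min}}$ separately by exhibiting the explicit synchronising word $1^3$: one checks directly that following $111$ from any vertex lands in the same vertex, so the (backwards-read) automaton is synchronising even without an absorbing state. (The paper in fact notes that $\sigma_{\mathrm{min}}$ is \emph{not} forwards synchronising, so the absence of an absorbing state is essential here.) You need to add this verification.

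For $\sigma_{\mathrm{J}}$, $\sigma_{\mathrm{J}}^{\circ 3}$, $\sigma_{\mathrm{CS}}$ and $\sigma_{\mathrm{CS}}^{\circ 2}$ your route via explicit formulas is different from the paper's and, as you acknowledge, the $\varphi$-conjugation step for $\sigma_{\mathrm{J}}$ is delicate and left as a sketch. The paper's argument is shorter and purely automaton-theoretic: the automata for $\sigma_{\mathrm{J}}$ and $\sigma_{\mathrm{J}}^{\circ 3}$ each have \emph{two} absorbing states (with labels $0$ and $1$), every vertex has a path to one of them, and this already implies forwards synchronisation via \cite[Lemma~3.2]{BKM}. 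For $\sigma_{\mathrm{CS}}$ and $\sigma_{\mathrm{CS}}^{\circ 2}$ the paper observes that the start vertex branches into two synchronising subautomata according to the parity of $n$, so $a_n$ is determined by $(n \bmod 2)$ (forwards synchronising) together with a backwards synchronising sequence from the product automaton. Your explicit-formula approach for $\sigma_{\mathrm{CS}}$ is correct and essentially equivalent, but for $\sigma_{\mathrm{J}}$ and $\sigma_{\mathrm{J}}^{\circ 3}$ the automaton argument avoids the digit-entanglement difficulty you flag.
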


\begin{proof} All series in Table \ref{tableeqs} except $\sigma_{\mathrm{min}}$, $\us_{\mathrm{CS}}^{\circ 2}$, $\us_{\mathrm{CS}}$, $\us_{\mathrm{J}}$ and $\us^{\circ 3}_{\mathrm{J}}$ are produced by automata that admit an absorbing state that is accessible from any other state of the automaton, and hence by Lemma \ref{absorbing}, they are (backwards and forwards) synchronising. Indeed, for small automata, one may inspect the pictures; for the larger automata, the verification can be found in \cite{Database}; for the series $\sigma_{\mathrm{S},2^{\mu}+1}$, for which we have not given a representation of the corresponding automata, one may rely on their sparseness and invoke Lemma \ref{absorbingsparse}.

To treat the remaining cases, we observe the following. The minimal automaton corresponding (in backwards-reading convention) to $\sigma_{\mathrm{min}}$ is  synchronising with synchronising word $1^3$, and so the corresponding sequence is backwards synchronising (using  \cite[Lemma 3.2]{BKM} it can be proven that it is not forwards synchronising). The automata corresponding to $\us_{\mathrm{J}}$ and $\us^{\circ 3}_{\mathrm{J}}$ have two absorbing states, and every state has a path to one of these two states; this is enough to conclude that these sequences are forwards synchronising (cf.\ \cite[Lemma 3.2]{BKM}). Finally, the automata for $\us_{\mathrm{CS}}^{\circ 2}$ and $\us_{\mathrm{CS}}$ have two subgraphs that are synchronising and the start vertex is connected by an outgoing edge to these two subgraphs; it follows that the value $a_n$ of the corresponding sequence depends on the value of a backwards synchronising sequence (the sequence produced by the product automaton for the subgraphs) and on the value of the sequence $(n \bmod 2)$, which is forwards synchronising.\end{proof}

Synchronisability is not invariant under conjugation of the corresponding power series, so one may wonder whether every conjugacy class of elements of finite order in $\No(\F_2)$ has a synchronising representative. 

\section*{How computations and visualisations were done}
{\footnotesize
\begin{itemize}[leftmargin=*]
\item Equations and uniformisers were computed by hand. {\sc Singular} or {\sc Mathematica} were used for elimination of variables and checking irreducibility of equations.
\item Automata were generated in {\sc Mathematica} by Rowland's package \cite{Rowland}. Shapes of automata were verified using the {\sc Magma} code in \cite{LDG}. This  code was also used to compute the number of states of certain automata that were not computed in further detail.
\item Automata were redrawn using tikz and Evan Walace's Finite State Machine Design app ({\tt github.com/evanw/fsm}), with the exception of the visualisation of the automaton for $\sigma_{(1,9)}$, which was drawn in {\sc Mathematica}, exported as eps and the `Start'-label was added in {\sc Inkskape}.
\item The genus of the curves in Table \ref{tablebounds} were  computed using {\sc Singular}, with the exception of $\sigma_{(1,9)}$, which was computed in {\sc Magma}.
\item All claimed automata and explicit series representations were verified in {\sc Mathematica} to $O(t^{200})$ at least.
\end{itemize}
}

\section*{Description of supplementary material}
{\footnotesize
\begin{itemize}[leftmargin=*]
\item The file {\tt automata-of-finite-order} contains, for each of the series occurring in this paper, an irreducible algebraic equation that it satisfies, initial coefficients that uniquely determine it as a solution to that algebraic equation, and the corresponding automaton, stored in the format of \cite{Rowland} and visualised as a graph. The series occur by the name used in the current paper, and are ordered by compositional order, then by lexicographical order of the lower break sequence.
\item The file {\tt verification-of-non-sparseness} contains the material needed to verify combinatorially that $\sigma_{(1,9)} \notin \hat{S}$.
\item The file {\tt verification-of-synchronisation} contains the material needed to verify that $\sigma_{V,3}$, $\sigma_{(1,9)}$ and $\sigma_8$ are synchronising.
\item The file {\tt LabelledDirectedGraph.txt} in \cite{LDG} contains the \textsc{Magma}-routine to compute the labelled directed graph structure (without vertex output labels) from Algorithm \ref{algoLDG} using the method of differential forms, in a form that can be parsed by Rowland's \textsc{Mathematica} package \cite{Rowland}.
We give two examples of the running time using the online calculator for \textsc{Magma} V2.25-5: for $\sigma_{\mathrm{min}}$ the labelled directed graph is computed in  $0.090$ seconds, and
the computation of the number of states in Remark \ref{subsecCarlitz} being $668$ required $2.74$ seconds.

\end{itemize}} 

\addtocontents{toc}{\SkipTocEntry}
\bibliographystyle{amsplain}

\end{document}